\documentclass[paper=a4, fontsize=11pt]{scrartcl} 

\usepackage[T1]{fontenc} 
\usepackage{fourier} 
\usepackage[english]{babel} 
\usepackage{amsmath,amsfonts,amsthm} 
\usepackage{amsmath, calligra, mathrsfs}
\usepackage{amssymb}
\usepackage{mathtools}
\usepackage{hyperref}
\usepackage{mathdots}
\usepackage{array}
\usepackage[mathscr]{euscript}
\usepackage{verbatim}
\usepackage[dvipsnames]{xcolor}
\usepackage{mdframed} 
\usepackage{soulutf8}
\usepackage{stmaryrd}
\usepackage{tikz-cd}
\usepackage{hyperref}
\usepackage{lipsum} 

\usepackage{sectsty} 
\allsectionsfont{\centering \normalfont\scshape} 

\usepackage{fancyhdr} 
\usepackage{xurl}
\newcommand*{\sheafhom}{\mathcal{H}\kern -.5pt om}
\numberwithin{equation}{section} 
\numberwithin{figure}{section} 
\numberwithin{table}{section} 

\newtheorem{thm}{Theorem}[section]
\newtheorem{cor}[thm]{Corollary}
\newtheorem{prop}[thm]{Proposition}

\newtheorem{lem}[thm]{Lemma}

\theoremstyle{definition}
\newtheorem{defn}[thm]{Definition}

\newtheorem{exmp}[thm]{Example}

\theoremstyle{remark}
\newtheorem{rem}[thm]{Remark}

\DeclareMathOperator{\lk}{lk}

\newcommand{\horrule}[1]{\rule{\linewidth}{#1}} 

\title{	
	\normalfont \normalsize 
	\textsc{} \\ [25pt] 
	\horrule{0.5pt} \\[0.4cm] 
	\huge Gamma positivity, PL homeomorphism types, and orthogonal polynomials

	\horrule{2pt} \\[0.5cm] 
}

\author{Soohyun Park \\ \href{mailto:lalaland.cappelletti@gmail.com}{lalaland.cappelletti@gmail.com} } 

\date{\normalsize March 26, 2026} 

\begin{document}
	
	\maketitle

	\begin{abstract}
		
		\noindent Using preservations of piecewise linear (PL) homeomorphism types under edge contractions (the link condition) as a topological proxy for flagness, we give a quantitative description of the effect flagness on on gamma positivity of simplicial spheres. In particular, we show that the link condition has a trivial effect on the $g$-vectors (and thus gamma vectors) of high-dimensional simplicial spheres with nonnegative gamma vectors in many cases. Note that this reflects a dichotomy between quantitative behavior arising from $g_1$ components (e.g. measuring ``net number of edge subdivisions'' from the boundary of a cross polytope) that are linear in the dimension and those that are superlinear in the dimension. When the link condition is nontrivial, we show that it gives a lower bound for growth rates of $g$-vector components. This lower bound increases as the number of edges and the distance of the $M$-vector condition on $g$-vectors of simplicial spheres from equality decrease. These lower bounds translate to ones on top gamma vector components and give lower bounds on gamma vector growth rates when the gamma vector components are dominant terms in the $g$-vector components with the same index (e.g. $g$-vectors with components increasing quickly compared to the dimension). Finally, we show that the same results apply to positivity properties generalizing gamma positivity arising from connections between orthogonal polynomials and lattice paths. In the course of doing this, we describe gamma vector components in terms of monomer/dimer covers and point out connections between repeated (stellar) edge subdivisions (Tchebyshev subdivisions) and dimer covers. 
	\end{abstract}
	
	\section*{Introduction}

	Our main objective is to understand underlying reasons behind initial assumptions made in combinatorial positivity properties of polynomials and their effects on quantitative behavior. The positivity property we are concerned with is gamma positivity. This is a positivity property of (reciprocal/palindromic) polynomials lying between unimodality and real-rootedness that occurs in many different contexts ranging from permutation statistics to signatures of toric varieties (\cite{Athgam}, \cite{LR}). Many of these examples are tied together by flag simplicial spheres \cite{Athgam}. A natural question to ask what a natural motivation for the flag and sphere assumptions would be. As mentioned in \cite{Gal} and \cite{LR}, the main geometric reasons (e.g. related to $CAT(0)$ complexes and positivity properties of (restrictions of) conormal bundles on toric varieties) are associated to properties of the top component. If we look for more combinatorial reasons related to the entire flag property, we are often led to specific cases (e.g. those considered in \cite{Athgam}) or general connections to balanced simplicial complexes (e.g. \cite{CCV}). \\
	
	In order to have a better grasp of how we can think about these objects objects quantitatively, we considered averaging and asymptotic properties. This does not seem to have been studied in many settings in the context of gamma vectors apart from work on symmetric edge polytopes in \cite{DJKKV}. More specifically, we focused on measuring how important the flag property is to nonnegativity of gamma vectors of simplicial spheres. To this end, we replaced the flag property by the more general link condition $\lk_\Delta(a) \cap \lk_\Delta(b) = \lk_\Delta(e)$ for edges $e \in \Delta$ of a simplicial complex $\Delta$. This is a common combinatorial application of the flag property which ends up being equivalent to preservation of piecewise linear (PL) homeomorphism types of compact PL manifolds without boundary \cite{Nev}. We will mainly consider PL simplicial spheres. \\
	
	Given a simplicial sphere, the characterization of possible $g$-vectors of simplicial spheres (Theorem \ref{gconjsphere}) and approximations of pseudopowers arising from Macaulay representations (Definition \ref{macrepower}) discussed in Section \ref{gennonnegrealiz} (e.g. applications of results in \cite{El} and \cite{Bol}) enable us to study the effect of the link condition on the structure of simplicial spheres with nonnegative gamma vectors in Section \ref{linkcondimp}. The results involved also apply to generalizations of gamma positivity coming from connections between orthogonal polynomials and lattice paths (see Section \ref{genorthlat}). This fits into lattice path-related properties of gamma vector transformations (Proposition \ref{gamtgrowth}), recent gamma positivity-related work (e.g. \cite{BV} and \cite{EHR}), and positivity and structural results of the gamma polynomial from inverted Chebyshev expansions (Theorem \ref{gamchebinv}, Corollary 1.35 on p. 25 of \cite{Pflag} and Corollary 1.10 on p. 9 of \cite{Pcheb}). The main results are listed below. \\

	 The ``averaged'' property mentioned earlier is due to an application of a $g$-vector version of a local-global formula involved in the proof of the upper bound conjecture and the generalized lower bound theorem for balanced simplicial complexes. This results in global inequalities of $g$-vector components storing ``average local data'' from links over contracted edges. Since the global/averaged version of the $g(\widetilde{\Delta}_e) \ge 0$ conditions are often trivial, we will list a global/average version of the $M$-vector condition below. \\

	 \begin{prop} (Section \ref{linkcondimp}) \\
	 	Let $\Delta$ be a $(d - 1)$-dimensional simplicial sphere satisfying the link condition. The global/averaged $M$-vector condition on the $g(\widetilde{\Delta})$ is a lower bound on $\frac{g_{k + 2}(\Delta)}{g_{k + 1}(\Delta)}$ in terms of $\frac{g_{k + 1}(\Delta)}{g_k(\Delta)}$, the number of edges $f_1(\Delta)^{ \frac{1}{k} }$, and $\frac{g_k(\Delta)^{ \frac{k + 1}{k} }}{g_{k + 1}(\Delta)}$ (Proposition \ref{ratlowbd} and Example \ref{simpratlowbd}). The last parameter measures how far the $M$-vector condition $g_{k + 1}(\Delta) \le g_k(\Delta)^{ \langle k \rangle }$ is form equality (Part 2c) of Theorem \ref{gconjsphere} and Corollary \ref{macpowasymp}). Decreasing these parameters increases the resulting lower bound of $\frac{g_{k + 2}(\Delta)}{g_{k + 1}(\Delta)}$. \\
	 \end{prop}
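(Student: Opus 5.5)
The plan is to combine three ingredients: the behaviour of $g$-vectors under edge contraction, a local-global averaging over all edges, and an asymptotic form of the $M$-vector condition applied to the contracted spheres.

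\textbf{Step 1: each contraction is again a sphere.} For every edge $e = \{a,b\}$ of $\Delta$, the link condition $\lk_\Delta(a)\cap\lk_\Delta(b)=\lk_\Delta(e)$ holds. By \cite{Nev}, the contraction $\widetilde{\Delta}_e$ is then PL homeomorphic to $\Delta$, so it is a $(d-1)$-dimensional simplicial sphere. Theorem \ref{gconjsphere} therefore applies to each $\widetilde{\Delta}_e$. In particular, for each $e$,
\[
g_{k+2}(\widetilde{\Delta}_e) \le g_{k+1}(\widetilde{\Delta}_e)^{\langle k+1\rangle}.
\]

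\textbf{Step 2: express $g(\widetilde{\Delta}_e)$ through $\Delta$.} Contracting $e$ removes exactly the faces containing $a$ or $b$ that are not matched by the identification $a = b$. This gives a standard relation of the form
\[
h(\widetilde{\Delta}_e) = h(\Delta) - x\, h(\lk_\Delta e)
\]
up to the usual shift. Translated into $g$-vectors, it reads
\[
g_i(\widetilde{\Delta}_e) = g_i(\Delta) - g_{i-1}(\lk_\Delta e).
\]
I then sum over all edges $e$, using the local-global formula
\[
\sum_{F\in\Delta} h(\lk F) \text{ for } |F| = 2 \text{ in terms of derivatives of } h(\Delta),
\]
which is the $g$-vector version used in the upper bound and generalized lower bound arguments. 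This writes $\sum_e g_i(\widetilde{\Delta}_e)$ as an explicit linear combination of $g_i(\Delta)$, $g_{i-1}(\Delta)$ and $g_{i-2}(\Delta)$, with coefficients depending on $d$, $i$ and $f_1(\Delta)$.

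\textbf{Step 3: average the $M$-vector inequality.} Summing Step 1 over all $f_1(\Delta)$ edges and dividing by $f_1(\Delta)$, I want to move the pseudopower outside the average. Since the $M$-vector condition is not convex in the obvious direction, I will replace $x^{\langle k\rangle}$ by its asymptotic form $\sim (k!\,x)^{(k+1)/k}/(k+1)!$ from Corollary \ref{macpowasymp}. I then use a power-mean (H\"older) inequality: the averaged $\sum_e g_{k+1}(\widetilde{\Delta}_e)^{(k+1)/k}$ is bounded below by $f_1^{-1/k}\bigl(\sum_e g_{k+1}(\widetilde{\Delta}_e)\bigr)^{(k+1)/k}$, which produces the $f_1(\Delta)^{1/k}$ factor. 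The direction needed may be the opposite, in which case the inequality is instead applied to the left-hand side; this is the delicate point.

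\textbf{Step 4: solve for the ratio.} Substituting Step 2 gives an inequality relating $g_{k+2}$, $g_{k+1}$ and $g_k$ of $\Delta$. Dividing by $g_{k+1}(\Delta)$ isolates $\frac{g_{k+2}}{g_{k+1}}$ in terms of three quantities: the ratio $\frac{g_{k+1}}{g_k}$, $f_1^{1/k}$, and the slack parameter $\frac{g_k^{(k+1)/k}}{g_{k+1}}$. The slack parameter arises when comparing with the $M$-condition $g_{k+1}\le g_k^{\langle k\rangle}$ for $\Delta$ itself. The monotonicity claim, that decreasing each parameter increases the bound, is then checked by reading off the signs of the partial derivatives of the resulting explicit expression, as done in Example \ref{simpratlowbd}.

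\textbf{Main obstacle.} The main obstacle is Step 3. The pseudopower is only approximately a power and is concave-like, so averaging can go the wrong way. My first attempt is to bound the error terms uniformly using the approximations from \cite{El} and \cite{Bol}, and then to choose the convexity direction so that the $g_{k+2}$ side receives the lower bound.
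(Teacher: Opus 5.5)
Your Steps 1, 2 and 4 follow the paper's route (Lemma \ref{edgeconhgvec}, Lemma \ref{glocedgesum}, the local bound of Proposition \ref{glocbounds}, then normalization). Step 3, however, cannot work as proposed, and it is exactly the step that produces the statement.

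After summing the local $M$-vector inequalities you have
\[ \sum_e g_{k+1}(\widetilde{\Delta}_e) \le \sum_e g_k(\widetilde{\Delta}_e)^{\langle k \rangle}. \]
The left side is linear in the local data and equals $f_1(\Delta) g_{k+1}(\Delta) - \mathcal{C}_k$ exactly. What you need is an \emph{upper} bound on the right side in terms of $X = \sum_e g_k(\widetilde{\Delta}_e) = f_1(\Delta) g_k(\Delta) - \mathcal{C}_{k-1}$.
\begin{itemize}
\item The power-mean inequality $\sum_e x_e^{(k+1)/k} \ge f_1(\Delta)^{-1/k} X^{(k+1)/k}$ points the wrong way.
\item Your fallback of applying it to the left-hand side is not available, since there is no power on that side. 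Inverting the pseudopower and using Jensen for the concave inverse fails for the same reason.
\item Replacing $a^{\langle k \rangle}$ by $C_k a^{(k+1)/k}$ is an asymptotic equivalence, not an inequality, and it is not uniform for small $g_k(\widetilde{\Delta}_e)$.
\end{itemize}
The missing ingredients are two exact inequalities. The first is $a^{\langle k \rangle} \le a^{(k+1)/k}$ for every positive integer $a$ (Part 2 of Corollary \ref{macpowasymp}). The second is superadditivity, $\sum_e x_e^p \le \left( \sum_e x_e \right)^p$ for $x_e \ge 0$ and $p \ge 1$ (Lemma \ref{sumpowerbd}), which applies because $g(\widetilde{\Delta}_e) \ge 0$ by your Step 1. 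Together they give
\[ f_1(\Delta) g_{k+1}(\Delta) - \mathcal{C}_k \le \left( f_1(\Delta) g_k(\Delta) - \mathcal{C}_{k-1} \right)^{\frac{k+1}{k}}. \]
The factor $f_1(\Delta)^{1/k}$ appears only after dividing by $f_1(\Delta)$. It is the cost of the lossy superadditive bound (sharp only when one $x_e$ carries everything), not a H\"older constant. Dividing further by $g_{k+1}(\Delta)$ produces the slack parameter $g_k(\Delta)^{(k+1)/k}/g_{k+1}(\Delta)$.

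Two smaller points. First, your index bookkeeping in Step 2 is off. By Lemma \ref{glocedgesum}, $\sum_e g_{i-1}(\lk_\Delta(e)) = \mathcal{C}_{i-1}$ involves $g_{i+1}(\Delta)$, $g_i(\Delta)$, $g_{i-1}(\Delta)$: indices go up, not down. So the statement's parameters ($g_{k+2}(\Delta)$, exponent $\frac{k+1}{k}$, $f_1(\Delta)^{1/k}$) come from the level-$k$ condition $g_{k+1}(\widetilde{\Delta}_e) \le g_k(\widetilde{\Delta}_e)^{\langle k \rangle}$, with $g_{k+2}(\Delta)$ entering through $\mathcal{C}_k$. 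Your level-$(k+1)$ condition gives the statement with $k$ replaced by $k+1$, and its natural exponent is $\frac{k+2}{k+1}$, not $\frac{k+1}{k}$.

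Second, the sign check in Step 4 will not confirm the monotonicity sentence as stated. Write the bound as
\begin{align*}
(k+1)(k+2) \frac{g_{k+2}(\Delta)}{g_{k+1}(\Delta)} &\ge 2 f_1(\Delta) - 2 \frac{g_k(\Delta)^{\frac{k+1}{k}}}{g_{k+1}(\Delta)} \left( f_1(\Delta) - \frac{\mathcal{C}_{k-1}}{g_k(\Delta)} \right)^{\frac{k+1}{k}} \\
&\quad - 2(k+1)(d-k) - (d-k)(d-k+1) \frac{g_k(\Delta)}{g_{k+1}(\Delta)}.
\end{align*}
The bound decreases in the slack parameter, and in $f_1(\Delta)$ once $f_1(\Delta) - \mathcal{C}_{k-1}/g_k(\Delta) \ge 1$. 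It \emph{increases}, however, with $g_{k+1}(\Delta)/g_k(\Delta)$, both through $\mathcal{C}_{k-1}/g_k(\Delta)$ and through the last term. The paper's own discussion just before Proposition \ref{ratlowbd} records the same thing. So only the $f_1(\Delta)$ and slack-parameter parts of that sentence can be verified.
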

	 
	 Returning to the ``averaging'' perspective, we obtain the following results on asymptotic behavior for high-dimensional simplicial spheres $\Delta$ satisfying the link condition such that $\gamma(\Delta) \ge 0$. While the results below are for \emph{individual} link condition inequaliites, we can also obtain analogous results for \emph{families} of link condition inequalities. \\

	 \begin{thm} (Section \ref{linkcondimp}) \label{linkvsgamsphere} \\
	 	Let $\Delta$ be a $(d - 1)$-dimensional simplicial sphere satisfying the link condition such that $\gamma(\Delta) \ge 0$. Given an edge $e \in \Delta$, let $\widetilde{\Delta}_e$ be the contraction of $e$. Fix $1 \le k \le \frac{d}{2} - 2$ in Part 2 and Part 3. \\

	 	\begin{enumerate}
	 		
	 		\item \textbf{(Triviality conditions and linear vs. superlinear $g_1(\Delta)$)} \\
	 			\begin{enumerate}
	 				\item The global/averaged $g(\widetilde{\Delta}_e) \ge 0$ condition for edges $e \in \Delta$ condition follows from the $M$-vector condition for arbitrary simplicial spheres $\Delta$ unless $g_1(\Delta) < d + 10$. \\
	 				
	 				\item The global/averaged $M$-vector condition on the $g(\widetilde{\Delta}_e)$ is trivial if $g_1(\Delta)$ is linear in $d$. \\
	 				
	 				\item If $g_1(\Delta)$ is superlinear with respect to $d$ and the rate of increase of the $g$-vector components with respect to single index changes is small compared to the number of edges, the global/averaged $M$-vector condition on the $g(\widetilde{\Delta}_e)$ is trivial. This indicates that the $g$-vector (and thus the gamma vector) is not affected by the link condition in large dimensions in such cases. The barycentric subdivision example suggests a closer connection to $f$-vectors in such settings. Note that the $f$-vector realization condition on a nonnegative gamma vector is asymptotically slightly stronger than that of realization by a simplicial sphere. \\
	 				
	 				\item Additional instances of triviality occur in global/averaged $M$-vector conditions on the $g(\widetilde{\Delta}_e)$ in ``generic'' cases among asymptotic equivalences between rate of increase of the $g$-vector components with respect to index changes.
	 			\end{enumerate}
	 		
	 		\pagebreak 
	 		
	 		\item \textbf{(Interpretation in nontrivial instances)} \\
	 			\begin{enumerate}
	 				\item ``Special/maximal'' cases among asymptotic equivalences between rate of increase of the $g$-vector components with respect to index changes can give rise to nontrivial global/averaged $M$-vector conditions on the $g(\widetilde{\Delta}_e)$. \\
	 				
	 				\item When global/averaged $M$-vector conditions on the $g(\widetilde{\Delta}_e)$ are nontrivial, the resulting limiting inequalities take the form of lower bounds between 0 and 1 for the quotient of $\frac{g_{k + 2}(\Delta)}{g_{k + 1}(\Delta)}$ by $\frac{g_{k + 1}(\Delta)}{g_k(\Delta)}$ (Corollary \ref{nontrivlinkgam} and Corollary \ref{constlinknontriv}). Since we often expect $\frac{g_{\ell + 1}(\Delta)}{g_\ell(\Delta)}$ to decrease as $\ell$ increases (Remark \ref{indexratchange}), this lower bound can be thought of as a ``control'' on this decrease. \\
	 				
	 				\item While there is initially a wide range of possible outputs, the (non)triviality depends on the comparison of ``average over edges $e \in \Delta$'' of differences $g_k(\Delta) - g_{k - 1}(\lk_\Delta(e))$ with a modified averaging over edges of ratios $\frac{g_k(\Delta)^{ \frac{k + 1}{k} }}{g_{k + 1}(\Delta)}$ measuring how far the $M$-vector inequality $g_{k + 1}(\Delta) \le g_k(\Delta)^{ \langle k \rangle}$ (Part 2c) of Theorem \ref{gconjsphere}) is from equality. This can also be phrased in terms of the size of $f_1(\Delta)^k$ compared to $g_{k + 1}(\Delta)$. Decreases in the former and increases in the latter yield larger lower bounds of this quotient of $\frac{g_{k + 2}(\Delta)}{g_{k + 1}(\Delta)}$ by $\frac{g_{k + 1}(\Delta)}{g_k(\Delta)}$. \\
	 			\end{enumerate}
	 		
	 		\item \textbf{(Gamma vector implications) \\}
	 		
	 		For gamma vectors $\gamma(\Delta)$, lower bounds on $\frac{g_{k + 2}(\Delta)}{g_{k + 1}(\Delta)}$ can be phrased as lower bounds on $\gamma_{k + 2}(\Delta)$ since this term only appears in $g_{k + 2}(\Delta)$ (see Proposition \ref{gamtohgvect}). However, the asymptotic effect on high-dimensional spheres satisfying the link condition with $\gamma(\Delta) \ge 0$ is the largest when the gamma vector components involved are asymptotically dominant in the $g$-vector components of the same index. For example, this occurs if $\gamma_{\ell + 1}(\Delta) \ge (d + 1)g_\ell(\Delta)$ (e.g. when $g_{\ell + 1}(\Delta) \ge 2(d + 1) g_\ell(\Delta)$). While the results discussed here involve \emph{individual} inequalities associated to constant indices $k$, the underlying tools (Proposition \ref{linkinitasymp}, Corollary \ref{asympgratinit}, Corollary \ref{coefflinkasymp}) apply to the behavior of \emph{collections} of inequalities associated to varying indices. \\ 
	 	\end{enumerate}
	 \end{thm}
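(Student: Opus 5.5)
The plan is to derive everything from a single averaged identity for the contractions $\widetilde{\Delta}_e$, and then do asymptotic bookkeeping. Since $\Delta$ satisfies the link condition, each $\widetilde{\Delta}_e$ is again a PL $(d-1)$-sphere (\cite{Nev}). Hence $g(\widetilde{\Delta}_e)$ satisfies Theorem \ref{gconjsphere}: it is nonnegative and satisfies the $M$-vector condition $g_{k+1}(\widetilde{\Delta}_e) \le g_k(\widetilde{\Delta}_e)^{\langle k \rangle}$.

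The first step is the contraction formula. Contracting $e = ab$ removes exactly the faces containing $e$ and one copy of the faces of $\lk_\Delta(a) \cap \lk_\Delta(b) = \lk_\Delta(e)$. This gives
\[ h(\widetilde{\Delta}_e) = h(\Delta) - (1 \text{ shifted}) \cdot h(\lk_\Delta(e)), \]
so $g_k(\widetilde{\Delta}_e) = g_k(\Delta) - g_{k-1}(\lk_\Delta(e))$. Summing over the $f_1(\Delta)$ edges and using the local-global formula, which expresses $\sum_{e} h_i(\lk_\Delta(e))$ linearly in the $h_j(\Delta)$, yields averaged quantities $\overline{g}_k := \frac{1}{f_1(\Delta)}\sum_e g_k(\widetilde{\Delta}_e)$. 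Each $\overline{g}_k$ is an explicit combination of $g_k(\Delta)$, $g_{k-1}(\Delta)$ and $g_{k+1}(\Delta)$ with coefficients rational in $d$, $k$ and $f_1(\Delta)$.

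Next I average the conditions themselves. For nonnegativity, $\overline{g}_k \ge 0$. For the $M$-condition, I use the approximation $x^{\langle k \rangle} \approx x^{(k+1)/k}$ from Corollary \ref{macpowasymp} together with convexity of $x \mapsto x^{(k+1)/k}$ (Jensen). This gives $\overline{g}_{k+1} \lesssim \overline{g}_k^{(k+1)/k}$ up to explicit error terms. Substituting the explicit forms and dividing by $g_{k+1}(\Delta)$ produces the inequality of Proposition \ref{ratlowbd}. It bounds $\frac{g_{k+2}}{g_{k+1}}$ from below by an expression in $\frac{g_{k+1}}{g_k}$, $f_1(\Delta)^{1/k}$ and $\frac{g_k^{(k+1)/k}}{g_{k+1}}$.

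Part 1 then follows by comparing orders of magnitude under $\gamma(\Delta) \ge 0$. Proposition \ref{gamtohgvect} expresses $g$ via $\gamma$ and forces $g_1(\Delta) \ge d$, and Part 2c of Theorem \ref{gconjsphere} bounds $g_2$ by $g_1$.
\begin{enumerate}
\item For 1(a), the averaged $\overline{g}_k \ge 0$ reduces to $g_k \ge c(d) g_{k-1}$-type inequalities. These follow from the $M$-condition unless $g_1 - d$ is a small constant, and checking the constants gives the threshold $d + 10$.
\item For 1(b), if $g_1 = O(d)$, then $f_1 = \binom{d}{2} + O(d^2)$ dominates. The error terms swamp the ratio inequality, so the lower bound becomes $\le 0$, i.e.\ trivial.
\item For 1(c) and 1(d), I split into cases according to the asymptotic relation between $\frac{g_{k+1}}{g_k}$ and $f_1^{1/k}$. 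In the generic cases the bound is again nonpositive.
\end{enumerate}

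Part 2 analyzes the remaining maximal cases, where the leading terms cancel. Dividing by $\frac{g_{k+1}}{g_k}$ and passing to the limit gives a constant in $(0,1)$, as in Corollary \ref{nontrivlinkgam} and Corollary \ref{constlinknontriv}. Its monotonicity in $f_1^k/g_{k+1}$ and in the $M$-defect ratio is read off directly. For Part 3, $\gamma_{k+2}$ appears only in $g_{k+2}$ with coefficient $1$, so the bound transfers to $\gamma_{k+2}$. Under $\gamma_{\ell+1} \ge (d+1) g_\ell$ the remaining binomial terms are lower order, so the bound applies asymptotically to $\gamma$ itself.

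The main obstacle is the Jensen/Macaulay step. Pseudopowers are not exactly convex, so I would first try to work with the bounds of \cite{El} and \cite{Bol}, controlling the discrepancy between $x^{\langle k \rangle}$ and $x^{(k+1)/k}$ uniformly. I would then verify that these errors are dominated in each asymptotic regime of Part 1.
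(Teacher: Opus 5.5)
Your architecture matches the paper's: the contraction formula, the edge version of the local-global identity via the link condition, averaging over the $f_1(\Delta)$ edges, then asymptotic case analysis. However, the step that produces the averaged $M$-vector inequality fails as written. Edge by edge you have $g_{k+1}(\widetilde{\Delta}_e) \le g_k(\widetilde{\Delta}_e)^{\langle k \rangle} \le g_k(\widetilde{\Delta}_e)^{\frac{k+1}{k}}$. So you need an \emph{upper} bound on $\frac{1}{f_1(\Delta)}\sum_e g_k(\widetilde{\Delta}_e)^{\frac{k+1}{k}}$ in terms of $\overline{g}_k$. Jensen for the convex map $x \mapsto x^{\frac{k+1}{k}}$ gives the opposite inequality, $\overline{g}_k^{\frac{k+1}{k}} \le \frac{1}{f_1(\Delta)}\sum_e g_k(\widetilde{\Delta}_e)^{\frac{k+1}{k}}$. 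Hence $\overline{g}_{k+1} \lesssim \overline{g}_k^{\frac{k+1}{k}}$ does not follow. Concentrating the mass on a single edge shows this averaging step can lose exactly a factor $f_1(\Delta)^{\frac{1}{k}}$.

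The paper instead uses superadditivity, $\sum_e a_e^p \le (\sum_e a_e)^p$ for $a_e \ge 0$ and $p \ge 1$ (Lemma \ref{sumpowerbd}). This gives
\[ g_{k+1}(\Delta) - \frac{\mathcal{C}_k}{f_1(\Delta)} \le f_1(\Delta)^{\frac{1}{k}} \left( g_k(\Delta) - \frac{\mathcal{C}_{k-1}}{f_1(\Delta)} \right)^{\frac{k+1}{k}}, \]
that is, $\overline{g}_{k+1} \le f_1(\Delta)^{\frac{1}{k}}\,\overline{g}_k^{\frac{k+1}{k}}$.

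The factor $f_1(\Delta)^{\frac{1}{k}}$ is essential downstream. It makes $\omega_k = f_1(\Delta)^{\frac{1}{k}} \frac{g_k(\Delta)^{\frac{k+1}{k}}}{g_{k+1}(\Delta)} \to +\infty$ for constant $k$ (Proposition \ref{zerolinkcoeff}). That growth drives the ``generic'' triviality in Part 3 of Proposition \ref{linkinitasymp} and Corollary \ref{consttriv}, and it is the source of the $f_1(\Delta)^k$ versus $g_{k+1}(\Delta)$ comparison in Part 2c). Your derivation cannot produce Proposition \ref{ratlowbd}, even though you cite it with $f_1(\Delta)^{\frac{1}{k}}$ in it.

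The obstacle you single out is not the real issue. Part 2 of Corollary \ref{macpowasymp} gives the exact bound $a^{\langle k \rangle} \le a^{\frac{k+1}{k}}$ for all positive integers. So no discrepancy between pseudopowers and powers needs controlling; the constant $C_k$ only enters the heuristic refinement in Remark \ref{asympcom}. The only real issue is the direction of the averaging inequality.

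Your sketch of 1(a) also points at the wrong inequality. Averaged nonnegativity (Corollary \ref{globgintlac}) is an \emph{upper} bound on $\frac{g_{k+2}(\Delta)}{g_{k+1}(\Delta)}$. The paper compares it with the universal bound $\frac{g_{k+2}(\Delta)}{g_{k+1}(\Delta)} \le g_1(\Delta)$, which comes from the $M$-condition and Corollary \ref{sphereginitbd}. It eliminates $\frac{g_k(\Delta)}{g_{k+1}(\Delta)}$ using $\frac{g_k(\Delta)}{g_{k+1}(\Delta)} < 3$, which comes from $\gamma(\Delta) \ge 0$ (Corollary \ref{gamgratrest}). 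At $k = 1$ this gives $g_2(\Delta) < (d^2 - 2) - (d-3) g_1(\Delta)$, hence $g_1(\Delta) < d + 10$. A lower bound $g_k \ge c(d) g_{k-1}$ cannot ``follow from the $M$-condition'', since that condition only bounds $g_k$ from above.

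Two smaller points. The primary case split in 1(c)/1(d) is $\frac{g_{k+1}(\Delta)}{g_k(\Delta)}$ versus $f_1(\Delta)$, through the limit of $\frac{1}{f_1(\Delta)}\frac{\mathcal{C}_{k-1}}{g_k(\Delta)}$, rather than versus $f_1(\Delta)^{\frac{1}{k}}$. And $\gamma(\Delta) \ge 0$ gives $g_1(\Delta) \ge d - 1$, not $g_1(\Delta) \ge d$.
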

	
	 The results involved apply to generalized positivity properties arising from connections between orthogonal polynomials and lattice paths. \\

	 \begin{prop} \textbf{(New connections and generalizations from orthogonal polynomials) \\} (Section \ref{genorthlat}) \\
	 	\begin{enumerate}
	 		
	 		\item Viewing the inverted Chebyshev expansion from this perspective, we have an interpretation of the gamma vector components in terms of monomer/dimer covers with colored monomers (Proposition \ref{gamorthpov}). In Remark \ref{dimtchebsubdiv}, we put this in the context of connections between dimer covers and repeated (stellar) edge subdivisions. This follows from compatibility of Tchebyshev subdivisions from \cite{Het} and \cite{HetB} with modified $f$-polynomials, which were the main connection between the inverted Chebyshev expansion and positivity/geometric properties in the results mentioned above. \\

	 		\item The combinatorial properties of lattice paths give orthogonal polynomial generalizations of the implication of unimodality by gamma positivity (Proposition \ref{formalgpos}) and realizability properties of nonnegative gamma vectors analogous to those in Section \ref{gamposout} for simplicial spheres (Proposition \ref{geninvgamsphere}) and Cohen--Macaulay simplicial complexes (Remark \ref{cmorthgen}). \\

	 		\item The results of Theorem \ref{linkvsgamsphere} apply to the generalized positivity properties mentioned above. This is outlined in Remark \ref{linkapproxorth}.
	 	\end{enumerate}
	 \end{prop}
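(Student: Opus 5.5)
The three parts rest on one mechanism, the orthogonal-polynomial (lattice path) reading of the inverted Chebyshev expansion from Theorem \ref{gamchebinv}; each part is obtained by transporting an existing argument along it. The plan is to rewrite the gamma polynomial so that the palindromic basis $t^i(1+t)^{d-2i}$ appears as a specialization of a family of orthogonal polynomials whose moments count weighted Motzkin paths. By Viennot's theory, such paths are in bijection with monomer/dimer covers of a path graph.

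\textbf{Part 1.} Start from the inverted Chebyshev expansion $\gamma$-polynomial formula expressed through the modified $f$-polynomial (Corollary 1.35 of \cite{Pflag}, Corollary 1.10 of \cite{Pcheb}). Write the Chebyshev polynomials $U_n$ (or $T_n$) as generating functions of monomer/dimer covers of a path on $n$ vertices: a monomer has weight $2x$ and a dimer has weight $-1$. Inverting the expansion turns each $\gamma_i$ into a signed sum over covers with $i$ dimers. The coefficients coming from the modified $f$-vector become colors on the monomers, and this gives Proposition \ref{gamorthpov}. For Remark \ref{dimtchebsubdiv}, I would use the fact from \cite{Het}, \cite{HetB} that a Tchebyshev subdivision multiplies the modified $f$-polynomial by a Chebyshev-type factor. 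Each stellar edge subdivision in the repeated process then corresponds to adjoining either a monomer or a dimer, and the compatibility follows by induction on the number of subdivisions.

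\textbf{Part 2.} Replace the constant Jacobi weights of the Chebyshev case by general nonnegative Motzkin weights $b_n, \lambda_n$, and define generalized $\gamma$-vectors as the coefficients in the corresponding orthogonal basis. Proposition \ref{formalgpos} (unimodality) is then a lattice path injection: I would show that each basis element is itself unimodal and symmetric about the same center. This holds because Motzkin paths with $j$ level steps inject into those with $j+2$ level steps by turning an up/down pair into two level steps, which is allowed when the weights are nonnegative. A nonnegative combination of symmetric unimodal polynomials with a common center is unimodal.

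For realizability, the plan for Proposition \ref{geninvgamsphere} and Remark \ref{cmorthgen} is to express the transformation from generalized $\gamma$ to $h$ (or $g$) through a triangular matrix with nonnegative lattice-path entries, in the style of Proposition \ref{gamtgrowth}. The Macaulay/$M$-vector bounds (Theorem \ref{gconjsphere}, Corollary \ref{macpowasymp}) then transfer as in Section \ref{gamposout}. The key input is that the transformation entries grow at most like the Chebyshev ones up to constants depending on the weights.

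\textbf{Part 3.} This follows by observing that the proof of Theorem \ref{linkvsgamsphere} uses only two ingredients. One is the $g$-vector inequalities from contractions (Proposition \ref{linkinitasymp}, Corollaries \ref{asympgratinit}, \ref{coefflinkasymp}). The other is the fact that the top $\gamma$ component appears only in the $g$-component of the same index, which is preserved by triangularity.

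The main obstacle is the growth comparison in Part 2. For arbitrary weights the transformation coefficients could dominate the $g$-components and destroy the asymptotic dichotomy. I would first try to impose a polynomial bound in $d$ on the weights $b_n, \lambda_n$, and to track the resulting modification of the $(d+1)g_\ell$ dominance threshold.
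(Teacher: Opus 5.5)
Your overall mechanism is the paper's: Viennot's lattice-path and monomer/dimer reading of the inverted Chebyshev expansion, then arbitrary positive $b_m,\lambda_m$. The unimodality step in Part 2 fails, however. In the generalized setting there is no palindromic completion. The variable $z_j$ contributes $2^{-\ell}\mu_{j,\ell}z_j$ to the formal $h_{N-\ell}$, and these Motzkin weight sums at different end levels are neither symmetric nor monotone in general. Your injection is also not weight-preserving: replacing an up/down pair starting at level $m$ by two level steps changes the weight $\lambda_{m+1}$ into $b_m^2$, so it gives no inequality unless $b_m^2\ge\lambda_{m+1}$. Concretely, for $N=1$ one has $q_0=z_0+b_0z_1$ and $q_1=z_1$. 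So $z=(0,2)$, the analogue of $\gamma=(1,0)$, has formal $h$-vector $(h_0,h_1)=(1,2b_0)$, which decreases as soon as $b_0<\frac{1}{2}$. (For the shifted Chebyshev weight $b_0=1$ it is $(1,2)$, i.e.\ $(1+t)^2$.) This is why Proposition~\ref{formalgpos} is an \emph{if and only if} statement. There $h_k\le h_{k+1}$ becomes $q_\ell\le 2q_{\ell-1}$, i.e.\ $2z_{\ell-1}+(2\mu_{\ell,\ell-1}-1)z_\ell+\sum_{m>\ell}(2\mu_{m,\ell-1}-\mu_{m,\ell})z_m\ge 0$. This holds for every $z\ge 0$ only under a coefficient comparison between $\mu_{m,\ell}$ and $2\mu_{m,\ell-1}$. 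Consequently, in the sphere case formal $g\ge 0$ has to be imposed as a hypothesis, as in Proposition~\ref{geninvgamsphere}, rather than derived. On the other hand, your extra ``key input'' comparing transformation entries with the Chebyshev ones is not needed there. For fixed weights and $N$, the paper only uses that $q_{\ell-1}$ contains exactly one variable, $z_{\ell-1}$, not occurring in $q_\ell$. The recursion of Corollary~\ref{sphereposgam} then runs backwards from $z_N=2^N$, together with $a^{\langle k\rangle}\sim C_k a^{(k+1)/k}$ and Proposition~\ref{pseudopowrat}.

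The bookkeeping in Part 1 also does not match the expansion being inverted. Theorem~\ref{gamchebinv} involves $T_j$, whereas uniform path weights (monomer $2x$, dimer $-1$) give $U_j$. The paper passes to the monic $\widehat{T}_j(w-1)$, with $b_m=1$, $\lambda_1=\frac{1}{2}$ and $\lambda_m=\frac{1}{4}$ for $m\ge 2$. It uses that the degree-$r$ coefficient is a signed sum over covers of $[0,j-1]$ with $r$ \emph{missing} elements. So $2^{d/2-\ell}\gamma_\ell$ collects covers with $\frac{d}{2}-\ell$ missing elements, not covers with $\ell$ dimers. After factoring out $2^{d/2-\ell}$, each monomer has weight $2$ and each dimer weight $1$, up to the sign $(-1)^{\#\text{pieces}}$. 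This factor $2$, not the modified $f$-vector, is where the two colors come from. The exception is the dimer $\{0,1\}$ from $\lambda_1=\frac{1}{2}$, which produces the correction $-2B_\ell$ in $\gamma_\ell=A_\ell-2B_\ell$. The multiplicities $h_{d/2-j}$ enter as intervals of the partitionable complex obtained by truncating a shelling.

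For Remark~\ref{dimtchebsubdiv}, Hetyei's result is $F_{T(\Delta)}=T(F_\Delta)$ for the \emph{linear operator} $x^m\mapsto T_m(x)$, not multiplication by a Chebyshev factor. Nothing attaches a monomer or dimer to an individual edge subdivision, so your induction has nothing to run on. The remark only expands $[x^\ell]T(F_\Delta)$ through dimer-only covers (unshifted $\widehat{T}_m$ has $b_k=0$) and compares it with $[x^\ell]F_{T(\Delta)}$.

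Finally, your list for Part 3 is missing the ingredient that Remark~\ref{linkapproxorth} singles out. This is the analogue of Lemma~\ref{gamtgrowth} and Corollary~\ref{gamgratrest}, whose ratio bounds such as $g_\ell/g_{\ell+1}<3$ are where $\gamma\ge 0$ enters the link-condition asymptotics. The paper obtains it by comparing $\mu_{r,s}$ with $\mu_{r,s-1}$ via the degree-one difference in Corollary~\ref{wtgrowth} and ratio bounds among the weights, not by a polynomial bound on the weights in $d$. It pairs this with $h_1\ge 2\mu_{N,N-1}=2(b_0+\cdots+b_{N-1})$ as the analogue of $g_1(\Delta)\ge d-1$.
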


	\section{Background} \label{backsec}

	We will start by defining the main objects of interest. \\
	
	\begin{defn} (Proposition 2.1.1 on p. 272 of \cite{Gal}) \\ \label{gamvect}
		\vspace{-3mm} 
		\begin{enumerate}
			\item Given a polynomial of degree $d$, the function $\gamma(t)$ is defined by \[ h(t) = (1 + t)^d \gamma \left( \frac{t}{(1 + t)^2} \right). \] The \textbf{gamma vector} is defined to be the coefficients of $\gamma$. \\
			 
			\item When $h$ is a reciprocal/palindromic polynomial, $\gamma$ is a polynomial and the coefficients $\gamma_i$ of $t^i$ in $\gamma(t)$ are the unique coefficients such that \[ h(t) = \sum_{i = 0}^{ \lfloor \frac{d}{2} \rfloor } \gamma_i x^i (1 + x)^{d - 2i}. \] We will mainly focus on the case where $d$ is even (Remark \ref{oddcheb}). \\
		\end{enumerate}
	\end{defn}
	
	The connection to orthogonal polynomials comes from the fact that the gamma polynomial can be expressed as a sort of inverted Chebyshev polynomial expansion. We will give further context and explore implications on nonnegative expansions by other orthogonal polynomials in Section \ref{genorthlat}. This will involve results generalizing those of gamma vectors on positivity properties and realizations by simplicial spheres and Cohen-Macaulay simplicial complexes. A key input will be the connection between orthogonal polynomials and lattice paths (e.g. see \cite{V1} and \cite{V2}). \\

	\begin{thm} (Theorem 1.7 on p. 8 of \cite{Pcheb}) \\ \label{gamchebinv} 
		Let $h = h_0 + h_1 t + \ldots + h_{d - 1} t^{d - 1} + h_d t^d$ be a reciprocal polynomial of even degree $d$ satisfying the relations $h_k = h_{d - k}$ for $0 \le k \le \frac{d}{2}$. Then, the gamma polynomial associated to $h(t)$ is equal to \[ \gamma(u) = u^{ \frac{d}{2} } g \left( \frac{1}{u} - 2 \right), \] where \[ g(u) \coloneq h_{\frac{d}{2}} + 2 \sum_{j = 1}^{ \frac{d}{2} } h_{ \frac{d}{2} - j } T_j \left( \frac{u}{2} \right) \] and $T_j(x)$ is the $j^{\text{th}}$ Chebyshev polynomial of the first kind. \\
		
		Equivalently, we have \[ u^{ \frac{d}{2} } \gamma \left( \frac{1}{u} \right) = g(u - 2) \] and \[ (2w)^{ \frac{d}{2} } \gamma \left( \frac{1}{2w} \right) = g(2w - 2). \]
	\end{thm}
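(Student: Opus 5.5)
The plan is to rewrite both sides of the claimed identity in terms of the symmetric variable $x = t + t^{-1}$ and then compare them. Put $m = \frac{d}{2}$. By Definition \ref{gamvect} the gamma polynomial is characterized by $h(t) = \sum_{i=0}^{m} \gamma_i t^i (1+t)^{d-2i}$. Factoring out $t^m$ and using $(1+t)^2/t = t + t^{-1} + 2$, this becomes
\[ h(t) = t^m \sum_{i=0}^{m} \gamma_i \left( \frac{(1+t)^2}{t} \right)^{m-i} = t^m \sum_{i=0}^{m} \gamma_i (x+2)^{m-i}. \]
So the gamma vector is just the list of coefficients of $t^{-m}h(t)$ expanded in powers of $x+2$.

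Next I would expand $t^{-m}h(t)$ in a second way, using the palindromic relations $h_k = h_{d-k}$. Grouping the terms $t^{m-j}$ and $t^{m+j}$ gives
\[ t^{-m} h(t) = h_m + \sum_{j=1}^{m} h_{m-j}\,(t^j + t^{-j}). \]
The key input is the Laurent polynomial identity $t^j + t^{-j} = 2T_j\!\left(\frac{t + t^{-1}}{2}\right)$. I would prove it by induction on $j$ from the recurrence $T_{j+1}(y) = 2yT_j(y) - T_{j-1}(y)$, together with $(t+t^{-1})(t^j+t^{-j}) = (t^{j+1}+t^{-j-1}) + (t^{j-1}+t^{1-j})$. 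Alternatively one can substitute $t = e^{i\theta}$ and use $T_j(\cos\theta) = \cos j\theta$. Either way this yields $t^{-m}h(t) = g(x)$ with $g$ as defined in the statement.

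Comparing the two expansions gives $g(x) = \sum_{i} \gamma_i (x+2)^{m-i}$ for all $x$ of the form $t + t^{-1}$ with $t \neq 0$. That set is infinite, and both sides are polynomials in $x$, so the identity holds as polynomials.

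Finally, substitute $x = u - 2$:
\[ g(u-2) = \sum_{i} \gamma_i u^{m-i} = u^m \gamma\!\left(\frac{1}{u}\right). \]
This is the first equivalent form. Replacing $u$ by $\frac{1}{u}$ and multiplying by $u^m$ gives $\gamma(u) = u^m g\!\left(\frac{1}{u} - 2\right)$. Setting $u = 2w$ gives the last form. The only step needing real care is the Chebyshev identity and the passage from an identity in $t$ to a polynomial identity in $x$. Both are routine once the substitution $x = t + t^{-1}$ is in place.
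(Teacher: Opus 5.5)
Your proof is correct and complete. The paper gives no proof of this statement: it quotes the result from the author's earlier work and uses it as a black box (for example at the start of Section~\ref{genorthlat}), so there is no internal argument to compare against.

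Your route is a natural self-contained proof. With $x = t + t^{-1}$, you write $t^{-m}h(t)$ once as $\sum_i \gamma_i (x+2)^{m-i}$. You write it a second time, using palindromicity and $t^j + t^{-j} = 2T_j\bigl(\tfrac{t+t^{-1}}{2}\bigr)$, as $h_m + \sum_j h_{m-j}(t^j + t^{-j}) = g(x)$, and then compare the two.

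Two small points are worth stating explicitly. First, each $T_j(u/2)$ has degree $j \le m$, so $g$ has degree at most $m$. Hence $u^m g(1/u - 2)$ is a genuine polynomial, and the identity $\gamma(u) = u^m g(1/u - 2)$, which you first obtain for $u \neq 0$, holds as a polynomial identity.

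Second, the same computation read backwards proves that the gamma polynomial exists for palindromic $h$. Expand $g(x)$ in powers of $x+2$ and multiply by $t^m$. Uniqueness then follows because the polynomials $t^i(1+t)^{d-2i}$, $0 \le i \le m$, are linearly independent: their lowest-degree terms are $t^i$. So you do not need to take the existence and uniqueness claim in Definition~\ref{gamvect} on faith.
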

	
	\begin{rem} \label{oddcheb}
		If $d$ is odd, we can repeat a version of this expansion for the polynomial $r(t)$ such that $h(t) = (1 + t) r(t)$ (Proposition 3.4 on p. 11 of \cite{AGSMMS}). This implies that $\gamma(u) = u^{\frac{d - 1}{2}} p \left( \frac{1}{u} - 2 \right)$, where $p$ is the counterpart of the polynomial $g$ in Theorem \ref{gamchebinv} with $r$ replacing $h$. This is consistent with the preservation of the gamma vector after multiplication by $1 + t$ (p. 17 of \cite{BV}). Geometrically, we can think of taking suspensions (p. 516 of \cite{NP}). \\
	\end{rem}

	Nonnegativity of the gamma vector of a reciprocal polynomial is a positivity property lying between unimodality and real-rootedness that appears in many different combinatorial and geometric contexts \cite{Athgam}. This ranges from permutation statistics to signatures of toric varieties \cite{LR} and the main connection to geometric contexts focuses on the top component. These examples are often special cases of triangulations of certain simplicial spheres. The Dehn--Sommerville relations $h_i = h_{d - i}$ (Theorem 5.4.2 on p. 238 and Theorem 5.2.16 on p. 229 of \cite{BH}) then lead back to the focus on reciprocal polynomials. \\

	Before specifying the connection to simplicial spheres, we will first define what properties of the triangulations will be considered. \\

	\begin{defn} (p. 208 and p. 213 of \cite{BH}, p. 80 and Theorem 1.3 on p. 77 of \cite{St}) \\ \label{fhvect}
		Let $\Delta$ be $(d - 1)$-dimensional simplicial complex.
		\begin{enumerate}
			\item The \textbf{$f$-vector} $f(\Delta) \coloneq (f_0, f_1, \ldots f_{d - 1})$ records the number of faces of $\Delta$ of each dimension. This is the convention on p. 55 of \cite{St} and p. 213 of \cite{BH} involved in $f$-vector realizations. However, note that $f_{-1}(\Delta) = 1$ is included in some realization-related contexts (e.g. $h(\Delta) = f(\Gamma)$ for some balanced $\Gamma$ if $\Delta$ is flag and Cohen--Macaulay by \cite{CCV} and p. 33 of \cite{BFS}). \\
			
			\item The \textbf{$h$-vector} is obtained from the $f$-vector by an invertible linear transformation given by \[ h_j = \sum_{i = 0}^j (-1)^{j - i} \binom{d - i}{j - i} f_{i - 1}. \] Alternatively, we have \[ \sum_{i = 0}^d h_i x^{d - i} = \sum_{i = 0}^d f_{i - 1}(x - 1)^{d - i}. \]
			
			The $h$-vector gives the numerator of the Hilbert series $\frac{h(t)}{(1 - t)^d}$ of the Stanley--Reisner ring $k[\Delta]$ of $\Delta$ (p. 212 -- 213 of \cite{BH}). If $\Delta$ gives the normal fan structure of a rational simple polytope, the individual components can be interpreted as even degree Betti numbers $h^{2i}(X_\Delta)$ from the intersection cohomology of the toric variety $X_\Delta$ (Theorem 3.1 and Corollary 3.2 on p. 196 -- 197 of \cite{Stg}). \\

			\item The \textbf{$g$-vector} is defined as \[ g \coloneq (h_0, h_1 - h_0, h_2 - h_1, \ldots, h_{ \left\lfloor \frac{d}{2} \right\rfloor } - h_{ \left\lfloor \frac{d}{2} \right\rfloor - 1 }). \]

			\color{black}
		\end{enumerate}
	\end{defn}

	We will often compare global properties of these vectors with those of local parts which are defined below. \\
	
	\begin{defn} (Definition 5.3.4 on p. 232 of \cite{BH}, p. 60 of \cite{St}) \\
		The \textbf{link} $\lk_\Delta(F)$ of a simplicial complex $\Delta$ over a face $F \in \Delta$ is \[ \lk_\Delta(F) \coloneq \{ G \in \Delta : G \cup F \in \Delta, G \cap F = \emptyset \}. \]
	\end{defn}

	\begin{rem} \textbf{(Piecewise linear (PL) assumption)} \label{plassump} \\
		By ``simplicial sphere'', we will mean a piecewise linear (PL) simplicial sphere where the links are spheres (see p. 4 of \cite{KN} and p. 26 of \cite{Hud}). \\
	\end{rem}
	
	\color{black}
	
	In our context, we will take the input polynomial for the gamma vector (Definition \ref{gamvect}) to be the $h$-polynomial of a simplicial sphere. The source of the connection to reciprocal polynomials is given by the Dehn--Sommerville relations in the first part of the result below. It gives the criteria for an $h$-vector to be realized by a simplicial sphere. \\

	\begin{thm} (Theorem 1.1 on p. 76 and 56 of \cite{St}, p. 255 of \cite{BH}, p. 5 of \cite{Ad}, Conjecture on p. 561 of \cite{McM}, p. 1184 of \cite{Gr}) \\ \label{gconjsphere}
		Let $h = (h_0, \ldots, h_d) \in \mathbb{Z}^{d + 1}$. The following are equivalent:
		\begin{enumerate}
			\item There is a $(d - 1)$-dimensional simplicial sphere $\Delta$ such that $h(\Delta) = h$. 
			
			\item The vector $h$ satisfies the following properties:
				\begin{enumerate}
					\item $h_0 = 1$
					
					\item $h_i = h_{d - i}$ (Dehn--Sommerville relations) (Theorem 5.4.2 on p. 238 and Theorem 5.2.16 on p. 229 of \cite{BH}) 
					
					\item The $g$-vector $g \coloneq (h_0, h_1 - h_0, h_2 - h_1, \ldots, h_{ \left\lfloor \frac{d}{2} \right\rfloor } - h_{ \left\lfloor \frac{d}{2} \right\rfloor - 1 })$ satisfies the inequalities $0 \le g_{i + 1} \le g_i^{\langle i \rangle}$ for all $0 \le i \le  \left\lfloor \frac{d}{2} \right\rfloor - 1 $ (i.e. $g$ is the $h$-vector of some Cohen-Macaulay simplicial complex) (Theorem 2.2 on p. 56 of \cite{St} and Theorem \ref{hvectcmshell}). \\
				\end{enumerate}
		\end{enumerate}
	\end{thm}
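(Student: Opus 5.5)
The plan is to prove the two implications separately, following the standard structure of the $g$-theorem, and to cite the deep algebraic input rather than rederive it. Condition 2a) is immediate from $f_{-1}(\Delta) = 1$ and the formula for $h_0$ in Definition \ref{fhvect}. Condition 2b) (Dehn--Sommerville) follows from the PL assumption of Remark \ref{plassump}: every link $\lk_\Delta(F)$ is a sphere of the appropriate dimension, so $\Delta$ is Gorenstein$^*$. By the Cohen--Macaulay/Gorenstein theory cited in \cite{BH}, the Hilbert series $\frac{h(t)}{(1 - t)^d}$ of $k[\Delta]$ then satisfies the functional equation forcing $h_i = h_{d - i}$. A more elementary route is to compute Euler characteristics of links, $\chi(\lk_\Delta(F)) = 1 + (-1)^{d - 1 - |F|}$, sum over faces, and invert the $f \leftrightarrow h$ transformation of Definition \ref{fhvect}.

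For the necessity of 2c), work over an infinite field $k$ of characteristic $2$ (or generic coefficients) and pick a linear system of parameters $\theta_1, \ldots, \theta_d$ together with a generic linear form $\ell$. Since $\Delta$ is Cohen--Macaulay, $A = k[\Delta]/(\theta)$ is an Artinian Gorenstein graded algebra with Hilbert function $h(\Delta)$. The key input is the hard Lefschetz property: for $i \le \lfloor \frac{d}{2} \rfloor$, multiplication $\ell^{d - 2i} : A_i \to A_{d - i}$ is an isomorphism. This is the step I expect to be the main obstacle, and I would not reprove it. I would invoke Adiprasito's theorem \cite{Ad} (or the Papadakis--Petrotou anisotropy argument), which gives it for arbitrary simplicial spheres. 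In the polytopal case, Stanley's argument via intersection cohomology of the toric variety \cite{Stg}, mentioned in Definition \ref{fhvect}, suffices.

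Granting Lefschetz, $\cdot \ell : A_{i - 1} \to A_i$ is injective for $i \le \lfloor \frac{d}{2} \rfloor$. Hence $B = A/(\ell)$ has Hilbert function $(g_0, g_1, \ldots, g_{\lfloor d/2 \rfloor})$ in low degrees, and $B$ is a standard graded algebra generated in degree $1$. Macaulay's theorem then yields $0 \le g_{i + 1} \le g_i^{\langle i \rangle}$, i.e. $g$ is an $M$-vector. Equivalently, it is the $h$-vector of a Cohen--Macaulay (even shellable) complex, as in Theorem \ref{hvectcmshell}.

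For sufficiency, follow Billera--Lee. Given $h$ satisfying 2a)--2c), the $M$-vector $g$ is the $f$-vector of an order ideal of monomials, by Macaulay. From this order ideal, construct a shellable $d$-dimensional ball $B$ inside the boundary of a cyclic $(d + 1)$-polytope $C(n, d + 1)$ with $n = g_1 + d + 1$, using Gale's evenness condition to choose the facets. The $h$-vector of $B$ has entries $g_0, \ldots, g_{\lfloor d/2 \rfloor}$ followed by zeros. Its boundary $\partial B$ is a $(d - 1)$-sphere, in fact polytopal by the Billera--Lee line-shelling construction, and hence PL. A Dehn--Sommerville-type relation for balls, $h(\partial B)$ expressed through $h(B)$ and its reversal, shows that $h(\partial B) = h$. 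The bookkeeping of this last identity is routine once the ball is constructed.
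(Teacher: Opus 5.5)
Your outline is correct, and it is the standard proof behind the references the paper cites. It runs as follows: Dehn--Sommerville from the Gorenstein$^*$ (Eulerian) property; necessity of $0 \le g_{i+1} \le g_i^{\langle i \rangle}$ from hard Lefschetz for a generic Artinian reduction (generic in both the l.s.o.p.\ and $\ell$; Stanley for polytopes, Adiprasito or Papadakis--Petrotou for arbitrary spheres), followed by Macaulay's theorem applied to $A/(\ell)$; and sufficiency via the Billera--Lee shellable ball in $\partial C(g_1 + d + 1, d + 1)$, whose boundary is polytopal and hence PL. The paper does not reprove the theorem but quotes it from these sources, so there is no different route to compare.
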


	Recall that nonnegativity of the gamma vector of a reciprocal polynomial implies unimodality (p. 78 of \cite{Pet}). In the context of nonnegative gamma vectors realized by simplicial spheres, our main focus will be on Part 2c). The terms involved in its statement are defined below. They have close connections to other ``realization properties'' of $f$-vectors/$h$-vectors and linear generation of Cohen--Macaulay algebras (e.g. from toric varieties with degrees halved, see Corollary 2.5 on p. 57 of \cite{St} and Theorem 3.1 on p. 196 of \cite{Stg}). \\

	\begin{defn} (p. 161 and Lemma 4.2.6 on p. 159 -- 160 of \cite{BH}, p. 262 -- 263 of \cite{Zie}, p. 55 of \cite{St}, p. 5 of \cite{BB}, Chapter 5 and Chapter 6 of \cite{Bol}) \\ \label{macrepower}
		Let $a$ and $k$ be positive integers.
		\begin{enumerate}
			\item The \textbf{$k^{\text{th}}$ Macaulay representation of $a$} is the \textbf{unique} expansion of the form \[ a = \binom{n_k}{k} + \binom{n_{k - 1}}{k - 1} + \ldots + \binom{n_j}{j}, \] where $n_k > n_{k - 1} > \cdots > n_j \ge j \ge 1$. In this expansion, $n_k$ is the largest integer $q$ such that $\binom{q}{k} \le a$. The top indices of the binomial coefficients can be interpreted as \[ \{\underbrace{1, \ldots, 1}_\text{$j - 1$ times}, n_j + 1 \ldots, n_k + 1 \} \] being the $(a + 1)^{\text{st}}$ $k$-tuple of positive integers with respect to reverse lexicographical order. \\

			\item Given the $k^{\text{th}}$ Macaulay representation of $a$ from Part 1, its \textbf{$k^{\text{th}}$ pseudopower} is defined as \[ a^{\langle k \rangle} \coloneq \binom{ n_k + 1 }{k + 1} + \binom{n_{k - 1} + 1}{k} + \ldots + \binom{n_j + 1}{j + 1} \]
			
			using the same $n_k > n_{k - 1} > \cdots > n_j \ge j \ge 1$. \\

			By the reasoning in Part 1, $a^{\langle k \rangle}$ counts the number of $(k + 1)$-tuples of positive integers that are strictly smaller than the $(k + 1)$-tuple \[ \{ 1, \underbrace{2, \ldots, 2}_\text{$j - 1$ times}, n_j + 2, \ldots, n_k + 2 \} \] with respect to reverse lexicographical order. \\
			
			Some examples where the expansion $n^{\langle k \rangle}$ occurs include realizations of $f$-vectors/$h$-vectors by simplicial complexes and linear generation of Cohen--Macaulay graded algebras in Corollary 2.5 on p. 57 of \cite{St} (e.g. from toric varieties with degrees halved). \\
		\end{enumerate}
	\end{defn}

	Apart from realizations of simplicial spheres (Theorem \ref{gconjsphere}), these expansions are involved in $f$-vectors and $h$-vectors attained by general classes of simplicial complexes. In the first statement, we note that there may a difference in indexing conventions (e.g. including $-1$ index corresponding to index $0$ term of an $h$-vector or a gamma vector). \\

	\begin{thm} (Theorem 2.1 on p. 55 of \cite{St}, p. 209 of \cite{BH}) \\ \label{kkfvect}
		$(f_0, f_1, \ldots, f_{d - 1}) \in \mathbb{Z}^d$ is the $f$-vector of some $(d - 1)$-dimensional simplicial complex if and only if \[ 0 < f_{i + 1} \le f_i^{\langle i + 1 \rangle} \] for all $0 \le i \le d - 2$. \\
	\end{thm}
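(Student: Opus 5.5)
The plan is to reduce the statement to the Kruskal--Katona characterization of $f$-vectors of simplicial complexes, rephrased through the standard shifted/compressed complexes, since the paper cites it as a background result. The forward direction (necessity) will be the classical Kruskal--Katona inequality. The reverse direction (sufficiency) will be an explicit construction by reverse lexicographic initial segments. Throughout I use the paper's indexing, in which $f_i$ counts $i$-dimensional faces, that is, $(i+1)$-subsets, and the bound is $f_{i+1} \le f_i^{\langle i+1 \rangle}$.

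\textbf{Sufficiency.} Given a vector with $0 < f_{i+1} \le f_i^{\langle i+1\rangle}$, I will define $\Delta$ so that its $i$-faces are the first $f_i$ subsets of size $i+1$ of $\mathbb{Z}_{>0}$ in reverse lexicographic order, using the tuple interpretation of Definition \ref{macrepower}. The key lemma is that the shadow of the first $a$ subsets of size $k$ in revlex order is exactly the first $\partial(a)$ subsets of size $k-1$. Here, if $a=\sum_{m \ge j}\binom{n_m}{m}$ is the Macaulay representation, then $\partial(a)=\sum_{m\ge j}\binom{n_m}{m-1}$. I will prove this by decomposing the initial segment according to its Macaulay representation: it is all $k$-subsets of $[n_k]$, together with $\{n_k+1\}$ joined to the initial segment of $(k-1)$-subsets of size $a-\binom{n_k}{k}$. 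Induction on $k$ then gives the lemma. The closure condition reduces to $\partial(f_{i+1}) \le f_i$ at each level. This is equivalent to $f_{i+1}\le f_i^{\langle i+1\rangle}$ because $a\mapsto a^{\langle k\rangle}$ and $\partial$ are monotone Galois-type inverses: $a^{\langle k\rangle}$ is the largest $b$ whose $(k+1)$-shadow is at most $a$. Since initial segments are nested, the union of these families is then a simplicial complex with the prescribed $f$-vector.

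\textbf{Necessity.} For an arbitrary simplicial complex, I will show that the shadow of any family $\mathcal{F}$ of $(k+1)$-subsets has size at least the shadow of the revlex initial segment of the same size. The standard route is compression: apply $(i,j)$-shifting operators $S_{ij}$ with $i<j$, which replace $j$ by $i$ whenever the result is not already in the family. I will check that shifting preserves $|\mathcal{F}|$ and does not increase $|\partial\mathcal{F}|$. After finitely many steps the family is shifted, and I then compare a shifted family with the revlex initial segment by induction on $k$ and on the ground set size, splitting $\mathcal{F}$ into the sets containing the largest element and the sets not containing it. Applying this to the $(i+1)$-faces, whose shadow lies inside the $i$-faces, gives $f_i \ge \partial(f_{i+1})$, which is equivalent to the stated inequality.

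\textbf{Main obstacle.} I expect the hardest step to be the inductive comparison for shifted families in the necessity direction. It requires the Macaulay-representation inequality $\partial(a) \le \partial(b) + \partial(a-b)$-type estimates, together with careful bookkeeping of the Macaulay representation when the two parts are recombined. I would follow Frankl's short proof here to keep this manageable. The remaining identifications, namely the monotonicity and duality between $\partial$ and $\cdot^{\langle k\rangle}$, are routine manipulations of binomial sums.
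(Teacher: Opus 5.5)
Your revlex construction, the shadow formula $\partial(a)=\sum_m\binom{n_m}{m-1}$, and the compression argument are the standard proof of Kruskal--Katona. The step that converts $\partial(f_{i+1})\le f_i$ into the printed inequality, however, is false. With the pseudopower of Definition \ref{macrepower}, $a^{\langle k\rangle}$ is \emph{not} the largest $b$ whose shadow has size at most $a$, and your own shadow formula shows this. Let $a=\sum_{m=j}^{k}\binom{n_m}{m}$ be the $k$-th Macaulay representation. Then $a^{\langle k\rangle}=\sum_{m=j}^{k}\binom{n_m+1}{m+1}$ is already a valid $(k+1)$-st Macaulay representation. Hence the revlex segment of that length has shadow of size $\sum_m\binom{n_m+1}{m}=a+\partial(a)>a$. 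The true inverse of $\partial$ is $a^{(k)}=\binom{n_k}{k+1}+\binom{n_{k-1}}{k}+\cdots+\binom{n_j}{j+1}$, with no shift in the top indices. For example, with $a=2$ and $k=1$, two vertices span at most $\binom{2}{2}=1$ edge, whereas $2^{\langle 1\rangle}=\binom{3}{2}=3$.

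This cannot be repaired inside your argument, because the statement as printed is false. The vector $(f_0,f_1)=(2,3)$ satisfies $0<f_1\le f_0^{\langle 1\rangle}=3$ but is not the $f$-vector of any $1$-dimensional simplicial complex. Likewise $(4,6,5)$ has $f_0^{\langle 1\rangle}=10$ and $f_1^{\langle 2\rangle}=\binom{5}{3}=10$, yet four vertices span at most four triangles. The paper gives no proof of its own, only a citation. The cited Theorem 2.1 of Stanley is stated with $f_i^{(i+1)}$; the pseudopower is the operation in Macaulay's characterization of $M$-vectors (Theorem \ref{hvectcmshell} here), and the two have been conflated. Once you correct the inverse, your plan proves the genuine Kruskal--Katona theorem $0<f_{i+1}\le f_i^{(i+1)}$. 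Of the printed statement, only the necessity half survives, as a strict weakening, since $\binom{n}{m+1}\le\binom{n+1}{m+1}$ gives $f_i^{(i+1)}\le f_i^{\langle i+1\rangle}$. The sufficiency half fails.
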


	\begin{thm} (Theorem 5.1.15 on p. 219 -- 220 of \cite{BH}, $M$-vector condition from Theorem 3.3 on p. 59 and Corollary 2.4 of \cite{St}) \\ \label{hvectcmshell}
		Let $s = (h_0, \ldots, h_d)$ be a sequence of integers. The following conditions are equivalent:
			\begin{enumerate}
				\item $h_0 = 1$ and $0 \le h_{i + 1} \le h_i^{\langle i \rangle}$ for all $ 1 \le i \le d - 1$. 
				
				\item $s$ is the $h$-vector of a shellable simplicial complex.
				
				\item $s$ is the $h$-vector of a Cohen--Macaulay simplicial complex.
			\end{enumerate}
	\end{thm}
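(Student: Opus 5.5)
We would prove the cycle of implications $(2) \Rightarrow (3) \Rightarrow (1) \Rightarrow (2)$. The first implication is standard. A shellable simplicial complex is Cohen--Macaulay over any field: each new facet in a shelling attaches along a pure codimension-one subcomplex (a ball or sphere). An induction on the number of facets, using the Mayer--Vietoris sequence on links (or Reisner's criterion applied to links), shows that the reduced homology of every link is concentrated in top degree.

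For $(3) \Rightarrow (1)$ we would pass to an Artinian reduction.
\begin{enumerate}
\item After extending to an infinite field $k$, which changes neither the Cohen--Macaulay property nor the Hilbert series, choose a linear system of parameters $\theta_1, \ldots, \theta_d$ for $k[\Delta]$.
\item Since $k[\Delta]$ is Cohen--Macaulay, the $\theta_i$ form a regular sequence. Hence $A = k[\Delta]/(\theta_1, \ldots, \theta_d)$ has Hilbert series $\frac{h(t)}{(1-t)^d} \cdot (1-t)^d = h(t)$, so $\dim_k A_i = h_i$.
\item $A$ is a standard graded algebra generated in degree one, so Macaulay's theorem on Hilbert functions (Lemma 4.2.6 and the surrounding results of \cite{BH}) gives $h_{i+1} \le h_i^{\langle i \rangle}$.
\end{enumerate}
Nonnegativity is automatic, and $h_0 = 1$ since $A_0 = k$.

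The main obstacle is $(1) \Rightarrow (2)$, a construction. Set $n = h_1$. By Macaulay's theorem in the converse direction, the conditions $h_0 = 1$ and $0 \le h_{i+1} \le h_i^{\langle i \rangle}$ yield an order ideal $M$ of monomials (a multicomplex) in $x_1, \ldots, x_n$ with exactly $h_i$ monomials of degree $i$. One can take $M$ to be the compressed (reverse-lex initial segment) order ideal, using the combinatorial description of $a^{\langle k \rangle}$ from Definition \ref{macrepower}. We would then convert monomials to sets by Stanley's shifting trick. Send $x_{j_1} \cdots x_{j_i}$ with $j_1 \le \cdots \le j_i$ to
\[ S(m) = \{ j_1, j_2 + 1, \ldots, j_i + i - 1 \} \subseteq [n + d - 1]. \]
To each $m$ associate the facet $F_m = S(m) \cup T(m)$, where $T(m)$ consists of the $d - i$ largest elements of $[n + d - 1]$ not in $S(m)$, say. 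Let $\Delta$ be generated by the $F_m$, with facets ordered by increasing degree of $m$ (and reverse-lex within a degree).

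The key verification is that this order is a shelling with restriction faces $\mathcal{R}(F_m) = S(m)$. Then the standard formula $h_i = \#\{ m : |\mathcal{R}(F_m)| = i \}$ gives $h(\Delta) = h$. To check the shelling property, we would show the following. For each $v \in S(m)$, the face $F_m \setminus \{v\}$ lies in an earlier facet, namely $F_{m'}$ where $m'$ is obtained by deleting the corresponding variable; $m'$ lies in $M$ because $M$ is an order ideal. Conversely, any face of $F_m$ lying in an earlier facet must miss some element of $S(m)$. We expect this second direction to be the hardest part. We would argue it by comparing shifted sets: an earlier $F_{m'}$ either has smaller degree, so $S(m') \subsetneq$ cannot contain $S(m)$, or has the same degree and is reverse-lex smaller. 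In either case $S(m) \not\subseteq F_{m'}$, and this is where the choice of $T(m)$ as "large" vertices must be used carefully. If the padding causes trouble, we would instead take the cone/join formulation of \cite{St} (Theorem 3.3 there), building $\Delta$ of dimension $i$ inductively and coning with new vertices.
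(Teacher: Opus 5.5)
The paper gives no proof of this statement; it quotes Theorem 5.1.15 of Bruns--Herzog and Stanley. Your implications $(2)\Rightarrow(3)$ and $(3)\Rightarrow(1)$ are the standard ones and are fine. The gap is in $(1)\Rightarrow(2)$, where the construction as written fails in two ways. First, the vertex set is too small: any complex on $[n+d-1]$ has $h_1=f_0-d\le n-1<n$. Concretely, $F_1=\{n,\ldots,n+d-1\}$, while $S(x_n)=\{n\}$ and $T(x_n)=\{n+1,\ldots,n+d-1\}$, so $F_{x_n}=F_1$. More generally, $F_m=F_{m/x_n}$ whenever $x_n\mid m$. Distinct monomials therefore give the same facet, and your ``converse'' direction fails outright, since $F_{x_n}$ already lies in the earlier facet $F_1$. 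Second, the claim that $F_m\setminus\{v\}\subseteq F_{m'}$, with $m'$ obtained by deleting the corresponding variable, is false for $v=j_k+k-1$ with $k<i$. Deleting $x_{j_k}$ lowers every later entry $j_l+l-1$ of the shifted set by one. For example, with $n=2$ and $d=3$ you get $F_{x_1^2}=\{1,2,4\}$ and $F_{x_1}=\{1,3,4\}$, yet $F_{x_1^2}\setminus\{1\}=\{2,4\}\not\subseteq F_{x_1}$.

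Two ingredients are missing. First, use the vertex set $[n+d]$ and pad by $P_i=\{n+i+1,\ldots,n+d\}$; then $n+i$ is the largest non-element of $F_m$, and $m$ can be read off from $F_m$. Second, choose $M$ \emph{strongly stable}: if $u\in M$, $x_j\mid u$ and $j<n$, then $x_{j+1}u/x_j\in M$. The order ideal complementary to the lex-segment ideal in Macaulay's theorem has this property, but an arbitrary order ideal does not suffice. For instance, $n=2$, $d=3$, $M=\{1,x_1,x_2,x_1^2,x_1^3\}$ with $h=(1,2,1,1)$ gives facets $\{3,4,5\},\{1,4,5\},\{2,4,5\},\{1,2,5\},\{1,2,3\}$, whose $h$-vector is $(1,2,3,-1)$.

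With both fixes the argument goes through. Moving an element $v\in F_m$ up to $v+1\notin F_m$ produces $F_{x_{j_k+1}m/x_{j_k}}$, or $F_{m/x_n}$ when $v=n+i-1$. Hence the family $\{F_m\}$ is closed under moving an element up to any larger non-element. Order the facets so that $F'$ precedes $F$ when $\max(F\triangle F')\in F'$; this refines increasing degree. Each $v\in S(m)$ is then witnessed by an \emph{up-shift} of $F_m$, not by deleting a variable. Conversely, suppose an earlier facet $F'$ contains $S(m)$. Then $F'\setminus S(m)\neq P_i$, which forces $\max(F_m\triangle F')\in P_i\subseteq F_m$, a contradiction. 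So $\mathcal{R}(F_m)=S(m)$ and $h(\Delta)=h$. Your fallback to ``the cone/join formulation'' does not supply this argument.
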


	\section{Realizability of nonnegative gamma vectors} \label{gamposout}

	\subsection{General Cohen-Macaulay simplicial complexes and simplicial spheres} \label{gennonnegrealiz}

	In this subsection, we will describe the construction of nonnegative gamma vectors realized by simplicial spheres and Cohen--Macaulay simplicial complexes in general and show that sufficiently large ones can be extended component by component (Corollary \ref{sphereposgam} and Remark \ref{matcoeffsize}). The key input comes from estimates of pseudopowers from Macaulay representations of $g$-vectors and $h$-vectors (Corollary \ref{macpowasymp}) and the $g$-conjecture for simplicial spheres (Theorem \ref{gconjsphere}). If $\gamma_0 = 1$ and $\gamma_1, \ldots, \gamma_{\frac{d}{2}} \ge 0$ that grow sufficiently quickly relative to the dimension with $\gamma_j$ while satisfying designated order $\frac{j}{j - 1}$ upper bounds in $\gamma_1, \ldots, \gamma_{j - 1}$, they give rise to nonnegative gamma vectors $(\gamma_0, \gamma_1, \ldots, \gamma_{\frac{d}{2}})$ realized by simplicial spheres. \\
	
	This is based on the existence of suitable nonnegative upper bounds for $\gamma_{i + 1}$ that increase as $\gamma_1, \ldots, \gamma_i$ increase. It also implies the same statement for attainment of nonnegative gamma vectors by Cohen--Macaulay simplicial complexes. Given sufficiently large initial entries, this can be extended to the entire gamma vector. Alternatively, replacing $g$-vectors with $h$-vectors and adjusting the linear transformation matrix gives analogous results for Cohen--Macaulay simplicial complexes in general that give parameters we can work with for required lower bounds, which may differ due to this change.  \\
	
	We also obtain constraints on special cases of interest. An initial consequence of Macaulay representation estimates is that simplicial spheres with $\gamma(\Delta) \ge 0$ and $g_1(\Delta)$ linear in $d$ have $g_i(\Delta)$ ``of order $i$'' with respect to $d$ and analogous properties hold for $h$-vectors of Cohen--Macaulay simplicial complexes (Proposition \ref{gamposling1h1}). These include boundaries of cross polytopes and simplicial spheres PL homeomorphic to them that result from taking ``not too many net edge subdivisions''. Aside from quantitative estimates, the method above also restricts possible realization properties. \\
	
	Noting that gamma vectors of some naturally occuring simplicial spheres are $f$-vectors of auxiliary simplicial complexes (e.g. barycentric subdivisions by Theorem 1.1 on p. 1365 and Corollary 5.6 on p. 1376 of \cite{NPT}), we find that the required growth condition for a nonnegative gamma vector of a simplicial sphere to be the $f$-vector of some simplicial complex (which has been shown in barycentric subdivisions) asymptotically slightly stronger than those of realization by the $h$-vector of a simplicial sphere (Corollary \ref{posgamfcomp}). Given an index $i$, both the required upper bounds in each setting have the same order in the gamma vector components $1, \gamma_1(\Delta), \ldots, \gamma_i(\Delta)$. However, only the $f$-vector condition the difference involves ``constant term changes'' in top order parts only involving the \emph{previous} entries $1, \gamma_1(\Delta), \ldots, \gamma_{i - 1}(\Delta)$ (i.e. that do \emph{not} involve $\gamma_i(\Delta)$). Finally, we give explicit polynomial bounds along with recursive and close relations between $g$-vector and gamma vector components of simplicial spheres in Corollary \ref{spheregambds}. \\

	Our starting point is an explicit expression of the $h$-vector and $g$-vector of a simplicial complex as a nonnegative linear combination of gamma vector components with coefficients from \cite{NPT}. \\

	\begin{prop} (Observation 6.1 on p. 1377 of \cite{NPT}) \\ \label{gamtohgvect}
		Let $h$ be the first $\lfloor \frac{d}{2} \rfloor$ coefficients of a reciprocal polynomial of degree $d$.
		\begin{enumerate}
			\item We have $A \gamma = (h_0, \ldots, h_{\lfloor \frac{d}{2} \rfloor})$, where \[ A = \left[ \binom{d - 2j}{i - j} \right]_{0 \le i, j \le \frac{d}{2}}. \]
			
			\item We have $B \gamma = (g_0, \ldots, g_{\lfloor \frac{d}{2} \rfloor})$, where \[ B = \left[ \binom{d - 2j}{i - j} - \binom{d - 2j}{i - j - 1} \right]_{0 \le i, j \le \frac{d}{2}}. \]

		\end{enumerate}
		
		The matrices $A$ and $B$ are totally nonnegative by a Lindstr\"om--Gessel--Viennot argument (\cite{GV}, \cite{Lind}). If $\gamma(\Delta) \ge 0$, this immediately implies that $h_i(\Delta) \ge \binom{d}{i}$ for all $0 \le i \le \frac{d}{2}$. \\
	\end{prop}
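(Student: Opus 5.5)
The statement consists of three assertions: the two matrix identities $A\gamma = h$ and $B\gamma = g$, the total nonnegativity of $A$ and $B$, and the bound $h_i(\Delta) \ge \binom{d}{i}$. I would prove them in that order.

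\textbf{Matrix identities.} Start from Definition \ref{gamvect}, which gives
\[ h(t) = \sum_{j} \gamma_j t^j (1+t)^{d-2j}. \]
Expanding $(1+t)^{d-2j}$ by the binomial theorem, the coefficient of $t^i$ in $t^j(1+t)^{d-2j}$ is $\binom{d-2j}{i-j}$. Reading off coefficients for $0 \le i \le \lfloor d/2 \rfloor$ gives
\[ h_i = \sum_j \binom{d-2j}{i-j}\gamma_j, \]
which is Part 1. Here I use the convention that binomial coefficients with negative lower index vanish, so that $A$ is lower triangular with ones on the diagonal. Part 2 then follows from $g_i = h_i - h_{i-1}$, which replaces each entry by $\binom{d-2j}{i-j} - \binom{d-2j}{i-1-j}$. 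This handles the $i = 0$ row as well, since $g_0 = h_0$.

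\textbf{Total nonnegativity.} For $A$, I would set up the usual lattice model. Take the planar network of north and east unit steps, and let the source for column $j$ and the sink for row $i$ be placed so that paths from source $j$ to sink $i$ are counted by $\binom{d-2j}{i-j}$. One concrete choice is sources $(j, j)$ shifted so that a path has $d-2j$ total steps with $i-j$ east steps, with sinks arranged along a line. Sources and sinks should be ordered compatibly so that the Lindström--Gessel--Viennot lemma (\cite{GV}, \cite{Lind}) applies: the only permutation admitting vertex-disjoint path systems is the identity. Then every minor counts nonintersecting path families and is therefore nonnegative.

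For $B$, the entry $\binom{n}{k} - \binom{n}{k-1}$ with $n = d-2j$ and $k = i-j \le n/2$ is a ballot number. It counts paths with $n$ steps and $k$ east steps that stay weakly on one side of the diagonal. I would realize it as a path count in a network restricted to that half-plane; the reflection principle identifies the count with the difference above. LGV then applies in the same way. The one point to check is that the constraint $i \le d/2$ keeps $k \le n/2$, so that every entry is a genuine ballot count. This holds because $i - j \le (d-2j)/2$ if and only if $i \le d/2$.

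\textbf{The bound and the main obstacle.} If $\gamma \ge 0$ with $\gamma_0 = h_0 = 1$, then every term in $h_i = \sum_j A_{ij}\gamma_j$ is nonnegative. Keeping only the $j = 0$ term gives $h_i \ge \binom{d}{i}$. The step I expect to be delicate is the geometric placement of sources and sinks for $B$: a naive shared grid can allow crossing path systems for nonidentity permutations. I would first try a shifted grid with sources on an antidiagonal and sinks on the boundary line of the restricted region, and verify the planarity and ordering condition directly. Note that the final bound uses only entrywise nonnegativity of $A$, which is immediate, so it does not depend on resolving this step.
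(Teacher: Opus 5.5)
Your coefficient extraction for $A\gamma = h$ and $B\gamma = g$ is correct, and so is the bound $h_i \ge \binom{d}{i}$ obtained from the $j=0$ column with $\gamma_0 = 1$. Both match what the paper intends; it gives no argument beyond citing the observation and naming LGV. The gap is in the total nonnegativity of $A$, which you treat as the routine case.

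If, as you describe, a path from the $j$-th source to the $i$-th sink has $i-j$ east steps and $d-2j$ steps in total, then up to a common translation the sources must be $u_j=(j,j)$ and the sinks $v_i=(i,d-i)$. In the full north/east grid this network does \emph{not} have the property you assert, namely that only the identity permutation admits vertex-disjoint path systems. Already for $d=4$, the paths $(1,1)\to(1,2)\to(1,3)$ and $(0,0)\to(1,0)\to(2,0)\to(2,1)\to(2,2)$ are vertex-disjoint and realize the reversed pairing $u_0\to v_2$, $u_1\to v_1$. LGV then only expresses the minor $A_{10}A_{21}-A_{11}A_{20}=2$ as the signed count $3-1$, which gives no sign information in general. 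The problem is that paths from $u_0$ can run below $u_1$, so the sources do not lie on the boundary of a common planar region.

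The repair is to reverse the order of your argument, starting with $B$. The restricted network you propose for $B$ works with the natural placement. Take the lattice points of the triangle $\{0\le x\le y,\ x+y\le d\}$ with north and east edges. Put the sources $u_j=(j,j)$ on the side $y=x$ and the sinks $v_i=(i,d-i)$ on the side $x+y=d$.
\begin{itemize}
\item Paths from $u_j$ to $v_i$ inside this region are exactly the ballot paths with $i-j$ east and $d-i-j$ north steps. They are counted by $B_{ij}$, using $i\le d/2$ as you checked.
\item Because every source lies on the same boundary line, one region serves all sources at once.
\item The boundary order is $u_0,\dots,u_m,v_m,\dots,v_0$, so LGV gives each minor of $B$ as a number of nonintersecting families, hence nonnegative.
\end{itemize}
Then $h_i=g_0+\cdots+g_i$ gives $A=LB$, with $L$ the lower triangular matrix of ones. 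Every minor of $L$ is $0$ or $1$, so Cauchy--Binet yields total nonnegativity of $A$. As you note, the bound $h_i\ge\binom{d}{i}$ uses only entrywise nonnegativity and is unaffected.
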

	
	\begin{rem} \textbf{(More general perspectives) \\} \label{orthgen}
		The total nonnegativity and lower bound properties both fit into the more general context of positivity and inverses of expansions by orthogonal polynomials which we will further explore in Section \ref{genorthlat}. \\ 
	\end{rem}

	We first give some restrictions on the growth rates and sizes of the coefficients involved. \\
	
	\begin{lem} \label{gamtgrowth}
		Suppose that $0 \le r, s \le \frac{d}{2}$ with $r - s \ge 1$. 
		
		\begin{enumerate}
			\item In Proposition \ref{gamtohgvect}, we have \[ 1 + \frac{2}{d} \le \frac{a_{r, s}}{a_{r - 1, s}} \le d \] and \[ \frac{1}{3} < \frac{b_{r, s}}{b_{r - 1, s}} < d + 1. \]
			
			Note that $\frac{1}{d}$ is a lower bound of the latter if $d \ge 3$. \\
			
			\item Part 1 implies the bounds \[ \left( 1 + \frac{2}{d} \right)^{r - s} \le a_{r, s} \le d^{r - s} \] and \[ \frac{1}{3^{r - s}} = 3^{-(r - s)} < b_{r, s} < (d + 1)^{r - s}. \]
		\end{enumerate}
		
	\end{lem}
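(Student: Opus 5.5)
The plan is to reduce everything to explicit ratios of binomial coefficients. Put $n = d - 2s$ and $m = r - s \ge 1$, so that $a_{r,s} = \binom{n}{m}$ and $b_{r,s} = \binom{n}{m} - \binom{n}{m-1}$ in the notation of Proposition \ref{gamtohgvect}. The standing hypothesis $s \le r \le \frac{d}{2}$ gives $2m = 2r - 2s \le d - 2s = n$, that is, $m \le \frac{n}{2}$. This single inequality is what makes all the bounds work, and I would record it first. It also gives $n \ge 2$.

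For the $a$-ratio I would write
\[ \frac{a_{r,s}}{a_{r-1,s}} = \frac{\binom{n}{m}}{\binom{n}{m-1}} = \frac{n-m+1}{m}. \]
This expression is decreasing in $m$. At the largest allowed value $m = \frac{n}{2}$ it equals $1 + \frac{2}{n}$, and $1 + \frac{2}{n} \ge 1 + \frac{2}{d}$ because $n \le d$. At $m = 1$ it equals $n \le d$. This proves the first pair of bounds.

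For the $b$-ratio I would first factor $b_{r,s} = \binom{n}{m}\bigl(1 - \frac{m}{n-m+1}\bigr) = \binom{n}{m}\frac{n-2m+1}{n-m+1}$. This is positive since $m \le \frac{n}{2}$, and the same formula gives $b_{s,s} = 1$ when $m = 0$. Dividing consecutive terms should then simplify to
\[ \frac{b_{r,s}}{b_{r-1,s}} = \frac{(n-2m+1)(n-m+2)}{m\,(n-2m+3)}. \]
Writing $x = n - 2m + 1 \ge 1$, the factor $\frac{x}{x+2}$ lies in $\left[\tfrac13, 1\right)$. The factor $\frac{n-m+2}{m}$ is strictly greater than $1$ because $m \le \frac{n}{2}$, and it is at most $n+1 \le d+1$, with the maximum at $m = 1$. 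Multiplying these gives $\frac13 < \frac{b_{r,s}}{b_{r-1,s}} < d+1$, with both inequalities strict: the lower one because of the factor strictly greater than $1$, the upper one because of the factor strictly less than $1$. The remark about $\frac1d$ is then immediate, since $\frac1d \le \frac13$ when $d \ge 3$.

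Part 2 follows by telescoping. Write $a_{r,s} = a_{s,s}\prod_{t=s+1}^{r} \frac{a_{t,s}}{a_{t-1,s}}$ with $a_{s,s} = \binom{n}{0} = 1$, and similarly for $b$ with $b_{s,s} = 1$. Each factor obeys the bounds from Part 1, and there are exactly $r - s$ factors, which gives both displayed bound pairs. The only delicate point, and the one I would check most carefully, is the simplification of the $b$-ratio, including the edge case $m = 1$, where $b_{r-1,s} = b_{s,s} = 1$ and the formula must still hold. The rest is bookkeeping driven by $m \le \frac{n}{2}$.
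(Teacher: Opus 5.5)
Your proposal is correct and follows essentially the same route as the paper: the explicit ratio $\frac{n-m+1}{m}$ for the $a$-entries, the factorization of the $b$-ratio as $\frac{n-2m+1}{n-2m+3}\cdot\frac{n-m+2}{m}$ with each factor bounded using $2m \le n$, and telescoping from $a_{s,s} = b_{s,s} = 1$ for Part 2. Your monotonicity-in-$m$ argument for the $a$-bounds is a slightly cleaner version of the paper's rewriting as $\frac{d-2r+1}{r-s}+1$. When $n$ is odd, the value $m = n/2$ is not attained, so read that step as evaluating the decreasing real function at the endpoint $n/2$; this still gives the bound $1 + \frac{2}{n} \ge 1 + \frac{2}{d}$.
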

	
	\begin{proof}
		\begin{enumerate}
			\item For the first pair of bounds, the term $\frac{a_{r, s}}{a_{r - 1, s}}$ can be rewritten as \[ \frac{\binom{p}{q}}{\binom{p}{q - 1}} = \frac{p - q + 1}{q} = \frac{p + 1}{q} - 1, \] where $p = d - 2s$ and $q = r - s$. In our setting, we have $p \ge 2q$ since $0 \le r \le \frac{d}{2}$. Writing \[ \frac{d - 2s + 1}{r - s} - 1 = \frac{d - 2r + 1}{r - s} + 2 - 1 = \frac{d - 2r + 1}{r - s} + 1, \] we can find an upper bound by maximizing the numerator (minimizing $r$) and minimizing the denominator (setting $r - s = 1$). The resulting upper bound is $d$. To find a lower bound, we minimize the numerator (maximizing $r$) and maximize the denominator. Then, we have a lower bound of $1 + \frac{2}{d}$. \\
			
			To show that \[ \frac{1}{3} < \frac{b_{r, s}}{b_{r - 1, s}} < d + 1, \] we consider ratios of the form \[ \frac{\binom{p}{q} - \binom{p}{q - 1}}{\binom{p}{q - 1} - \binom{p}{q - 2}} \] with $p = d - 2s$ and $q = r - s$ for some $1 \le r, s \le \frac{d}{2}$ with $r - s \ge 1$. As mentioned above, $p \ge 2q$ since $0 \le r \le \frac{d}{2}$. \\
			
			Since \[ \frac{\binom{p}{q}}{\binom{p}{q - 1}} = \frac{p - q + 1}{q} \] and \[ \frac{\binom{p}{q - 1}}{\binom{p}{q - 2}} = \frac{p - q + 2}{q - 1}, \]
			
			we have
			
			\begin{align*}
				\frac{\binom{p}{q} - \binom{p}{q - 1}}{\binom{p}{q - 1} - \binom{p}{q - 2}} &= \frac{ \binom{p}{q} }{ \binom{p}{q - 1} } \frac{ \frac{p - 2q + 1}{p - q + 1} }{ \frac{p - 2q + 3}{p - q + 2} } \\
				&= \frac{p - q + 1}{q} \cdot \frac{p - 2q + 1}{p - q + 1} \cdot \frac{p - q + 2}{p - 2q + 3} \\
				&= \frac{p - 2q + 1}{p - 2q + 3} \cdot \frac{p - q + 2}{q} \\
				&= \frac{(p - 2q) + 1}{(p - 2q) + 3} \left( \frac{p + 2}{q} - 1 \right) \\
				&< \frac{p + 2}{q} - 1.
			\end{align*}
			
			Since $p = d - 2s \le d$ and $q = r - s$ with $1 \le r, s \le \frac{d}{2}$ and $r - s \ge 1$, we have that \[ \frac{\binom{p}{q} - \binom{p}{q - 1}}{\binom{p}{q - 1} - \binom{p}{q - 2}} < d + 1. \]
			
			Next, we consider the lower bound. Since $p - 2q \ge 0$, we have that \[ \frac{(p - 2q) + 1}{(p - 2q) + 3} \ge \frac{1}{3}. \]
			
			The condition $p \ge 2q$ also implies that \[ \frac{p + 2}{q} - 1 > 1. \] Thus we have \[ \frac{\binom{p}{q} - \binom{p}{q - 1}}{\binom{p}{q - 1} - \binom{p}{q - 2}} > \frac{1}{3}. \]
			
			\item This follows from applying Part 1 to $a_{r, r} = 1$ and $b_{r, r} = 1$. \\
		\end{enumerate}
	\end{proof}

	As a consequence of this property, nonnegativity of gamma vectors gives restrictions on ratios of $g$-vector components. \\
	
	\begin{cor} \label{gamgratrest}
		Suppose that we are in the setting of Proposition \ref{gamtgrowth} and $\gamma \ge 0$. Then, we have the following restrictions on ratios of consecutive terms:
		
		\begin{enumerate}
			\item We have $g_{\ell + 1} > \frac{1}{3} g_\ell$. If $g_\ell \ne 0$, this is equivalent to \[ \frac{g_\ell}{g_{\ell + 1}} < 3. \]
			
			\item If $g_\ell \ne 0$ and \[ \frac{g_{\ell + 1}}{g_\ell} \ge d + 1, \] then the additional increase is induced by $\gamma_{\ell + 1}$ and $\gamma_{\ell + 1} \ge g_{\ell + 1} - (d + 1) g_\ell$. \\
		\end{enumerate}
	\end{cor}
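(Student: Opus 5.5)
Both parts come from writing $g = B\gamma$ (Proposition \ref{gamtohgvect}) and comparing rows $\ell$ and $\ell + 1$ of $B$ entrywise via Lemma \ref{gamtgrowth}. Concretely,
\[ g_\ell = \sum_{j \le \ell} b_{\ell, j}\gamma_j, \qquad g_{\ell + 1} = \sum_{j \le \ell} b_{\ell + 1, j}\gamma_j + \gamma_{\ell + 1}, \]
using $b_{\ell + 1, \ell + 1} = 1$. For $j \le \ell$ we have $(\ell + 1) - j \ge 1$, so Lemma \ref{gamtgrowth} Part 1 with $r = \ell + 1$ and $s = j$ gives
\[ \tfrac{1}{3} b_{\ell, j} < b_{\ell + 1, j} < (d + 1) b_{\ell, j}. \]
Before using this, I will check that $b_{\ell, j} \ge 0$ in the relevant range. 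This holds because $p = d - 2j \ge 2(\ell - j)$, so the binomial difference is a nonnegative ballot-type number. Nonnegativity lets us multiply these inequalities by $\gamma_j \ge 0$ and sum.

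For Part 1, I multiply the lower inequality by $\gamma_j$ and sum over $j \le \ell$. Then I add $\gamma_{\ell + 1} \ge 0$ to the left side, which gives
\[ g_{\ell + 1} \ge \sum_{j \le \ell} b_{\ell + 1, j}\gamma_j \ge \tfrac{1}{3} \sum_{j \le \ell} b_{\ell, j}\gamma_j = \tfrac{1}{3} g_\ell. \]
For the strict inequality, I use $\gamma_0 = 1 > 0$. The $j = 0$ term is then strict, provided $b_{\ell, 0} > 0$ or the strict lemma inequality survives; this needs checking in the degenerate case $b_{\ell, 0} = 0$. If $g_\ell = 0$, the claim reduces to $g_{\ell + 1} > 0$. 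That should follow from the $j = 0$ term $b_{\ell + 1, 0} > 0$ whenever $\ell + 1 \le d/2$. If $g_\ell \ne 0$, then $g_\ell > 0$, and dividing gives $g_\ell / g_{\ell + 1} < 3$.

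For Part 2, I use the upper inequality in the same way:
\[ \sum_{j \le \ell} b_{\ell + 1, j}\gamma_j \le (d + 1) \sum_{j \le \ell} b_{\ell, j}\gamma_j = (d + 1) g_\ell. \]
This gives $\gamma_{\ell + 1} = g_{\ell + 1} - \sum_{j \le \ell} b_{\ell + 1, j}\gamma_j \ge g_{\ell + 1} - (d + 1) g_\ell$. Under the hypothesis $g_{\ell + 1} \ge (d + 1) g_\ell$, this lower bound is nonnegative, which is the sense in which the excess growth is "induced by $\gamma_{\ell + 1}$."

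The main obstacle is bookkeeping rather than any real difficulty. The lemma's ratio bounds require $b_{\ell, j} \ne 0$ and an index range with $\ell + 1 \le d/2$, and the strictness in Part 1 depends on the $\gamma_0$ term. I would first verify positivity of $b_{r, s}$ for $r - s \le (d - 2s)/2$ and then handle the boundary case $\ell + 1 = d/2$ separately if needed.
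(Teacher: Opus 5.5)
Your proof is correct and is essentially the paper's argument. Like the paper, you expand $g_\ell$ and $g_{\ell+1}$ through $B$, drop $\gamma_{\ell+1} \ge 0$, and compare coefficients termwise using the bounds $\tfrac{1}{3} < b_{\ell+1,j}/b_{\ell,j} < d+1$ from Lemma \ref{gamtgrowth}, with strictness coming from the $\gamma_0 = 1$ term. The degenerate cases you flag never occur, because $b_{r,s} > 0$ whenever $s \le r \le d/2$; in particular $g_\ell \ge b_{\ell,0} > 0$, so the case $g_\ell = 0$ is vacuous.
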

	
	\begin{proof}
		\begin{enumerate}
			\item Recall that \[ g_\ell = b_{\ell, 0} + b_{\ell, 1} \gamma_1 + \ldots + b_{\ell, \ell - 1} \gamma_{\ell - 1} + \gamma_\ell \] and \[ g_{\ell + 1} = b_{\ell + 1, 0} + b_{\ell + 1, 1} \gamma_1 + \ldots + b_{\ell + 1, \ell - 1} \gamma_{\ell - 1} + b_{\ell + 1, \ell} \gamma_\ell + \gamma_{\ell + 1}. \]

			Since $\gamma \ge 0$, we have $\gamma_{\ell + 1} \ge 0$ and \[ g_{\ell + 1} \ge b_{\ell + 1, 0} + b_{\ell + 1, 1} \gamma_1 + \ldots + b_{\ell + 1, \ell - 1} \gamma_{\ell - 1} + b_{\ell + 1, \ell} \gamma_\ell.  \]
			
			\color{black} 
			By Lemma \ref{gamtgrowth}, we have \[ \frac{b_{\ell + 1, s}}{b_{\ell, s}} > \frac{1}{3} \Longrightarrow b_{\ell + 1, s} > \frac{1}{3} b_{\ell, s} \] for all $0 \le \ell \le \lfloor \frac{d}{2} \rfloor - 1$ and $0 \le s \le \ell$.  \\
			
			Substituting this into the inequality above, we have 
			\begin{align*}
				g_{\ell + 1} &\ge b_{\ell + 1, 0} + b_{\ell + 1, 1} \gamma_1 + \ldots + b_{\ell + 1, \ell - 1} \gamma_{\ell - 1} + b_{\ell + 1, \ell} \gamma_\ell \\
				&> \frac{1}{3} b_{\ell, 0} + \frac{1}{3} b_{\ell, 1} \gamma_1 + \ldots + \frac{1}{3} b_{\ell, \ell - 1} \gamma_{\ell - 1} + \frac{1}{3} b_{\ell, \ell} \gamma_\ell \\
				&= \frac{1}{3} (b_{\ell, 0} + b_{\ell, 1} \gamma_1 + \ldots + b_{\ell, \ell - 1} \gamma_{\ell - 1} + b_{\ell, \ell} \gamma_\ell) \\
				&= \frac{1}{3} (b_{\ell, 0} + b_{\ell, 1} \gamma_1 + \ldots + b_{\ell, \ell - 1} \gamma_{\ell - 1} + \gamma_\ell) \\
				&= \frac{1}{3} g_\ell.
			\end{align*}
			
			Since $\gamma \ge 0$, we have $g \ge 0$. If $g_\ell \ne 0$, then we have $g_\ell > 0 \Longrightarrow g_{\ell + 1} > 0$ and the inequality above is equivalent to \[ \frac{g_\ell}{g_{\ell + 1}} < 3. \]
			
			\item By Lemma \ref{gamtgrowth}, we have \[ \frac{b_{\ell + 1 , s}}{b_{\ell, s}} < d + 1 \] for all $0 \le \ell \le \lfloor \frac{d}{2} \rfloor - 1$ by Lemma \ref{gamtgrowth}. \\
			
			Then, the inequality above implies that \[ b_{\ell + 1, 0} + b_{\ell + 1, 1} \gamma_1 + \ldots + b_{\ell + 1, \ell} \gamma_\ell < (d + 1) g_\ell \le g_{\ell + 1}. \]
			
			The second inequality is implied by the assumption $\gamma \ge 0$. This implies that the additional increase comes from $\gamma_{\ell + 1}$ and $\gamma_{\ell + 1} \ge g_{\ell + 1} - (d + 1) g_\ell$. \\
			
		\end{enumerate}
	\end{proof}

	\color{black}

	We will now relate these properties back to those of simplicial spheres and Cohen--Macaulay simplicial complexes in general. To this end, we will first consider properties of the Macaulay representation involved in characterizations of $g$-vectors and $h$-vectors of simplicial spheres and Cohen--Macaulay simplicial complexes respectively (Theorem \ref{gconjsphere} and Theorem \ref{hvectcmshell}). The first property follows from the description of the construction of the Macaulay representation in Definition \ref{macrepower} and considering the binomial coefficient as a function of the top index (see Lemma 5.6 on p. 2116 of \cite{El} and p. 38 of \cite{Bol}). \\
	
	\begin{lem} \label{macrepbds}
		Given positive integers $a$ and $k$, let \[ a = \binom{n_k}{k} + \binom{n_{k - 1}}{k - 1} + \ldots + \binom{n_j}{j}, \] where $n_k > n_{k - 1} > \cdots > n_j \ge j \ge 1$ be the $k^{\text{th}}$ Macaulay representation of $a$ (Definition \ref{macrepower}). \\
		
		Then, we have \[ n_k \le x < n_k + 1, \] where $x$ is the unique real number such that $a = \binom{x}{k}$. \\
	\end{lem}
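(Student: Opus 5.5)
The plan is to treat $\binom{x}{k} = \frac{x(x-1)\cdots(x-k+1)}{k!}$ as a real function of $x$ on the interval $[k-1, \infty)$. On this interval it is continuous and strictly increasing, since each factor $x - i$ with $0 \le i \le k-1$ is nonnegative there and strictly increasing. Its value is $0$ at $x = k-1$ and it tends to $\infty$. So for every positive integer $a$ there is a unique $x \ge k-1$ with $\binom{x}{k} = a$, and it satisfies $x > k-1$. This is how we interpret the ``unique real number'' in the statement: uniqueness is only claimed on the branch where the function is monotone.

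Next we use the characterization of $n_k$ from Definition \ref{macrepower}: $n_k$ is the largest integer $q$ with $\binom{q}{k} \le a$. Since $a \ge 1$, we have $n_k \ge k$, so both $n_k$ and $n_k + 1$ lie in the monotone region. Maximality gives the two inequalities
\[ \binom{n_k}{k} \le a < \binom{n_k+1}{k}. \]
Rewriting the middle term as $\binom{x}{k}$, strict monotonicity on $[k-1,\infty)$ turns these into $n_k \le x < n_k + 1$.

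The only delicate point is the bookkeeping on the domain. We must ensure that $n_k$, $n_k+1$ and $x$ all lie in the region where $\binom{\cdot}{k}$ is increasing, so that the inequalities can be inverted. This rests on $a \ge 1 \Rightarrow n_k \ge k$ and on the choice of the branch $x \ge k-1$.

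If one prefers not to use maximality of $n_k$ directly, the upper inequality $a < \binom{n_k+1}{k}$ can instead be read off from the representation. The standard bound is
\[ \sum_{i=j}^{k-1}\binom{n_i}{i} \le \sum_{i=1}^{k-1}\binom{n_k-k+i}{i} < \binom{n_k}{k-1}, \]
using $n_i \le n_k - (k-i)$ and the hockey-stick identity. Adding $\binom{n_k}{k}$ and applying Pascal's rule gives $a < \binom{n_k+1}{k}$.
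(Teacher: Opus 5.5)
Your proof is correct and matches the paper's approach: the paper gives no detailed argument and only says the lemma follows from the construction in Definition \ref{macrepower} ($n_k$ is the largest integer $q$ with $\binom{q}{k} \le a$) together with viewing $\binom{x}{k}$ as a monotone function of its top index, which is exactly your maximality-plus-monotonicity inversion. Two of your additions go beyond the paper: restricting to the branch $x \ge k-1$ settles which root is meant, since $\binom{x}{k} = a$ can have other real roots (e.g.\ a negative one when $k = 2$), and your hockey-stick derivation of $a < \binom{n_k+1}{k}$ is a self-contained substitute for the maximality property.
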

	
	\begin{lem} (Theorem 5.9 on p. 2117 -- 2118 of \cite{El}) \label{kmacpwrbds} \\
		Recall from Definition \ref{macrepower} that \[ a^{\langle k \rangle} \coloneq \binom{ n_k + 1 }{k + 1} + \binom{n_{k - 1} + 1}{k} + \ldots + \binom{n_j + 1}{j + 1}. \] We have that \[ \binom{ n_k + 1 }{k + 1} \le a^{\langle k \rangle} \le \binom{x + 1}{k + 1}, \] where $x$ is the same number from Lemma \ref{macrepbds}. \\
	\end{lem}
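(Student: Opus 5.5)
The lower bound is immediate: every summand of $a^{\langle k\rangle}$ after the first, $\binom{n_i+1}{i+1}$, is nonnegative, so $a^{\langle k\rangle}\ge\binom{n_k+1}{k+1}$. All the work is in the upper bound, which I would prove by induction on $k$ using the recursive structure of the Macaulay representation and a Cauchy mean value estimate.

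\emph{Base case.} For $k=1$ the Macaulay representation is $a=\binom{a}{1}$. Hence $x=a$ and $a^{\langle 1\rangle}=\binom{a+1}{2}=\binom{x+1}{2}$, so equality holds.

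\emph{Setting up the induction step.} Let $k\ge 2$, write $n=n_k$, and set $a'=a-\binom{n}{k}=\binom{n_{k-1}}{k-1}+\cdots+\binom{n_j}{j}$. If $a'=0$, then $x=n$ and the claim is again an equality. Otherwise the displayed sum is exactly the $(k-1)^{\text{st}}$ Macaulay representation of $a'$. By maximality of $n$ we have $a'<\binom{n+1}{k}-\binom{n}{k}=\binom{n}{k-1}$. Write $a'=\binom{y}{k-1}$ with $y\ge k-1$ real. Monotonicity of $z\mapsto\binom{z}{k-1}$ on $[k-2,\infty)$ then gives $y<n$. By definition $a^{\langle k\rangle}=\binom{n+1}{k+1}+a'^{\langle k-1\rangle}$, and the induction hypothesis gives $a'^{\langle k-1\rangle}\le\binom{y+1}{k}$. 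It therefore suffices to show
\[ \binom{y+1}{k}\le \binom{x+1}{k+1}-\binom{n+1}{k+1}, \qquad\text{where } \binom{x}{k}-\binom{n}{k}=\binom{y}{k-1}. \]

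\emph{Key estimate.} This is the main step. Put $A(z)=\binom{z}{k}$ and $B(z)=\binom{z+1}{k+1}=\frac{z+1}{k+1}A(z)$; both are increasing on $z\ge k-1$. The left side of the desired inequality equals $\frac{y+1}{k}\binom{y}{k-1}=\frac{y+1}{k}\,(A(x)-A(n))$. By Cauchy's mean value theorem,
\[ B(x)-B(n)=\frac{B'(\xi)}{A'(\xi)}\,(A(x)-A(n)) \quad\text{for some } \xi\in(n,x). \]
So the step reduces to showing $B'(\xi)/A'(\xi)\ge\frac{y+1}{k}$.

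Differentiating $B=\frac{z+1}{k+1}A$ gives
\[ \frac{B'}{A'}=\frac{z+1}{k+1}+\frac{1}{k+1}\cdot\frac{A}{A'}. \]
From the product formula for $A$, $A'/A=\sum_{i=0}^{k-1}\frac{1}{z-i}\le\frac{k}{z-k+1}$ for $z>k-1$. Therefore
\[ \frac{B'}{A'}\ge\frac{z+1}{k+1}+\frac{z-k+1}{k(k+1)}=\frac{z+1}{k}. \]
At $z=\xi>n>y$ this is at least $\frac{y+1}{k}$, which closes the induction.

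The only delicate points are the boundary cases: $\xi$ near $k-1$, where $A'/A$ blows up, and ensuring $n\ge k$ so that $\xi>k-1$. Both are harmless, since $n=n_k\ge k$ and the bound on $A'/A$ only improves as $z$ grows.
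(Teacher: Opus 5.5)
Your setup is sound up to the key estimate. The lower bound, the base case, the passage to $a'=\binom{y}{k-1}$ with $k-1\le y<n$, the recursion $a^{\langle k\rangle}=\binom{n+1}{k+1}+a'^{\langle k-1\rangle}$, and the reduction to proving $\binom{y+1}{k}\le\binom{x+1}{k+1}-\binom{n+1}{k+1}$ are all correct. The key estimate is where the argument breaks.

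First, the algebra is wrong:
\[ \frac{z+1}{k+1}+\frac{z-k+1}{k(k+1)}=\frac{(k+1)z+1}{k(k+1)}=\frac{z}{k}+\frac{1}{k(k+1)}, \]
which is not $\frac{z+1}{k}$. Second, the inequality you need, $B'/A'\ge\frac{z+1}{k}$, is false. For $k=2$ one computes $B'/A'=\frac{3z^2-1}{3(2z-1)}$, and this is strictly less than $\frac{z+1}{2}$ for every $z>\tfrac12$.

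With the correct bound you would need $\xi\ge y+\frac{k}{k+1}$. The facts $\xi>n>y$ do not give this, because for $k\ge 3$ the value $y$ can lie arbitrarily close to $n$ (take $a'=\binom{n}{k-1}-1$ with $n$ large). The case $k=2$ happens to be harmless, since there $y=a'$ is an integer at most $n-1$. For $k\ge3$, however, the induction step does not close.

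A sharper pointwise bound on $A'/A$ cannot rescue the Cauchy mean value argument either. Take $k=3$, $n=10$, $a'=44=\binom{9}{2}+\binom{8}{1}$. Then $y=\frac{1+\sqrt{353}}{2}\approx 9.894$, so you need $B'(\xi)/A'(\xi)\ge\frac{y+1}{3}\approx 3.631$. But the exact value
\[ \frac{B'(z)}{A'(z)}=\frac{z+1}{4}+\frac14\Bigl(\frac1z+\frac1{z-1}+\frac1{z-2}\Bigr)^{-1} \]
tends to about $3.494$ as $z\to 10^+$. The mean value theorem gives no control over where $\xi$ lies in $(n,x)$, so any lower bound for $B'/A'$ valid on all of $(n,x)$ is too weak here. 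This happens even though the target inequality itself holds: $\binom{x+1}{4}-\binom{11}{4}\approx 161.2$ versus $\binom{y+1}{3}\approx 159.8$.

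The structural reason is that the target inequality is tight. As $x\to n+1$ one has $y\to n$, and both sides tend to $\binom{n+1}{k}$. Decoupling $\xi$ from $y$ therefore discards exactly the information needed. A correct argument has to use the relation $\binom{x}{k}-\binom{n}{k}=\binom{y}{k-1}$ throughout. One natural way is to treat both sides as functions of the single parameter $s=\binom{y}{k-1}\in\bigl[0,\binom{n}{k-1}\bigr]$. They coincide at both endpoints, so the task is to control their difference on the whole interval. The paper gives no proof of this step; it quotes the bound from Elias.
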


	We now apply these estimates to give asymptotic properties and explicit upper bounds on the pseudopowers. \\
	
	\begin{defn} (special case of Part 2 of Definition \ref{asympdef}) \\
		Given functions $F$ and $G$ on positive real numbers, we write $F \sim G$ if \[ \lim_{x \to +\infty} \frac{F(x)}{G(x)} = 1. \] \\ 
	\end{defn}
	
	\begin{cor} \label{macpowasymp}
		Suppose that we are in the setting of Lemma \ref{macrepbds} and Lemma \ref{macrepower}. Given a positive integer $m$, let \[ C_m = \frac{(m!)^{\frac{1}{m}}}{m + 1}. \] 
		\begin{enumerate}
			\item If $k$ is fixed, then $a^{\langle k \rangle} \sim C_k a^{ \frac{k + 1}{k} } = C_k a^{ 1 + \frac{1}{k} }$ as $a \to +\infty$. We have $C_m < 1$ and $C_m \to \frac{1}{e}$ as $m \to +\infty$. 
			
			\item For any positive integers $a$ and $k$, we have $a^{\langle k \rangle} \le a^{\frac{k + 1}{k}} = a^{1 + \frac{1}{k}}$. \\
		\end{enumerate}
	\end{cor}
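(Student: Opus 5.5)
The plan is to squeeze $a^{\langle k \rangle}$ between the two binomial bounds of Lemma \ref{kmacpwrbds}, namely $\binom{n_k+1}{k+1} \le a^{\langle k \rangle} \le \binom{x+1}{k+1}$. We then compare both ends with $a = \binom{x}{k}$, using $n_k \le x < n_k + 1$ from Lemma \ref{macrepbds}.

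For Part 1, fix $k$ and let $a \to +\infty$. Then $x \to +\infty$ and $n_k \to +\infty$, with $x - n_k \in [0,1)$. Since $\binom{y}{k} = \frac{y(y-1)\cdots(y-k+1)}{k!} \sim \frac{y^k}{k!}$ as $y \to +\infty$, we get $a \sim \frac{x^k}{k!}$. This gives $x \sim (k!\,a)^{1/k}$. Both bounds then satisfy $\binom{n_k+1}{k+1} \sim \frac{x^{k+1}}{(k+1)!}$ and $\binom{x+1}{k+1} \sim \frac{x^{k+1}}{(k+1)!}$, because a bounded shift of the argument does not change the leading term. By squeezing, $a^{\langle k \rangle} \sim \frac{(k!\,a)^{(k+1)/k}}{(k+1)!}$. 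Simplifying, $\frac{(k!)^{(k+1)/k}}{(k+1)!} = \frac{(k!)^{1/k}}{k+1} = C_k$, which gives $a^{\langle k \rangle} \sim C_k a^{1 + 1/k}$. For the constants, $C_m < 1$ follows from $m! < m^m$, which gives $(m!)^{1/m} < m < m+1$ (for $m=1$, directly $C_1 = \frac12$). The limit $C_m \to \frac1e$ follows from Stirling's formula, $(m!)^{1/m} \sim m/e$, together with $\frac{m}{m+1} \to 1$.

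Part 2 needs a non-asymptotic inequality: $\binom{x+1}{k+1} \le \binom{x}{k}^{(k+1)/k}$ for real $x \ge k$, where $x \ge k$ holds because $a \ge 1$. Write $\binom{x+1}{k+1} = \binom{x}{k} \cdot \frac{x+1}{k+1}$. The claim then reduces to
\[ \frac{x+1}{k+1} \le \binom{x}{k}^{1/k} = \Big(\prod_{i=0}^{k-1} \frac{x-i}{k-i}\Big)^{1/k}. \]
Each factor satisfies $\frac{x-i}{k-i} \ge \frac{x-k+1}{1}$? That is false in general, so instead I would pair factors: $\frac{x-i}{k-i} \cdot \frac{x-(k-1-i)}{i+1}$. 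This product is at least $\left(\frac{x+1}{k+1}\right)^2$, by expanding. Equivalently, $\frac{x-j}{k-j} \ge \frac{x+1}{k+1}$ for all $0 \le j \le k-1$ whenever $x \ge k$. Indeed, $(x-j)(k+1) - (x+1)(k-j) = (j+1)(x-k) \ge 0$, so each factor individually dominates $\frac{x+1}{k+1}$. Taking the geometric mean gives the claim, and combining with Lemma \ref{kmacpwrbds} gives $a^{\langle k \rangle} \le a^{1+1/k}$.

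The main subtlety is Part 2: the upper bound of Lemma \ref{kmacpwrbds} must be compared with $a^{1+1/k}$ exactly rather than asymptotically. The factor-by-factor inequality $(j+1)(x-k) \ge 0$ should settle it cleanly. In Part 1, one should also note that the lower bound uses $n_k + 1 > x$, so both squeeze bounds are $\frac{x^{k+1}}{(k+1)!}(1+o(1))$.
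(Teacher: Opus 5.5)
Your proof is correct and follows the paper's route. For Part 1 you squeeze $a^{\langle k\rangle}$ using Lemma \ref{kmacpwrbds} and $n_k \le x < n_k+1$, and for Part 2 you reduce to $\frac{x+1}{k+1} \le \binom{x}{k}^{1/k}$ by comparing factors one at a time. The paper uses $\frac{x-j}{k-j} \ge \frac{x}{k}$ to get $x \le k a^{1/k}$ and then needs $1 \le a^{1/k}$, whereas your bound $\frac{x-j}{k-j} \ge \frac{x+1}{k+1}$, with difference $(j+1)(x-k)\ge 0$, finishes in one step; the abandoned ``pairing'' aside and its ``Equivalently'' can simply be deleted.
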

	
	\begin{proof}
		\begin{enumerate}
			\item By Lemma \ref{macrepbds} and Lemma \ref{macpowasymp}, we have $a \sim \frac{x^k}{k!}$ and $a^{\langle k \rangle} \sim \frac{x^{k + 1}}{(k + 1)!}$. This implies that \[ \frac{a^{\langle k \rangle}}{a} \sim \frac{x}{k + 1}. \]
			
			Since $x \sim (k! a)^{\frac{1}{k}}$, we have that \[ \frac{a^{\langle k \rangle}}{a} \sim \frac{(k! a)^{\frac{1}{k}}}{k + 1} \Longrightarrow a^{\langle k \rangle} \sim \frac{(k!)^{\frac{1}{k}}}{k + 1} a^{\frac{k + 1}{k}} = C_k a^{\frac{k + 1}{k}}. \]
			
			Since $(m!)^{\frac{1}{m}} < m < m + 1$, we have that $C_m < 1$. The claim that $C_m \to \frac{1}{e}$ as $m \to +\infty$ follows from Stirling's approximation $N! \sim \sqrt{2\pi N} \left( \frac{N}{e} \right)^N$. We may also use sharper estimates for arbitrary $N$ involving upper and lower bounds of $N!$ from constant multiples of $\sqrt{N} \cdot \left( \frac{N}{e} \right)^N = e^{-N} \cdot N^{N + \frac{1}{2}}$ (see p. 287 of \cite{ElM1}). \\
			
			\item Using the notation of Lemma \ref{macrepbds}, Lemma \ref{macpowasymp} implies that \[ \frac{a^{\langle k \rangle}}{a} \le \frac{\binom{x + 1}{k + 1}}{\binom{x}{k}} = \frac{x + 1}{k + 1}. \]
			
			Note that a term by term comparison of products in $\binom{x}{k} = \frac{x(x - 1) \cdots (x - k + 1)}{k!}$ implies that \[ \frac{x^k}{k^k} \le \binom{x}{k} = a \Longrightarrow x \le k a^{ \frac{1}{k} } \] since $x > 0$, $a > 0$, and $k > 0$. \\
			
			Substituting this into the bound \[ \frac{a^{\langle k \rangle}}{a} \le \frac{x + 1}{k + 1}, \]
			
			we have that 
			
			\begin{align*}
				\frac{a^{\langle k \rangle}}{a} &\le \frac{k a^{ \frac{1}{k}} + 1}{k + 1} \\
				&\le \frac{k a^{ \frac{1}{k}} + a^{\frac{1}{k}}}{k + 1} \\&= a^{\frac{1}{k}} \\
				\Longrightarrow a^{\langle k \rangle} &\le a^{\frac{k + 1}{k}}
			\end{align*}
			
			since $a \ge 1$ and $k > 0$.

		\end{enumerate}
	\end{proof}

	As an initial consequence, we can give a simple restriction on the growth rate of the components $g_\ell(\Delta)$ of the $g$-vector of a simplicial sphere $\Delta$. \\
	
	\begin{cor} \label{sphereginitbd}
		Given a $(d - 1)$-dimensional simplicial sphere $\Delta$, we have that $g_\ell(\Delta) \le g_1(\Delta)^\ell$ for all $0 \le \ell \le \frac{d}{2}$. \\
	\end{cor}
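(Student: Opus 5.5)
We argue by induction on $\ell$, using the $M$-vector condition from Part 2c) of Theorem \ref{gconjsphere} together with the uniform pseudopower bound in Part 2 of Corollary \ref{macpowasymp}. The cases $\ell = 0$ and $\ell = 1$ are immediate, since $g_0(\Delta) = 1 = g_1(\Delta)^0$ and $g_1(\Delta) = g_1(\Delta)^1$. Note that $g_1(\Delta) \ge 0$ by Part 2c). If $g_1(\Delta) = 0$, then $0 \le g_2(\Delta) \le 0^{\langle 1 \rangle} = 0$. More generally, once some $g_i(\Delta) = 0$, the condition $0 \le g_{i+1}(\Delta) \le g_i(\Delta)^{\langle i \rangle} = 0$ (with the convention $0^{\langle i \rangle} = 0$) propagates zeros to all later components. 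So we may assume all components involved are positive integers.

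For the inductive step, suppose $1 \le \ell \le \frac{d}{2} - 1$ and $g_\ell(\Delta) \le g_1(\Delta)^\ell$ with $g_\ell(\Delta) \ge 1$. Part 2c) of Theorem \ref{gconjsphere} gives $g_{\ell+1}(\Delta) \le g_\ell(\Delta)^{\langle \ell \rangle}$. Part 2 of Corollary \ref{macpowasymp} then gives
\[ g_\ell(\Delta)^{\langle \ell \rangle} \le g_\ell(\Delta)^{\frac{\ell+1}{\ell}}. \]
Since $t \mapsto t^{\frac{\ell+1}{\ell}}$ is increasing on $[0, \infty)$, the inductive hypothesis yields
\[ g_{\ell+1}(\Delta) \le \left( g_1(\Delta)^\ell \right)^{\frac{\ell+1}{\ell}} = g_1(\Delta)^{\ell+1}, \]
which completes the induction.

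The only point needing care is the range of indices. Part 2c) of Theorem \ref{gconjsphere} is stated for $0 \le i \le \lfloor \frac{d}{2} \rfloor - 1$, so the induction produces the bound exactly for $0 \le \ell \le \lfloor \frac{d}{2} \rfloor$, which is the range of the statement. We also rely on Corollary \ref{macpowasymp}, Part 2, being valid for every positive integer $a$ and not merely asymptotically. Its proof used $x \le k a^{1/k}$ and $1 \le a^{1/k}$, both of which hold for every $a \ge 1$, so no genuine obstacle arises.
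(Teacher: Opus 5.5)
Your proof is correct and is essentially the paper's argument: induction on $\ell$, combining the $M$-vector bound $g_{\ell+1}(\Delta) \le g_\ell(\Delta)^{\langle \ell \rangle}$ from Part 2c) of Theorem \ref{gconjsphere} with the uniform estimate $a^{\langle k \rangle} \le a^{\frac{k+1}{k}}$ from Part 2 of Corollary \ref{macpowasymp} and monotonicity of $t \mapsto t^{\frac{\ell+1}{\ell}}$. Your separate treatment of vanishing components is a small rigor refinement that the paper omits; it is needed because the pseudopower estimate is stated only for positive integers.
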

	
	\begin{proof}
		We have equality in the $\ell = 0$ and $\ell = 1$ cases. For general $\ell$, we will use induction. \\
		
		Suppose that $g_{\ell - 1}(\Delta) \le g_1(\Delta)^{\ell - 1}$. Since $\Delta$ is a simplicial sphere, Theorem \ref{gconjsphere} implies that $g_\ell(\Delta) \le g_{\ell - 1}(\Delta)^{\langle \ell - 1 \rangle}$. By Corollary \ref{macpowasymp}, we have $g_{\ell - 1}(\Delta)^{\langle \ell - 1 \rangle} \le g_{\ell - 1}(\Delta)^{ \frac{\ell}{\ell - 1} }$. Combining this with our induction assumption $g_{\ell - 1}(\Delta) \le g_1(\Delta)^{\ell - 1}$, we have $g_\ell(\Delta) \le (g_1(\Delta)^{\ell - 1})^{ \frac{\ell}{\ell - 1} } = g_1(\Delta)^\ell$. 
	\end{proof}

	\begin{rem} \textbf{(Comparison between $g_1(\Delta)$ and $d$) \\} \label{g1dcomp}
		
		While there is a lower bound $g_1(\Delta) \ge d - 1$ for $(d - 1)$-dimensional simplicial spheres such that $\gamma(\Delta) \ge 0$ (attained by the boundary of the $d$-dimensional cross polytope), the parameter $g_1(\Delta)$ can grow arbitrarily quickly with respect to $d$. In other words, $d$ can grow arbitarily slowly with respect to $g_1(\Delta)$ even when it is not a constant. We can see this by considering flag simplicial spheres, such that $\gamma(\Delta) \ge 0$ and $\gamma(\lk_\Delta(e)) \ge 0$ for each edge $e \in \Delta$. This is because flagness is preserved by edge subdivision and each edge subdivision increases the number of vertices by 1. \\
	\end{rem}
	
	We can also show that can show that simplicial spheres where $\gamma(\Delta) \ge 0$ and $g_1(\Delta)$ is linear in $d$ have $g_i(\Delta)$ is bounded above and below by a degree $i$ polynomial. In some sense, this indicates that $g_i(\Delta)$ is ``of order $i$'' with respect to $d$ in such a setting. Examples where this occurs include boundaries of cross polytopes where $g_1(\Delta) = d - 1$, which are minimal elements among flag PL simplicial spheres of a given dimension with respect to edge subdivisions \cite{LN}. For a simplicial sphere PL homeomorphic to the boundary of a cross polytope, this is a case where the ``net number of edge subdivisions'' (Corollary [10:2d] on p. 302 of \cite{Alex}, Corollary 4.1 on p. 75 of \cite{LN}) does not increase too quickly with respect to the dimension. Similar results hold for $h$-vectors of Cohen--Macaulay simplicial complexes. \\
	
	\begin{prop} \textbf{(Linear net number of edge subdivisions and order with respect to $d$) \\} \label{gamposling1h1}
		Let $\Delta$ be a $(d - 1)$-dimensional simplicial complex such that $\gamma(\Delta) \ge 0$.  	
		\begin{enumerate}
			\item Suppose that $\Delta$ is a simplicial sphere. If $g_1(\Delta)$ is linear in $d$, then each $g_i(\Delta)$ is bounded above and below by an order $i$ polynomial in $d$ with constant coefficients. \\ 
			
			\item Suppose that $\Delta$ is a Cohen--Macaulay simplicial complex. If $h_1(\Delta)$ is linear in $d$, then $h_i(\Delta)$ is bounded above and below by an order $i$ polynomial in $d$ with constant coefficients. \\ 
		\end{enumerate}
	\end{prop}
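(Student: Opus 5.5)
For the upper bound in Part 1, I will combine the two $g$-vector results already established. By Corollary \ref{sphereginitbd}, $g_i(\Delta) \le g_1(\Delta)^i$. If $g_1(\Delta) \le c_1 d + c_0$ for constants $c_0, c_1$, then $g_i(\Delta) \le (c_1 d + c_0)^i$. This is a degree $i$ polynomial in $d$ with constant coefficients, and it uses only the $M$-vector condition from Part 2c) of Theorem \ref{gconjsphere} together with Part 2 of Corollary \ref{macpowasymp}.

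For the lower bound I will use $\gamma(\Delta) \ge 0$ and Proposition \ref{gamtohgvect}. Since $B$ has nonnegative entries and $\gamma_0 = 1$, we get
\[ g_i(\Delta) \ge b_{i,0} = \binom{d}{i} - \binom{d}{i-1} = \binom{d}{i}\frac{d - 2i + 1}{d - i + 1}. \]
For fixed $i$, this is a degree $i$ polynomial in $d$ divided by a linear factor. It is bounded below by $\binom{d}{i}$ times a positive constant once $d \ge 4i$, say; for example $\frac{d-2i+1}{d-i+1} \ge \frac{1}{2}$ there. So $g_i(\Delta) \ge \frac{1}{2}\binom{d}{i}$, which is an order $i$ polynomial in $d$. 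The main obstacle is interpreting the statement so that this really is a fixed-$i$ statement. When $i$ is close to $\frac{d}{2}$, the factor $\frac{d-2i+1}{d-i+1}$ degenerates to order $\frac{1}{d}$. In that regime I would instead keep the exact expression $b_{i,0}$, which is still a polynomial in $d$ of degree $i$ up to the rational factor. The alternative is to use the cruder bound from Lemma \ref{gamtgrowth}, namely $b_{i,0} > 3^{-i}$ per step. I would state the result for fixed $i$ as $d \to \infty$, where the clean polynomial bounds hold.

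Part 2 runs in parallel with $h$ in place of $g$. For the upper bound, Theorem \ref{hvectcmshell} gives $h_{i+1} \le h_i^{\langle i \rangle}$. Then the same induction as in Corollary \ref{sphereginitbd}, using Corollary \ref{macpowasymp}, yields $h_i(\Delta) \le h_1(\Delta)^i \le (c_1 d + c_0)^i$. For the lower bound, Part 1 of Proposition \ref{gamtohgvect} with $\gamma \ge 0$ gives $h_i(\Delta) \ge a_{i,0} = \binom{d}{i}$. This is directly a degree $i$ polynomial in $d$ with constant coefficients, with no degeneration issue. Here I implicitly use that the $h$-polynomial is reciprocal so that $\gamma$ is defined; I would note this as a standing assumption in Part 2.
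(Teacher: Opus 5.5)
Your proposal is correct and is essentially the paper's proof. The lower bounds come from $g_i(\Delta) \ge b_{i,0}$ and $h_i(\Delta) \ge a_{i,0} = \binom{d}{i}$ via Proposition \ref{gamtohgvect} with $\gamma(\Delta) \ge 0$, and the upper bounds come from iterating the $M$-vector condition with $a^{\langle k \rangle} \le a^{\frac{k+1}{k}}$; your appeal to Corollary \ref{sphereginitbd} is just the closed form of the paper's induction. Your fixed-$i$ reading is the one the paper tacitly uses, and since $b_{i,0} = \frac{d(d-1)\cdots(d-i+2)(d-2i+1)}{i!}$ is already a degree-$i$ polynomial in $d$, the detour through $\frac{1}{2}\binom{d}{i}$ is unnecessary.
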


	\begin{rem}
		Although the bounds by degree $i$ polynomials suggest that $g_i(\Delta)$ and $h_i(\Delta)$ are ``of order $i$ with respect to $d$'' in this case, we need to be careful about the dependence of the coefficients in $i$. This can cause a large variance in things like ratios between terms from consecutive indices (see Part 2 of Example \ref{constexmp}). A more direct possible issue with the polynomial bound in terms of $g_1(\Delta)$ due to the index comparison will be discussed in Theorem \ref{avghbd} and Part 4 of Corollary \ref{spheregambds}. \\
	\end{rem}

	\begin{proof}
		\begin{enumerate}
			\item The lower bounds of $g_i(\Delta)$ by degree $i$ polynomials in $d$ is implied by $\gamma(\Delta) \ge 0$. This is because $g_i \ge b_{i, 0} = \binom{d}{i} - \binom{d}{i - 1}$ for all $0 \le i \le \frac{d}{2}$ by Proposition \ref{gamtohgvect}. Thus, we will focus on the upper bound. \\
			
			Suppose that $i = 2$. The $M$-vector condition on the $g$-vector from Theorem \ref{gconjsphere} implies that $g_2(\Delta) \le g_1(\Delta)^{\langle 1 \rangle} = \binom{g_1(\Delta) + 1}{2}$. Since $g_1(\Delta)$ is linear in $d$, the upper bound is a degree 2 polynomial in $d$. \\
			
			Suppose that $g_i(\Delta)$ is bounded above by a degree $i$ polynomial in $d$. Then, Theorem \ref{gconjsphere} and Corollary \ref{macpowasymp} imply that $g_{i + 1}(\Delta) \le g_i(\Delta)^{\langle i \rangle} \le g_i(\Delta)^{ \frac{i + 1}{i} }$. Since $g_i(\Delta)$ is bounded above by a degree $i$ polynomial in $d$, this implies that $g_{i + 1}(\Delta)$ is bounded above by a constant multiple of $d^{i + 1}$. \\

			\item This follows from the same reasoning as Part 1 after replacing $g_i(\Delta)$ by $h_i(\Delta)$, using $a_{i, 0}$ instead of $b_{i, 0}$ in Proposition \ref{gamtohgvect}, and applying Theorem \ref{hvectcmshell} instead of Theorem \ref{gconjsphere}. Note that $h_i(\Delta) = 1 + g_1(\Delta) + \ldots + g_i(\Delta)$. \\
		\end{enumerate}
	\end{proof}
	
	\color{black}

	Combining the estimates of Corollary \ref{macpowasymp} with Theorem \ref{gconjsphere}, we determine when nonnegative gamma vectors are realized by simplicial spheres and show that nonnegative gamma vector inputs that increase sufficiently quickly can be recursively extended. \\

	\begin{cor} \textbf{(Nonnegative gamma vectors realized by simplicial spheres) \\} \label{sphereposgam}
		Suppose that $\gamma_0 = 1$ (which is necessary for $h_0 = 1$). Fix a positive integer $d \ge 2$.
		\begin{enumerate}
			\item Given a choice of $\gamma_1, \ldots, \gamma_i \ge 0$, there is a choice of $\gamma_{i + 1} \ge 0$ compatible with the degree $i$ and $i + 1$ inequality in the $M$-vector condition on $g$ for realization by a $(d - 1)$-dimensional simplicial sphere if and only if \[ g_i^{\langle i \rangle} = (b_{i,0} + b_{i,1} \gamma_1 + \ldots + b_{i, i - 1} \gamma_{i - 1} + \gamma_i)^{\langle i \rangle} \ge b_{i + 1,0} + b_{i + 1,1} \gamma_1 + \ldots + b_{i + 1, i - 1} \gamma_{i - 1} + b_{i + 1, i} \gamma_i. \]
			
			\item If the condition in Part 1 is satisfied, the required bound on $\gamma_{i + 1}$ is
			\begin{align*}
				\gamma_{i + 1} &\le g_i^{\langle i \rangle} - (b_{i + 1,0} + b_{i + 1,1} \gamma_1 + \ldots + b_{i + 1, i - 1} \gamma_{i - 1} + b_{i + 1, i} \gamma_i) \\
				&= (b_{i,0} + b_{i,1} \gamma_1 + \ldots + b_{i, i - 1} \gamma_{i - 1} + \gamma_i)^{\langle i \rangle} - (b_{i + 1,0} + b_{i + 1,1} \gamma_1 + \ldots + b_{i + 1, i - 1} \gamma_{i - 1} + b_{i + 1, i} \gamma_i).
			\end{align*}

			\item Suppose that $\gamma_0 = 1$ and $\gamma_k$ satisfies the inequality in Part 2 for all $1 \le k \le i$, and results in integer entries of $(h_0, \ldots, h_{ \lfloor \frac{d}{2} \rfloor })$. If $\gamma_k \gg 0$ for all $1 \le k \le i$, there is a $\gamma_{i + 1} \ge 0$ compatible with the inequalities in Part 2. This indicates that nonnegative entries $(\gamma_0, \gamma_1, \ldots, \gamma_i)$ that increase sufficiently quickly (relative to $d$) with entries $\gamma_j$ while satisfying designated order $\frac{j}{j - 1}$ upper bounds with respect to $\gamma_1, \ldots, \gamma_{j - 1}$ yield nonnegative upper bounds for $\gamma_{i + 1}$ we can choose from. With sufficiently large inputs, this extends to the entire gamma vector. \\
		\end{enumerate}
	\end{cor}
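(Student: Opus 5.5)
The plan is to work directly with the linear relation $B\gamma = g$ from Part 2 of Proposition \ref{gamtohgvect}. Since $B$ is lower unitriangular with $b_{\ell,\ell} = 1$, for each $\ell$ we can write
\[ g_\ell = \sum_{s=0}^{\ell - 1} b_{\ell,s}\gamma_s + \gamma_\ell , \]
where $\gamma_0 = 1$. Realizability of $h = A\gamma$ by a $(d-1)$-dimensional simplicial sphere is then read off from Theorem \ref{gconjsphere}. Conditions 2a) and 2b) are automatic: $\gamma_0 = 1$ gives $h_0 = 1$, and reciprocity comes from the form $h(t) = \sum \gamma_i t^i(1+t)^{d-2i}$. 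So only the $M$-vector inequalities $0 \le g_{i+1} \le g_i^{\langle i \rangle}$ in Part 2c) need attention.

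For Parts 1 and 2, fix $\gamma_1, \ldots, \gamma_i \ge 0$. Then $g_i$ is determined, and $g_{i+1}$ is an affine function of $\gamma_{i+1}$ with slope $b_{i+1,i+1} = 1$:
\[ g_{i+1} = L_{i+1} + \gamma_{i+1}, \qquad L_{i+1} \coloneq \sum_{s=0}^{i} b_{i+1,s}\gamma_s . \]
Lower bounds are handled by total nonnegativity of $B$, which gives $L_{i+1} \ge 0$ and hence $g_{i+1} \ge 0$ for every $\gamma_{i+1} \ge 0$. The constraint $g_{i+1} \le g_i^{\langle i \rangle}$ is therefore equivalent to $\gamma_{i+1} \le g_i^{\langle i \rangle} - L_{i+1}$, which is exactly the bound in Part 2. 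A nonnegative $\gamma_{i+1}$ satisfying it exists if and only if $g_i^{\langle i \rangle} \ge L_{i+1}$, which is Part 1; in that case $\gamma_{i+1} = 0$ is one valid choice. For integrality, $g_i$ must be a positive integer for the pseudopower to be defined, and this is guaranteed by the standing assumption of integer $h$-entries.

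Part 3 is the asymptotic step and the main obstacle. I would compare the two sides of the Part 1 inequality using Corollary \ref{macpowasymp}, which gives
\[ g_i^{\langle i \rangle} \sim C_i\, g_i^{\frac{i+1}{i}} \]
as $g_i \to \infty$, so the left side grows superlinearly in $\gamma_i$. The right side $L_{i+1}$ is linear in $(1, \gamma_1, \ldots, \gamma_i)$, with coefficients bounded by $(d+1)^{i+1-s}$ by Lemma \ref{gamtgrowth}. Since $g_i \ge \gamma_i$, once $\gamma_i$ is large relative to $d$ we get
\[ C_i\, \gamma_i^{1/i} \cdot \gamma_i \gg (d+1)\gamma_i . \]
This absorbs the $b_{i+1,i}\gamma_i$ term.

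The delicate point is the earlier terms $b_{i+1,s}\gamma_s$ with $s < i$. These are dominated only if $\gamma_i$ is not too small compared with $\gamma_1, \ldots, \gamma_{i-1}$. Conversely, each $\gamma_j$ is itself capped by an order $\frac{j}{j-1}$ expression in the earlier entries, by Part 2 at index $j-1$. So I would formalize the hypothesis ``$\gamma_k \gg 0$'' as a growth regime in which each $\gamma_k$ is at least a fixed fraction of its allowed upper bound and that bound exceeds $(d+1)^{i+1}$ times the earlier entries. In that regime, $g_i \ge \gamma_i$ dominates the $\gamma_s$ with $s < i$ up to factors polynomial in $d$, and the superlinear exponent $\frac{i+1}{i}$ beats them.

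Carrying this out inductively in $i$ shows that the Part 1 inequality holds at each step. Hence a nonnegative $\gamma_{i+1}$ exists, and choosing it large within its allowed range maintains the regime for the next index. Iterating up to $\lfloor \frac{d}{2} \rfloor$ extends the construction to the whole gamma vector.
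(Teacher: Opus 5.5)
Your Parts 1 and 2 are correct. They spell out what the paper means by saying these parts follow from Theorem \ref{gconjsphere}.

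The gap is in Part 3. You bound $L_{i+1}$ using the coefficient bounds $b_{i+1,s} < (d+1)^{i+1-s}$ from Part 2 of Lemma \ref{gamtgrowth}, and you bound $g_i$ from below only by $\gamma_i$. This discards the contributions $b_{i,s}\gamma_s$ with $s<i$ to $g_i$, and those are exactly what control the terms $b_{i+1,s}\gamma_s$. As a result, your argument only works under the extra ``growth regime'' you introduce, in which $\gamma_i$ dominates $\gamma_1,\ldots,\gamma_{i-1}$ by factors like $(d+1)^{i+1}$. That regime is not part of the hypothesis ``$\gamma_k \gg 0$ for all $1 \le k \le i$''.

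For instance, take $i \ge 2$ with $\gamma_1$ enormous compared to a merely large $\gamma_i$; the Part 2 upper bounds permit this. Your chain of inequalities fails there, yet the conclusion is still true. So your claim that the earlier terms are dominated \emph{only if} $\gamma_i$ is comparatively large is wrong.

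The missing step is to compare $L_{i+1}$ with $g_i$ term by term rather than with $\gamma_i$, which is what the paper does.
\begin{itemize}
\item Part 1 of Lemma \ref{gamtgrowth} gives $b_{i+1,s} < (d+1)\,b_{i,s}$ for every $0 \le s \le i$. For $s=i$ this reads $d-2i-1 < d+1$, since $b_{i,i}=1$.
\item Hence $L_{i+1} < (d+1)\,g_i$ for all $\gamma_1,\ldots,\gamma_i \ge 0$. Each $b_{i+1,s}\gamma_s$ is absorbed by its own contribution $b_{i,s}\gamma_s$ to $g_i$.
\item The Part 1 condition then follows from the one-variable inequality $g_i^{\langle i\rangle} \ge (d+1)\,g_i$. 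This holds once $g_i$ exceeds a threshold depending only on $i$ and $d$, because $g_i^{\langle i\rangle}/g_i \sim C_i\, g_i^{1/i}$ by Corollary \ref{macpowasymp}.
\item For $i \le \frac{d}{2}$ each $b_{i,s}$ is a positive integer, so $g_i \ge \max_s \gamma_s$. Largeness of any single $\gamma_k$ therefore already suffices.
\end{itemize}

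The extension to the whole gamma vector needs no regime either. As in the proof of Corollary \ref{gamgratrest}, $g_{i+1} \ge L_{i+1} > \frac{1}{3}\, g_i$ for every choice of $\gamma_{i+1} \ge 0$. So taking $\gamma_1$ large enough relative to $d$ keeps every subsequent $g_i$ above its threshold.
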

	
	\begin{proof}
		
		Since Part 1 and Part 2 follow from Theorem \ref{gconjsphere}, we will focus on Part 3. The key point is that there is a suitable choice of $\gamma_{i + 1} \ge 0$ if $\gamma_1, \ldots, \gamma_i \ge 0$ are sufficiently large. Intuitively, Corollary \ref{macpowasymp} indicates that the left hand side is of order $\frac{i + 1}{i} = 1 + \frac{1}{i}$ in $1, \gamma_1, \ldots, \gamma_i$ and the right hand side is linear in these terms. If they are large, we would expect the gap between the two sides to get larger. We will combine Corollary \ref{macpowasymp} and Lemma \ref{gamtgrowth} to find an approximate threshold. \\
		
		Suppose that $\gamma_k > 0$ for some $1 \le k \le i$. This would mean that both sides are strictly positive. Let $\alpha = b_{i,0} + b_{i,1} \gamma_1 + \ldots + b_{i, i - 1} \gamma_{i - 1} + \gamma_i$ and \[ m = \frac{b_{i + 1,0} + b_{i + 1,1} \gamma_1 + \ldots + b_{i + 1, i - 1} \gamma_{i - 1} + b_{i + 1, i} \gamma_i}{b_{i,0} + b_{i,1} \gamma_1 + \ldots + b_{i, i - 1} \gamma_{i - 1} + \gamma_i}. \] 
		
		We would like to find when \[ \alpha^{\langle i \rangle} \ge m \alpha. \] Since $\frac{1}{3} < m < d + 1$ by Lemma \ref{gamtgrowth}, it suffices to show that \[ \alpha^{\langle i \rangle} \ge (d + 1) \alpha \] for $\alpha \gg 0$. This is equivalent to showing that \[ \frac{\alpha^{\langle i \rangle}}{\alpha} \ge d + 1 \] for $\alpha \gg 0$. \\
		
		By Corollary \ref{macpowasymp}, we have that \[ \alpha^{\langle i \rangle } \sim C_i \alpha^{ \frac{i + 1}{i} } \Longrightarrow \frac{\alpha^{\langle i \rangle}}{\alpha} \sim C_i \alpha^{\frac{1}{i}}, \] where \[ C_i = \frac{(i!)^{\frac{1}{i}}}{i + 1}. \] 
		
		Since $\lim_{\alpha \to +\infty} \alpha^{\frac{1}{i}} = +\infty$, we eventually have $\frac{\alpha^{\langle i \rangle}}{\alpha} \ge d + 1$ for $\alpha \gg 0$. The term $\alpha$ is a linear combination of $1, \gamma_1, \ldots, \gamma_i$ with coefficients in $b_{i, j}$ for $0 \le j \le i$ and the induced approximate lower bound on $\alpha$ is $\left( \frac{d + 1}{i + 1} \right)^i \cdot i!$. In particular, we expect the ratio $\frac{\alpha^{\langle i \rangle}}{\alpha}$ to increase as $\alpha$ increases. This means that we can increase $\gamma_1, \ldots, \gamma_i$ so that $\gamma_{i + 1}$ can be large enough for the index $i + 2$ difference in Part 2 to be nonnegative. Repeating this for successive indices yields the desired conclusion. \\
		
	\end{proof}
	
	\color{black}

	\begin{rem} \label{matcoeffsize} ~\\
		\begin{enumerate}
			\item \textbf{(Comparison with Cohen--Macaulay case)} \\
			
			Since simplicial spheres are Cohen--Macaulay, analogous results hold for nonnegative gamma vectors associated to $h$-vectors of Cohen--Macaulay simplicial complexes (equivalently those of shellable simplicial complexes by Theorem 5.1.15 on p. 219 -- 220 of \cite{BH}). We can also see this by directly applying Theorem \ref{hvectcmshell} and replacing the $b_{r, s}$ by $a_{r, s}$. While the actual threshold size may vary due to using $a_{r, s}$ instead of $b_{r, s}$ in Proposition \ref{gamtohgvect}, the common upper bound in Lemma \ref{gamtgrowth} gives an approximate common one we can use. \\
			
			\item \textbf{(Counterparts for varying dimensions $d$) \\}
			
			When we consider $d \to \infty$, the same statements above hold when each $\gamma_i$ is very large compared to $d$. The proof of Corollary \ref{sphereposgam} implies that it suffices to have $\gamma_i \gg M d^d$ (e.g. $\gamma_i \ge d!$) for some $M \gg 0$ for each $i$. In the proof of Part 2 of Corollary \ref{sphereposgam}, the lower bound on the linear combination of $1, \gamma_1, \ldots, \gamma_i$ involved (with coefficients in $b_{i, j}$ with $0 \le j \le i$) is $\left( \frac{d + 1}{i + 1} \right)^i \cdot i!$. Since we add a new $\gamma_i(\Delta)$ component on each turn, it may be possible to ``load'' this increase on the new component. For example, we may use this to keep the $\gamma_1(\Delta)$ linear in $d$. This is sort of the opposite of balancing increases across different components of the gamma vector. \\
		\end{enumerate}
	\end{rem}

	Given that there are natural examples where simplicial spheres with nonnegative gamma vectors have those given by $f$-vectors (e.g. barycentric subdivisions by Theorem 1.1 on p. 1365 and Corollary 5.6 on p. 1376 of \cite{NPT}), we compare realizability of nonnegative gamma vectors by simplicial spheres with the condition of being an $f$-vector.  \\
	
	\pagebreak 
	
	\begin{cor} \textbf{(Comparison with $f$-vector condition) \\} \label{posgamfcomp}
		
		Given a fixed dimension $d - 1$, the existence conditions for a nonnegative gamma vector to be associated to an $h$-vector realized by a Cohen--Macaulay simplicial complex (equivalently that of a shellable one) or even a simplicial sphere are asymptotically slightly weaker than those required for the gamma vector itself to be the $f$-vector of an auxiliary simplicial complex. \\
		
		The same steps indicate that gamma vector entries of index that grow sufficiently quickly with respect to $d$ with the index $i + 1$ terms respecting an order $\frac{i + 1}{i}$ bound in the terms of index $0, 1, \ldots, i$ are equal to the $f$-vector of some simplicial complex. The resulting upper bounds for these conditions still have the same maximal order terms in $1, \gamma_1(\Delta), \ldots, \gamma_i(\Delta)$. However, only the $f$-vector condition involves smaller ``constant terms'' for the top order terms that only involve the \emph{previous} gamma vector entries $1, \gamma_1(\Delta), \ldots, \gamma_{i - 1}(\Delta)$ (i.e. that do \emph{not} involve $\gamma_i(\Delta)$). \\
	\end{cor}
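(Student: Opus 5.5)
We compare the two recursive realizability conditions one index at a time, in the same way as in the proof of Corollary \ref{sphereposgam}. Write $\gamma = (\gamma_0, \gamma_1, \ldots, \gamma_{\frac{d}{2}})$ with $\gamma_0 = 1$.

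\textbf{The $f$-vector condition.} We shift indices so that $\gamma_i$ plays the role of $f_{i-1}$ (with $f_{-1} = \gamma_0 = 1$), as allowed by the indexing remark before Theorem \ref{kkfvect}. Theorem \ref{kkfvect} then says that $\gamma$ is an $f$-vector exactly when
\[ 0 < \gamma_{i+1} \le \gamma_i^{\langle i+1 \rangle} \]
for each $i$. We will check the positivity requirement separately.

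\textbf{The sphere and Cohen--Macaulay conditions.} By Part 2 of Corollary \ref{sphereposgam}, realization by a simplicial sphere requires
\[ \gamma_{i+1} \le \alpha_i^{\langle i \rangle} - \beta_i, \]
where $\alpha_i = b_{i,0} + b_{i,1}\gamma_1 + \cdots + b_{i,i-1}\gamma_{i-1} + \gamma_i$ and $\beta_i = b_{i+1,0} + \cdots + b_{i+1,i}\gamma_i$. The Cohen--Macaulay case is the same with $a_{r,s}$ in place of $b_{r,s}$, using Theorem \ref{hvectcmshell} and Remark \ref{matcoeffsize}.

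\textbf{Step 1: asymptotics of both bounds.} By Part 1 of Corollary \ref{macpowasymp}, for $\gamma_i \to \infty$ we have
\[ \gamma_i^{\langle i+1 \rangle} \sim C_{i+1}\, \gamma_i^{\frac{i+2}{i+1}}, \qquad \alpha_i^{\langle i \rangle} \sim C_i\, \alpha_i^{\frac{i+1}{i}}. \]
By Lemma \ref{gamtgrowth}, $\beta_i < (d+1)\alpha_i$, so $\beta_i$ is of lower order than $\alpha_i^{\langle i \rangle}$. The sphere upper bound is therefore governed by $\alpha_i^{\frac{i+1}{i}}$, and the $f$-vector upper bound by $\gamma_i^{\frac{i+2}{i+1}}$.

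\textbf{Step 2: expanding the sphere bound.} Expand
\[ \alpha_i^{\frac{i+1}{i}} = \bigl(\gamma_i + L_{i-1}\bigr)^{\frac{i+1}{i}}, \qquad L_{i-1} = b_{i,0} + b_{i,1}\gamma_1 + \cdots + b_{i,i-1}\gamma_{i-1}, \]
by a binomial or mean-value estimate. This gives
\[ \alpha_i^{\frac{i+1}{i}} = \gamma_i^{\frac{i+1}{i}} + \tfrac{i+1}{i}\,\gamma_i^{\frac{1}{i}} L_{i-1} + \cdots . \]
The terms in this expansion that involve only the previous entries $1, \gamma_1, \ldots, \gamma_{i-1}$ enter through $L_{i-1}$. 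They appear with the coefficients $b_{i,j}$, which are large (of polynomial size in $d$) by Lemma \ref{gamtgrowth}.

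\textbf{Step 3: comparing the two conditions.} The $f$-vector condition contains no such contribution from earlier entries. Once the exponents are matched by indexing both conditions at the same level, it differs from the sphere condition only through smaller constant terms, namely $C$-constants with no $b_{i,j}$ factors. This gives the claimed picture:
\begin{itemize}
\item the top order in $1, \gamma_1, \ldots, \gamma_i$ is the same in both conditions;
\item only the $f$-vector condition lacks the additional contribution from the earlier entries.
\end{itemize}
Hence the $f$-vector inequality is asymptotically the slightly more restrictive one, which is the comparison the corollary asserts.

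\textbf{Step 4: the existence statement.} For the second paragraph of the statement, the recursion in Part 3 of Corollary \ref{sphereposgam} applies verbatim. Choose each new $\gamma_{i+1}$ between its lower requirement (positivity) and the upper bound $\gamma_i^{\langle i+1 \rangle}$. For $\gamma_i \gg 0$ this interval is nonempty by the divergence of $\gamma_i^{\frac{1}{i+1}}$. The resulting vector is then the $f$-vector of some simplicial complex by Theorem \ref{kkfvect}.

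\textbf{Main obstacle.} The difficulty is making ``asymptotically slightly weaker'' precise. The exponent bookkeeping is delicate: the $f$-vector condition has exponent $\frac{i+2}{i+1}$ where the sphere condition has $\frac{i+1}{i}$, and one must fix the index convention so that the comparison is genuinely between the same order terms. I would first settle the convention $f_{i-1} \leftrightarrow \gamma_i$. I would then compare the two bounds as explicit functions of $(\gamma_1, \ldots, \gamma_i)$ in the regime where all $\gamma_j$ are large relative to $d$, as in Remark \ref{matcoeffsize}, isolating the lower order cross terms in $L_{i-1}$ as the source of the gap.
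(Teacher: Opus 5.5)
Your overall strategy is the paper's: compare the two recursive upper bounds on $\gamma_{i+1}$ through $a^{\langle k\rangle}\sim C_k a^{\frac{k+1}{k}}$ and trace the difference to the earlier entries. Two steps, however, do not work as written.

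First, your translation of Theorem \ref{kkfvect} is off by one. With $\gamma_j$ in the role of $f_{j-1}$, the condition $f_{i+1}\le f_i^{\langle i+1\rangle}$ reads $\gamma_{i+2}\le\gamma_{i+1}^{\langle i+1\rangle}$, i.e.\ $\gamma_{i+1}\le\gamma_i^{\langle i\rangle}$, which is what the paper uses. The exponent mismatch you single out as the main obstacle is an artifact of this slip. Both bounds on $\gamma_{i+1}$ are index-$i$ pseudopowers, asymptotic to $C_i(\cdot)^{\frac{i+1}{i}}$ with the same constant $C_i$. This invalidates three things:
\begin{itemize}
\item the exponent $\frac{i+2}{i+1}$ in Step 1;
\item the bound $\gamma_i^{\langle i+1\rangle}$ in Step 4, which is not the order-$\frac{i+1}{i}$ bound of the statement;
\item your reading in Step 3 of the ``smaller constant terms'' as smaller $C$-constants.
\end{itemize}
The constant term meant in the statement is $L_{i-1}$ itself. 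It sits inside the pseudopower $(\gamma_i+L_{i-1})^{\langle i\rangle}$ on the sphere side and is zero on the $f$-vector side.

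Second, and more seriously, you never show that the sphere bound $\alpha_i^{\langle i\rangle}-\beta_i$ actually exceeds $\gamma_i^{\langle i\rangle}$, which is the content of ``asymptotically slightly weaker''. Both bounds have the same top order. So the real question is whether the gain $\alpha_i^{\langle i\rangle}-\gamma_i^{\langle i\rangle}$ produced by $L_{i-1}$ beats the subtracted term $\beta_i$, which has no counterpart on the $f$-vector side. Step 1 ($\beta_i=o(\alpha_i^{\langle i\rangle})$) says nothing about this, and Step 2 never compares its cross term with $\beta_i$.

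The paper's key step here is Lemma \ref{sumpowerbd} (Remark \ref{diffpowbd}) with $A=\gamma_i$, $B=L_{i-1}$, $p=\frac{i+1}{i}$. This gives asymptotically $\gamma_i^{\langle i\rangle}\le(\gamma_i+L_{i-1})^{\langle i\rangle}-L_{i-1}^{\langle i\rangle}$, so the gain is at least $L_{i-1}^{\langle i\rangle}\sim C_iL_{i-1}^{\frac{i+1}{i}}$, superlinear in the earlier entries. By Lemma \ref{gamtgrowth}, the part of $\beta_i$ involving those entries is below $(d+1)L_{i-1}$. This is how the paper concludes that $\gamma_i^{\langle i\rangle}$ is the smaller bound once $\gamma_1,\ldots,\gamma_{i-1}$ are large.

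Be aware that $\beta_i$ also contains $b_{i+1,i}\gamma_i=(d-2i-1)\gamma_i\ge\gamma_i$, which that comparison does not cover. With $\gamma_1,\ldots,\gamma_{i-1}$ fixed and $\gamma_i\to\infty$ ($i\ge 2$), the gain is only $O(\gamma_i^{1/i})$, and the sphere bound falls below $\gamma_i^{\langle i\rangle}$. So a complete comparison must also use that $\gamma_i$ is controlled by the earlier entries through the previous-step inequality. Your proposed regime, ``all $\gamma_j$ large relative to $d$'', does not by itself supply this.
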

	
	\begin{rem}
		Comments analogous to those of Remark \ref{matcoeffsize} hold for varying dimensions. \\
	\end{rem}
	
	\begin{proof}
		
		Taking index changes into account, the $f$-vector realizability conditions of Theorem \ref{kkfvect} state that we need $\gamma_{i + 2}(\Delta) \le \gamma_{i + 1}(\Delta)^{\langle i + 1 \rangle}$. Substituting in $i - 1$ in place of $i$, we have $\gamma_{i + 1}(\Delta) \le \gamma_i(\Delta)^{\langle i \rangle}$. The steps of the proof of Corollary \ref{sphereposgam} applying the estimates of Corollary \ref{macpowasymp} imply the controlled growth statement analogous to these results. We will focus on $g$-vectors of simplicial spheres since the analogous statement for $h$-vectors of Cohen--Macaulay simplicial complexes follows from the same reasoning by Lemma \ref{gamtgrowth}. \\
		
		The statement comparing this with attainment by some simplicial sphere and involves the $\gamma_{i + 1}(\Delta)$ upper bounds 
		\begin{align*}
			g_i(\Delta)^{\langle i \rangle} - (b_{i + 1, 0} + b_{i + 1, 1} \gamma_1(\Delta) + \ldots + b_{i + 1, i} \gamma_i(\Delta)) &= (b_{i, 0} + b_{i, 1} \gamma_1(\Delta) + \ldots + b_{i, i - 1} \gamma_{i - 1}(\Delta) + \gamma_i(\Delta))^{\langle i \rangle} \\ &- (b_{i + 1, 0} + b_{i + 1, 1} \gamma_1(\Delta) + \ldots + b_{i + 1, i} \gamma_i(\Delta)) \\
			&= ( \gamma_i(\Delta) + (b_{i, 0} + b_{i, 1} \gamma_1(\Delta) + \ldots + b_{i, i - 1} \gamma_{i - 1}(\Delta)) )^{\langle i \rangle} \\
			&- (b_{i + 1, 0} + b_{i + 1, 1} \gamma_1(\Delta) + \ldots + b_{i + 1, i} \gamma_i(\Delta))
		\end{align*}
		
		from the simplicial sphere side (Part 2 of Corollary \ref{sphereposgam}) and \[ \gamma_i(\Delta)^{\langle i \rangle} \] from the $f$-vector condition (Theorem \ref{kkfvect}). To compare these bounds, we recall that \[ a^{\langle k \rangle} \sim C_k a^{ \frac{k + 1}{k} } \] with \[ C_k = \frac{(k!)^{\frac{1}{k}}}{k + 1} \] by Corollary \ref{macpowasymp} and $B^p \le (A + B)^p - A^p$ if $A, B \ge 0$ and $p \ge 1$ (see Lemma \ref{sumpowerbd}). Setting \[ A = \gamma_i(\Delta), \]  \[ B = b_{i, 0} + b_{i, 1} \gamma_1(\Delta) + \ldots + b_{i, i - 1} \gamma_{i - 1}(\Delta), \] and $p = \frac{k + 1}{k} = 1 + \frac{1}{k}$, dividing and taking limits implies that 
		
		\begin{align*}
			\gamma_i(\Delta)^{\langle i \rangle} &\le ( \gamma_i(\Delta) + (b_{i, 0} + b_{i, 1} \gamma_1(\Delta) + \ldots + b_{i, i - 1} \gamma_{i - 1}(\Delta)) )^{\langle i \rangle} \\
			&- (b_{i, 0} + b_{i, 1} \gamma_1(\Delta) + \ldots + b_{i, i - 1} \gamma_{i - 1}(\Delta))^{\langle i \rangle}
		\end{align*}
		
		when $\gamma_j \gg 0$ for $1 \le j \le i$. \\
		
		Comparing this to our simplicial sphere upper bound 
		\begin{align*}
			g_i(\Delta)^{\langle i \rangle} - (b_{i + 1, 0} + b_{i + 1, 1} \gamma_1(\Delta) + \ldots + b_{i + 1, i} \gamma_i(\Delta))
			&= ( \gamma_i(\Delta) + (b_{i, 0} + b_{i, 1} \gamma_1(\Delta) + \ldots + b_{i, i - 1} \gamma_{i - 1}(\Delta)) )^{\langle i \rangle} \\
			&- (b_{i + 1, 0} + b_{i + 1, 1} \gamma_1(\Delta) + \ldots + b_{i + 1, i} \gamma_i(\Delta)),
		\end{align*}
		
		we see that the simplicial sphere upper bound subtracts a linear combination of $1, \gamma_1(\Delta), \ldots, \gamma_i(\Delta)$ instead of something that is of order $\frac{i + 1}{i} = 1 + \frac{1}{i}$. It remains to compare the coefficients of the $\gamma_j(\Delta)$ terms. By Lemma \ref{gamtgrowth}, we have that \[ \frac{1}{3} < \frac{b_{r, s}}{b_{r - 1, s}} < d + 1. \] This indicates that most possibly ``minimal'' case for the simplicial sphere upper bound is a comparison between \[ (d + 1)(b_{i, 0} + b_{i, 1} \gamma_1(\Delta) + \ldots + b_{i, i - 1} \gamma_{i - 1}(\Delta)) \] and \[ (b_{i, 0} + b_{i, 1} \gamma_1(\Delta) + \ldots + b_{i, i - 1} \gamma_{i - 1}(\Delta))^{\langle i \rangle}. \] The asymptotic equivalence $a^{\langle i \rangle} \sim C_i a^{\frac{i + 1}{i}}$ indicates that the latter is larger and show that $\gamma_i(\Delta)^{\langle i \rangle}$ is the smaller upper bound if $\gamma_1(\Delta), \ldots, \gamma_{i - 1}(\Delta)$ are sufficiently large.

	\end{proof}

	In general, we can obtain more explicit polynomial bounds in $g_1(\Delta)$ involving the gamma vector components in terms of the (nonnegative) differences of binomial coefficients $b_{i, j}$ from Proposition \ref{gamtohgvect}. The main tool is a consequence of the $M$-vector condition giving $h$-vectors of simplicial complexes. While we will focus on realizations by simplicial spheres, there are analogues for Cohen--Macaulay/shellable simplicial complexes as usual. \\
	
	\begin{thm} (Theorem 5.11 on p. 2119 of \cite{El}, Theorem 5.1.15 on p. 219 -- 220 of \cite{BH}, Theorem 2.2 on p. 56 of \cite{St}) \label{avghbd} \\
		If $h = (h_0, \ldots, h_d)$ is the $h$-vector of a Cohen--Macaulay simplicial complex, we have that \[ q h_q \le h_1(1 + h_1 + \ldots + h_{q - 1}). \]
	\end{thm}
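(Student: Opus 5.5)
The plan is to derive the inequality from the $M$-vector condition $h_{i+1} \le h_i^{\langle i \rangle}$ of Theorem \ref{hvectcmshell}, using a counting model for $h$. By Theorem \ref{hvectcmshell}, $h$ is an $M$-vector. By Macaulay's theorem (Corollary 2.4 on p. 56 of \cite{St}), it is then the Hilbert function of a standard graded algebra $R = k[x_1, \ldots, x_{h_1}]/I$. Moreover, $I$ may be taken to be a monomial order ideal, e.g. the lex-segment ideal. So $h_j$ counts the monomials of degree $j$ in an order ideal $\mathcal{O}$ of monomials in $n \coloneq h_1$ variables, i.e. a set closed under taking divisors.

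The key step is a double count of pairs $(x_\ell, u)$, where $u \in \mathcal{O}$ has degree $q$ and $x_\ell$ divides $u$. Each such $u$ has at least one variable dividing it; weighting by multiplicity, the sum of the exponents of $u$ is exactly $q$. So
\[ q h_q = \sum_{u \in \mathcal{O}_q} \sum_{\ell=1}^{n} \deg_{x_\ell}(u) = \sum_{\ell=1}^n \#\{(u, t) : u \in \mathcal{O}_q,\ 1 \le t \le \deg_{x_\ell}(u)\}. \]
For fixed $\ell$, we send the pair $(u,t)$ to $u/x_\ell^{t}$ together with the exponent $t$. Better, we send $(u,t)$ to $w = u / x_\ell^{t}$, a monomial of degree $q-t$ in $\mathcal{O}$ (by divisor-closure), and recover $u = w x_\ell^{t}$. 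This map is injective for fixed $\ell$, since $t$ is determined by $q - \deg w$.

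Hence for each $\ell$ the count is at most the number of elements of $\mathcal{O}$ of degree $0, \ldots, q-1$, which is $1 + h_1 + \cdots + h_{q-1}$. Summing over the $n = h_1$ variables gives
\[ q h_q \le h_1(1 + h_1 + \cdots + h_{q-1}). \]

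The main point to check is that the order-ideal model is legitimate. Theorem \ref{hvectcmshell} gives the pseudopower inequalities, while the model needs Macaulay's characterization of $M$-sequences as Hilbert functions of monomial order ideals, as in \cite{St} and Theorem 4.2.10 of \cite{BH}. I would cite this directly.

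Alternatively, one can use the shellable realization from Theorem \ref{hvectcmshell}, or an Artinian reduction $k[\Delta]/(\theta)$ over an infinite field. Its Hilbert function is $h$, and it is generated by $h_1$ linear forms. Taking an initial ideal then yields the monomial order ideal.
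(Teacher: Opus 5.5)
Your proof is correct. The paper gives no argument of its own for this statement: it is quoted as a black box, combining Stanley's result that Cohen--Macaulay $h$-vectors are $M$-vectors (Theorem \ref{hvectcmshell}) with an inequality for $M$-vectors taken from \cite{El}.

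What you add is a self-contained proof of that $M$-vector inequality. You pass to Macaulay's order-ideal model and double count pairs $(x_\ell,u)$, using the injection $(u,t)\mapsto u/x_\ell^{t}$ into $\mathcal{O}_{\le q-1}$ for each fixed $\ell$. This buys three things:
\begin{itemize}
\item The argument is purely combinatorial and holds for every $M$-vector, i.e.\ the Hilbert function of any standard graded algebra, not just Cohen--Macaulay $h$-vectors.
\item It identifies the equality case. Equality holds exactly when $h_q=\binom{h_1+q-1}{q}$, i.e.\ when $\mathcal{O}$ contains every monomial of degree $\le q$ in the $h_1$ variables.
\item It makes the paper's later ``closed'' bound in Part 4 of Corollary \ref{spheregambds} transparent, with no need for the upper bound theorem. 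Since $\frac{g_1}{q}\binom{g_1+q-1}{q-1}=\binom{g_1+q-1}{q}$, that bound just says $\mathcal{O}_q$ has at most as many elements as there are degree-$q$ monomials in $g_1$ variables.
\end{itemize}
The citation route is shorter on the page and matches the paper's practice of importing Macaulay-representation estimates from \cite{El} (Lemmas \ref{macrepbds} and \ref{kmacpwrbds}).

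Two small fixes to your write-up:
\begin{itemize}
\item $I$ should be a monomial ideal (e.g.\ the lex-segment ideal) whose standard monomials form $\mathcal{O}$. It is not itself an order ideal.
\item A shelling by itself does not produce a multicomplex, so drop that alternative. Keep either the Artinian reduction followed by an initial ideal, or Macaulay's theorem cited directly.
\end{itemize}
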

	
	If $\Delta$ is a simplicial sphere, Part 2c) of Theorem \ref{gconjsphere} implies that $g(\Delta) = h(T)$ for some Cohen--Macaulay simplicial complex $T$. This implies the following: \\
	
	\begin{cor} \label{spheregambds}
		Suppose that $\Delta$ is a $(d - 1)$-dimensional simplicial sphere.
		
		\begin{enumerate}
			\item For each $1 \le q \le \frac{d}{2}$, we have that \[ q g_q(\Delta) \le g_1(\Delta)(1 + g_1(\Delta) + \ldots + g_{q - 1}(\Delta)) = g_1(\Delta) h_{q - 1}(\Delta). \]
			
			\item For $\gamma(\Delta)$, Part 1 implies that \[ \gamma_q(\Delta) \le \sum_{j = 0}^{q - 1} \left[ \frac{1}{q} (\gamma_1(\Delta) + d - 1) \left( \sum_{i = j}^{q - 1} b_{i, j} \right) - b_{q, j} \right] \gamma_j(\Delta), \]  where the $b_{k, \ell}$ are the (nonnegative) differences of binomial coefficients from Proposition \ref{gamtohgvect}. \\
			
			Note that $g_1(\Delta) = h_1(\Delta) - 1$ and $h_1(\Delta) = \gamma_1(\Delta) + d$. This is consistent with $\gamma_1(\Delta) = b_{1, 0} + \gamma_1(\Delta)$ from Proposition \ref{gamtohgvect}. \\ 
			
			\item Suppose that $2 \le q \le \frac{d}{2}$. The recursion in Part 2 gives an upper bound of $\gamma_q(\Delta)$ that is a polynomial in $\gamma_1(\Delta)$ of degree $\le q$ with coefficients given by polynomials in $b_{i, j}$ (from Proposition \ref{gamtohgvect}) of degree $\le q$. If $\gamma_i(\Delta)$ has the large lower bounds from Part 2 of Remark \ref{matcoeffsize}, the monomials from these polynomials in the $b_{i, j}$ are at most linear in the $\gamma_j(\Delta)$. As a polynomial in $\gamma_1(\Delta)$, the bound we obtain is a multiple of $\gamma_1(\Delta) + d - 1$. \\
			
			\item We have ``closed'' upper bounds of $q_q(\Delta)$ and $\gamma_q(\Delta)$ given by \[ g_q(\Delta) \le \frac{g_1(\Delta) \binom{g_1(\Delta) + q - 1}{q - 1}}{q} \] and \[ \gamma_q(\Delta) \le \frac{ (\gamma_1(\Delta) + (d - 1)) \binom{\gamma_1(\Delta) + (d - 1) + (q - 1)}{q - 1}  }{q} - (b_{q, 0} + b_{q, 1} \gamma_1(\Delta) + \ldots + b_{q, q - 1} \gamma_{q - 1}(\Delta)). \]
			
			If $\gamma_q(\Delta) \ge 0$, this implies that \[ \frac{ (\gamma_1(\Delta) + (d - 1)) \binom{\gamma_1(\Delta) + (d - 1) + (q - 1)}{q - 1}  }{q} \ge b_{q, 0} + b_{q, 1} \gamma_1(\Delta) + \ldots + b_{q, q - 1} \gamma_{q - 1}(\Delta). \]
			
			For $1 \le j, k \le q - 1$, the bound above gives upper bounds on $b_{q, k}$ sizes compared to the $\gamma_j(\Delta)$ and $d$ if $\gamma_m(\Delta) \ge 0$ for all $1 \le m \le q - 1$. \\
			
		\end{enumerate}
	\end{cor}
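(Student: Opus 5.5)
The plan is to derive all four parts from Theorem \ref{avghbd}, applied to a Cohen--Macaulay complex $T$ with $h(T) = g(\Delta)$. Such a $T$ exists by Part 2c) of Theorem \ref{gconjsphere} together with Theorem \ref{hvectcmshell}.

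\textbf{Part 1.} Substituting $h_i(T) = g_i(\Delta)$ into Theorem \ref{avghbd} gives $q g_q(\Delta) \le g_1(\Delta)(1 + g_1(\Delta) + \cdots + g_{q-1}(\Delta))$ for $1 \le q \le \frac{d}{2}$. The range is fine because $g$ has components up to index $\lfloor \frac{d}{2} \rfloor$. The parenthesized sum telescopes to $h_{q-1}(\Delta)$, since $g_i = h_i - h_{i-1}$ and $h_0 = 1$.

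\textbf{Part 2.} Insert the expansion $g_i = \sum_{j \le i} b_{i,j}\gamma_j$ from Part 2 of Proposition \ref{gamtohgvect}, with $b_{i,i} = 1$ and $\gamma_0 = 1$.
\begin{itemize}
\item First, $g_1(\Delta) = b_{1,0} + \gamma_1 = (d-1) + \gamma_1$, because $b_{1,0} = d - 1$. This gives the stated identities $h_1 = \gamma_1 + d$ and $g_1 = h_1 - 1$.
\item Next, $\sum_{i=0}^{q-1} g_i = \sum_{j=0}^{q-1}\left(\sum_{i=j}^{q-1} b_{i,j}\right)\gamma_j$ by interchanging the order of summation.
\item The left side is $q g_q = q\gamma_q + q\sum_{j<q} b_{q,j}\gamma_j$.
\item Divide by $q$ and move the $b_{q,j}\gamma_j$ terms to the right. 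This yields exactly the displayed recursion.
\end{itemize}

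\textbf{Part 3.} Induct on $q$. Each $\gamma_j$ with $j < q$ is bounded above by its own recursive bound, which by the inductive hypothesis is a polynomial of degree $\le j$ in $\gamma_1$. There is one sign caveat: this substitution only preserves the inequality when the bracketed coefficient is nonnegative, so I would state the bound in that regime. In the large-$\gamma$ regime of Remark \ref{matcoeffsize} the factor $\frac{1}{q}(\gamma_1 + d - 1)$ dominates the $b_{q,j}$, so the condition holds there. The degree count is then:
\begin{itemize}
\item the factor $\frac{1}{q}(\gamma_1 + d - 1)$ raises the degree by at most one, giving degree $\le q$ in $\gamma_1$;
\item the coefficients are products of sums of $b_{i,j}$ accumulated over at most $q$ steps, so they have degree $\le q$ in the $b_{i,j}$.
\end{itemize}
Every term except the $b_{q,j}\gamma_j$ corrections carries the common factor $\gamma_1 + d - 1$. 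Under the large lower bounds of Remark \ref{matcoeffsize} those corrections are lower order, which gives the ``multiple of $\gamma_1 + d - 1$'' statement. This part is descriptive, and I expect it to be the main obstacle, because it needs careful bookkeeping of both sign and degree.

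\textbf{Part 4.} Here the goal is a closed bound rather than a recursion. Use the bound $h_{q-1}(T) \le \binom{h_1(T) + q - 2}{q-1}$, the number of monomials of degree $q-1$ in $h_1(T)$ variables. This holds for $M$-vectors, or can be obtained by iterating $a^{\langle k\rangle}$ starting from the exact Macaulay representation of $h_1(T)$. Since $h_1(T) = g_1(\Delta)$, this gives $h_{q-1}(\Delta) = \sum_{i < q} h_i(T) \le \binom{g_1 + q - 1}{q - 1}$ by the hockey-stick identity. Plugging this into Part 1 gives the bound on $g_q$. Then substitute $g_1 = \gamma_1 + d - 1$ and subtract $\sum_{j<q} b_{q,j}\gamma_j$ to get the bound on $\gamma_q$. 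If $\gamma_q \ge 0$, that bound must be nonnegative, which is the final displayed inequality. Isolating any single $b_{q,k}\gamma_k$, using nonnegativity of the other terms, bounds $b_{q,k}$ relative to the $\gamma_j$ and $d$.
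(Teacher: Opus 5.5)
Your Parts 1--3 follow the paper's proof: Theorem~\ref{avghbd} applied to a Cohen--Macaulay complex with $h$-vector $g(\Delta)$, then the substitution $g_i=\sum_{j\le i}b_{i,j}\gamma_j$ with $b_{1,0}=d-1$ and an interchange of summation, then induction on $q$ for the degree count. The genuine difference is in Part 4.

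The paper bounds $h_{q-1}(\Delta)$ by the Upper Bound Theorem, $h_{q-1}(\Delta)\le\binom{f_0(\Delta)-d+q-2}{q-1}$, and rewrites $f_0(\Delta)-d-1=g_1(\Delta)$. You instead bound each $g_i(\Delta)\le\binom{g_1(\Delta)+i-1}{i}$ using that $g(\Delta)$ is an $M$-vector, and sum with the hockey-stick identity. The two bounds coincide: both give $\binom{g_1(\Delta)+q-1}{q-1}$, since the number of monomials of degree at most $q-1$ in $g_1(\Delta)$ variables equals the number of degree exactly $q-1$ in $g_1(\Delta)+1$ variables. The paper's input needs only that $h(\Delta)$ is an $M$-vector, i.e.\ Cohen--Macaulayness. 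Yours rests the whole corollary on the single fact that $g(\Delta)$ is an $M$-vector, which Part 1 already requires, so nothing extra is imported.

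Your sign caveat in Part 3 improves on the paper. The paper substitutes upper bounds for the $\gamma_j$ into the recursion without checking the signs of the bracketed coefficients. For $q\ge 3$ the coefficient of $\gamma_{q-1}$ is $\frac{\gamma_1+d-1}{q}-(d-2q+1)$, which is negative when $\gamma_1$ is small and $d$ is large. Your claim that the large-$\gamma$ regime removes the problem can be made precise. Since $\sum_{i=j}^{q-1}b_{i,j}\ge b_{q-1,j}>0$ and $b_{q,j}<(d+1)\,b_{q-1,j}$ by Lemma~\ref{gamtgrowth}, every bracketed coefficient (for every index up to $q$) is nonnegative once $\gamma_1+d-1\ge q(d+1)$.
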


	\begin{proof}
		\begin{enumerate}
			\item This follows directly from Part 2c) of Theorem \ref{gconjsphere} and Part 3 of Theorem \ref{hvectcmshell}. \\
			
			\item Since \[ g_\ell(\Delta) = b_{\ell, 0} + b_{\ell, 1} \gamma_1(\Delta) + \ldots + b_{\ell, \ell - 1} \gamma_{\ell - 1}(\Delta) + \gamma_\ell(\Delta) \] by Proposition \ref{gamtohgvect} and $b_{1, 0} = d - 1$, 
			
			we have 
			
			\begin{align*}
				q \left( \sum_{k = 0}^{q - 1} b_{q, k} \gamma_k(\Delta) + \gamma_q(\Delta) \right) &\le  (\gamma_1(\Delta) + d - 1) \\
				&\cdot ( 1 + (b_{1, 0} + \gamma_1(\Delta)) + \ldots + (b_{q - 1, 0} + b_{q - 1, 1} \gamma_1(\Delta) + \ldots + b_{q - 1, q - 2} \gamma_{q - 2}(\Delta) + \gamma_{q - 1}(\Delta)) ) \\
				&= (\gamma_1(\Delta) + d - 1) \left( \sum_{j = 0}^{q - 1} \left( \sum_{i = j}^{q - 1} b_{i, j} \right) \gamma_j(\Delta) \right) \\
				\Longrightarrow \gamma_q(\Delta) &\le \frac{1}{q} (\gamma_1(\Delta) + d - 1) \left( \sum_{j = 0}^{q - 1} \left( \sum_{i = j}^{q - 1} b_{i, j} \right) \gamma_j(\Delta) \right) - \sum_{k = 0}^{q - 1} b_{q, k} \gamma_k(\Delta) \\
				&= \sum_{j = 0}^{q - 1} \left[ \frac{1}{q} (\gamma_1(\Delta) + d - 1) \left( \sum_{i = j}^{q - 1} b_{i, j} \right) - b_{q, j} \right] \gamma_j(\Delta). 
			\end{align*}

			\item The $q = 2$ case follows from the $q = 2$ case of Part 1. Suppose that $\gamma_m(\Delta)$ is bounded above by a polynomial in $\gamma_1(\Delta)$ of degree $\le m$ with coefficients given by polynomials in $b_{i, j}$ (Proposition \ref{gamtohgvect}) of degree $\le m$ for each $2 \le m \le q - 1$. In the inequality \[ \gamma_q(\Delta) \le \sum_{j = 0}^{q - 1} \left[ \frac{1}{q} (\gamma_1(\Delta) + d - 1) \left( \sum_{i = j}^{q - 1} b_{i, j} \right) - b_{q, j} \right] \gamma_j(\Delta) \] from Part 2, each $\gamma_j(\Delta)$ with $0 \le j \le q - 1$ is bounded above by a polynomial in $\gamma_1(\Delta)$ of degree $\le q - 1$ with coefficients from $b_{i, j}$ of degree $\le q - 1$. Multiplying this by a linear term in $\gamma_1(\Delta)$ and $b_{i, j}$ yields a polynomial in $\gamma_1(\Delta)$ of degree $\le q$ with coefficients from $b_{i, j}$ of degree $\le q$. \\
			
			\item Combining Theorem \ref{avghbd} with the upper bound theorem yields a closed polynomial upper bound in $g_1(\Delta)$ for $g_q(\Delta)$. Since $1 + g_1(\Delta) + \ldots + g_{q - 1}(\Delta) = h_{q - 1}(\Delta)$, the bound \[ h_{q - 1}(\Delta) \le \binom{n - d + q - 1 - 1}{q - 1} \] from the upper bound theorem (p. 59 of \cite{St}, p. 228 of \cite{BH}) and $g_1(\Delta) = h_1(\Delta) - 1 = f_0(\Delta) - d - 1$ (p. 213 of \cite{BH}) implies that \[ \frac{g_q(\Delta)}{g_1(\Delta)} \le \frac{\binom{g_1(\Delta) + q - 1}{q - 1}}{q}. \]
			
			When $g_1(\Delta)$ is large, we have an (asymptotic) upper bound that is about $\frac{g_1(\Delta)^q}{q!}$. \\
			
			Since \[ g_q(\Delta) = b_{q, 0} + b_{q, 1} \gamma_1(\Delta) + \ldots + b_{q, q - 1} \gamma_{q - 1}(\Delta) + \gamma_q(\Delta) \] using the (nonnegative) differences of binomial coefficients $b_{i, j}$ (Proposition \ref{gamtohgvect}) and \[ g_1(\Delta) = b_{1, 0} + \gamma_1(\Delta) = \gamma_1(\Delta) + (d - 1), \] this implies that 
			
			\begin{equation*}
				\begin{gathered}
					\frac{b_{q, 0} + b_{q, 1} \gamma_1(\Delta) + \ldots + b_{q, q - 1} \gamma_{q - 1}(\Delta) + \gamma_q(\Delta)}{\gamma_1(\Delta) + (d - 1)} \le \frac{ \binom{\gamma_1(\Delta) + (d - 1) + (q - 1)}{q - 1} }{q} \\
					\Longrightarrow \gamma_q(\Delta) \le \frac{ (\gamma_1(\Delta) + (d - 1)) \binom{\gamma_1(\Delta) + (d - 1) + (q - 1)}{q - 1}  }{q} - (b_{q, 0} + b_{q, 1} \gamma_1(\Delta) + \ldots + b_{q, q - 1} \gamma_{q - 1}(\Delta)). 
				\end{gathered} 
			\end{equation*}

		\end{enumerate}
	\end{proof}
	
	\begin{rem} \textbf{(Analogues for Cohen--Macaulay simplicial complexes) \\}
		Inequalities analogous to those in Corollary \ref{spheregambds} hold with the binomial coefficients $a_{i, j}$ replacing (nonnegative) differences $b_{i, j}$ from Proposition \ref{gamtohgvect} when we consider gamma vectors of Cohen--Macaulay simplicial complexes. This is a direct substitution for Part 2 and Part 3. In Part 4, we would substitute in the upper bound theorem at an earlier point and would end up with a different expression.  \\
	\end{rem}
	
	\color{black}

	\subsection{Simplicial spheres satisfying the link condition} \label{linkcondimp}

	Next, we focus on realizations of nonnegative gamma vectors by simplicial spheres $\Delta$ satisfying the link condition, which is defined below. \\
	
	\begin{defn} \cite{Nev}
		A simplicial complex $\Delta$ satisfies the \textbf{link condition} if $\lk_\Delta(a) \cap \lk_\Delta(b) = \lk_\Delta(a, b)$ for every edge $\{ a, b \} \in \Delta$. \\
	\end{defn}
	
	This is a natural condition to consider from the perspective of piecewise linear (PL) homeomorphisms since they can be decomposed into edge subdivisions and contractions (Corollary [10:2d] on p. 302 of \cite{Alex}, Corollary 4.1 on p. 75 of \cite{LN}) and the link condition is equivalent to the contraction of every edge preserving the PL homeomorphism class of a compact PL manifold without boundary \cite{Nev}. In addition, the condition $\lk_\Delta(a) \cap \lk_\Delta(b) = \lk_\Delta(a, b)$ for edges $\{ a, b \} \in \Delta$ is a common application of flagness, which ties many examples from gamma positivity together (see \cite{Athgam}, \cite{Gal}). \\

	In Section \ref{mainlinkineq}, we convert local data from nonnegativity (Corollary \ref{globgintlac}) and the $M$-vector condition (Corollary \ref{gglobbounds}) on the $g$-vector $g(\widetilde{\Delta}_e)$ of the contractions $\widetilde{\Delta}_e$ of edges into global data encoding ``average'' properties of these local parts. These conditions are combined in to a single inequality in Corollary \ref{diffcush}. The main tool for this conversion was a local-global formula (Lemma \ref{glocedgesum}) that is a $g$-vector analogue of one appearing in many different contexts including the proof of the upper bound conjecture and the generalized lower bound theorem for balanced simplicial polytopes (Proposition \ref{hlocsum}). \\

	Along the way, we show that the global/averaged nonnegativity of the $g$-vectors of the edge contractions $\widetilde{\Delta}$ is usually trivially implied by the $M$-vector condition on the $g$-vectors of arbitrary simplicial spheres unless $g_1(\Delta) < d + 10$ (Proposition \ref{interlacingnontriv}). If $\Delta$ is a simplicial sphere PL homeomorphic to the boundary of a cross polytope, this special case translates as coming from a PL homeomorphism with a very small ``net number of edge subdivisions'' in edge subdivision/contraction decompositions. \\

	In particular, this comes from a shift in perspective on these global/averaged inequalities as information on \emph{ratios} of $g$-vector components $\frac{g_{\ell + 1}(\Delta)}{g_\ell(\Delta)}$. Given this situation, we mainly focus on the global/averaged version of the $M$-vector condition on the $g$-vectors of the edge contractions $\widetilde{\Delta}_e$ of edges $e \in \Delta$. We note that individual local conditions induce ``sandwiching'' of local parts (Proposition \ref{glocbounds}) with global/averaged counterparts (Corollary \ref{gglobbounds}) when combined with the PL manifold condition. We will primarily focus on the lower bounds and their comparison with nonnegativity of the $g$-vectors of the contractions $\widetilde{\Delta}_e$ of edges $e \in \Delta$ (Corollary \ref{diffcush}) since this is the bound specific to spheres satisfying the link condition rather than PL manifolds in general. \\

	The general shape of the global/averaged $M$-vector conditions on $g$-vectors of contractions $\widetilde{\Delta}_e$ of edges $e \in \Delta$ involves a lower bound on $\frac{g_{k + 2}(\Delta)}{g_{k + 1}(\Delta)}$ in terms of $\frac{g_{k + 1}(\Delta)}{g_k(\Delta)}$, the number of edges $f_1(\Delta)^{ \frac{1}{k} }$, and $\frac{g_k(\Delta)^{ \frac{k + 1}{k} }}{g_{k + 1}(\Delta)}$ (Proposition \ref{ratlowbd} and Example \ref{simpratlowbd}). The last parameter measures how far the $M$-vector condition $g_{k + 1}(\Delta) \le g_k(\Delta)^{ \langle k \rangle }$ is from equality (Part 2c) of Theorem \ref{gconjsphere} and Corollary \ref{macpowasymp}). Increasing these parameters decreases the resulting lower bound of $\frac{g_{k + 2}(\Delta)}{g_{k + 1}(\Delta)}$. \\

	We then take a closer look at the behavior of these parameters in high-dimensional spheres $\Delta$ satisfying the link condition such that $\gamma(\Delta) \ge 0$ in Section \ref{asymplink} (summarized and specialized in Section \ref{sumconst}). Both this further analysis and the earlier observations on the shape of the inequality involve a return to the perspective of the global inequalities as storing averaging data of local parts. A sort of parallel to the earlier global/averaged $g$-vector condition on contractions $\widetilde{\Delta}_e$ of edges $e \in \Delta$ is that individual global/averaged $M$-vector conditions on $g(\widetilde{\Delta}_e)$ for high-dimensional spheres are trivially implied by the $M$-vector condition on $g(\Delta)$ for arbitrary simplicial simplicial spheres when $g_1(\Delta)$ (from ``net number of edge subdivisions'') is superlinear and the number of edges $f_1(\Delta)$ is large compared to the rate of increase of the $g$-vector components (Theorem \ref{largelinktriv}). This means that the $g$-vector is not affected by the link condition in these cases. \\
	
	The same applies to possible (nonnegative) gamma vectors realized since they are obtained from $g$-vectors via invertible linear transformations (see Proposition \ref{gamtohgvect}). Since gamma vectors of barycentric subdivisions are equal to $f$-vectors of auxiliary simplicial complexes, this suggests a closer connection between gamma vectors and $f$-vectors in such settings in general. Combining this with the general triviality of the global/averaged $M$-vector condition on contractions $\widetilde{\Delta}_e$ of edges $e \in \Delta$ when $g_1(\Delta)$ is linear in $d$ (Part 2 of Corollary \ref{consttriv}), this parallel can be sort of extended to a dichotomy between the behavior of such simplicial spheres where $g_1(\Delta)$ is linear in $d$ and those where it is superlinear in $d$. Additional triviality cases come from ``generic'' cases among asymptotic equivalence relations between the number of edges $f_1(\Delta)$ and rates of increase of $g$-vector components (Part 1 of Corollary \ref{consttriv}). \\

	The special/maximal cases where among asymptotic equivalence relations between the number of edges $f_1(\Delta)$ and rates of increase of $g$-vector components (Corollary \ref{constlinknontriv}) yields cases where the global/averaged $M$-vector condition on contractions $\widetilde{\Delta}_e$ of edges $e \in \Delta$ are satisfied nontrivially by such high-dimensional simplicial spheres. The resulting limiting inequalities take the form of lower bounds between 0 and 1 for the quotient of $\frac{g_{k + 2}(\Delta)}{g_{k + 1}(\Delta)}$ by $\frac{g_{k + 1}(\Delta)}{g_k(\Delta)}$ (Corollary \ref{nontrivlinkgam} and Corollary \ref{constlinknontriv}). Since we often expect $\frac{g_{\ell + 1}(\Delta)}{g_\ell(\Delta)}$ to decrease as $\ell$ increases (Remark \ref{indexratchange}), this lower bound can be thought of as a ``control'' on this decrease. While there is initially a wide range of possible outputs, the (non)triviality depends on the comparison of ``average over edges $e \in \Delta$'' of differences $g_k(\Delta) - g_{k - 1}(\lk_\Delta(e))$ with a modified averaging over edges of ratios $\frac{g_k(\Delta)^{ \frac{k + 1}{k} }}{g_{k + 1}(\Delta)}$ measuring how far the $M$-vector inequality $g_{k + 1}(\Delta) \le g_k(\Delta)^{ \langle k \rangle}$ (Part 2c) of Theorem \ref{gconjsphere}) is from equality. This can also be phrased in terms of the size of $f_1(\Delta)^k$ compared to $g_{k + 1}(\Delta)$. Decreases in the former and increases in the latter yield larger lower bounds of this quotient of $\frac{g_{k + 2}(\Delta)}{g_{k + 1}(\Delta)}$ by $\frac{g_{k + 1}(\Delta)}{g_k(\Delta)}$. \\
	
	For gamma vectors $\gamma(\Delta)$, lower bounds on $\frac{g_{k + 2}(\Delta)}{g_{k + 1}(\Delta)}$ can be phrased as lower bounds on $\gamma_{k + 2}(\Delta)$ since this term only appears in $g_{k + 2}(\Delta)$ (see Proposition \ref{gamtohgvect}). However, the asymptotic effect on high-dimensional spheres satisfying the link condition with $\gamma(\Delta) \ge 0$ is the largest when the gamma vector components involved are asymptotically dominant in the $g$-vector components of the same index. For example, this is occurs if $\gamma_{\ell + 1} \ge (d + 1)g_\ell(\Delta)$ (e.g. when $g_{\ell + 1}(\Delta) \ge 2(d + 1) g_\ell(\Delta)$). While the results discussed here involve \emph{individual} inequalities associated to constant indices $k$, the underlying tools (Proposition \ref{linkinitasymp}, Corollary \ref{asympgratinit}, Corollary \ref{coefflinkasymp}) apply to the behavior of \emph{collections} of inequalities associated to varying indices. \\ 
	
	\begin{rem} \textbf{(Cohen--Macaulay simplicial complex generalizations) \\}
		The same reasoning as above implies analogous results for $h$-vectors of Cohen--Macaulay simplicial complexes whose edge contractions have $h$-vectors realized by Cohen--Macaulay simplicial complexes. However, we focus on simplicial spheres here since the link condition seems more natural to consider in the context of the PL homeomorphisms discussed above.
	\end{rem}

	\subsubsection{Main underlying inequalities} \label{mainlinkineq}
	
	Since the link condition is a local condition, we will start by giving a local-global relation between $g$-vectors of links over edges and the entire simplicial complex. It is a variant of a local-global result that was used in the proof of the upper bound conjecture for convex polytopes (p. 183 of \cite{McM2}). Analogous identities also appeared in the context of gamma vectors to study the top component for odd $d$ (Lemma 1.2.1 on p. 274 of \cite{Gal}) and were combined with Lagrange inversion to relate the gamma polynomial to differences in local-global behavior (Theorem 1.7 on p. 8 and p. 10 -- 11 of \cite{Pexp}). \\
	
	We will use the link condition to give a $g$-vector variant involving links over edges. \\
	
	\begin{prop} (p. 183 of \cite{McM2}, Proposition 2.3 on p. 648 -- 649 of \cite{Swa}, p. 1688 of \cite{JKM}) \label{hlocsum} \\
		Let $\Delta$ be a pure $(d - 1)$-dimensional simplicial complex. Then, we have \[ \sum_{v \in V(\Delta)} h_i(\lk_\Delta(v)) = (i + 1) h_{i + 1}(\Delta) + (d - i) h_i(\Delta). \]
	\end{prop}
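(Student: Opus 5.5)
The identity is a double count of pairs (vertex, face containing it), carried out at the level of $f$-vectors and then converted to $h$-vectors with the generating function from Part 2 of Definition \ref{fhvect}.

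\textbf{Step 1 (face count).} For each vertex $v$ of the pure $(d-1)$-dimensional complex $\Delta$, the link $\lk_\Delta(v)$ is pure of dimension $d-2$. Its $(j-1)$-dimensional faces correspond bijectively to the $j$-dimensional faces of $\Delta$ containing $v$, via $G \mapsto G \cup \{v\}$. A $j$-dimensional face of $\Delta$ has $j+1$ vertices, so summing over all vertices gives
\[ \sum_{v} f_{j-1}(\lk_\Delta(v)) = (j+1) f_j(\Delta) \]
for $-1 \le j \le d-1$. The case $j=-1$ reads $\sum_v f_{-2} = 0$, which is consistent with the convention $f_{-2}=0$.

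\textbf{Step 2 (generating functions).} For a complex of dimension $d-1$, Definition \ref{fhvect} gives
\[ H_\Delta(x) := \sum_i h_i(\Delta) x^{d-i} = \sum_i f_{i-1}(\Delta)(x-1)^{d-i}. \]
Applying this to the links, which have dimension $d-2$, we get
\[ \sum_v H_{\lk(v)}(x) = \sum_v \sum_i f_{i-1}(\lk(v))(x-1)^{d-1-i} = \sum_i i\, f_{i-1}(\Delta)(x-1)^{d-1-i}, \]
after reindexing Step 1 with $j = i-1$. The right-hand side is the derivative of $\sum_i f_{i-1}(\Delta)(x-1)^{d-i}$ that lowers the exponent $d-i$ in the right way. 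Concretely, define $F(y) = \sum_i f_{i-1}(\Delta) y^{d-i}$ with $y = x-1$. Then
\[ d\,F(y) - yF'(y) = \sum_i i\, f_{i-1}(\Delta) y^{d-i}, \]
so
\[ \sum_v H_{\lk(v)}(x) = \frac{dF(y) - yF'(y)}{y}. \]
Since $F(y) = H_\Delta(x)$, this becomes
\[ \sum_v H_{\lk(v)}(x) = \frac{d\,H_\Delta(x) - (x-1)H_\Delta'(x)}{x-1}. \]

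\textbf{Step 3 (extract coefficients).} Multiply through by $x-1$:
\[ (x-1)\sum_v H_{\lk(v)}(x) = d\,H_\Delta(x) - (x-1)H_\Delta'(x). \]
Now compare coefficients of $x^{d-i}$ on both sides, using that $h_i(\lk(v))$ is the coefficient of $x^{d-1-i}$ in $H_{\lk(v)}$.

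On the left, the coefficient is $\sum_v \big(h_{i-1}(\lk v) - h_i(\lk v)\big)$.

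On the right, the coefficient is
\[ d\,h_i - (d-i+1)h_{i-1} + (d-i)h_i = (2d-i)h_i - (d-i+1)h_{i-1}. \]

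This yields a recursion in the partial sums $S_i = \sum_v h_i(\lk v)$. Rather than solve it, it is cleaner to check the claimed formula directly: substitute $S_i = (i+1)h_{i+1} + (d-i)h_i$ into $S_{i-1} - S_i$ and confirm that it equals the right-hand coefficient. The base case is $S_0 = f_0 = h_1 + d\,h_0$, which matches the formula at $i=0$, so uniqueness of the recursion settles the identity.

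Alternatively, I would avoid the recursion altogether by writing $\sum_v H_{\lk(v)}$ directly as $\frac{1}{x-1}\big(d H_\Delta - (x-1)H'_\Delta\big)$ and reading off coefficients. I expect this to be the cleaner route.

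The main obstacle is the sign and index bookkeeping in Steps 2 and 3, especially the shift between dimension $d-1$ for $\Delta$ and $d-2$ for the links. I would sanity-check the final formula on the boundary of a simplex before trusting it.
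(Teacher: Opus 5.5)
Your strategy of counting vertex--face incidences and then pushing the count through the $f$-to-$h$ generating function, where it becomes the operator $F \mapsto dF - yF'$, is the standard proof from the cited sources; the paper itself only cites the result. However, Step 2 has an off-by-one error that makes the identity you derive false.

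The inner sum you need is $\sum_v f_{i-1}(\lk_\Delta(v))$. Step 1 with $j = i$ gives $(i+1) f_i(\Delta)$; your substitution $j = i-1$ computes $\sum_v f_{i-2}(\lk_\Delta(v))$ instead. The correct computation is
\[ \sum_v H_{\lk_\Delta(v)}(x) = \sum_i (i+1) f_i(\Delta)(x-1)^{d-1-i} = \sum_i i\, f_{i-1}(\Delta)(x-1)^{d-i} = dF(y) - yF'(y), \]
with no division by $y$. Your version $\frac{1}{y}\bigl(dF(y) - yF'(y)\bigr)$ contains the term $d\, f_{d-1}(\Delta)/y$, so it is not a polynomial. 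The left side, by contrast, is a polynomial of degree $d-1$ with leading coefficient $f_0(\Delta)$. On the boundary of a simplex, your own proposed sanity check, your formula gives $(d+1)(1 + x + \cdots + x^{d-1})/(x-1)$.

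Step 3 then adds two sign errors:
\begin{itemize}
\item The coefficient of $x^{d-i}$ in $(x-1)\sum_v H_{\lk_\Delta(v)}(x)$ is $S_i - S_{i-1}$, not $S_{i-1} - S_i$.
\item The coefficient of $x^{d-i}$ in $dH_\Delta - (x-1)H_\Delta'$ is $i h_i + (d-i+1) h_{i-1}$, not $(2d-i)h_i - (d-i+1)h_{i-1}$.
\end{itemize}
So the ``substitute and confirm'' check you deferred would fail. For the boundary of a simplex (all $h_i = 1$, true values $S_i = d+1$), your recursion reads $0 = d-1$ for $1 \le i \le d-1$. The base-case-plus-uniqueness argument therefore establishes nothing.

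The repair is short and removes the recursion entirely. From the corrected identity $\sum_v H_{\lk_\Delta(v)}(x) = dH_\Delta(x) - (x-1)H_\Delta'(x)$, compare coefficients of $x^{d-1-i}$. The left side gives $S_i$, and the right side gives
\[ d\,h_{i+1} - \bigl[(d-i-1)h_{i+1} - (d-i)h_i\bigr] = (i+1)h_{i+1} + (d-i)h_i, \]
which is the statement.
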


	\begin{lem} \label{glocedgesum}
		Suppose that $\Delta$ is a pure $(d - 1)$-dimensional simplicial complex satisfying the link condition. Then, we have \[ 2 \sum_{ \substack{ e \in \Delta \\ |e| = 2 }} g_k(\lk_\Delta(e)) = (k + 1)(k + 2) g_{k + 2}(\Delta) + 2(k + 1)(d - k) g_{k + 1}(\Delta) + (d - k)(d - k + 1) g_k(\Delta) \] for all $0 \le k \le \frac{d}{2} - 1$. The convention we will use is that $g_m(\Delta) = 0$ for all $m \ge \frac{d}{2} + 1$. \\
	\end{lem}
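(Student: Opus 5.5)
The plan is to derive the edge identity by applying the vertex local-global formula of Proposition \ref{hlocsum} twice, first to $\Delta$ and then to each vertex link. We then pass from $h$-vectors to $g$-vectors by subtracting two consecutive instances of the resulting $h$-identity.

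\textbf{Step 1 (iterating Proposition \ref{hlocsum}).} Fix a vertex $v$. The link $\lk_\Delta(v)$ is pure of dimension $d-2$, and for every vertex $w$ of $\lk_\Delta(v)$ we have $\lk_{\lk_\Delta(v)}(w) = \lk_\Delta(\{v,w\})$. This is where we use the local structure of links. If the link condition is needed to identify these complexes in the way the author intends, it enters only here. Summing over ordered pairs $(v,w)$ counts every edge $e = \{v,w\}$ exactly twice, so
\[ 2\sum_{|e|=2} h_i(\lk_\Delta(e)) = \sum_{v} \sum_{w \in V(\lk_\Delta(v))} h_i(\lk_{\lk_\Delta(v)}(w)). \]
Applying Proposition \ref{hlocsum} to the inner sum, with $d$ replaced by $d-1$, gives $\sum_v \bigl[(i+1)h_{i+1}(\lk_\Delta(v)) + (d-1-i)h_i(\lk_\Delta(v))\bigr]$. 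Applying Proposition \ref{hlocsum} to $\Delta$ once more then yields
\[ 2\sum_{|e|=2} h_i(\lk_\Delta(e)) = (i+1)(i+2)h_{i+2}(\Delta) + 2(i+1)(d-i-1)h_{i+1}(\Delta) + (d-i)(d-i-1)h_i(\Delta). \]

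\textbf{Step 2 (passing to $g$).} Write $g_k(\lk_\Delta(e)) = h_k(\lk_\Delta(e)) - h_{k-1}(\lk_\Delta(e))$. Subtract the $i=k-1$ instance of the Step 1 identity from the $i=k$ instance, and compare the result with the expansion of the right-hand side of the lemma, using $g_m(\Delta) = h_m(\Delta) - h_{m-1}(\Delta)$. The comparison is a coefficient check on $h_{k+2}, h_{k+1}, h_k, h_{k-1}$:
\begin{itemize}
\item $h_{k+2}$ and $h_{k-1}$ match immediately.
\item For $h_{k+1}$, both sides give $(k+1)(2d-3k-2)$.
\item For $h_k$, both sides give $(d-k)(d-3k-1)$.
\end{itemize}
This is routine algebra.

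\textbf{Main obstacle (range of indices).} The identity of Step 2 holds with $g_m \coloneq h_m - h_{m-1}$ for all $m$. The lemma instead uses the convention $g_m(\Delta)=0$ for $m \ge \frac{d}{2}+1$, and the only index at risk is $k+2 = \frac{d}{2}+1$ when $k = \frac{d}{2}-1$. For $0 \le k \le \frac{d}{2}-2$, every index that occurs is at most $\frac{d}{2}$, so the identity holds verbatim. For the top value of $k$ I would check the term $h_{d/2+1}(\Delta)-h_{d/2}(\Delta)$ separately.
\begin{itemize}
\item Use Dehn--Sommerville (Part 2b of Theorem \ref{gconjsphere}) in the sphere case to rewrite $h_{d/2+1}(\Delta)=h_{d/2-1}(\Delta)$.
\item Use the symmetry of $h(\lk_\Delta(e))$ for links of edges, which have dimension $d-3$, on the left-hand side.
\end{itemize}
The aim is to show that the discrepancy cancels, or else that the statement should be read with $g_{d/2+1} \coloneq h_{d/2+1}-h_{d/2}$. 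I would settle this first by testing the boundary of the cross polytope in small even $d$, and only afterwards finalize the convention.
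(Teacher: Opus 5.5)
Your proof is essentially the paper's. Both apply Proposition \ref{hlocsum} twice through $\lk_{\lk_\Delta(v)}(w)=\lk_\Delta(\{v,w\})$. The only difference is the order: the paper first converts the vertex formula to $\sum_{v} g_k(\lk_\Delta(v)) = (k+1)g_{k+1}(\Delta)+(d-k+1)g_k(\Delta)$ and then iterates, while you iterate in $h$ and take differences at the end. Your coefficient checks are right. As you suspected, the link condition plays no real role: $\lk_{\lk_\Delta(v)}(w)=\lk_\Delta(\{v,w\})$ holds in every simplicial complex, even though the paper invokes the link condition at exactly this step.

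The obstacle you flagged is genuine, and it resolves the second way: the discrepancy does not cancel. What your argument and the paper's both prove is the identity with $g_{k+2}(\Delta) := h_{k+2}(\Delta)-h_{k+1}(\Delta)$ for every $k$. So at $k=\frac{d}{2}-1$ the stated convention $g_{d/2+1}(\Delta)=0$ makes the lemma false in general.

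Your proposed test confirms this. For the boundary of the $4$-dimensional cross polytope we have $h=(1,4,6,4,1)$ and $f_1=24$, and every edge link is a $4$-cycle with $g_1=1$. For $k=1$ the left side is $48$. The right side is $6g_3+12g_2+12g_1=6g_3+60$, which is $60$ under the convention and equals $48$ only with $g_3=h_3-h_2=-2$.

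For spheres, Dehn--Sommerville turns the correct top term into $g_{d/2+1}(\Delta)=-g_{d/2}(\Delta)$. Nothing on the left side can absorb it, since $g_{d/2-1}(\lk_\Delta(e))$ is already the middle component for the $(d-3)$-dimensional edge links.
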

	
	\begin{proof}
		
		We will first give a $g$-vector analogue of Proposition \ref{hlocsum} for sums of links over vertices.
		
		\begin{align*}
			\sum_{v \in V(\Delta)} g_k(\lk_\Delta(v)) &= \sum_{v \in V(\Delta)} h_k(\lk_\Delta(v)) - \sum_{v \in V(\Delta)} h_{k - 1}(\lk_\Delta(v)) \\
			&= (k + 1) h_{k + 1}(\Delta) + (d - k) h_k(\Delta) - k h_k(\Delta) - (d - k + 1) h_{k - 1}(\Delta) \\
			&= (k + 1) h_{k + 1}(\Delta) + (d - k) h_k(\Delta) - (k + 1) h_k(\Delta) - (d - k) h_{k - 1}(\Delta) + h_k(\Delta) - h_{k - 1}(\Delta) \\
			&= (k + 1)(h_{k + 1}(\Delta) - h_k(\Delta)) + (d - k)(h_k(\Delta) - h_{k - 1}(\Delta)) + (h_k(\Delta) - h_{k - 1}(\Delta)) \\
			&= (k + 1) g_{k + 1}(\Delta) + (d - k) g_k(\Delta) + g_k(\Delta) \\
			&= (k + 1) g_{k + 1}(\Delta) + (d - k + 1) g_k(\Delta)
		\end{align*}
		
		Given an edge $\{ v, w \} \in \Delta$, the link condition then implies that \[ \lk_{\lk_\Delta(v)}(w) = \lk_\Delta(v) \cap \lk_\Delta(w) = \lk_\Delta(v, w) \] and
		
		\begin{align*}
			2 \sum_{ \substack{ e \in \Delta \\ |e| = 2 }} g_k(\lk_\Delta(e)) &= \sum_{v \in V(\Delta)} \left( \sum_{w \in V(\lk_\Delta(v))} g_k(\lk_\Delta(v, w)) \right) \\
			&= \sum_{v \in V(\Delta)} [ (k + 1) g_{k + 1}(\lk_\Delta(v)) + (d - k) g_k(\lk_\Delta(v)) ] \\
			&= (k + 1) \sum_{v \in V(\Delta)} g_{k + 1}(\lk_\Delta(v)) + (d - k) \sum_{v \in V(\Delta)} g_k(\lk_\Delta(v)) \\
			&= (k + 1) [ (k + 2) g_{k + 2}(\Delta) + (d - k) g_{k + 1}(\Delta) ] + (d - k) [ (k + 1) g_{k + 1}(\Delta) + (d - k + 1) g_k(\Delta) ] \\
			&= (k + 1) (k + 2) g_{k + 2}(\Delta) + 2(k + 1)(d - k) g_{k + 1}(\Delta) + (d - k)(d - k + 1) g_k(\Delta).
		\end{align*}

	\end{proof}
	
	In order to relate the local-global properties back to simplicial complexes satisfying the link condition, we record changes in $h$-vectors and $g$-vectors after edge contractions (also written down on p. 5 - 6 of \cite{Prec}). \\

	\begin{lem} \label{edgeconhgvec}
		Let $\Delta$ be a $(d - 1)$-dimensional simplicial complex satisfying the link condition. Given an edge $e \in \Delta$, let $\widetilde{\Delta}_e$ be the contraction of $e$ in $\Delta$. Then, we have \[ h_{\widetilde{\Delta}_e}(x) = h_\Delta(x) - x h_{\lk_\Delta(e)}(x). \] This implies that \[ h_k(\widetilde{\Delta}_e) = h_k(\Delta) - h_{k - 1}(\lk_\Delta(e)) \] and \[ g_k(\widetilde{\Delta}_e) = g_k(\Delta) - g_{k - 1}(\lk_\Delta(e)). \]
	\end{lem}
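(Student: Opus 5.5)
The plan is to prove the $h$-polynomial identity by counting faces directly, using the link condition to see exactly which faces disappear or merge under the contraction. After that, the coefficient statements follow by extracting coefficients. Write $e = \{a, b\}$ and let $c$ be the new vertex replacing $a$ and $b$ in $\widetilde{\Delta}_e$. The link condition $\lk_\Delta(a) \cap \lk_\Delta(b) = \lk_\Delta(e)$ is what guarantees that $\widetilde{\Delta}_e$ is a pure $(d-1)$-dimensional complex with the expected face structure.

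The first step is to describe the faces of $\widetilde{\Delta}_e$. These are the faces of $\Delta$ containing neither $a$ nor $b$, together with the sets $\{c\} \cup G$ where $G \in \lk_\Delta(a) \cup \lk_\Delta(b)$ avoids $a, b$. Next, compare face numbers. A face of $\Delta$ containing $a$ or $b$ has one of three forms:
\begin{itemize}
\item $\{a\} \cup G$ with $G \in \lk_\Delta(a)$ and $b \notin G$;
\item $\{b\} \cup G$ with $G \in \lk_\Delta(b)$ and $a \notin G$;
\item $e \cup G$ with $G \in \lk_\Delta(e)$.
\end{itemize}
Under contraction, the first two forms map onto the faces $\{c\} \cup G$. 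The map is two-to-one exactly when $G \in \lk_\Delta(a) \cap \lk_\Delta(b)$, and by the link condition this means $G \in \lk_\Delta(e)$. The third form disappears entirely.

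So, for each $G \in \lk_\Delta(e)$, two faces of $\Delta$ are lost: the face $e \cup G$ of size $|G| + 2$, and one of the two faces $\{a\} \cup G$, $\{b\} \cup G$ of size $|G| + 1$. This gives
\[ f_{i-1}(\widetilde{\Delta}_e) = f_{i-1}(\Delta) - f_{i-2}(\lk_\Delta(e)) - f_{i-3}(\lk_\Delta(e)), \]
with the convention $f_{-1} = 1$ (the case $G = \emptyset$ corresponds to the edge $e$ itself and one of its endpoints).

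The next step is to convert this to $h$-polynomials using Definition \ref{fhvect}, written in the form $\sum_i h_i x^{d-i} = \sum_i f_{i-1}(x-1)^{d-i}$. The link $\lk_\Delta(e)$ has dimension $d - 3$, so it has $d-2$ in the role of $d$. Substituting the face count relation gives
\[ \sum_i f_{i-1}(\widetilde{\Delta}_e)(x-1)^{d-i} = \sum_i f_{i-1}(\Delta)(x-1)^{d-i} - \bigl[(x-1) + 1\bigr] \sum_j f_{j-1}(\lk_\Delta(e))(x-1)^{d-2-j}. \]
The bracket simplifies to $x$. Applying Definition \ref{fhvect} to the link, the final sum equals $\sum_j h_j(\lk_\Delta(e)) x^{d-2-j}$. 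Passing to reversed polynomials $h(x) = \sum_i h_i x^i$ by replacing $x$ with $1/x$ and multiplying by $x^d$ turns the factor $x \cdot x^{d-2-j}$ into $x^{j+1}$. This yields $h_{\widetilde{\Delta}_e}(x) = h_\Delta(x) - x\, h_{\lk_\Delta(e)}(x)$.

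Comparing coefficients of $x^k$ gives $h_k(\widetilde{\Delta}_e) = h_k(\Delta) - h_{k-1}(\lk_\Delta(e))$. Taking the difference of the index-$k$ and index-$(k-1)$ versions gives the $g$-vector formula, with the conventions $h_{-1} = h_{-2} = 0$ making the $k = 0, 1$ cases consistent.

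The main obstacle is the bookkeeping in the face-count step, in particular the $G = \emptyset$ case. Here the edge $e$ is lost, and of the two vertices $a, b$ only the single vertex $c$ survives. One also has to confirm that no new faces are created: $\{c\} \cup G$ is declared a face only when $G$ lies in $\lk_\Delta(a) \cup \lk_\Delta(b)$, so this requires checking the definition of contraction. The link condition is used precisely to identify the overlap $\lk_\Delta(a) \cap \lk_\Delta(b)$ with $\lk_\Delta(e)$. Without it, the overlap would be larger and the correction term would not be $x\, h_{\lk_\Delta(e)}(x)$.
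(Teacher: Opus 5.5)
Your proposal is correct and follows essentially the same route as the paper. Both count the faces lost under the contraction to get $f_{i-1}(\widetilde{\Delta}_e) = f_{i-1}(\Delta) - f_{i-2}(\lk_\Delta(e)) - f_{i-3}(\lk_\Delta(e))$, using the link condition to identify the doubly-covered faces with $\lk_\Delta(e)$, and then push this through $\sum_i h_i x^{d-i} = \sum_i f_{i-1}(x-1)^{d-i}$ and reverse the polynomial. One minor quibble: the link condition does not by itself guarantee that $\widetilde{\Delta}_e$ is pure of dimension $d-1$. This does not affect your argument, since you compute the $h$-vector of $\widetilde{\Delta}_e$ with respect to $d$ throughout and never use purity.
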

	
	\begin{proof}
		In $\widetilde{\Delta}_e$, the $(k - 1)$-dimensional faces split into the following types:

		\begin{itemize}
			\item $(k - 1)$-dimensional faces of $\Delta$ \emph{not} containing $e$
			
			\item Taking a $(k - 1)$-dimensional face of $\Delta$ containing only one of $a$ or $b$ and replacing it by the contraction point of $e = \{ a, b \}$. The faces remain distinct unless removing $a$ or $b$ yields the same face (which must then be in $\lk_\Delta(e)$). In the latter case, we need to remove the ``second copy'' in $\Delta$ since both $a$ and $b$ are replaced by the contraction point of $e = \{ a, b \}$.  
			
			\item $(k - 1)$-dimensional faces of $\Delta$ containing $e$
		\end{itemize}

		This means that $f_{k - 1}(\widetilde{\Delta}_e) = f_{k - 1}(\Delta) - f_{k - 2}(\lk_\Delta(e)) - f_{k - 3}(\lk_\Delta(e))$ and \[ f_{\widetilde{\Delta}_e}(x) = f_\Delta(x) - (x^2 + x) f_{\lk_\Delta(e)}(x). \]
		
		\color{black} 
		Since \[ \sum_{i = 0}^D h_i(S) x^{D - i} = \sum_{i = 0}^D f_{i - 1}(S) (x - 1)^{D - i} \] for a $(D - 1)$-dimensional simplicial complex $S$ (p. 80 of \cite{St}), this implies that \[ x^d h_{\widetilde{\Delta}_e} \left( \frac{1}{x} \right) = x^d h_\Delta \left( \frac{1}{x} \right) - x \cdot x^{d - 2} h_{\lk_\Delta(e)} \left( \frac{1}{x} \right) = x^d h_\Delta \left( \frac{1}{x} \right) - x^{d - 1} h_{\lk_\Delta(e)} \left( \frac{1}{x} \right) \] and $h_{d - k}(\widetilde{\Delta}_e) = h_{d - k}(\Delta) - h_{d - k - 1}(\lk_\Delta(e))$. This implies that $h_{\widetilde{\Delta}_e}(x) = h_\Delta(x) - x h_{\lk_\Delta(e)}(x)$. \\
	\end{proof}

	Moving back towards \emph{simplicial spheres} satisfying the link condition, we first study the effect of nonnegativity of the $g$-vector on relationships between $g$-vector components. We can interpret the initial ``interlacing bounds'' as a control on the growth rate of the $g$-vector components that induces ``sandwiching'' of $g_{k + 1}(\Delta)$ between multiples of $g_k(\Delta)$. \\
	
	\begin{cor} \textbf{(Interlacing bounds from $g(\widetilde{\Delta}_e) \ge 0$) \\} \label{globgintlac}
		Suppose that $\Delta$ is a pure $(d - 1)$-dimensional simplicial complex satisfying the link condition such that the contraction $\widetilde{\Delta}_e$ of every edge $e \in \Delta$ satisfies $g(\widetilde{\Delta}_e) \ge 0$. \\
		
		\begin{enumerate}
			\item We have \[ 2 \left( \binom{d + 1}{2} - (k + 1)(d - k) + d g_1(\Delta) + g_2(\Delta) \right) g_{k + 1}(\Delta) \ge (k + 1)(k + 2) g_{k + 2}(\Delta) + (d - k)(d - k + 1) g_k(\Delta) \]

			for all $0 \le k \le \frac{d}{2}$, where we will take $g_m(\Delta) = 0$ for all $m \ge \frac{d}{2}$. Note that \[ f_1(\Delta) = \binom{d + 1}{2} + d g_1(\Delta) + g_2(\Delta). \] The coefficient of $g_{k + 1}(\Delta)$ is strictly positive for sufficiently large $d$ if $g_1(\Delta) \ge 0$ and $g_2(\Delta) \ge 0$. \\
			
			\item Suppose that $g(\Delta) \ge 0$ and $d$ is fixed. If $1 \le k \le \frac{d}{2} - 1$, then there are constants $\alpha_k, \beta_k > 0$ such that \[ \alpha_k g_k(\Delta) \le g_{k + 1}(\Delta) \le \beta_k g_k(\Delta). \] We can take \[ \alpha_k = \frac{(d - k)(d - k + 1)}{2 \left( \binom{d + 1}{2} - (k + 1)(d - k) + d g_1(\Delta) + g_2(\Delta) \right)} \] and \[ \beta_k = \frac{2 \left( \binom{d + 1}{2} - k(d - k + 1) + d g_1(\Delta) + g_2(\Delta) \right)}{k(k + 1)}. \]
			
			Note that \[ \alpha_k \ge \frac{  \left( \frac{d}{2} + 1 \right) \left( \frac{d}{2} + 2 \right) }{ 2 \left( \binom{d + 1}{2} - 2(d - 1) + d g_1(\Delta) + g_2(\Delta) \right) } \] and \[ \beta_k \le \binom{d + 1}{2} - 2(d - 1) + d g_1(\Delta) + g_2(\Delta). \] The lower bound on $\alpha_k$ is nonnegative for sufficiently large $d$ and for arbitrary $d$ if $g_1(\Delta) \ge 3$. \\
			
		\end{enumerate}
		
	\end{cor}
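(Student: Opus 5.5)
The plan is to sum the local inequalities $g_{k+1}(\widetilde{\Delta}_e) \ge 0$ over all edges and convert the sum into global data with Lemma \ref{glocedgesum}. By Lemma \ref{edgeconhgvec}, each such inequality reads $g_{k+1}(\Delta) \ge g_k(\lk_\Delta(e))$. Summing over the $f_1(\Delta)$ edges and multiplying by $2$ gives
\[ 2 f_1(\Delta) g_{k+1}(\Delta) \ge 2\sum_{e} g_k(\lk_\Delta(e)) = (k+1)(k+2) g_{k+2}(\Delta) + 2(k+1)(d-k) g_{k+1}(\Delta) + (d-k)(d-k+1) g_k(\Delta). \]
Moving the middle term to the left gives the coefficient $2\left(f_1(\Delta) - (k+1)(d-k)\right)$ on $g_{k+1}(\Delta)$.

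To put this coefficient in the stated form, I will compute $f_1$ from the $h$-vector. The relation $\sum h_i x^{d-i} = \sum f_{i-1}(x-1)^{d-i}$ gives
\[ f_1 = \binom{d}{2} + (d-1)h_1 + h_2. \]
Substituting $h_1 = 1 + g_1$ and $h_2 = 1 + g_1 + g_2$ yields
\[ \binom{d}{2} + d + d g_1 + g_2 = \binom{d+1}{2} + d g_1 + g_2. \]
For the boundary range of $k$, I will use the stated convention $g_m = 0$ together with the edge identity. Positivity of the coefficient for large $d$ then follows because $\binom{d+1}{2} - (k+1)(d-k) \ge \binom{d+1}{2} - \frac{(d+1)^2}{4} > 0$, where the middle bound comes from maximizing $(k+1)(d-k)$ over $k$.

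For Part 2, assume $g(\Delta) \ge 0$. Dropping the nonnegative $g_{k+2}$ term from Part 1 gives
\[ 2\left(f_1 - (k+1)(d-k)\right) g_{k+1} \ge (d-k)(d-k+1) g_k, \]
which yields $\alpha_k$. Dropping instead the $g_k$ term and shifting the index from $k$ to $k-1$ gives $k(k+1) g_{k+1} \le 2\left(f_1 - k(d-k+1)\right) g_k$, which yields $\beta_k$.

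The stated estimates on $\alpha_k$ and $\beta_k$ come from extremizing over $1 \le k \le \frac{d}{2}-1$:
\begin{itemize}
\item For $\alpha_k$, the numerator is at least $\left(\frac{d}{2}+1\right)\left(\frac{d}{2}+2\right)$. For the denominator I will use $(k+1)(d-k) \ge 2(d-1)$.
\item For $\beta_k$, I will use $k(k+1) \ge 2$ and $k(d-k+1) \ge d \ge 2(d-1)$ for small $d$. Where this comparison fails, I will settle for the cruder bound $f_1$.
\end{itemize}

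The main obstacle is exactly this bookkeeping. The uniform bounds stated for $\alpha_k$ and $\beta_k$ need the right extremal $k$ in each factor, and the constants may have to be loosened slightly. Positivity of the $\alpha_k$ bound for $g_1 \ge 3$ then follows because the denominator is positive once $d g_1$ dominates $2(d-1)$.
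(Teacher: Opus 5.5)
Your argument is the paper's own. You sum $g_{k+1}(\Delta)\ge g_k(\lk_\Delta(e))$ over the edges, evaluate the edge sum with Lemma \ref{glocedgesum} and $f_1(\Delta)=\binom{d+1}{2}+dg_1(\Delta)+g_2(\Delta)$, drop the $g_{k+2}$ term for $\alpha_k$, and shift $k\mapsto k-1$ and drop the $g_{k-1}$ term for $\beta_k$. Your positivity argument via $(k+1)(d-k)\le\frac{(d+1)^2}{4}$ and your extremization for $\alpha_k$ are correct.

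Your unease about $\beta_k$ is justified, and you should make it definite: the stated bound is false. Take $k=1$, which is admissible whenever the range $1\le k\le\frac{d}{2}-1$ is nonempty (so $d\ge 4$). Then $\beta_1=\binom{d+1}{2}-d+dg_1(\Delta)+g_2(\Delta)$, which exceeds $\binom{d+1}{2}-2(d-1)+dg_1(\Delta)+g_2(\Delta)$ by $d-2>0$. The paper's proof says to minimize $k(d-k+1)$, but on this range its minimum is $d$, attained at $k=1$. The value $2(d-1)$ is the minimum of $(k+1)(d-k)$ from the $\alpha_k$ step. The correct uniform statement is $\beta_k\le\binom{d+1}{2}-d+dg_1(\Delta)+g_2(\Delta)=f_1(\Delta)-d$. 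Your fallback ``$d\ge 2(d-1)$ for small $d$'' never applies, since it requires $d\le 2$.

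The endpoint step also fails as written; take $d$ even. The inequalities $g_{k+1}(\widetilde{\Delta}_e)\ge 0$ exist only for $k\le\frac{d}{2}-1$, and no convention rescues the stated range $0\le k\le\frac{d}{2}$:
\begin{itemize}
\item With the corollary's convention ($g_m=0$ for $m\ge\frac{d}{2}$), Part 1 at $k=\frac{d}{2}-1$ reads $0\ge(\frac{d}{2}+1)(\frac{d}{2}+2)g_{\frac{d}{2}-1}(\Delta)$.
\item With the lemma's convention ($g_m=0$ for $m\ge\frac{d}{2}+1$), Part 1 at $k=\frac{d}{2}$ reads $0\ge\frac{d}{2}(\frac{d}{2}+1)g_{\frac{d}{2}}(\Delta)$.
\end{itemize}
Both fail for the boundary of the cross polytope.

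There is a further problem at $k=\frac{d}{2}-1$. The edge identity of Lemma \ref{glocedgesum} holds only with the untruncated difference $h_{\frac{d}{2}+1}(\Delta)-h_{\frac{d}{2}}(\Delta)$ in place of $g_{\frac{d}{2}+1}(\Delta)$. For a sphere this equals $-g_{\frac{d}{2}}(\Delta)$ by Dehn--Sommerville, not $0$. For the boundary of the $4$-dimensional cross polytope, $2\sum_e g_1(\lk_\Delta(e))=48$, while the truncated formula gives $60$.

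So your summation, like the paper's, proves Part 1 as stated only for $0\le k\le\frac{d}{2}-2$. At $k=\frac{d}{2}-1$, for spheres, it yields only the weaker inequality with coefficient $2f_1(\Delta)-\frac{d}{2}(\frac{d}{2}+1)$ on $g_{\frac{d}{2}}(\Delta)$. This also weakens the constant $\alpha_{\frac{d}{2}-1}$.
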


	\begin{proof}
		\begin{enumerate}
			\item Given an edge $e \in \Delta$, the assumption $g(\widetilde{\Delta}_e) \ge 0$ and Lemma \ref{edgeconhgvec} imply that $g_{k + 1}(\Delta) \ge g_k(\lk_\Delta(e))$ for all $0 \le k \le \frac{d}{2} - 1$. Taking the sum over all edges $e \in \Delta$, this means that \[ f_1(\Delta) g_{k + 1}(\Delta) \ge \sum_{ \substack{ e \in \Delta \\ |e| = 2 } } g_k(\lk_\Delta(e)). \] Since \[ f_1(\Delta) = \binom{d}{2} + (d - 1) h_1(\Delta) + h_2(\Delta) = \binom{d + 1}{2} + d g_1(\Delta) + g_2(\Delta) \] by Lemma 5.1.8 on p. 213 of \cite{BH}, we can rewrite this as \[ \left( \binom{d + 1}{2} + d g_1(\Delta) + g_2(\Delta) \right) g_{k + 1}(\Delta) \ge \sum_{ \substack{ e \in \Delta \\ |e| = 2 } } g_k(\lk_\Delta(e)). \]
			
			After multiplying both sides by 2, Lemma \ref{glocedgesum} implies that 
			
			\begin{align*}
				2 \left( \binom{d + 1}{2} + d g_1(\Delta) + g_2(\Delta) \right) g_{k + 1}(\Delta) &\ge (k + 1)(k + 2) g_{k + 2}(\Delta) + 2(k + 1)(d - k) g_{k + 1}(\Delta) \\ 
				&+ (d - k)(d - k + 1) g_k(\Delta) \\
				\Longrightarrow 2 \left( \binom{d + 1}{2} - (k + 1)(d - k) + d g_1(\Delta) + g_2(\Delta) \right) g_{k + 1}(\Delta) &\ge (k + 1)(k + 2) g_{k + 2}(\Delta) + (d - k)(d - k + 1) g_k(\Delta).
			\end{align*}
			
			\item Since $g(\Delta) \ge 0$, Part 1 implies that \[ (d - k)(d - k + 1) g_k(\Delta) \le 2 \left( \binom{d + 1}{2} - (k + 1)(d - k) + d g_1(\Delta) + g_2(\Delta) \right) g_{k + 1}(\Delta) \]
			
			and we can take \[ \alpha_k = \frac{(d - k)(d - k + 1)}{2 \left( \binom{d + 1}{2} - (k + 1)(d - k) + d g_1(\Delta) + g_2(\Delta) \right)}. \]
			
			The lower bound on $\alpha_k$ follows from minimizing the numerator and maximizing the denominator under the condition $1 \le k \le \frac{d}{2}$. This means the maximum of $(d - k)(d - k + 1)$ in the numerator and the maximum of $(k + 1)(d - k)$ in the denominator. \\
			
			For the upper bound and $\beta_k$, we replace $k$ by $k - 1$ in Part 1 to obtain \[ 2 \left( \binom{d + 1}{2} - k(d - k + 1) + d g_1(\Delta) + g_2(\Delta) \right) g_k(\Delta) \ge k(k + 1) g_{k + 1}(\Delta) + (d - k + 1)(d - k + 2) g_{k - 1}(\Delta). \] 
			
			Since $g(\Delta) \ge 0$, this implies that \[ k(k + 1) g_{k + 1}(\Delta) \le 2 \left( \binom{d + 1}{2} - k(d - k + 1) + d g_1(\Delta) + g_2(\Delta) \right) g_k(\Delta) \] and we can take \[ \beta_k = \frac{2 \left( \binom{d + 1}{2} - k(d - k + 1) + d g_1(\Delta) + g_2(\Delta) \right)}{k(k + 1)}. \]
			
			Similarly, the upper bound on $\beta_k$ follows from maximizing the numerator and minimizing the denominator under the condition $1 \le k \le \frac{d}{2} - 1$. This means taking the minimum of $k(d - k + 1)$ in the numerator and the minimum of $k(k + 1)$ in the denominator. \\
			
		\end{enumerate}
	\end{proof}
	
	We now make some comments related to sizes of parameters involved in cases of interest. \\
	
	\begin{exmp} \textbf{(Behavior in specific cases) \\} \label{constexmp}
		\begin{enumerate}
			\item \textbf{(Lower bounds on $g_1(\Delta)$) \\}
			
			If $\gamma_1(\Delta), \gamma_2(\Delta) \ge 0$, $\Delta$ is balanced \cite{JKM}, or $\Delta$ is a doubly Cohen--Macaulay flag simplicial complex (Observation 6.1 on p. 1377 of \cite{NPT} and Theorem 1.3 on p. 19 \cite{Athsom}), we have that $h_i(\Delta) \ge \binom{d}{i}$ for $i = 1, 2$ (and for all $0 \le i \le \frac{d}{2}$ if $\gamma(\Delta) \ge 0$ or we are in the latter case). This implies that 
			
			\begin{align*}
				f_1(\Delta) &= \binom{d}{2} + (d - 1) h_1(\Delta) + h_2(\Delta) \\
				&\ge \binom{d}{2} + (d - 1) \cdot d + \binom{d}{2} \\
				&= d(d - 1) + (d - 1) \cdot d \\
				&= 2d(d - 1).
			\end{align*}
			
			In Part 2, this means that \[ \alpha_k \le \frac{(d - k)(d - k + 1)}{ 2(2d(d - 1) - (k + 1)(d - k)) } \] and \[ \beta_k \ge \frac{ 2(2d(d - 1) - k(d - k + 1)) }{k(k + 1)}. \]
			
			Since $1 \le k \le \frac{d}{2} - 1$, maximizing the numerator and minimizing the denominator implies that \[ \alpha_k \le \frac{d(d - 1)}{2 \left( 2d(d - 1) - \frac{d}{2} \left( \frac{d}{2} + 1 \right) \right) } \approx \frac{2}{7}. \]

			Similarly, minimizing the numerator and maximizing the denominator of the lower bound of $\beta_k$ implies that \[ \beta_k \ge \frac{ 2 \left( 2d(d - 1) - \left( \frac{d}{2} - 1 \right) \left( \frac{d}{2} + 2 \right) \right) }{ \frac{d}{2} \left( \frac{d}{2} - 1 \right) } \approx 14. \]
			
			\color{black} 
			\item \textbf{(Case where $g_1(\Delta)$ is linear in $d$) \\}
			
			Consider the case where $g_1(\Delta)$ is linear in $d$ as in the setting of Proposition \ref{gamposling1h1}. While the growth rate with respect to $d$ appears to be ``larger'' on the right hand side, such a claim would depend on a uniform ratio between terms on both sides of the inequalities $g_{k + 1}(\Delta) \ge g_k(\lk_\Delta(e))$ whose upper and lower bound sums are compared. For example, suppose that $\Delta$ is the boundary of a $d$-dimensional cross polytope. Then, dividing the upper bound by the lower bound would give the ratio \[ \frac{ \binom{d}{k + 1} - \binom{d}{k} }{ \binom{d - 2}{k} - \binom{d - 2}{k - 1} }. \]
			
			If $k = 1$, this is equal to \[ \frac{ \binom{d}{2} - \binom{d}{1} }{ \binom{d - 2}{1} - \binom{d - 2}{0} } = \frac{ \binom{d}{2} - d }{(d - 2) - 1} \approx \frac{d}{2}, \]
			
			where the approximation is taken when $d \gg 0$. \\
			
			On the other hand, substituting in $k = \frac{d}{2} - 1$ gives \begin{align*}
				\frac{ \binom{d}{\frac{d}{2}} - \binom{d}{\frac{d}{2} - 1} }{ \binom{d - 2}{\frac{d}{2} - 1} - \binom{d - 2}{\frac{d}{2} - 2} } &= \frac{ \binom{d}{\frac{d}{2}} \cdot \frac{1}{ \frac{d}{2} + 1 } }{ \binom{d - 2}{\frac{d}{2} - 1} \cdot \frac{1}{ \frac{d}{2} } } \\
				&= \frac{ \binom{d}{ \frac{d}{2} } }{ \binom{d - 2}{ \frac{d}{2} - 1 } } \cdot \frac{ \frac{d}{2} }{ \frac{d}{2} + 1 } \\
				&= \frac{d(d - 1)}{ \frac{d^2}{4} } \cdot \frac{d}{d + 2} \\
				&\approx 4
			\end{align*}
			
			when $d \gg 0$. \\
			
			Thus, ratios of upper and lower bounds in $g_{k + 1}(\Delta) \ge g_k(\lk_\Delta(e))$ from $g_{k + 1}(\widetilde{\Delta}_e) \ge 0$ may vary significantly depending on $k$ and the comparison between the left and right hand sides of Corollary \ref{globgintlac} is not necessarily what we may expect from differences of powers of $d$ involved in Proposition \ref{gamposling1h1}. \\
			
		\end{enumerate}

	\end{exmp}

	Converting the bounds above into information about gamma vector components, we have ``linear sandwiching'' bounds between consecutive components that are closer to compatibility $f$-vector realization bounds from Theorem \ref{kkfvect} compared to nonnegative gamma vectors realized by arbitrary simplicial spheres (Corollary \ref{posgamfcomp}). This is especially the case when the gamma vector components are very large compared to $d$ (see Remark \ref{matcoeffsize} for further discussion). \\

	Comparing this to earlier conditions for a $g$-vector to be realized by a simplicial sphere, we can show that these ``averaged'' consequences of $g(\widetilde{\Delta}_e) \ge 0$ for contractions $\widetilde{\Delta}_e$ of edges $e \in \Delta$ only yield nontrivial conditions beyond those of general simplicial spheres when $g_1(\Delta)$ and $g_2(\Delta)$ are small. \\
	
	\begin{prop} \textbf{(Interlacing bounds vs. general simplicial spheres) \\} \label{interlacingnontriv}
		Suppose that $\Delta$ is a pure $(d - 1)$-dimensional simplicial complex satisfying the link condition such that $g(\Delta) \ge 0$ and $g(\widetilde{\Delta_e}) \ge 0$ for every edge $e \in \Delta$.
		\begin{enumerate}
			 
			\item We have $g_\ell(\Delta) \ne 0$ for all $0 \le \ell \le \frac{d}{2}$.

			\item Suppose that $\Delta$ is a simplicial sphere satisfying the link condition such that $\gamma(\Delta) \ge 0$. In order for the ``averaged'' global version of the inequalities $g(\widetilde{\Delta}_e) \ge 0$ for all the edges $e \in \Delta$ from Corollary \ref{globgintlac} to impose a nontrivial condition on the $g_{k + 2}(\Delta)$ beyond the $M$-vector condition on the $g$-vectors of arbitrary simplicial spheres (Part 2c) of Theorem \ref{gconjsphere}), we need $g_2(\Delta) < (d^2 - 2) - (d - 3) g_1(\Delta)$. By the $g(\Delta) \ge 0$ assumption, this also implies that $g_1(\Delta) < (d + 3) + \frac{7}{d - 3}$. If $d \ge 4$, the latter bound implies that $g_1(\Delta) < d + 10$. This implies that $g_2(\Delta) \le \binom{d + 11}{2}$,  which is a stronger bound for $d \gg 0$. Since there is a linear bound on $g_1(\Delta)$, each $g_\ell(\Delta)$ with $0 \le \ell \le \frac{d}{2}$ is bounded above by a degree $\ell$ polynomial in $d$ (Corollary \ref{gamposling1h1}). \\
		\end{enumerate}
		
	\end{prop}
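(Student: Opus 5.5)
The plan is to handle the two parts separately. Part 1 comes from the interlacing inequality of Corollary \ref{globgintlac}; Part 2 comes from comparing the averaged inequality with the $M$-vector bound of Theorem \ref{gconjsphere} and Corollary \ref{macpowasymp}.

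\textbf{Part 1.} I would argue by downward propagation. Take the inequality of Part 1 of Corollary \ref{globgintlac}:
\[ 2 \left( f_1(\Delta) - (k+1)(d-k) \right) g_{k+1}(\Delta) \ge (k+1)(k+2) g_{k+2}(\Delta) + (d-k)(d-k+1) g_k(\Delta). \]
Since $g(\Delta) \ge 0$ and $(d-k)(d-k+1) > 0$, $g_k(\Delta) > 0$ forces $g_{k+1}(\Delta) > 0$. Starting from $g_0(\Delta) = 1$ and inducting on $k$ up to $k = \frac{d}{2} - 1$ gives $g_\ell(\Delta) \ne 0$ for all $\ell \le \frac{d}{2}$. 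One point needs checking: the coefficient $f_1(\Delta) - (k+1)(d-k)$ must be handled correctly. If it were $\le 0$, the left side would be $\le 0$ while the right side is $> 0$, a contradiction. So positivity of the coefficient is automatic, and the inequality then yields $g_{k+1}(\Delta) > 0$.

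\textbf{Part 2.} Rewrite the averaged inequality as an upper bound
\[ g_{k+2}(\Delta) \le \frac{2(f_1(\Delta) - (k+1)(d-k))}{(k+1)(k+2)}\, g_{k+1}(\Delta) - \frac{(d-k)(d-k+1)}{(k+1)(k+2)}\, g_k(\Delta). \]
Compare it with the $M$-vector bound $g_{k+2}(\Delta) \le g_{k+1}(\Delta)^{\langle k+1 \rangle}$. The averaged condition is nontrivial only if the right-hand side above can be smaller than $g_{k+1}(\Delta)^{\langle k+1 \rangle}$ for some admissible $g$-vector. My approach is to test the weakest case, $k = 0$, where the pseudopower is explicit: $g_1^{\langle 1 \rangle} = \binom{g_1 + 1}{2}$, with $g_0 = 1$ and $f_1(\Delta) = \binom{d+1}{2} + d g_1 + g_2$. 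For $k = 0$, Corollary \ref{globgintlac} bounds $g_2(\Delta)$ by $(f_1(\Delta) - d) g_1(\Delta) - \binom{d+1}{2}$. Substituting for $f_1(\Delta)$ and asking when this can drop below the trivial bound should produce, after routine rearrangement, the threshold $g_2(\Delta) < (d^2 - 2) - (d-3) g_1(\Delta)$; the exact constants depend on which index one tests. For $k \ge 1$, Part 1 of Proposition \ref{gamtohgvect} and Lemma \ref{gamtgrowth} show that the $g_{k+1}$ coefficient, roughly $f_1(\Delta)$, dominates the ratio $g_{k+1}^{\langle k+1 \rangle}/g_{k+1}$ in the same regime, so the $k = 0$ condition is necessary in general.

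\textbf{Deriving the remaining bounds.} Next, use $g_2(\Delta) \ge 0$ in $g_2 < (d^2 - 2) - (d-3) g_1$ to get $g_1 < \frac{d^2 - 2}{d-3} = d + 3 + \frac{7}{d-3}$. For $d \ge 4$ this gives $g_1 < d + 10$. The $M$-vector condition then gives $g_2 \le \binom{g_1 + 1}{2} \le \binom{d+11}{2}$. Finally, Proposition \ref{gamposling1h1} yields polynomial bounds of degree $\ell$ for every $g_\ell(\Delta)$.

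The main obstacle is the reduction in Part 2: showing that the index-$0$ comparison (equivalently, the comparison at the relevant index using Corollary \ref{macpowasymp}) is the binding one, and pinning down the exact constant $d^2 - 2$. My first attempt would be a direct computation at $k = 0$, followed by a monotonicity argument in $k$ using the ratio bounds of Corollary \ref{gamgratrest}.
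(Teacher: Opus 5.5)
Your Part 1 and your closing chain agree with the paper: $g_1(\Delta) < d + 3 + \frac{7}{d-3} < d + 10$, then $g_2(\Delta) \le \binom{g_1(\Delta)+1}{2} \le \binom{d+11}{2}$, then Proposition \ref{gamposling1h1}. The gap is in the core of Part 2. The index you propose to test, $k = 0$, cannot produce the threshold. For every edge, $g_1(\widetilde{\Delta}_e) = g_1(\Delta) - g_0(\lk_\Delta(e)) = g_1(\Delta) - 1$, so the $k=0$ averaged inequality is just $\sum_e g_1(\widetilde{\Delta}_e) \ge 0$. Explicitly,
\[ 2\left(f_1(\Delta) - d\right) g_1(\Delta) - 2 g_2(\Delta) - d(d+1) = 2 f_1(\Delta)\left(g_1(\Delta) - 1\right), \]
so the inequality is equivalent to $g_1(\Delta) \ge 1$. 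Your ``bound'' $g_2 \le (f_1 - d) g_1 - \binom{d+1}{2}$ is not a bound on $g_2$ at all, because $f_1(\Delta)$ contains $g_2(\Delta)$ with coefficient $g_1(\Delta) \ge 1$. No rearrangement at $k = 0$ yields $g_2(\Delta) < (d^2 - 2) - (d-3) g_1(\Delta)$. Your planned reduction showing that ``the $k=0$ condition is necessary in general'' therefore has nothing to reduce to.

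The paper obtains the threshold at $k = 1$, using two ingredients your plan does not use in this role. First, after dividing by $g_{k+1}(\Delta) \ne 0$ (Part 1), it compares with the crude sphere bound $\frac{g_{k+2}(\Delta)}{g_{k+1}(\Delta)} \le g_{k+1}(\Delta)^{\frac{1}{k+1}} \le g_1(\Delta)$ from Corollary \ref{macpowasymp} and Corollary \ref{sphereginitbd}, not with the exact pseudopower. Second, it uses $\frac{g_k(\Delta)}{g_{k+1}(\Delta)} \le 3$ from Corollary \ref{gamgratrest}; this is where $\gamma(\Delta) \ge 0$ enters. This makes the right-hand side of the ratio form at least $\frac{2}{(k+1)(k+2)}\left(f_1(\Delta) - (k+1)(d-k)\right) - \frac{3(d-k)(d-k+1)}{(k+1)(k+2)}$. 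Nontriviality therefore forces this quantity to be $< g_1(\Delta)$. At $k = 1$ the condition reads $\frac{1}{3}\left(f_1(\Delta) - 2(d-1)\right) - \frac{d(d-1)}{2} < g_1(\Delta)$, which rearranges exactly to $g_2(\Delta) < (d^2 - 2) - (d-3) g_1(\Delta)$. Comparing with $g_2(\Delta)^{\langle 2 \rangle}$ directly, or keeping $\frac{g_1}{g_2}$ exact, would give a different condition, so the constant $d^2 - 2$ comes from these specific choices. No monotonicity argument in $k$ is involved; the paper simply specializes to $k = 1$.
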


	\begin{proof}
		\begin{enumerate}
			\item Since the coefficient of each $g$-vector component in Part 1 of Corollary \ref{globgintlac} is strictly positive and $g(\Delta) \ge 0$, having $g_{k + 1}(\Delta) = 0$ would imply that $g_{k + 2}(\Delta) = 0$ and $g_k(\Delta) = 0$. If $g_\ell(\Delta) = 0$ for \emph{some} $1 \le \ell \le \frac{d}{2}$, this would mean that $g_k(\Delta) = 0$ for all $0 \le k \le \frac{d}{2}$. However, this is impossible in our setting. \\
			
			\item Since $g_\ell(\Delta) \ne 0$ for all $0 \le \ell \le \frac{d}{2}$ by Part 1, we can rewrite \[ 2 \left( \binom{d + 1}{2} - (k + 1)(d - k) + d g_1(\Delta) + g_2(\Delta) \right) g_{k + 1}(\Delta) \ge (k + 1)(k + 2) g_{k + 2}(\Delta) + (d - k)(d - k + 1) g_k(\Delta) \] from Part 1 of Corollary \ref{globgintlac} as \[ 2 \left( \binom{d + 1}{2} - (k + 1)(d - k) + d g_1(\Delta) + g_2(\Delta) \right) \ge (k + 1)(k + 2) \frac{g_{k + 2}(\Delta)}{g_{k + 1}(\Delta)} + (d - k)(d - k + 1) \frac{g_k(\Delta)}{g_{k + 1}(\Delta)}. \]
			
			Equivalently, we have \[ \frac{g_{k + 2}(\Delta)}{g_{k + 1}(\Delta)} \le  \frac{2}{(k + 1)(k + 2)} \left( \binom{d + 1}{2} - (k + 1)(d - k) + d g_1(\Delta) + g_2(\Delta) \right) - \frac{(d - k)(d - k + 1)}{(k + 1)(k + 2)} \frac{g_k(\Delta)}{g_{k + 1}(\Delta)}.  \]
			
			We can compare these to initial bounds from the $M$-vector condition on $g$-vectors of arbitrary simplicial spheres. Since $g_{k + 2}(\Delta) \le g_{k + 1}(\Delta)^{\langle k + 1 \rangle} \le g_{k + 1}(\Delta)^{ \frac{k + 2}{k + 1} }$ and $g_{k + 1}(\Delta) \le g_1(\Delta)^{k + 1} \Longrightarrow g_{k + 1}(\Delta)^{ \frac{1}{k + 1} } \le g_1(\Delta)$ by Corollary \ref{macpowasymp} and Corollary \ref{sphereginitbd}, we have \[ \frac{g_{k + 2}(\Delta)}{g_{k + 1}(\Delta)} \le g_1(\Delta) \] for any simplicial sphere $\Delta$. \\
			
			In order for the interlacing bounds in Part 1 of Corollary \ref{globgintlac} to give nontrivial bounds beyond those for arbitrary simplicial spheres for some index $0 \le k \le \frac{d}{2}$, we need to have  \[ \frac{2}{(k + 1)(k + 2)} \left( \binom{d + 1}{2} - (k + 1)(d - k) + d g_1(\Delta) + g_2(\Delta) \right) - \frac{(d - k)(d - k + 1)}{(k + 1)(k + 2)} \frac{g_k(\Delta)}{g_{k + 1}(\Delta)} < g_1(\Delta). \]
			
			If $\gamma(\Delta) \ge 0$, then Corollary \ref{gamgratrest} implies that $\frac{g_{k + 1}(\Delta)}{g_k(\Delta)} \ge \frac{1}{3} \Longrightarrow \frac{g_k(\Delta)}{g_{k + 1}(\Delta)} \le 3$ since we assumed that $g(\Delta) \ge 0$. Substituting this into our inequality above, we have \[ \frac{2}{(k + 1)(k + 2)} \left( \binom{d + 1}{2} - (k + 1)(d - k) + d g_1(\Delta) + g_2(\Delta) \right) - 3 \frac{(d - k)(d - k + 1)}{(k + 1)(k + 2)} < g_1(\Delta). \]
			
			If $k = 1$, this implies that 
			\begin{equation*}
				\begin{gathered} 
				\frac{1}{3} \left( \binom{d + 1}{2} - 2(d - 1) + d g_1(\Delta) + g_2(\Delta) \right) - \frac{1}{2} d(d - 1) < g_1(\Delta) \\
				\Longrightarrow \binom{d + 1}{2} - 2(d - 1) + d g_1(\Delta) + g_2(\Delta) - \frac{3}{2} d(d - 1) < 3 g_1(\Delta) \\
				\Longrightarrow g_2(\Delta) < \frac{3}{2} d(d - 1) - \binom{d + 1}{2} + 2(d - 1) - (d - 3) g_1(\Delta) \\
				= (d^2 - 2) - (d - 3) g_1(\Delta). 
				\end{gathered} 
			\end{equation*}
			
			This implies that $g_2(\Delta)$ is quadratically bounded with respect to $d$. \\
			
			In order to have $g_2(\Delta) > 0$, we need to have a linear bound on $g_1(\Delta)$ when $d \gg 0$. This is because
			
			\begin{align*}
				(d^2 - 2) - (d - 3) g_1(\Delta) &> 0 \\
				\Longleftrightarrow g_1(\Delta) &< \frac{d^2 - 2}{d - 3} \\
				&= \frac{d^2 - 9}{d - 3} + \frac{7}{d - 3} \\
				&= (d + 3) + \frac{7}{d - 3}.
			\end{align*}
			
			If $d \ge 3$, this implies that $g_1(\Delta) < d + 10$. \\ 
			
			Substituting this into the bound \[ g_2(\Delta) \le g_1(\Delta)^{ \langle 1 \rangle } = \binom{g_1(\Delta) + 1}{2} \] from the $M$-vector condition on $g$-vectors of simplicial spheres and Corollary \ref{macpowasymp}, we have that \[ g_2(\Delta) \le \binom{d + 11}{2}. \] This is a stronger bound for $d \gg 0$. \\
		\end{enumerate}
	\end{proof}

	Aside from the Dehn--Sommerville relations $h_i(\widetilde{\Delta}_e) = h_{d - i}(\widetilde{\Delta}_e)$ and nonnegativity of the $g$-vector $g(\widetilde{\Delta}_e) \ge 0$, recall that the remaining condition needed for $h(\widetilde{\Delta}_e)$ to be realized by a simplicial sphere is \[ g_{i + 1}(\widetilde{\Delta}_e) \le g_i(\widetilde{\Delta}_e)^{\langle i \rangle} \] by Theorem \ref{gconjsphere}. This is a necessary condition when $\Delta$ is a simplicial sphere (Remark \ref{plassump}) that satisfies the link condition. \\
	
	In this setting, we have a ``squeezing'' of $g$-vectors of local terms.
	
	\begin{prop} \textbf{(Link condition-induced local lower bounds and sandwiching) \\}   \label{glocbounds}
		If $\Delta$ is a simplicial sphere satisfying the link condition, then \[ g_{i + 1}(\Delta) - (g_i(\Delta) - g_{i - 1}(\lk_\Delta(e)))^{\langle i \rangle} \le g_i(\lk_\Delta(e)) \le g_{i - 1}(\lk_\Delta(e))^{\langle i - 1 \rangle}. \]
	\end{prop}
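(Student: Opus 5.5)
The plan is to get both inequalities from Theorem \ref{gconjsphere}, applied to two different simplicial spheres attached to the edge $e$: the link $\lk_\Delta(e)$ and the contraction $\widetilde{\Delta}_e$. The upper bound uses only the link. The lower bound uses the contraction together with Lemma \ref{edgeconhgvec}.

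\textbf{Upper bound.} Since $\Delta$ is a PL simplicial sphere (Remark \ref{plassump}), the link $\lk_\Delta(e)$ of an edge is a $(d-3)$-dimensional PL simplicial sphere. Part 2c) of Theorem \ref{gconjsphere} applied to $\lk_\Delta(e)$ then gives
\[ g_i(\lk_\Delta(e)) \le g_{i-1}(\lk_\Delta(e))^{\langle i-1 \rangle} \]
in the range $1 \le i \le \lfloor \frac{d-2}{2} \rfloor$. The boundary indices need separate treatment: for $i$ outside this range we use the convention $g_m = 0$ for indices beyond half the dimension, so the left side vanishes.

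\textbf{Lower bound.} Because $\Delta$ satisfies the link condition, Nevo's result \cite{Nev} says the contraction $\widetilde{\Delta}_e$ is PL homeomorphic to $\Delta$, so it is again a $(d-1)$-dimensional simplicial sphere. Theorem \ref{gconjsphere} then gives
\[ g_{i+1}(\widetilde{\Delta}_e) \le g_i(\widetilde{\Delta}_e)^{\langle i \rangle}. \]
By Lemma \ref{edgeconhgvec} we have $g_j(\widetilde{\Delta}_e) = g_j(\Delta) - g_{j-1}(\lk_\Delta(e))$ for every $j$. Substituting this for $j=i+1$ and $j=i$ gives
\[ g_{i+1}(\Delta) - g_i(\lk_\Delta(e)) \le \bigl(g_i(\Delta) - g_{i-1}(\lk_\Delta(e))\bigr)^{\langle i \rangle}, \]
and rearranging yields the left inequality.

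\textbf{Main obstacle.} The delicate step is justifying that $\widetilde{\Delta}_e$ is a genuine simplicial sphere. This needs $\widetilde{\Delta}_e$ to be a simplicial complex of the same dimension, which is exactly what the link condition secures via \cite{Nev}. The index bookkeeping also needs care, in two places:
\begin{itemize}
\item the range of $i$ for which Part 2c) applies to the lower-dimensional sphere $\lk_\Delta(e)$ versus to $\widetilde{\Delta}_e$;
\item the fact that the pseudopower is only defined for positive integer arguments, so the degenerate case $g_i(\Delta) - g_{i-1}(\lk_\Delta(e)) = 0$ should be read with $0^{\langle i \rangle} = 0$.
\end{itemize}
This degenerate case is consistent with the statement: $g(\widetilde{\Delta}_e) \ge 0$ forces the left side to be at most $g_i(\lk_\Delta(e))$ trivially there.
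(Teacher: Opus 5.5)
Your proposal is correct and follows essentially the same route as the paper. The lower bound is the $M$-vector condition for the sphere $\widetilde{\Delta}_e$, rewritten via Lemma \ref{edgeconhgvec}, and the upper bound is the $M$-vector condition for the sphere $\lk_\Delta(e)$.

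One small slip: in the degenerate case $g_i(\widetilde{\Delta}_e)=0$, the needed inequality $g_{i+1}(\widetilde{\Delta}_e)\le 0$ follows from the $M$-vector condition with $0^{\langle i\rangle}=0$, not from $g(\widetilde{\Delta}_e)\ge 0$, which gives the opposite direction.
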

	
	\begin{proof}
		We can use Lemma \ref{edgeconhgvec} to rewrite this inequality as \[ g_{i + 1}(\Delta) - g_i(\lk_\Delta(e)) \le (g_i(\Delta) - g_{i - 1}(\lk_\Delta(e)))^{\langle i \rangle}. \]
		
		Equivalently, we have \[ g_i(\lk_\Delta(e)) \ge g_{i + 1}(\Delta) - (g_i(\Delta) - g_{i - 1}(\lk_\Delta(e)))^{\langle i \rangle}. \]
		
		Since $\Delta$ is a PL simplicial manifold, we have that $\lk_\Delta(e)$ is a simplicial sphere and \[ g_i(\lk_\Delta(e)) \le g_{i - 1}(\lk_\Delta(e))^{\langle i - 1 \rangle}. \]
		
		Combining these inequalities gives the stated bounds.
	\end{proof}
	
	We can combine these local inequalities with the local-global relation for $g$-vectors in Lemma \ref{glocedgesum} obtain a global version of this ``squeezing'' property. In order to effectively apply the estimates of pseudopowers $a^{\langle k \rangle}$ from Corollary \ref{macpowasymp}, we will use the following property. \\

	\begin{lem} \label{sumpowerbd}
		If $a_k \ge 0$ for all $1 \le k \le N$ and $p \ge 1$, then \[ \sum_{k = 1}^N a_k^p \le \left( \sum_{k = 1}^N a_k \right)^p. \]
	\end{lem}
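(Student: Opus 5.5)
The plan is to reduce to the case where the total sum equals $1$ by homogeneity, and then use the elementary fact that $t^p \le t$ for $0 \le t \le 1$ when $p \ge 1$. Set $S = \sum_{k = 1}^N a_k$.

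If $S = 0$, then every $a_k = 0$ because each $a_k \ge 0$. Both sides are then $0$, so the inequality holds with equality.

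Suppose $S > 0$ and put $t_k = \frac{a_k}{S}$. Then $0 \le t_k \le 1$, since each $a_k$ is a nonnegative summand of $S$, and $\sum_{k = 1}^N t_k = 1$. Because $p \ge 1$ and $0 \le t_k \le 1$, we have $t_k^p = t_k \cdot t_k^{p - 1} \le t_k$, using $t_k^{p - 1} \le 1$; the case $t_k = 0$ is trivial. Summing over $k$ gives
\[ \sum_{k = 1}^N t_k^p \le \sum_{k = 1}^N t_k = 1. \]
Multiplying through by $S^p > 0$ gives
\[ \sum_{k = 1}^N a_k^p \le S^p = \left( \sum_{k = 1}^N a_k \right)^p. \]

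No step here is a genuine obstacle. The only care needed is the degenerate case $S = 0$, so that we never divide by zero. The hypothesis $p \ge 1$ is used only in the inequality $t^{p - 1} \le 1$ for $t \in [0, 1]$, which would fail if $p < 1$.

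An alternative route is induction on $N$ using the two-term inequality $A^p + B^p \le (A + B)^p$ for $A, B \ge 0$. That inequality is the form quoted in the proof of Corollary \ref{posgamfcomp}, and it follows either from the normalization argument above with $N = 2$ or from convexity and superadditivity of $x \mapsto x^p$ on $[0, \infty)$. The normalization argument avoids the induction and is the one we will use.
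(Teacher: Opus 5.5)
Your proof is correct, but it takes a different route from the paper's.

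The paper first treats $N = 2$. It divides by $a_2$ to reduce to showing $(x + 1)^p - x^p \ge 1$ for $x > 0$. It notes that equality holds at $p = 1$ and shows the left side is increasing in $p$, using the derivative $(x + 1)^p \log(x + 1) - x^p \log x > 0$. It then inducts on $N$.

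You instead normalize by $S = \sum_k a_k$ and use the pointwise bound $t^p \le t$ on $[0, 1]$. This handles all $N$ at once, with no calculus and no induction. It also makes clear that $p \ge 1$ enters only through $t^{p - 1} \le 1$.

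Your route also avoids a small gap in the paper's justification. The paper gets positivity of the derivative from $(x + 1)^p > x^p$ and $\log(x + 1) > \log x$. Multiplying those two inequalities is only valid when $\log x \ge 0$. For $0 < x < 1$ one needs the separate observation that $-x^p \log x > 0$; the conclusion still holds.

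What the paper's route buys is the extra fact that $(x + 1)^p - x^p$ is monotone in $p$. It also isolates the two-term form $(P - Q)^p \le P^p - Q^p$ that is used in Remark \ref{diffpowbd} and Proposition \ref{ratlowbd}. Your argument specialized to $N = 2$ gives that two-term form just as well.
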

	
	\begin{rem} \textbf{(Application to differences) \\} \label{diffpowbd} 
		The $N = 2$ case is equivalent to the statement that $(P - Q)^p \le P^p - Q^p$ if $P \ge Q \ge 0$ and $p \ge 1$. \\
	\end{rem}
	
	\begin{proof}
		We focus on the $N = 2$ case and the general case follows from induction on $N$. \\
		
		If $N = 2$, the statement is $a_1^p + a_2^p \le (a_1 + a_2)^p$. Since we always have equality if $a_1 = 0$ or $a_2 = 0$, we will assume that $a_1, a_2 > 0$. Then, this is equivalent to the statement \[ \left( \frac{a_1}{a_2} \right)^p + 1 \le \left( \frac{a_1}{a_2} + 1 \right)^p. \] In other words, we have reduced the problem to showing that \[ x^p + 1 \le (x + 1)^p \] if $x > 0$ and $p \ge 1$. Equivalently, we want to show that \[ (x + 1)^p - x^p \ge 1 \] if $x > 0$ and $p \ge 1$. \\
		
		Note that we have equality when $p = 1$. The idea is that this difference will increase as $p$ increases since higher powers of positive numbers increase more quickly. Fixing $x$ and taking the derivative with respect to $p$, we have
		
		\begin{align*}
			\frac{d}{dp} ((x + 1)^p - x^p) &= (x + 1)^p \log(x + 1) - x^p \log x \\
			&> 0
		\end{align*}
		
		since $(x + 1)^p > x^p$ and $\log(x + 1) > \log x$ if $x > 0$ and $p > 0$. This implies that $(x + 1)^p - x^p \ge 1$ for all $p \ge 1$ if $x > 0$ and we are done with the $N = 2$ case. \\
		
		For general $N$, we use induction on $N$. Suppose that \[ \sum_{k = 1}^{N - 1} a_k^p \le \left( \sum_{k = 1}^{N - 1} a_k \right)^p. \]
		
		This implies that
		
		\begin{align*}
			\sum_{k = 1}^N a_k^p &=  \sum_{k = 1}^{N - 1} a_k^p + a_N^p \\
			&\le \left( \sum_{k = 1}^{N - 1} a_k \right)^p + a_N^p \\
			&\le \left( \sum_{k = 1}^{N - 1} a_k + a_N \right)^p \\
			&= \left( \sum_{k = 1}^N a_k  \right)^p,
		\end{align*}
		
		where the first inequality follows from the induction assumption and the second one follows from the $N = 2$ case. 
		
	\end{proof}

	This leads us to the global version of Proposition \ref{glocbounds}.
	
	\begin{thm} \textbf{(Link condition-induced global lower bounds and sandwiching) \\} \label{gglobbounds}
		Recall from Corollary \ref{globgintlac} that \[ f_1(\Delta) = \binom{d + 1}{2} + d g_1(\Delta) + g_2(\Delta). \]
		
		if $\Delta$ is a $(d - 1)$-dimensional simplicial complex. \\
		
		Suppose that $\Delta$ is a simplicial sphere satisfying the link property. Then, we have

		\begin{equation*}
		\begin{gathered}
			2 \left( \binom{d + 1}{2} + d g_1(\Delta) + g_2(\Delta) \right) g_{i + 1}(\Delta) \\ 
			- 2 \left[ \left( \binom{d + 1}{2} + d g_1(\Delta) + g_2(\Delta) \right) g_i(\Delta) - \frac{1}{2} [i(i + 1) g_{i + 1}(\Delta) + 2i(d - i + 1) g_i(\Delta) + (d - i + 1)(d - i + 2) g_{i - 1}(\Delta)] \right]^{\frac{i + 1}{i}} \\
			\le (i + 1)(i + 2) g_{i + 2}(\Delta) + 2(i + 1)(d - i) g_{i + 1}(\Delta) + (d - i)(d - i + 1) g_i(\Delta)  \\
			\le 2 \left[ \frac{i(i + 1) g_{i + 1}(\Delta) + 2i(d - i + 1) g_i(\Delta) + (d - i + 1)(d - i + 2) g_{i - 1}(\Delta) }{2} \right]^{\frac{i}{i - 1}}.
		\end{gathered}
		\end{equation*}

		Equivalently, we have 
		
		\begin{equation*}
			\begin{gathered}
				2 \left( \binom{d + 1}{2} + d g_1(\Delta) + g_2(\Delta) \right) g_{i + 1}(\Delta) \\
				- 2 \left[ \left( \binom{d + 1}{2} - i(d - i + 1) + d g_1(\Delta) + g_2(\Delta) \right) g_i(\Delta) - \frac{i(i + 1)}{2} g_{i + 1}(\Delta) - \frac{(d - i + 1)(d - i + 2)}{2} g_{i - 1}(\Delta) \right]^{\frac{i + 1}{i}} \\
				\le (i + 1)(i + 2) g_{i + 2}(\Delta) + 2(i + 1)(d - i) g_{i + 1}(\Delta) + (d - i)(d - i + 1) g_i(\Delta)  \\
				\le \frac{ [i(i + 1) g_{i + 1}(\Delta) + 2i(d - i + 1) g_i(\Delta) + (d - i + 1)(d - i + 2) g_{i - 1}(\Delta)]^{\frac{i}{i - 1}} }{2^{\frac{1}{i - 1}}}.
			\end{gathered}
		\end{equation*}

		Note that the upper bounds above hold for any piecewise linear (PL) manifold (p. 3 of \cite{KN}, p. 26 of \cite{Hud}) satisfying the link condition. \\
		
	\end{thm}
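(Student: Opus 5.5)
The plan is to sum the local sandwich of Proposition \ref{glocbounds} over all edges $e \in \Delta$. The sums of local $g$-vector components will then be converted into global quantities using Lemma \ref{glocedgesum}. The nonlinear pseudopower terms will be handled by Corollary \ref{macpowasymp} together with Lemma \ref{sumpowerbd}. Throughout, write $F = f_1(\Delta) = \binom{d+1}{2} + d g_1(\Delta) + g_2(\Delta)$ and $S_j = \sum_{|e| = 2} g_j(\lk_\Delta(e))$.

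By Lemma \ref{glocedgesum}, the middle expression of the statement is exactly $2S_i$. Applying the same lemma with $k = i-1$ gives
\[ 2S_{i-1} = i(i+1) g_{i+1}(\Delta) + 2i(d-i+1) g_i(\Delta) + (d-i+1)(d-i+2) g_{i-1}(\Delta), \]
which is the bracketed quantity appearing on both outer sides.

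\textbf{Upper bound.} From Proposition \ref{glocbounds} we have $g_i(\lk_\Delta(e)) \le g_{i-1}(\lk_\Delta(e))^{\langle i-1\rangle}$ for each edge. Corollary \ref{macpowasymp} bounds the right-hand side by $g_{i-1}(\lk_\Delta(e))^{\frac{i}{i-1}}$. Summing over edges and applying Lemma \ref{sumpowerbd} with $p = \frac{i}{i-1} \ge 1$ (the terms are nonnegative since links are spheres) gives
\[ S_i \le S_{i-1}^{\frac{i}{i-1}}. \]
Multiplying by $2$ and substituting $S_{i-1} = \frac{1}{2}[\cdots]$ yields $2[\cdots/2]^{\frac{i}{i-1}}$, and $2 \cdot 2^{-\frac{i}{i-1}} = 2^{-\frac{1}{i-1}}$ gives the second form. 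This part only uses that links of edges are spheres, so it holds for PL manifolds with the link condition, as remarked.

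\textbf{Lower bound.} Proposition \ref{glocbounds} gives $g_i(\lk_\Delta(e)) \ge g_{i+1}(\Delta) - (g_i(\Delta) - g_{i-1}(\lk_\Delta(e)))^{\langle i\rangle}$. Here the base $g_i(\widetilde{\Delta}_e) = g_i(\Delta) - g_{i-1}(\lk_\Delta(e))$ is nonnegative because $\widetilde{\Delta}_e$ is a sphere by the link condition. Bounding the pseudopower by the $\frac{i+1}{i}$ power (Corollary \ref{macpowasymp}), summing over the $F$ edges, and applying Lemma \ref{sumpowerbd} with $p = \frac{i+1}{i}$ gives
\[ S_i \ge F g_{i+1}(\Delta) - \big(F g_i(\Delta) - S_{i-1}\big)^{\frac{i+1}{i}}. \]
Multiplying by $2$ gives the first displayed form. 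Expanding $S_{i-1}$ and merging $-i(d-i+1)g_i(\Delta)$ into the coefficient of $g_i(\Delta)$ gives the equivalent form.

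The main care point is the sign bookkeeping: Lemma \ref{sumpowerbd} must be applied to nonnegative bases, which is exactly where the link condition (contraction remains a sphere) and the sphere property of links enter. The edge cases $i = 1$, where $\frac{i}{i-1}$ is undefined, and pseudopowers of zero should be treated separately or excluded.
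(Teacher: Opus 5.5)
Your proposal is correct and follows the paper's proof essentially step for step: you sum the local sandwich of Proposition~\ref{glocbounds} over all edges, replace the pseudopowers by the $\frac{i+1}{i}$ and $\frac{i}{i-1}$ powers via Corollary~\ref{macpowasymp}, move the power outside the sum with Lemma~\ref{sumpowerbd}, and substitute Lemma~\ref{glocedgesum} at $k = i$ and $k = i-1$. You also make explicit two points the paper leaves implicit: the bases $g_i(\widetilde{\Delta}_e)$ and $g_{i-1}(\lk_\Delta(e))$ must be nonnegative for Lemma~\ref{sumpowerbd} to apply, and the case $i = 1$ is degenerate because the exponent $\frac{i}{i-1}$ is undefined.
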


	\begin{rem} \textbf{(Asymptotic properties) \\} \label{asympcom}
		For large $g$-vector entries, indices $i$, and dimensions $d - 1$, Corollary \ref{macpowasymp} and upper and lower bounds for $N!$ by constant multiples of $N^{N + \frac{1}{2}} e^{-N}$ for arbitrary $N$ from p. 287 of \cite{ElM1} indicate that we would expect the (equivalent) inequalities 
		
		\begin{equation*}
			\begin{gathered}
				2 \left( \binom{d + 1}{2} + d g_1(\Delta) + g_2(\Delta) \right) g_{i + 1}(\Delta) \\ 
				- \frac{2}{e} \left[ \left( \binom{d + 1}{2} + d g_1(\Delta) + g_2(\Delta) \right) g_i(\Delta) - \frac{1}{2} [i(i + 1) g_{i + 1}(\Delta) + 2i(d - i + 1) g_i(\Delta) + (d - i + 1)(d - i + 2) g_{i - 1}(\Delta)] \right]^{\frac{i + 1}{i}} \\
				\le (i + 1)(i + 2) g_{i + 2}(\Delta) + 2(i + 1)(d - i) g_{i + 1}(\Delta) + (d - i)(d - i + 1) g_i(\Delta)  \\
				\le \frac{2}{e} \left[ \frac{i(i + 1) g_{i + 1}(\Delta) + 2i(d - i + 1) g_i(\Delta) + (d - i + 1)(d - i + 2) g_{i - 1}(\Delta) }{2} \right]^{\frac{i}{i - 1}}
			\end{gathered}
		\end{equation*} 
		
		and 
		
		\begin{equation*}
			\begin{gathered}
				2 \left( \binom{d + 1}{2} + d g_1(\Delta) + g_2(\Delta) \right) g_{i + 1}(\Delta) \\
				- \frac{2}{e} \left[ \left( \binom{d + 1}{2} - i(d - i + 1) + d g_1(\Delta) + g_2(\Delta) \right) g_i(\Delta) - \frac{i(i + 1)}{2} g_{i + 1}(\Delta) - \frac{(d - i + 1)(d - i + 2)}{2} g_{i - 1}(\Delta) \right]^{\frac{i + 1}{i}} \\
				\le (i + 1)(i + 2) g_{i + 2}(\Delta) + 2(i + 1)(d - i) g_{i + 1}(\Delta) + (d - i)(d - i + 1) g_i(\Delta)  \\
				\le \frac{1}{e} \frac{ [i(i + 1) g_{i + 1}(\Delta) + 2i(d - i + 1) g_i(\Delta) + (d - i + 1)(d - i + 2) g_{i - 1}(\Delta)]^{\frac{i}{i - 1}} }{2^{\frac{1}{i - 1}}}.
			\end{gathered}
		\end{equation*}

	\end{rem}
	
	\begin{proof}
		We will add up local bounds \[ g_{i + 1}(\Delta) - (g_i(\Delta) - g_{i - 1}(\lk_\Delta(e)))^{\langle i \rangle} \le g_i(\lk_\Delta(e)) \le g_{i - 1}(\lk_\Delta(e))^{\langle i - 1 \rangle} \] from Proposition \ref{glocbounds} and apply estimates on $a^{\langle k \rangle}$ from Corollary \ref{macpowasymp}. \\
		
		After adding terms from each edge $e \in \Delta$ and multiplying by 2, Lemma \ref{glocedgesum} implies that 
		
		\begin{align*}
			2 \sum_{ \substack{e \in \Delta \\ |e| = 2} } [  g_{i + 1}(\Delta) - (g_i(\Delta) - g_{i - 1}(\lk_\Delta(e)))^{\langle i \rangle} ] &\le (i + 1)(i + 2) g_{i + 2}(\Delta) + 2(i + 1)(d - i) g_{i + 1}(\Delta) + (d - i)(d - i + 1) g_i(\Delta) \\ 
			&\le 2 \sum_{ \substack{e \in \Delta \\ |e| = 2} } g_{i - 1}(\lk_\Delta(e))^{\langle i - 1 \rangle}.
		\end{align*}
		
		The stated inequalities will be obtained by giving a lower bound for the lower bound and an upper bound for the upper bound. \\
		
		We will start by giving an upper bound for the upper bound. By Corollary \ref{macpowasymp}, we have that \[ 2 \sum_{ \substack{e \in \Delta \\ |e| = 2} } g_{i - 1}(\lk_\Delta(e))^{\langle i - 1 \rangle} \le 2 \sum_{ \substack{e \in \Delta \\ |e| = 2} } g_{i - 1}(\lk_\Delta(e))^{\frac{i}{i - 1}}. \]
		
		Since $\frac{i}{i - 1} = 1 + \frac{1}{i - 1} > 1$, Lemma \ref{sumpowerbd} implies that \[ 2 \sum_{ \substack{e \in \Delta \\ |e| = 2} } g_{i - 1}(\lk_\Delta(e))^{\frac{i}{i - 1}} \le 2 \left( \sum_{ \substack{e \in \Delta \\ |e| = 2} } g_{i - 1}(\lk_\Delta(e)) \right)^{\frac{i}{i - 1}}. \]
		
		Then, we can substitute the local-global formula from Lemma \ref{glocedgesum} to find that 
		
		\begin{align*}
			2 \sum_{ \substack{e \in \Delta \\ |e| = 2} } g_{i - 1}(\lk_\Delta(e))^{\langle i - 1 \rangle} &\le 2 \left[ \frac{i(i + 1) g_{i + 1}(\Delta) + 2i(d - i + 1) g_i(\Delta) + (d - i + 1)(d - i + 2) g_{i - 1}(\Delta)}{2} \right]^{\frac{i}{i - 1}} \\
			&= \frac{ [ i(i + 1) g_{i + 1}(\Delta) + 2i(d - i + 1) g_i(\Delta) + (d - i + 1)(d - i + 2) g_{i - 1}(\Delta) ]^{\frac{i}{i - 1}} }{2^{\frac{1}{i - 1}}}.
		\end{align*}

		Next, we give a lower bound for the lower bound \[ 2 \sum_{ \substack{e \in \Delta \\ |e| = 2} } [  g_{i + 1}(\Delta) - (g_i(\Delta) - g_{i - 1}(\lk_\Delta(e)))^{\langle i \rangle} ]. \]
		
		Since $a^{\langle k \rangle} \le a^{\frac{k + 1}{k}}$ by Corollary \ref{macpowasymp}, we have \[ 2 \sum_{ \substack{e \in \Delta \\ |e| = 2} } [  g_{i + 1}(\Delta) - (g_i(\Delta) - g_{i - 1}(\lk_\Delta(e)))^{\langle i \rangle} ] \ge 2 \sum_{ \substack{e \in \Delta \\ |e| = 2} } [ g_{i + 1}(\Delta) - (g_i(\Delta) - g_{i - 1}(\lk_\Delta(e)))^{\frac{i + 1}{i}} ].  \]
		
		After rearranging terms and substituting \[ f_1(\Delta) = \binom{d + 1}{2} + d g_1(\Delta) + g_2(\Delta), \] we have
		
		\begin{align*}
			2 \sum_{ \substack{e \in \Delta \\ |e| = 2} } [  g_{i + 1}(\Delta) - (g_i(\Delta) - g_{i - 1}(\lk_\Delta(e)))^{\langle i \rangle} ] &\ge 2 f_1(\Delta) g_{i + 1}(\Delta) - 2 \sum_{ \substack{e \in \Delta \\ |e| = 2} } (g_i(\Delta) - g_{i - 1}(\lk_\Delta(e)))^{\frac{i + 1}{i}} \\
			&= 2 \left( \binom{d + 1}{2} + d g_1(\Delta) + g_2(\Delta) \right) g_{i + 1}(\Delta) \\
			&- 2 \sum_{ \substack{e \in \Delta \\ |e| = 2} } (g_i(\Delta) - g_{i - 1}(\lk_\Delta(e)))^{\frac{i + 1}{i}}. 
		\end{align*}
		 
		Since $\frac{i + 1}{i} = 1 + \frac{1}{i} > 1$, Lemma \ref{sumpowerbd} implies that 
		
		\begin{equation*}
			\begin{gathered}
			2 \left( \binom{d + 1}{2} + d g_1(\Delta) + g_2(\Delta) \right) g_{i + 1}(\Delta) - 2 \sum_{ \substack{e \in \Delta \\ |e| = 2} } (g_i(\Delta) - g_{i - 1}(\lk_\Delta(e)))^{\frac{i + 1}{i}} \\ 
			\ge 2 \left( \binom{d + 1}{2} + d g_1(\Delta) + g_2(\Delta) \right) g_{i + 1}(\Delta) - 2 \left( \sum_{ \substack{e \in \Delta \\ |e| = 2} } (g_i(\Delta) - g_{i - 1}(\lk_\Delta(e))) \right)^{\frac{i + 1}{i}} \\
			= 2 \left( \binom{d + 1}{2} + d g_1(\Delta) + g_2(\Delta) \right) g_{i + 1}(\Delta) - 2 \left[ \left( \binom{d + 1}{2} + d g_1(\Delta) + g_2(\Delta) \right) g_i(\Delta) - \sum_{ \substack{e \in \Delta \\ |e| = 2} } g_{i - 1}(\lk_\Delta(e)) \right]^{\frac{i + 1}{i}}.
			\end{gathered} 
		\end{equation*}
		
		Substituting in the local-global formula from Lemma \ref{glocedgesum}, we have 
		
		\begin{equation*}
			\begin{gathered}
			2 \sum_{ \substack{e \in \Delta \\ |e| = 2} } [  g_{i + 1}(\Delta) - (g_i(\Delta) - g_{i - 1}(\lk_\Delta(e)))^{\langle i \rangle} ] \\ 
			\ge 2 \left( \binom{d + 1}{2} + d g_1(\Delta) + g_2(\Delta) \right) g_{i + 1}(\Delta) \\
			-  2 \left[ \left( \binom{d + 1}{2} + d g_1(\Delta) + g_2(\Delta) \right) g_i(\Delta) - \frac{1}{2} [ i(i + 1) g_{i + 1}(\Delta) + 2i(d - i + 1) g_i(\Delta) + (d - i + 1)(d - i + 2) g_{i - 1}(\Delta) ] \right]^{\frac{i + 1}{i}} \\
			= 2 \left( \binom{d + 1}{2} + d g_1(\Delta) + g_2(\Delta) \right) g_{i + 1}(\Delta) \\
			- 2 \left[ \left( \binom{d + 1}{2} - i(d - i + 1) + d g_1(\Delta) + g_2(\Delta) \right) g_i(\Delta) - \frac{i(i + 1)}{2} g_{i + 1}(\Delta) - \frac{(d - i + 1)(d - i + 2)}{2} g_{i - 1}(\Delta) ] \right]^{\frac{i + 1}{i}} .
			\end{gathered} 
		\end{equation*}
		 
	\end{proof}

	In some sense, the main role of the $M$-vector condition on the $g$-vector of the contractions $\widetilde{\Delta}_e$ of the edges $e \in \Delta$ is in the \emph{lower} bound  
	
	\begin{equation*}
		\begin{gathered}
			2 \left( \binom{d + 1}{2} + d g_1(\Delta) + g_2(\Delta) \right) g_{i + 1}(\Delta) \\ 
			- 2 \left[ \left( \binom{d + 1}{2} + d g_1(\Delta) + g_2(\Delta) \right) g_i(\Delta) - \frac{1}{2} [i(i + 1) g_{i + 1}(\Delta) + 2i(d - i + 1) g_i(\Delta) + (d - i + 1)(d - i + 2) g_{i - 1}(\Delta)] \right]^{\frac{i + 1}{i}} \\
			\le (i + 1)(i + 2) g_{i + 2}(\Delta) + 2(i + 1)(d - i) g_{i + 1}(\Delta) + (d - i)(d - i + 1) g_i(\Delta)
		\end{gathered}
	\end{equation*}
	
	of $(i + 1)(i + 2) g_{i + 2}(\Delta) + 2(i + 1)(d - i) g_{i + 1}(\Delta) + (d - i)(d - i + 1) g_i(\Delta)$ from Theorem \ref{gglobbounds}. This is because the upper bound in Theorem \ref{gglobbounds} follows from the condition of $\lk_\Delta(e)$ being a simplicial sphere (p. 26 of \cite{Hud}) and the local-global $g$-vector formula (Lemma \ref{glocedgesum}), resulting in an upper bound that holds for any PL manifold satisfying the link condition. \\

	Rewriting this inequality as 
	
	\begin{equation*}
		\begin{gathered}
			2 \left( \binom{d + 1}{2} - (i + 1)(d - i) + d g_1(\Delta) + g_2(\Delta) \right) g_{i + 1}(\Delta) \\ 
			- 2 \left[ \left( \binom{d + 1}{2} + d g_1(\Delta) + g_2(\Delta) \right) g_i(\Delta) - \frac{1}{2} [i(i + 1) g_{i + 1}(\Delta) + 2i(d - i + 1) g_i(\Delta) + (d - i + 1)(d - i + 2) g_{i - 1}(\Delta)] \right]^{\frac{i + 1}{i}} \\
			\le (i + 1)(i + 2) g_{i + 2}(\Delta) + (d - i)(d - i + 1) g_i(\Delta),
		\end{gathered}
	\end{equation*}
	
	we note that the terms look similar to the inequality
	
	\[ 2 \left( \binom{d + 1}{2} - (i + 1)(d - i) + d g_1(\Delta) + g_2(\Delta) \right) g_{i + 1}(\Delta) \ge (i + 1)(i + 2) g_{i + 2}(\Delta) + (d - k)(d - i + 1) g_i(\Delta) \]
	
	from Corollary \ref{globgintlac}, which is a consequence of the assumption $g(\widetilde{\Delta}_e) \ge 0$. \\
	
	This indicates that the term \[ 2 \left[ \left( \binom{d + 1}{2} + d g_1(\Delta) + g_2(\Delta) \right) g_i(\Delta) - \frac{1}{2} [i(i + 1) g_{i + 1}(\Delta) + 2i(d - i + 1) g_i(\Delta) + (d - i + 1)(d - i + 2) g_{i - 1}(\Delta)] \right]^{\frac{i + 1}{i}} \] is a sort of ``cushion'' giving an upper bound on the difference that is induced by an index change (replacing $i$ by $i - 1$). In other words, we have the following: \\
	
	\begin{cor} \textbf{(Global comparison with $g(\widetilde{\Delta}_e) \ge 0$-induced bounds) \\} \label{diffcush}
		Let $\Delta$ be a $(d - 1)$-dimensional simplicial complex. If the contraction $\widetilde{\Delta}_e$ of every edge $e \in \Delta$ is a $(d - 1)$-dimensional simplicial sphere (e.g. $\Delta$ a simplicial sphere satisfying the link condition), then

		\begin{equation*} 
			\begin{gathered}
				0 \le 2 \left( \binom{d + 1}{2} - (i + 1)(d - i) + d g_1(\Delta) + g_2(\Delta) \right) g_{i + 1}(\Delta) - (i + 1)(i + 2) g_{i + 2}(\Delta) - (d - i)(d - i + 1) g_i(\Delta) \\
				\le 2 \left[ \left( \binom{d + 1}{2} + d g_1(\Delta) + g_2(\Delta) \right) g_i(\Delta) - \frac{1}{2} [i(i + 1) g_{i + 1}(\Delta) + 2i(d - i + 1) g_i(\Delta) + (d - i + 1)(d - i + 2) g_{i - 1}(\Delta)] \right]^{\frac{i + 1}{i}}.
			\end{gathered}
		\end{equation*}

		Asymptotically, Remark \ref{asympcom} suggests that we can multiply the bracketed term by $\frac{2}{e}$ instead of $2$ in the upper bound.  \\
	\end{cor}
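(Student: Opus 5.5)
The plan is to derive the two inequalities from two separate sources. The lower bound $0 \le \cdots$ comes from nonnegativity of $g(\widetilde{\Delta}_e)$. The upper bound comes from the lower half of Theorem \ref{gglobbounds}, after rearranging.

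For the left inequality, we use the hypothesis that each contraction $\widetilde{\Delta}_e$ is a $(d-1)$-dimensional simplicial sphere. Theorem \ref{gconjsphere} then gives $g(\widetilde{\Delta}_e) \ge 0$. We also need the identity $g_k(\widetilde{\Delta}_e) = g_k(\Delta) - g_{k-1}(\lk_\Delta(e))$ of Lemma \ref{edgeconhgvec}, and the local-global formula of Lemma \ref{glocedgesum}. Both of these require the link condition. For the example case of a simplicial sphere satisfying the link condition this is immediate. In general, I would note that the link condition is exactly what makes the contraction count in Lemma \ref{edgeconhgvec} valid, so I would assume it implicitly, as the parenthetical suggests.

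With these in hand, Corollary \ref{globgintlac} Part 1 applies verbatim with $k = i$. Moving all terms to one side gives precisely
\[ 0 \le 2\left(\binom{d+1}{2} - (i+1)(d-i) + d g_1(\Delta) + g_2(\Delta)\right) g_{i+1}(\Delta) - (i+1)(i+2) g_{i+2}(\Delta) - (d-i)(d-i+1) g_i(\Delta). \]

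For the right inequality, I would take the lower bound in Theorem \ref{gglobbounds}. Its proof uses only the $M$-vector condition $g_{i+1}(\widetilde{\Delta}_e) \le g_i(\widetilde{\Delta}_e)^{\langle i \rangle}$ for each contraction, which holds because $\widetilde{\Delta}_e$ is a sphere. It then uses Corollary \ref{macpowasymp} Part 2, Lemma \ref{sumpowerbd}, and Lemma \ref{glocedgesum}.

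The one point to check is that Lemma \ref{sumpowerbd} applies. It requires $g_i(\Delta) - g_{i-1}(\lk_\Delta(e)) = g_i(\widetilde{\Delta}_e) \ge 0$ for every edge $e$, and this holds by the sphere hypothesis. This nonnegativity check is the only delicate step; everything else is bookkeeping.

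Finally, I subtract $2(i+1)(d-i) g_{i+1}(\Delta)$ from both sides of the inequality in Theorem \ref{gglobbounds}, in the form displayed just before the corollary. Moving the bracketed power to the right and the remaining $g$-terms to the left yields the stated upper bound.

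The asymptotic remark follows the same way. Replace Corollary \ref{macpowasymp} Part 2 by its Part 1, which gives $a^{\langle i \rangle} \sim C_i a^{(i+1)/i}$ with $C_i \to 1/e$, exactly as in Remark \ref{asympcom}. I would present this only heuristically, as the paper does.
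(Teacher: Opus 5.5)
Your proposal is correct and follows the paper's own argument. The lower bound $0 \le \cdots$ is Corollary \ref{globgintlac} Part 1 with $k = i$. The upper bound is the lower half of Theorem \ref{gglobbounds} after subtracting $2(i+1)(d-i)g_{i+1}(\Delta)$ from both sides, and the $\frac{2}{e}$ remark comes heuristically from Remark \ref{asympcom}.

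Two points you make explicit are left unstated in the paper: Lemma \ref{sumpowerbd} needs $g_i(\widetilde{\Delta}_e) \ge 0$ for every edge, and the link condition is used implicitly through Lemmas \ref{edgeconhgvec} and \ref{glocedgesum}.
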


	\subsubsection{Behavior in large dimensions} \label{asymplink}
	
	Since $g_\ell(\Delta) \ne 0$ in our setting (Proposition \ref{interlacingnontriv}), we will rewrite the inequality above in terms of ratios of consecutive terms $\frac{g_{\ell + 1}(\Delta)}{g_\ell(\Delta)}$. We will also ``normalize'' terms by dividing by \[ f_1(\Delta) = \binom{d + 1}{2} + d g_1(\Delta) + g_2(\Delta). \] In order to make some simplifications, we will first record some notation. \\
	
	\begin{defn} \label{shortglob}
		Given a pure $(d - 1)$-dimensional simplicial complex $\Delta$ satisfying the link condition, we will define
		\[ \mathcal{C}_k \coloneq \sum_{ \substack{ e \in \Delta \\ |e| = 2 } } g_k(\lk_\Delta(e)) = \frac{1}{2} [(k + 1)(k + 2) g_{k + 2}(\Delta) + 2(k + 1)(d - k) g_{k + 1}(\Delta) + (d - k)(d - k + 1) g_k(\Delta) ], \]
		
		which is the global/averaged version of the local $g$-vector terms as from Lemma \ref{glocedgesum}. \\
	\end{defn}
	
	After dividing by 2 on both sides, the second inequality in Corollary \ref{diffcush} from the link condition can be rewritten as
	
	\begin{align*}
		f_1(\Delta) g_{k + 1}(\Delta) - \mathcal{C}_k &\le (f_1(\Delta) g_k(\Delta) - \mathcal{C}_{k - 1})^{ \frac{k + 1}{k} } \\
		\Longleftrightarrow g_{k + 1}(\Delta) - \frac{\mathcal{C}_k}{f_1(\Delta)} &\le f_1(\Delta)^{ \frac{1}{k} } \left( g_k(\Delta) - \frac{\mathcal{C}_{k - 1}}{f_1(\Delta)} \right)^{\frac{k + 1}{k}} \\
		\Longleftrightarrow g_{k + 1}(\Delta) \left( 1 - \frac{1}{f_1(\Delta)} \frac{\mathcal{C}_k}{g_{k + 1}(\Delta)} \right) &\le f_1(\Delta)^{ \frac{1}{k} } g_k(\Delta)^{ \frac{k + 1}{k} } \left( 1 - \frac{1}{f_1(\Delta)} \frac{\mathcal{C}_{k - 1}}{g_k(\Delta)} \right)^{\frac{k + 1}{k}} \\
		\Longleftrightarrow \frac{g_{k + 1}(\Delta)}{g_k(\Delta)^{ \frac{k + 1}{k} }} &\le f_1(\Delta)^{ \frac{1}{k} } \frac{ \left( 1 - \frac{1}{f_1(\Delta)} \frac{\mathcal{C}_{k - 1}}{g_k(\Delta)} \right)^{\frac{k + 1}{k}} }{ \left( 1 - \frac{1}{f_1(\Delta)} \frac{\mathcal{C}_k}{g_{k + 1}(\Delta)} \right) }. 
	\end{align*}
	
	We will take a closer look at the terms \[ \frac{\mathcal{C}_k}{g_{k + 1}(\Delta)} = \frac{1}{2} \left( (k + 1)(k + 2) \frac{g_{k + 2}(\Delta)}{g_{k + 1}(\Delta)} + 2(k + 1)(d - k) + (d - k)(d - k + 1) \frac{g_k(\Delta)}{g_{k + 1}(\Delta)}  \right) \]
	
	for $0 \le k \le \frac{d}{2} - 1$. \\
	
	Since $\Delta$ is a simplicial sphere in our setting, the $M$-vector condition on $g(\Delta)$ (Part 2c) of Theorem \ref{gconjsphere}) and Corollary \ref{macpowasymp} imply that $g_{k + 2}(\Delta) \le g_{k + 1}(\Delta)^{\langle k + 1 \rangle} \le g_{k + 1}(\Delta)^{ \frac{k + 2}{k + 1} }$. This means that \[ \frac{g_{k + 2}(\Delta)}{g_{k + 1}(\Delta)} \le g_{k + 1}(\Delta)^{ \frac{1}{k + 1} } \le g_1(\Delta), \] where the second inequality follows from Corollary \ref{sphereginitbd}. If $\gamma(\Delta) \ge 0$, then Corollary \ref{gamgratrest} implies that \[ \frac{g_{k + 2}(\Delta)}{g_{k + 1}(\Delta)} \ge \frac{1}{3} \] and \[ \frac{g_k(\Delta)}{g_{k + 1}(\Delta)} \le 3. \]
	
	As for the ``coefficient'' quadratic terms from $0 \le k \le \frac{d}{2} - 1$, we have the following bounds:
	
	\begin{itemize}
		\item $(k + 1)(k + 2) \le \left( \frac{d}{2} + 1 \right) \left( \frac{d}{2} + 2 \right)$
		
		\item $2(k + 1)(d - k) \le d \left( \frac{d}{2} + 1 \right)$
		
		\item $(d - k)(d - k + 1) \le d(d - 1)$
	\end{itemize}
	
	Similar properties hold for $\frac{\mathcal{C}_{k - 1}}{g_k(\Delta)}$. \\
	
	We will now consider the ``normalizations'' of $\frac{\mathcal{C}_k}{g_{k + 1}(\Delta)}$ and $\frac{\mathcal{C}_{k - 1}}{g_k(\Delta)}$ from division by \[ f_1(\Delta) = \binom{d + 1}{2} + d g_1(\Delta) + g_2(\Delta). \]

	In order to study asymptotic properties, we will review some notation. \\

	\begin{defn} (based on p. 54, p. 56, and second alternative from p. 60 of \cite{CLRS}) \label{asympdef} \\
		Given two functions $F : \mathbb{R}_{\ge 0} \longrightarrow \mathbb{R}$ and $G : \mathbb{R} \longrightarrow \mathbb{R}$ such that $g(x) > 0$ for $x \gg 0$, we define the following notation: 
		
		\begin{enumerate}
			\item $F = o(G(x))$ if $\lim_{x \to +\infty} \frac{F(x)}{G(x)} = 0$. \\
			
			\item $F = \Theta(G(x))$ if $F(x)$ is bounded above and below by strictly positive multiples of $G(x)$ if $x \gg 0$. \\
			
			\item $F = O(G(x))$ if $F(x)$ is bounded above by a strictly positive multiple of $G(x)$ if $x \gg 0$. \\
			
			\item $F(x)$ is \textbf{sublinear} with respect to $x$ if $F(x) = o(x)$. Similarly, $F(x)$ is \textbf{superlinear} with respect to $x$ if $x = o(F(x))$. \\
		\end{enumerate}
	\end{defn}

	Suppose that $d = o(g_1(\Delta))$ or $d^2 = o(g_2(\Delta))$. Note that the latter is a stronger condition than the former since $g_2(\Delta) \le g_1(\Delta)^{\langle 1 \rangle} = \binom{g_1(\Delta) + 1}{2}$. \\
	
	If \[ \frac{ g_{k + 2}(\Delta) }{ g_{k + 1}(\Delta) } = o \left( \frac{f_1(\Delta)}{d^2} \right) \] and \[ \frac{ g_{k + 1}(\Delta) }{ g_k(\Delta) } = o \left( \frac{f_1(\Delta)}{d^2} \right) \] with respect to $d$, then \[ \lim_{d \to +\infty} \frac{1}{f_1(\Delta)} \frac{\mathcal{C}_k}{g_{k + 1}(\Delta)} = 0 \] and \[ \lim_{d \to +\infty} \frac{1}{f_1(\Delta)} \frac{\mathcal{C}_{k - 1}}{g_k(\Delta)} = 0. \]
	
	This can be summarized as follows: \\
	
	\begin{thm} \textbf{(Large parameters and lack of nontriviality) \\} \label{largelinktriv}
		Let $\Delta$ be a $(d - 1)$-dimensional simplicial sphere satisfying the link condition such that $\gamma(\Delta) \ge 0$. Suppose that $d = o(g_1(\Delta))$ with respect to $d$ (e.g. when $d^2 = o(g_2(\Delta))$). Fix $1 \le k \le \frac{d}{2} - 2$. \\
		
		\begin{enumerate}
			\item We have \[ \lim_{d \to +\infty} \frac{1}{f_1(\Delta)} \frac{\mathcal{C}_k}{g_{k + 1}(\Delta)} = \lim_{d \to +\infty} \frac{(k + 1)(k + 2) \frac{g_{k + 2}(\Delta)}{g_{k + 1}(\Delta)} }{f_1(\Delta)}. \]
			
			\item If \[ \frac{ g_{k + 1}(\Delta) }{ g_k(\Delta) } = o \left( \frac{f_1(\Delta)}{d^2} \right) \] with respect to $d$, then global/averaged version of the $M$-vector inequalities $g_{k + 1}(\widetilde{\Delta}_e) \le g_k(\widetilde{\Delta}_e)^{\langle k \rangle}$ (Corollary \ref{diffcush}) is already implied by the $M$-vector inequalities for $\Delta$ for $d \gg 0$. \\
		\end{enumerate}
		
	\end{thm}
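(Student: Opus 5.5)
Part 1 follows from expanding $\frac{\mathcal{C}_k}{g_{k + 1}(\Delta)}$ via Definition \ref{shortglob} as
\[ \frac{1}{2} \left( (k + 1)(k + 2) \frac{g_{k + 2}(\Delta)}{g_{k + 1}(\Delta)} + 2(k + 1)(d - k) + (d - k)(d - k + 1) \frac{g_k(\Delta)}{g_{k + 1}(\Delta)} \right) \]
and dividing by $f_1(\Delta) = \binom{d + 1}{2} + d g_1(\Delta) + g_2(\Delta)$. The hypothesis $d = o(g_1(\Delta))$ gives $d^2 = o(d g_1(\Delta))$, hence $d^2 = o(f_1(\Delta))$. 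The middle term satisfies $2(k + 1)(d - k) \le d \left( \frac{d}{2} + 1 \right) = O(d^2)$, so after division by $f_1(\Delta)$ it tends to $0$. For the last term, Corollary \ref{gamgratrest} (from $\gamma(\Delta) \ge 0$, with $g_\ell(\Delta) \ne 0$ by Part 1 of Proposition \ref{interlacingnontriv}) gives $\frac{g_k(\Delta)}{g_{k + 1}(\Delta)} < 3$. Thus $(d - k)(d - k + 1) \frac{g_k}{g_{k + 1}} \le 3d^2 = o(f_1(\Delta))$. Only the first term survives, which gives the stated equality of limits. I would read this equality as asserting that both limits exist and agree whenever one does, or more robustly that the difference tends to $0$, and I would state it in that form.

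For Part 2, I will use the rewriting of the second inequality of Corollary \ref{diffcush} displayed just before Definition \ref{asympdef}:
\[ \frac{g_{k + 1}(\Delta)}{g_k(\Delta)^{\frac{k + 1}{k}}} \le f_1(\Delta)^{\frac{1}{k}} \frac{\left( 1 - \frac{1}{f_1(\Delta)} \frac{\mathcal{C}_{k - 1}}{g_k(\Delta)} \right)^{\frac{k + 1}{k}}}{1 - \frac{1}{f_1(\Delta)} \frac{\mathcal{C}_k}{g_{k + 1}(\Delta)}}. \]
The plan is to show that both normalized quantities $\frac{1}{f_1} \frac{\mathcal{C}_{k - 1}}{g_k}$ and $\frac{1}{f_1} \frac{\mathcal{C}_k}{g_{k + 1}}$ tend to $0$. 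Then the right-hand side is asymptotically $f_1(\Delta)^{\frac{1}{k}} \to \infty$, since $f_1 \ge \binom{d + 1}{2}$. The left-hand side is at most $1$ by the $M$-vector condition $g_{k + 1} \le g_k^{\langle k \rangle} \le g_k^{\frac{k + 1}{k}}$ (Theorem \ref{gconjsphere} and Corollary \ref{macpowasymp}). So for $d \gg 0$ the inequality is automatic from the $M$-vector inequalities for $\Delta$.

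For $\frac{\mathcal{C}_{k - 1}}{g_k}$, the same expansion with $k$ replaced by $k - 1$ applies, which is why $k \ge 1$ is needed. The surviving top term is $k(k + 1) \frac{g_{k + 1}}{g_k} / f_1$. Since $k$ is fixed, this vanishes by the hypothesis $\frac{g_{k + 1}}{g_k} = o(f_1/d^2)$, which is in fact stronger than needed. The lower terms vanish exactly as in Part 1, using $\frac{g_{k - 1}}{g_k} < 3$.

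The main obstacle is $\frac{\mathcal{C}_k}{g_{k + 1}}$. By Part 1 this reduces to showing that $\frac{g_{k + 2}/g_{k + 1}}{f_1} \to 0$, but the hypothesis only controls $g_{k + 1}/g_k$. My first approach is to bound $g_{k + 2}/g_{k + 1}$ by $g_{k + 1}^{\frac{1}{k + 1}}$ via the $M$-vector condition, and then bound $g_{k + 1}^{\frac{1}{k+1}}$ by $g_1$ via Corollary \ref{sphereginitbd}. This gives a ratio at most $g_1/f_1 \le 1/d \to 0$, using $f_1 \ge d g_1$. This uses $k + 2 \le \frac{d}{2}$, which holds since $k \le \frac{d}{2} - 2$. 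Thus both normalized terms vanish and the denominator stays positive for large $d$.

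It then remains to check that the right-hand side $\ge (1 - o(1)) f_1^{\frac{1}{k}}$ eventually exceeds $1$. This is immediate, which completes the triviality claim.
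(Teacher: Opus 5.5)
Your proposal is correct and follows the same overall structure as the paper's proof. You expand $\frac{\mathcal{C}_k}{g_{k+1}(\Delta)}$ and $\frac{\mathcal{C}_{k-1}}{g_k(\Delta)}$ via Definition \ref{shortglob} and discard the lower-order terms using $d^2 = o(f_1(\Delta))$ and $\frac{g_\ell(\Delta)}{g_{\ell+1}(\Delta)} < 3$. You then compare a right-hand side of size about $f_1(\Delta)^{\frac{1}{k}}$ with a left-hand side that the $M$-vector condition bounds by $1$.

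The only real divergence is the step you call the main obstacle. The paper never estimates $\frac{1}{f_1(\Delta)} \frac{\mathcal{C}_k}{g_{k+1}(\Delta)}$. Since $\mathcal{C}_k \ge 0$, the denominator $1 - \frac{1}{f_1(\Delta)} \frac{\mathcal{C}_k}{g_{k+1}(\Delta)}$ is at most $1$, so dividing by it can only enlarge the right-hand side. Equivalently, in the undivided form the left-hand side is simply at most $g_{k+1}(\Delta) \le g_k(\Delta)^{\frac{k+1}{k}}$. So only $\frac{1}{f_1(\Delta)} \frac{\mathcal{C}_{k-1}}{g_k(\Delta)} \to 0$ matters.

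Your bound $\frac{g_{k+2}(\Delta)}{g_{k+1}(\Delta)} \le g_1(\Delta) \le \frac{f_1(\Delta)}{d}$ is valid, but it uses that $k$ is constant, since you need $(k+1)(k+2) = o(d)$. The paper's observation needs no information about $g_{k+2}(\Delta)$ and no restriction on $k$. In return, your route shows the denominator actually tends to $1$, so dividing by it is legitimate for $d \gg 0$.

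Pushed one step further, the same estimate with $k-1$ in place of $k$ gives $\frac{g_{k+1}(\Delta)}{g_k(\Delta)} \le \frac{f_1(\Delta)}{d}$. So for constant $k$, the Part 2 hypothesis $\frac{g_{k+1}(\Delta)}{g_k(\Delta)} = o\left( \frac{f_1(\Delta)}{d^2} \right)$ is not even needed.

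One small correction in Part 1. Your expansion correctly shows that the surviving term is $\frac{(k+1)(k+2)}{2} \cdot \frac{g_{k+2}(\Delta)}{g_{k+1}(\Delta) \, f_1(\Delta)}$. That is half of the right-hand side as written in the statement, so the expansion alone does not give ``the stated equality.'' The statement holds literally only because, by the estimate you use in Part 2, both limits are $0$ when $k$ is constant. You should say this explicitly rather than claim the expansion yields it directly.
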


	\begin{proof}
		\begin{enumerate}
			\item This is implied by our discussion above since the remaining terms of $\frac{\mathcal{C}_k}{g_{k + 1}(\Delta)}$ are either quadratic polynomials in $d$ or bounded above and below by constant multiples of them. Note that $2k(d - k + 1)$ with $0 \le k \le \frac{d}{2}$ is also bounded above by a constant multiple of a quadratic polynomial in $d$ when $k$ is a sublinear function in $d$ (where we can also phrase a more precise version). The comparison between the examples follows from $g_2(\Delta) \le g_1(\Delta)^{\langle 1 \rangle} = \binom{g_1(\Delta) + 1}{2}$. \\
			
			\item Recall that the global/averaged inequality under consideration is \[ \frac{g_{k + 1}(\Delta)}{g_k(\Delta)^{ \frac{k + 1}{k} }} \le f_1(\Delta)^{ \frac{1}{k} } \frac{ \left( 1 - \frac{1}{f_1(\Delta)} \frac{\mathcal{C}_{k - 1}}{g_k(\Delta)} \right)^{\frac{k + 1}{k}} }{ \left( 1 - \frac{1}{f_1(\Delta)} \frac{\mathcal{C}_k}{g_{k + 1}(\Delta)} \right) }. \]
			
			If \[ \frac{ g_{k + 2}(\Delta) }{ g_{k + 1}(\Delta) } = o \left( \frac{f_1(\Delta)}{d^2} \right) \] and \[ \frac{ g_{k + 1}(\Delta) }{ g_k(\Delta) } = o \left( \frac{f_1(\Delta)}{d^2} \right) \] with respect to $d$, then \[ \lim_{d \to +\infty} \frac{ \left( 1 - \frac{1}{f_1(\Delta)} \frac{\mathcal{C}_{k - 1}}{g_k(\Delta)} \right)^{\frac{k + 1}{k}} }{ \left( 1 - \frac{1}{f_1(\Delta)} \frac{\mathcal{C}_k}{g_{k + 1}(\Delta)} \right) } = 1 \] and the inequality gives \[ \frac{g_{k + 1}(\Delta)}{g_k(\Delta)^{ \frac{k + 1}{k} }} \le f_1(\Delta)^{\frac{1}{k}} \] for $d \gg 0$. Note that this holds both when $0 \le k \le \frac{d}{2}$ is a linear or sublinear function in $d$. However, the $M$-vector inequalities for $\Delta$ (Part 2c) of Theorem \ref{gconjsphere}) already imply that \[ g_{k + 1}(\Delta) \le g_k(\Delta)^{\langle k \rangle} \le g_k(\Delta)^{\frac{k + 1}{k}} \Longrightarrow \frac{g_{k + 1}(\Delta)}{g_k(\Delta)^{\frac{k + 1}{k}}} \le 1 \le f_1(\Delta)^{ \frac{1}{k} }. \]
			
			If we only assume that \[  \frac{ g_{k + 1}(\Delta) }{ g_k(\Delta) } = o \left( \frac{f_1(\Delta)}{d^2} \right) \] with respect to $d$, then the denominator may \emph{have a value in} $(0, 1)$ as $d \to +\infty$ while the numerator of the fraction in the upper bound is 1. However, decreasing the denominator while increasing the numerator would give rise to an even larger upper bound which is ``more trivial'' than the one above. Thus, it suffices to only assume that the asymptotic upper bound for $\frac{ g_{k + 1}(\Delta) }{ g_k(\Delta) }$.
			
		\end{enumerate}
	\end{proof}

	\begin{rem} \textbf{(Interpretation of effect of initial conditions) \\} \label{largeparamint}
		\begin{enumerate}
			\item \textbf{((Non)triviality of link condition vs. $\gamma(\Delta) \ge 0$) \\} 
			Heuristically, we can take Part 2 of Theorem \ref{largelinktriv} to mean that nonnegativity of the gamma vector $\gamma(\Delta) \ge 0$ on high-dimensional simplicial spheres $\Delta$ is not affected by the link condition if $g_1(\Delta)$ is very large compared to the rate of growth of the $g$-vector components. In the setting of simplicial spheres PL homeomorphic to the boundary of a cross polytope, this can be taken to mean having a large ``net number of edge subdivisions'' compared to $\dim \Delta = d - 1$. If $g_1(\Delta)$ is asymptotically significantly larger than $d$ (e.g. exponential in $d$ or even larger as in Remark \ref{matcoeffsize} in the general realizability context), the asymptotic upper bound is not very far from $f_1(\Delta)$ or $g_1(\Delta)$ itself. Note that the condition is determined by the size of $\gamma_{k + 1}(\Delta)$ if $\frac{g_{k + 1}(\Delta)}{g_k(\Delta)} \ge d + 1$. \\

			\item \textbf{(Comparison with $f$-vectors) \\} 
			Given that gamma vectors of barycentric subdivisions of homology spheres are equal to $f$-vectors of auxiliary simplicial complexes (Theorem 1.1 on p. 1365 and Corollary 5.6 on p. 1376 of \cite{NPT}) and barycentric subdivisions can be obtained via repeated edge subdivisions (Proposition 3.1 on p. 73 of \cite{LN}), we may compare the relationship between $\gamma(\Delta) \ge 0$ and realizability by $f$-vectors in the ``large parameter'' setting of Theorem \ref{largelinktriv}. More specifically, we can consider whether there is a closer relationship between the two conditions than indicated in Corollary \ref{posgamfcomp}. \\
		\end{enumerate}
	\end{rem}

	Recall that the inequality \[ \frac{g_{k + 1}(\Delta)}{g_k(\Delta)^{ \frac{k + 1}{k} }} \le f_1(\Delta)^{ \frac{1}{k} } \frac{ \left( 1 - \frac{1}{f_1(\Delta)} \frac{\mathcal{C}_{k - 1}}{g_k(\Delta)} \right)^{\frac{k + 1}{k}} }{ \left( 1 - \frac{1}{f_1(\Delta)} \frac{\mathcal{C}_k}{g_{k + 1}(\Delta)} \right) } \] is equivalent to \[ 1 - \frac{1}{f_1(\Delta)} \frac{\mathcal{C}_k}{g_{k + 1}(\Delta)} \le f_1(\Delta)^{ \frac{1}{k} } \frac{ \left( 1 - \frac{1}{f_1(\Delta)} \frac{\mathcal{C}_{k - 1}}{g_k(\Delta)} \right)^{\frac{k + 1}{k}} }{ \frac{g_{k + 1}(\Delta)}{g_k(\Delta)^{ \frac{k + 1}{k} }} } = f_1(\Delta)^{ \frac{1}{k} } \frac{ \left( 1 - \frac{1}{f_1(\Delta)} \frac{\mathcal{C}_{k - 1}}{g_k(\Delta)} \right)^{\frac{k + 1}{k}} }{ \frac{g_{k + 1}(\Delta)}{g_k(\Delta) }} \cdot g_k(\Delta)^{ \frac{1}{k} }. \]
	
	Unwinding the definitions of $\mathcal{C}_k$ and $\mathcal{C}_{k - 1}$, the inequality above can be interpreted as a lower bound of $\frac{g_{k + 2}(\Delta)}{g_{k + 1}(\Delta)}$ in terms of $\frac{g_{k + 1}(\Delta)}{g_k(\Delta)}$, $g_k(\Delta)$, and $f_1(\Delta)$. Note that $f_1(\Delta)^{ \frac{1}{k} } \ge 1$ and $g_k(\Delta)^{ \frac{1}{k} } \ge 1$ for each $k$ since $f_1(\Delta) > \binom{d + 1}{2} > 1$ and $g_k(\Delta) \ge \binom{d}{k} - \binom{d}{k - 1} \ge 1$ for $0 \le k \le \frac{d}{2}$ if $\gamma(\Delta) \ge 0$. Given a fixed $d$, we have $\lim_{k \to +\infty} f_1(\Delta)^{ \frac{1}{k} } = 1$. Note that $f_1(\Delta)$ does \emph{not} depend on $k$. We can describe how the induced lower bound of $\frac{g_{k + 2}(\Delta)}{g_{k + 1}(\Delta)}$ increases/decreases with respect to these parameters. 
	
	\begin{itemize}
		\item Keeping $g_k(\Delta)$ and $f_1(\Delta)$ fixed, we see that increasing $\frac{g_{k + 1}(\Delta)}{g_k(\Delta)}$ decreases the numerator and increases the denominator of the upper bound of $1 - \frac{1}{f_1(\Delta)} \frac{\mathcal{C}_k}{g_{k + 1}(\Delta)}$. Since this decreases the upper bound of $1 - \frac{1}{f_1(\Delta)} \frac{\mathcal{C}_k}{g_{k + 1}(\Delta)}$, we have a larger lower bound of $\frac{g_{k + 2}(\Delta)}{g_{k + 1}(\Delta)}$.
		
		\item On the other hand, the upper bound of $1 - \frac{1}{f_1(\Delta)} \frac{\mathcal{C}_k}{g_{k + 1}(\Delta)}$ increases as we increase $f_1(\Delta)$ or $g_k(\Delta)$ while keeping $\frac{g_{k + 1}(\Delta)}{g_k(\Delta)}$ fixed. This would mean a smaller lower bound for $\frac{g_{k + 2}(\Delta)}{g_{k + 1}(\Delta)}$. \\ 
	\end{itemize}

	In general, we have \[ 0 \le \left( 1 - \frac{1}{f_1(\Delta)} \frac{\mathcal{C}_{k - 1}}{g_k(\Delta)} \right)^{\frac{k + 1}{k}} \le 1 \]
	
	since $0 \le x \le 1$ implies that $0 \le x^{ \frac{1}{k} } \le 1$ if $k > 0$ and $x > 0$. Note that $x^{ \frac{1}{k} } \ge x$ in this case. \\ 
	
	Fix $1 \le k \le \frac{d}{2} - 1$. Although \[ \frac{g_k(\Delta)^{ \frac{k + 1}{k} }}{g_{k + 1}(\Delta)} \ge 1 \] and \[ \lim_{d \to +\infty} f_1(\Delta)^{ \frac{1}{k} } = +\infty \] for each fixed $k$, it may still be possible that \[ \left( 1 - \frac{1}{f_1(\Delta)} \frac{\mathcal{C}_{k - 1}}{g_k(\Delta)} \right)^{\frac{k + 1}{k}} \] can be arbitrarily small even when $d \gg 0$ (e.g. if it approaches 0 as $d \to +\infty$). \\

	In order to have a more precise idea of the overall behavior, we will start by taking a closer look at the shape of the inequality. \\

	 \begin{prop} \label{ratlowbd}
	 	Given a $(d - 1)$-dimensional simplicial sphere $\Delta$ satisfying the link condition and $1 \le k \le \frac{d}{2} - 1$, we have 
	 	
	 	\begin{align*}
	 		\frac{1}{f_1(\Delta)} \frac{\mathcal{C}_k}{g_{k + 1}(\Delta)} &\ge \left( 1 - \omega_k^{ \frac{k}{k + 1} } \left( 1 - \frac{1}{f_1(\Delta)} \frac{\mathcal{C}_{k - 1}}{g_k(\Delta)} \right) \right)^{ \frac{k + 1}{k} } \\
	 		&= \left( \omega_k^{ \frac{k}{k + 1} } \left( \frac{1}{f_1(\Delta)} \frac{\mathcal{C}_{k - 1}}{g_k(\Delta)} \right) + \left( 1 - \omega_k^{ \frac{k}{k + 1} } \right) \right)^{ \frac{k + 1}{k} },
	 	\end{align*}
	 	
	 	where \[ \omega_k = f_1(\Delta)^{ \frac{1}{k} } \frac{g_k(\Delta)^{ \frac{k + 1}{k} }}{g_{k + 1}(\Delta)}. \]
	 	
	 \end{prop}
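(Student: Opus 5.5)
Starting point: the global/averaged $M$-vector inequality from the second inequality of Corollary \ref{diffcush}, divided by $2$ and in the normalized form derived just before Theorem \ref{largelinktriv}:
\[ g_{k + 1}(\Delta) \left( 1 - \frac{1}{f_1(\Delta)} \frac{\mathcal{C}_k}{g_{k + 1}(\Delta)} \right) \le f_1(\Delta)^{ \frac{1}{k} } g_k(\Delta)^{ \frac{k + 1}{k} } \left( 1 - \frac{1}{f_1(\Delta)} \frac{\mathcal{C}_{k - 1}}{g_k(\Delta)} \right)^{\frac{k + 1}{k}}. \]
Here $g_{k+1}(\Delta) \ne 0$ by Proposition \ref{interlacingnontriv}. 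Dividing by $g_{k+1}(\Delta)$ and using the definition of $\omega_k$ gives
\[ 1 - X_k \le \omega_k \, (1 - X_{k-1})^{\frac{k+1}{k}}, \qquad X_j \coloneq \frac{1}{f_1(\Delta)} \frac{\mathcal{C}_j}{g_{j+1}(\Delta)}. \]
The nonnegativity half of Corollary \ref{diffcush} (equivalently Corollary \ref{globgintlac}) shows $X_k \le 1$ and $X_{k-1} \le 1$. Hence both sides are nonnegative and fractional powers are legitimate.

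The plan is to turn this into the stated form by taking $\frac{k}{k+1}$-th powers, which is monotone on $[0,\infty)$. This gives
\[ (1 - X_k)^{\frac{k}{k+1}} \le \omega_k^{\frac{k}{k+1}} (1 - X_{k-1}), \]
and rearranging yields $1 - \omega_k^{\frac{k}{k+1}}(1 - X_{k-1}) \le 1 - (1-X_k)^{\frac{k}{k+1}}$.

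The remaining step is to pass from the right-hand side $1 - (1-X_k)^{\frac{k}{k+1}}$ to $X_k^{\frac{k}{k+1}}$. We need
\[ 1 - (1-X_k)^{\frac{k}{k+1}} \le X_k^{\frac{k}{k+1}}, \quad\text{i.e.}\quad 1 \le X_k^{p} + (1-X_k)^{p}, \qquad p = \tfrac{k}{k+1} \le 1. \]
This holds because $t \mapsto t^p$ is subadditive (concave with value $0$ at $0$) for $p \le 1$. Equivalently, it is Lemma \ref{sumpowerbd} applied to $a_1 = X_k^p$, $a_2 = (1-X_k)^p$ with exponent $1/p \ge 1$.

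Once $1 - \omega_k^{\frac{k}{k+1}}(1-X_{k-1}) \le X_k^{\frac{k}{k+1}}$ is established, raise both sides to the power $\frac{k+1}{k}$. This is justified when the left side is nonnegative; when it is negative the claimed inequality should be read with that caveat, or it is trivially satisfied if one interprets the bound as $\max(\cdot,0)$. Doing so gives $X_k \ge \bigl(1 - \omega_k^{\frac{k}{k+1}}(1-X_{k-1})\bigr)^{\frac{k+1}{k}}$. The second displayed expression in the statement is then just the algebraic expansion $1 - \omega_k^{\frac{k}{k+1}} + \omega_k^{\frac{k}{k+1}} X_{k-1}$.

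The main obstacle is sign bookkeeping. We must check $X_{k-1}, X_k \in [0,1]$ from the $g(\widetilde{\Delta}_e)\ge 0$ half of Corollary \ref{diffcush}, so that every fractional power is of a nonnegative quantity. We must also handle the case where $1-\omega_k^{\frac{k}{k+1}}(1-X_{k-1})<0$, which can occur since $\omega_k \ge 1$ typically, when raising to $\frac{k+1}{k}$. I would treat that case separately, noting the bound is then vacuous in its natural interpretation.
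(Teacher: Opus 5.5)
Your proof is correct and is essentially the paper's argument. The paper also starts from $1 - X_k \le \omega_k (1 - X_{k-1})^{\frac{k+1}{k}}$ and closes with the two-term case of Lemma \ref{sumpowerbd}: it applies Remark \ref{diffpowbd} directly with $P = 1$ and $Q = \omega_k^{\frac{k}{k+1}}(1 - X_{k-1})$ to get $X_k \ge 1 - Q^{\frac{k+1}{k}} \ge (1 - Q)^{\frac{k+1}{k}}$, whereas you use the dual subadditive form on the split $1 = X_k + (1 - X_k)$.

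Your sign caveat is exactly the hypothesis $P \ge Q$, i.e.\ $Q \le 1$, which the paper uses without comment, and it is genuinely needed. For $k = 1$ and the boundary of the $6$-dimensional cross polytope one has $X_1 = \frac{1}{3}$ while $Q \approx 10.3$, so $(1 - Q)^2 > 1$ and the displayed bound fails as literally stated.
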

	 
	 \begin{proof}
	 	This follows from applying the inequality $(P - Q)^p \le P^p - Q^p$ for $P \ge Q \ge 0$ and $p \ge 1$ (Remark \ref{diffpowbd}) to the inequality \[ 1 - \frac{1}{f_1(\Delta)} \frac{\mathcal{C}_k}{g_{k + 1}(\Delta)} \le f_1(\Delta)^{ \frac{1}{k} } \frac{ \left( 1 - \frac{1}{f_1(\Delta)} \frac{\mathcal{C}_{k - 1}}{g_k(\Delta)} \right)^{\frac{k + 1}{k}} }{ \frac{g_{k + 1}(\Delta)}{g_k(\Delta)^{ \frac{k + 1}{k} }} } = f_1(\Delta)^{ \frac{1}{k} } \frac{g_k(\Delta)^{ \frac{k + 1}{k} }}{g_{k + 1}(\Delta)} \left( 1 - \frac{1}{f_1(\Delta)} \frac{\mathcal{C}_{k - 1}}{g_k(\Delta)} \right)^{\frac{k + 1}{k}}, \]
	 	
	 	which is equivalent to \[ \frac{1}{f_1(\Delta)} \frac{\mathcal{C}_k}{g_{k + 1}(\Delta)} \ge 1 - f_1(\Delta)^{ \frac{1}{k} } \frac{g_k(\Delta)^{ \frac{k + 1}{k} }}{g_{k + 1}(\Delta)} \left( 1 - \frac{1}{f_1(\Delta)} \frac{\mathcal{C}_{k - 1}}{g_k(\Delta)} \right)^{\frac{k + 1}{k}}. \]
	 \end{proof}

	 \begin{exmp} \textbf{(Simplification from a stronger bound) \\} \label{simpratlowbd}
	 	If $g(\Delta)$ satisfies the stronger bound \[ 1 - \frac{1}{f_1(\Delta)} \frac{\mathcal{C}_k}{g_{k + 1}(\Delta)} \le  \left( 1 - \frac{1}{f_1(\Delta)} \frac{\mathcal{C}_{k - 1}}{g_k(\Delta)} \right)^{\frac{k + 1}{k}} \] with the upper bound terms $f_1(\Delta)^{ \frac{1}{k} } \ge 1$ and $\frac{g_k(\Delta)^{ \frac{k + 1}{k} }}{g_{k + 1}(\Delta)} \ge 1$ replaced by 1, then the method above yields the simplified bound \[ \frac{1}{f_1(\Delta)} \frac{\mathcal{C}_k}{g_{k + 1}(\Delta)} \ge \left( \frac{1}{f_1(\Delta)} \frac{\mathcal{C}_{k - 1}}{g_k(\Delta)} \right)^{ \frac{k + 1}{k} }. \]

	 \end{exmp}

	If $\gamma(\Delta) \ge 0$, we can view the inequalities in Proposition \ref{ratlowbd} and Example \ref{simpratlowbd} asymptotically with respect to $d$ (i.e. for $d \gg 0$) as lower bounds for $\frac{g_{k + 2}(\Delta)}{g_{k + 1}(\Delta)}$ where the lower bounds increase as $\frac{g_{k + 1}(\Delta)}{g_k(\Delta)}$ increases. For example, only the terms involving $\frac{g_{k + 2}(\Delta)}{g_{k + 1}(\Delta)}$ and $\frac{g_{k + 1}(\Delta)}{g_k(\Delta)}$ remain in the upper and lower bounds respectively when $d = o(g_1(\Delta))$ (e.g. case where $d^2 = o(g_2(\Delta))$). \\

	We will now study the behavior of the terms of the inequality as $d \to +\infty$ and split into cases according to asymptotic properties of the numerator of the quotient in the upper bound. While $\lim_{d \to +\infty} \frac{1}{f_1(\Delta)} \frac{\mathcal{C}_{k - 1}}{g_k(\Delta)}$ does not necessarily exist if the upper bound term $\frac{\mathcal{C}_{k - 1}}{g_k(\Delta)}$ is an arbitrary function of $d$, we will look at instances where the limit exists in order to get some insight into what functions yield the possible output behaviors. For example, this may include functions added to form the given one or those bounding oscillating functions. The same applies for the lower bound term $\frac{1}{f_1(\Delta)} \frac{\mathcal{C}_k}{g_{k + 1}(\Delta)}$. Note that \[ 0 \le \lim_{d \to +\infty} \frac{1}{f_1(\Delta)} \frac{\mathcal{C}_{k - 1}}{g_k(\Delta)} \le 1 \] if it exists. In the remainder of this section, we will both take $1 \le k \le \frac{d}{2} - 2$ to be a constant or linear/sublinear functions in $d$ such that $\lim_{d \to +\infty} k = +\infty$. The former involves limiting behavior of \emph{individual} link condition inequalities while the latter considers those of \emph{collections} of such inequalities. \\
	
	\begin{prop} \textbf{(Initial asymptotic conditions and the link condition) \\} \label{linkinitasymp}
		Let $\Delta$ be a $(d - 1)$-dimensional simplicial sphere satisfying the link condition such that $\gamma(\Delta) \ge 0$. Consider the inequality \[ 1 - \frac{1}{f_1(\Delta)} \frac{\mathcal{C}_k}{g_{k + 1}(\Delta)} \le f_1(\Delta)^{ \frac{1}{k} } \frac{ \left( 1 - \frac{1}{f_1(\Delta)} \frac{\mathcal{C}_{k - 1}}{g_k(\Delta)} \right)^{\frac{k + 1}{k}} }{ \frac{g_{k + 1}(\Delta)}{g_k(\Delta)^{ \frac{k + 1}{k} }} } = f_1(\Delta)^{ \frac{1}{k} } \frac{g_k(\Delta)^{ \frac{k + 1}{k} }}{g_{k + 1}(\Delta)} \left( 1 - \frac{1}{f_1(\Delta)} \frac{\mathcal{C}_{k - 1}}{g_k(\Delta)} \right)^{\frac{k + 1}{k}}. \]
		
		Since $1 \le k \le \frac{d}{2} - 2$, we can take $k$ to be a linear or sublinear (e.g. constant) function in $d$. \\
		
		\begin{enumerate}
			\item If \[ \lim_{d \to +\infty} \frac{1}{f_1(\Delta)} \frac{\mathcal{C}_{k - 1}}{g_k(\Delta)} = 0, \] the induced limit inequality is trivial. This is equivalent to the last term of the product being equal to 1 if $k$ is constant or a sublinear function of $d$ such that $\lim_{d \to +\infty} k = +\infty$. \\
			
			\item Suppose that $1 \le k \le \frac{d}{2} - 1$ is a positive constant or $\lim_{d \to +\infty} k = +\infty$. If $\lim_{d \to +\infty} f_1(\Delta)^{ \frac{1}{k} } = +\infty$ (e.g. $k$ a constant or the small functions in $d$ from Proposition \ref{zerolinkcoeff}), the limit of the upper bound of the inequality as $d \to +\infty$ is equal to 0 only if \[ \lim_{d \to +\infty}  \left( 1 - \frac{1}{f_1(\Delta)} \frac{\mathcal{C}_{k - 1}}{g_k(\Delta)} \right)^{ \frac{k + 1}{k} } = 0. \]
			
			This condition is equivalent to \[ \lim_{d \to +\infty} \frac{1}{f_1(\Delta)} \frac{\mathcal{C}_{k - 1}}{g_k(\Delta)} = 1. \]
			
			The upper bound of the inequality is then equal to 0 if $\left( 1 - \frac{1}{f_1(\Delta)} \frac{\mathcal{C}_{k - 1}}{g_k(\Delta)} \right)^{ \frac{k + 1}{k} }$ approaches 0 more quickly than \[ \frac{1}{f_1(\Delta)^{ \frac{1}{k} }} \frac{g_{k + 1}(\Delta)}{g_k(\Delta)^{ \frac{k + 1}{k} }} \] as $d \to +\infty$. \\

			\item Suppose that \[ \lim_{d \to +\infty} \frac{1}{f_1(\Delta)} \frac{\mathcal{C}_{k - 1}}{g_k(\Delta)} = \mu \] for some $0 < \alpha < 1$. \\
			
			 If \[ \lim_{d \to +\infty} f_1(\Delta)^{ \frac{1}{k} } \frac{g_k(\Delta)^{ \frac{k + 1}{k} }}{g_{k + 1}(\Delta)} = +\infty \] or \[ \lim_{d \to +\infty} f_1(\Delta)^{ \frac{1}{k} } \frac{g_k(\Delta)^{ \frac{k + 1}{k} }}{g_{k + 1}(\Delta)} = \beta > \frac{1}{(1 - \mu)^{ \frac{k + 1}{k} }}, \] the induced limit inequality is trivial. \\
		\end{enumerate}
		
	\end{prop}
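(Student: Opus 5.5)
We will treat the displayed inequality as a limiting statement. Write $X_d = \frac{1}{f_1(\Delta)} \frac{\mathcal{C}_k}{g_{k+1}(\Delta)}$ and $Y_d = \frac{1}{f_1(\Delta)} \frac{\mathcal{C}_{k-1}}{g_k(\Delta)}$. Its upper bound is the product of three factors, $f_1(\Delta)^{1/k}$, $\frac{g_k(\Delta)^{(k+1)/k}}{g_{k+1}(\Delta)}$ and $(1 - Y_d)^{(k+1)/k}$. We first record what controls each factor.

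\begin{itemize}
\item \emph{Range of $X_d$ and $Y_d$.} By Definition \ref{shortglob}, $\mathcal{C}_{k-1} = \sum_e g_{k-1}(\lk_\Delta(e))$. Each term satisfies $g_{k-1}(\lk_\Delta(e)) \le g_k(\Delta)$ by Lemma \ref{edgeconhgvec} and $g(\widetilde{\Delta}_e) \ge 0$, so $0 \le Y_d \le 1$, and likewise $0 \le X_d \le 1$. Hence the left side $1 - X_d$ always lies in $[0,1]$.
\item \emph{Size of the middle factor.} The $M$-vector condition together with Corollary \ref{macpowasymp} gives $\frac{g_k(\Delta)^{(k+1)/k}}{g_{k+1}(\Delta)} \ge 1$.
\item \emph{Size of the first factor.} We have $f_1(\Delta)^{1/k} \ge 1$, with $f_1(\Delta) > \binom{d+1}{2}$.
\end{itemize}

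Consequently a limit inequality is \emph{trivial} exactly when the limit (or $\liminf$) of the upper bound is at least $1$. It can only be nontrivial if the upper bound can drop below $1$.

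For Part 1, assume $Y_d \to 0$. Then $(1 - Y_d)^{(k+1)/k} \to 1$. For constant $k$ this is immediate from continuity. For sublinear $k \to \infty$ we write
\[ (1 - Y_d)^{(k+1)/k} = (1 - Y_d)\,(1 - Y_d)^{1/k} \]
and note that both pieces tend to $1$. The upper bound is then asymptotically at least $f_1(\Delta)^{1/k} \cdot 1 \ge 1$, which dominates $1 - X_d \le 1$, so the inequality is trivial. The stated equivalence with the last factor tending to $1$ is just the continuity and monotonicity of $x \mapsto (1 - x)^{(k+1)/k}$ on $[0,1]$.

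For Part 2, assume $f_1(\Delta)^{1/k} \to +\infty$. The middle factor is at least $1$, so if the product tends to $0$ the third factor must tend to $0$. Since $x \mapsto x^{(k+1)/k}$ vanishes only at $0$ and the exponent is bounded (for constant $k$ it is fixed; otherwise it tends to $1$), this is equivalent to $Y_d \to 1$. For the converse clause, write the product as
\[ \frac{(1 - Y_d)^{(k+1)/k}}{\left[ f_1(\Delta)^{-1/k} \, \frac{g_{k+1}(\Delta)}{g_k(\Delta)^{(k+1)/k}} \right]}. \]
This tends to $0$ exactly when the numerator is $o$ of the bracketed reciprocal, which is a restatement of the condition in the proposition.

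For Part 3, assume $Y_d \to \mu \in (0,1)$. The statement writes $\alpha$, which we read as $\mu$. Then $(1 - Y_d)^{(k+1)/k} \to (1 - \mu)^{(k+1)/k}$; if $k \to \infty$ this limit is $1 - \mu$, and we interpret the exponent accordingly.
\begin{itemize}
\item If $\omega_k = f_1(\Delta)^{1/k} \frac{g_k(\Delta)^{(k+1)/k}}{g_{k+1}(\Delta)} \to +\infty$, the upper bound tends to $+\infty > 1$.
\item If $\omega_k \to \beta > (1 - \mu)^{-(k+1)/k}$, the upper bound tends to $\beta (1 - \mu)^{(k+1)/k} > 1$.
\end{itemize}
In either case, for $d \gg 0$ the upper bound exceeds $1 \ge 1 - X_d$, so the inequality is trivial.

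The main obstacle is the case of non-constant $k$. There the exponent $\frac{k+1}{k}$ varies with $d$, so passing to limits through powers needs the uniform continuity of $(x, p) \mapsto x^p$ on $[0,1] \times [1,2]$. We would invoke this explicitly rather than treating the exponent as fixed. The same care is needed in Part 2 to make sure that $f_1(\Delta)^{1/k}$ still tends to infinity for the allowed growth rates of $k$.
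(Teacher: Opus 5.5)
Your proposal is correct and follows the paper's proof essentially step for step. In both, the first two factors of the upper bound are at least $1$ (from $f_1(\Delta) \ge 1$, and from the $M$-vector condition together with Corollary \ref{macpowasymp}), while $1 - X_d \in [0,1]$; the paper gets this range by citing Corollary \ref{globgintlac}, which is exactly your $g(\widetilde{\Delta}_e) \ge 0$ observation. Parts 1--3 then reduce to the limiting behavior of $(1 - Y_d)^{(k+1)/k}$. The only cosmetic difference is in Part 2: the paper passes through $\log(1 - Y_d)$ with an $\varepsilon$-argument when $k \to \infty$, whereas your observation that the exponent stays bounded (e.g. $x^2 \le x^{(k+1)/k} \le x$ on $[0,1]$) does the same job more directly.
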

	
	\begin{rem} \textbf{(Trivial limit inequalities) \\} \label{trivineqasymp}
		Variants of Part 1 and Part 3 on triviality of the limit inequality hold whenever there is a strictly positive constant $C > 0$ such that \[ \left( 1 - \frac{1}{f_1(\Delta)} \frac{\mathcal{C}_{k - 1}}{g_k(\Delta)} \right)^{\frac{k + 1}{k}} > C \] for all $d \gg 0$. \\
	\end{rem}
	
	\begin{proof}
		\begin{enumerate}
			\item The assumption implies that \[ \lim_{d \to +\infty} \left( 1 - \frac{1}{f_1(\Delta)} \frac{\mathcal{C}_{k - 1}}{g_k(\Delta)} \right)^{ \frac{k + 1}{k} } = 1. \] Since $f_1(\Delta)^{\frac{1}{k}} \ge 1$ and \[ \frac{g_k(\Delta)^{ \frac{k + 1}{k} }}{g_{k + 1}(\Delta)} \ge 1 \] by Part 2c) of Theorem \ref{gconjsphere} and Corollary \ref{macpowasymp}, the upper bound of the inequality is $\ge 1$. However, the lower bound is already between 0 and 1 by Corollary \ref{globgintlac} (used in equivalent inequality Corollary \ref{diffcush}). Thus, the inequality must hold for trivial reasons. \\ 
			
			\item Since $\lim_{d \to +\infty} k = +\infty$, $\lim_{d \to +\infty} f_1(\Delta)^{ \frac{1}{k} } = +\infty$, and \[ \frac{g_k(\Delta)^{ \frac{k + 1}{k} }}{g_{k + 1}(\Delta)} \ge 1 \] as mentioned in the proof of Part 1, the limit of the upper bound is equal to 0 only if \[ \lim_{d \to +\infty} \left( 1 - \frac{1}{f_1(\Delta)} \frac{\mathcal{C}_{k - 1}}{g_k(\Delta)} \right)^{ \frac{k + 1}{k} } = 0. \]
			
			This is equivalent to \[ \lim_{d \to +\infty} \left( 1 + \frac{1}{k} \right) \log \left( 1 - \frac{1}{f_1(\Delta)} \frac{\mathcal{C}_{k - 1}}{g_k(\Delta)} \right) = -\infty. \]
			
			If $k$ is a constant, the statement follows after factoring out $1 + \frac{1}{k}$. Suppose that $\lim_{d \to +\infty} k = +\infty$. Fix $0 < \varepsilon \ll 1$. For any $N > 0$, there is a $d_N$ such that \[ \left( 1 + \frac{1}{k} \right) \log \left( 1 - \frac{1}{f_1(\Delta)} \frac{\mathcal{C}_{k - 1}}{g_k(\Delta)} \right) < -N \] and \[ - \left( 1 + \frac{1}{k} \right) \log \left( 1 - \frac{1}{f_1(\Delta)} \frac{\mathcal{C}_{k - 1}}{g_k(\Delta)} \right) > N \] for all $d \ge d_N$. Since $\lim_{d \to +\infty} k = +\infty$, we can assume without loss of generality that $1 + \frac{1}{k} < 1 + \varepsilon$. This implies that \[ -\log \left( 1 - \frac{1}{f_1(\Delta)} \frac{\mathcal{C}_{k - 1}}{g_k(\Delta)} \right) > \frac{N}{1 + \varepsilon}. \]  Given $M > 0$, using the $d_N$ for $N = (1 + \varepsilon) M$ would imply that \[ -\log \left( 1 - \frac{1}{f_1(\Delta)} \frac{\mathcal{C}_{k - 1}}{g_k(\Delta)} \right) > M \] for all $d \ge d_N$. This implies that \[ \lim_{d \to +\infty} \log \left( 1 - \frac{1}{f_1(\Delta)} \frac{\mathcal{C}_{k - 1}}{g_k(\Delta)} \right) = -\infty \] and \[ \lim_{d \to +\infty} 1 - \frac{1}{f_1(\Delta)} \frac{\mathcal{C}_{k - 1}}{g_k(\Delta)} = 0. \]
			
			\item In these cases, the limit of the upper bound as $d \to +\infty$ is either $+\infty$ or a finite number $> 1$. \\

		\end{enumerate}
	\end{proof}
	
	Given these initial asymptotic conditions, we will take a closer look at when they apply. \\
	
	\begin{cor} \textbf{(Ratios of $g$-vector components and initial conditions) \\} \label{asympgratinit}
		Suppose that $\Delta$ is a $(d - 1)$-dimensional simplicial sphere satisfying the link condition such that $\gamma(\Delta) \ge 0$. We will take $1 \le k \le \frac{d}{2} - 1$ to be asymptotically linear or sublinear (e.g. constant) in $d$. \\
		
		\begin{enumerate}
			\item Suppose that \[ \lim_{d \to +\infty} \frac{1}{f_1(\Delta)} \frac{\mathcal{C}_{k - 1}}{g_k(\Delta)} = 0 \] as in Part 1 of Proposition \ref{linkinitasymp}. \\  
				\begin{enumerate}
					\item Assume that $d = o(g_1(\Delta))$ (e.g. when $d^2 = o(g_2(\Delta))$). If $k$ is linear in $d$, then \[ \frac{g_{k + 1}(\Delta)}{g_k(\Delta)} = o \left( \frac{g_1(\Delta)}{d} \right) \] or \[ \frac{g_{k + 1}(\Delta)}{g_k(\Delta)} = o \left( \frac{g_2(\Delta)}{d^2} \right) \] with respect to $d$. Similarly, we have \[ \frac{g_{k + 1}(\Delta)}{g_k(\Delta)} = o \left( \frac{d g_1(\Delta)}{k^2} \right)  \] or \[ \frac{g_{k + 1}(\Delta)}{g_k(\Delta)} = o \left( \frac{g_2(\Delta)}{k^2} \right) \] with respect to $d$ if $k$ is a sublinear function in $d$ (e.g. a constant). \\
					
					\item Assume that $g_1(\Delta)$ is linear in $d$. Then, we have that \[ \frac{g_{k + 1}(\Delta)}{g_k(\Delta)} = o \left( \frac{d^2}{k^2} \right), \] and \[ \lim_{d \to +\infty} \frac{g_{k - 1}(\Delta)}{g_k(\Delta)} = 0. \]  For example, this holds for any constant $k$. \\ 
				\end{enumerate}
			
			\item Suppose that \[ \lim_{d \to +\infty} \frac{1}{f_1(\Delta)} \frac{\mathcal{C}_{k - 1}}{g_k(\Delta)} = 1 \] as in Part 2 of Proposition \ref{linkinitasymp}. \\
				\begin{enumerate}
					\item Assume that $d = o(g_1(\Delta))$ (e.g. when $d^2 = o(g_2(\Delta))$). Then, we have \[ \lim_{d \to +\infty} \frac{ \frac{g_{k + 1}(\Delta)}{g_k(\Delta)} }{ \frac{f_1(\Delta)}{k^2} } = 2 \] 
					
					if $\lim_{d \to +\infty} k = +\infty$ and \[ \lim_{d \to +\infty}  \frac{ \frac{g_{k + 1}(\Delta)}{g_k(\Delta)} }{f_1(\Delta)} = \frac{2}{k(k + 1)}  \] if $k$ is a constant. \\

					If $\lim_{d \to +\infty} \frac{k}{d} = \alpha$ for some $0 < \alpha \le \frac{1}{2}$, then \[ \alpha^2 \lim_{d \to +\infty} \frac{  \frac{g_{k + 1}(\Delta)}{g_k(\Delta)} }{ \frac{1}{2} + \frac{g_1(\Delta)}{d} + \frac{g_2(\Delta)}{d^2} } = 2.  \]
					
					\item Assume that $g_1(\Delta)$ is linear in $d$. If $g_1(\Delta)$ is linear in $d$ and $\lim_{d \to +\infty} \frac{k}{d} = \alpha$ for some $0 < \alpha \le \frac{1}{2}$, then \[ \alpha^2 \lim_{d \to +\infty} \frac{g_{k + 1}(\Delta)}{g_k(\Delta)} + 2\alpha(1 - \alpha) + (1 - \alpha)^2 \lim_{d \to +\infty} \frac{g_{k - 1}(\Delta)}{g_k(\Delta)} = 2 \max \left( \lim_{d \to +\infty} \frac{g_1(\Delta)}{d}, \lim_{d \to +\infty} \frac{g_2(\Delta)}{d^2} \right). \]
					
					If $k$ is a sublinear function of $d$ and $\lim_{d \to +\infty} k = +\infty$, then \[ \lim_{d \to +\infty} \frac{ \frac{g_{k + 1}(\Delta)}{g_k(\Delta)} + \frac{d^2}{k^2} \frac{g_{k - 1}(\Delta)}{g_k(\Delta)} }{ \frac{f_1(\Delta)}{k^2} } = 2  \] and \[  \lim_{d \to +\infty} \left(  \frac{ \frac{g_{k + 1}(\Delta)}{g_k(\Delta)} }{ \frac{d^2}{k^2} } +  \frac{g_{k - 1}(\Delta)}{g_k(\Delta)} \right) = 1 + 2 \lim_{d \to +\infty} \frac{g_1(\Delta)}{d} +  2 \lim_{d \to +\infty}  \frac{g_2(\Delta)}{d^2}.  \]

					If $k$ is a constant, this is impossible. \\
					
					\color{black} 
				\end{enumerate}

			\item Suppose that \[ \lim_{d \to +\infty} \frac{1}{f_1(\Delta)} \frac{\mathcal{C}_{k - 1}}{g_k(\Delta)} = \mu \] with $0 < \mu < 1$ as in Part 3 of Proposition \ref{linkinitasymp}. \\
				\begin{enumerate}
					\item Assume that $d = o(g_1(\Delta))$ (e.g. when $d^2 = o(g_2(\Delta))$). Then, we have \[ \lim_{d \to +\infty} \frac{ \frac{g_{k + 1}(\Delta)}{g_k(\Delta)} }{ \frac{f_1(\Delta)}{k^2} } = 2 \mu \] 
					
					if $\lim_{d \to +\infty} k = +\infty$ and \[ \lim_{d \to +\infty}  \frac{ \frac{g_{k + 1}(\Delta)}{g_k(\Delta)} }{f_1(\Delta)} = \frac{2 \mu}{k(k + 1)}  \] if $k$ is a constant. \\
					
					If $\lim_{d \to +\infty} \frac{k}{d} = \alpha$ for some $0 < \alpha \le \frac{1}{2}$, then \[ \alpha^2 \lim_{d \to +\infty} \frac{  \frac{g_{k + 1}(\Delta)}{g_k(\Delta)} }{ \frac{1}{2} + \frac{g_1(\Delta)}{d^2} + \frac{g_2(\Delta)}{d^2} } = 2 \mu.  \]
					
					\item Assume that $g_1(\Delta)$ is linear in $d$. If $g_1(\Delta)$ is linear in $d$ and $\lim_{d \to +\infty} \frac{k}{d} = \alpha$ for some $0 < \alpha \le \frac{1}{2}$, then \[ \alpha^2 \lim_{d \to +\infty} \frac{g_{k + 1}(\Delta)}{g_k(\Delta)} + 2\alpha(1 - \alpha) + (1 - \alpha)^2 \lim_{d \to +\infty} \frac{g_{k - 1}(\Delta)}{g_k(\Delta)} = 2 \mu \max \left( \lim_{d \to +\infty} \frac{g_1(\Delta)}{d}, \lim_{d \to +\infty} \frac{g_2(\Delta)}{d^2} \right). \]
					
					If $k$ is a sublinear function of $d$ and $\lim_{d \to +\infty} k = +\infty$, then \[ \lim_{d \to +\infty} \frac{ \frac{g_{k + 1}(\Delta)}{g_k(\Delta)} + \frac{d^2}{k^2} \frac{g_{k - 1}(\Delta)}{g_k(\Delta)} }{ \frac{f_1(\Delta)}{k^2} } = 2 \mu \] and \[  \lim_{d \to +\infty} \left(  \frac{ \frac{g_{k + 1}(\Delta)}{g_k(\Delta)} }{ \frac{d^2}{k^2} } +  \frac{g_{k - 1}(\Delta)}{g_k(\Delta)} \right) = \mu \left( 1 + 2 \lim_{d \to +\infty} \frac{g_1(\Delta)}{d} +  2 \lim_{d \to +\infty}  \frac{g_2(\Delta)}{d^2} \right).  \]

					If $k$ is a constant, this is impossible. \\ 
					
					\color{black} 
					
				\end{enumerate}
		\end{enumerate}
	\end{cor}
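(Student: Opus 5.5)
The plan is to unwind the definition of $\mathcal{C}_{k-1}$ and sort the terms of $\frac{1}{f_1(\Delta)} \frac{\mathcal{C}_{k-1}}{g_k(\Delta)}$ by their asymptotic size in each regime. By Definition \ref{shortglob} with $k$ replaced by $k-1$,
\[ \frac{1}{f_1(\Delta)} \frac{\mathcal{C}_{k-1}}{g_k(\Delta)} = \frac{1}{2 f_1(\Delta)} \left( k(k+1) \frac{g_{k+1}(\Delta)}{g_k(\Delta)} + 2k(d-k+1) + (d-k+1)(d-k+2) \frac{g_{k-1}(\Delta)}{g_k(\Delta)} \right), \]
with $f_1(\Delta) = \binom{d+1}{2} + d g_1(\Delta) + g_2(\Delta)$. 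The three cases correspond to setting the limit of this quantity equal to $0$, $1$, or $\mu$. Write $L$ for this limit.

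Two estimates bound the terms other than $\frac{g_{k+1}}{g_k}$. First, since $\gamma(\Delta) \ge 0$, Corollary \ref{gamgratrest} gives $\frac{g_{k-1}(\Delta)}{g_k(\Delta)} < 3$, so the third term is $O(d^2)$. Second, the middle term is $O(kd)$.

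\textbf{Regime $d = o(g_1(\Delta))$.} Here $f_1(\Delta)/d^2 \to \infty$, so the second and third terms divided by $f_1(\Delta)$ vanish in the limit. Hence $L = \lim \frac{k(k+1)}{2 f_1(\Delta)} \frac{g_{k+1}(\Delta)}{g_k(\Delta)}$, in the spirit of Part 1 of Theorem \ref{largelinktriv}.

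\begin{itemize}
\item If $L = 0$, then $\frac{g_{k+1}}{g_k} = o(f_1/k^2)$. This gives the stated $o$-forms: $f_1 \asymp d g_1 + g_2$ for sublinear $k$, and dividing by $d^2$ when $k$ is linear in $d$. Since $g_2 \le \binom{g_1+1}{2}$, at least one of the two alternatives holds.
\item If $L = 1$ or $L = \mu$, we obtain $\frac{g_{k+1}/g_k}{f_1/k^2} \to 2L$ when $k \to \infty$ (using $k(k+1) \sim k^2$), and $\frac{2L}{k(k+1)}$ when $k$ is constant.
\item If $k/d \to \alpha$, write $f_1/d^2 = \frac{1}{2} + \frac{g_1}{d} + \frac{g_2}{d^2} + o(1)$ and $k^2 = \alpha^2 d^2 (1 + o(1))$. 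This gives the $\alpha^2$ formula.
\end{itemize}

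\textbf{Regime $g_1(\Delta)$ linear in $d$.} Now $f_1 = \Theta(d^2)$, so all three terms compete. Divide everything by $d^2$, using $f_1/d^2 \to \frac{1}{2} + \lim \frac{g_1}{d} + \lim \frac{g_2}{d^2}$.

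\begin{itemize}
\item If $k/d \to \alpha$, the normalized bracket tends to
\[ \alpha^2 \lim \frac{g_{k+1}}{g_k} + 2\alpha(1-\alpha) + (1-\alpha)^2 \lim \frac{g_{k-1}}{g_k}, \]
which we equate to $2L \cdot \lim f_1/d^2$. Matching this with the stated right-hand side in terms of $\lim g_1/d$ and $\lim g_2/d^2$ requires tracking the constant $\frac{1}{2}$. I expect bookkeeping of these constants to be a nuisance, and will follow whatever normalization the statement fixes.
\item If $k$ is sublinear with $k \to \infty$, the middle term $2k(d-k+1)/f_1$ tends to $0$. The remaining two terms give the stated limits after dividing by $f_1/k^2$, or equivalently by $d^2/k^2$.
\item If $L = 0$ and $k$ is constant or sublinear, both surviving nonnegative terms must vanish. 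This forces $\frac{g_{k-1}}{g_k} \to 0$ and $\frac{g_{k+1}}{g_k} = o(d^2/k^2)$.
\end{itemize}

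\textbf{The constant-$k$ impossibility claims in Parts 2b and 3b.} For constant $k$ with $g_1$ linear, Corollary \ref{gamgratrest} and Lemma \ref{gamtgrowth} applied to the coefficient matrix $B$ should force $\frac{g_{k+1}}{g_k} = O(d)$. Then the first term is $o(1)$ after division by $f_1$, so we would need $\lim \frac{g_{k-1}}{g_k}$ to equal a positive constant times $\lim f_1/d^2$.

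This is the main obstacle: I have to rule that out. The idea is to show $\frac{g_{k-1}}{g_k} = O(1/d)$ when $\gamma \ge 0$ and $k$ is constant, because $b_{k,j}/b_{k-1,j} = \frac{d-2j-2k+2j+1}{k-j}\cdots \asymp d/(k-j)$ for fixed $k$, so the ratio tends to $0$. The first attempt would be a refinement of Lemma \ref{gamtgrowth}, showing that for fixed $k$ the lower bound on $b_{k,s}/b_{k-1,s}$ grows linearly in $d$. That would make $L = 0$ forced, contradicting $L \in \{\mu, 1\}$.
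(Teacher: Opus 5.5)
Your strategy is the paper's. You expand $\frac{1}{f_1(\Delta)}\frac{\mathcal{C}_{k-1}}{g_k(\Delta)}$ into its three nonnegative terms, use $\frac{g_{k-1}(\Delta)}{g_k(\Delta)} \le 3$ (Corollary~\ref{gamgratrest}), and check which terms survive division by $f_1(\Delta)$ in the regimes $d = o(g_1(\Delta))$ and $g_1(\Delta)$ linear. Parts 1, 2(a), 3(a) and the sublinear-$k$ cases of 2(b), 3(b) go through as you describe. Two steps need repair.

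\textbf{Constant $k$ in 2(b), 3(b).} Corollary~\ref{gamgratrest} and Lemma~\ref{gamtgrowth} cannot give $\frac{g_{k+1}(\Delta)}{g_k(\Delta)} = O(d)$. They compare $b_{k+1,s}$ with $b_{k,s}$ for $s \le k$, but $g_{k+1}(\Delta)$ also contains $\gamma_{k+1}(\Delta)$ with coefficient $1$, and $\gamma(\Delta) \ge 0$ puts no upper bound on it. Part 2 of Corollary~\ref{gamgratrest} only says where an excess over $(d+1)g_k(\Delta)$ comes from; it does not rule it out. The upper bound has to come from the sphere condition: $g_{k+1}(\Delta) \le g_k(\Delta)^{\frac{k+1}{k}}$ and $g_k(\Delta) \le g_1(\Delta)^k$ (Corollaries~\ref{macpowasymp} and~\ref{sphereginitbd}) give $\frac{g_{k+1}(\Delta)}{g_k(\Delta)} \le g_1(\Delta) = O(d)$.

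Your other half is sound, and it is a genuinely different route from the paper, which reads off both ratios from the polynomial bounds of Proposition~\ref{gamposling1h1}. Since $p - 2q = d - 2k$ for every $s \le k-1$, the computation in Lemma~\ref{gamtgrowth} gives $\frac{b_{k,s}}{b_{k-1,s}} \ge \frac{d-2k+1}{d-2k+3}\cdot\frac{d-3k+4}{k}$. Hence $g_k(\Delta) \ge \sum_{s<k} b_{k,s}\gamma_s(\Delta) \ge c_k\, d\, g_{k-1}(\Delta)$, using only $\gamma(\Delta) \ge 0$ and not the linearity of $g_1(\Delta)$. With both bounds in hand, all three terms are $O(d) = o(f_1(\Delta))$, so the limit is forced to be $0$.

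\textbf{Linear $k$ in 2(b), 3(b).} Let $A, B$ be the limits of $\frac{g_{k+1}(\Delta)}{g_k(\Delta)}$ and $\frac{g_{k-1}(\Delta)}{g_k(\Delta)}$, let $a = \lim\frac{g_1(\Delta)}{d}$, $b = \lim\frac{g_2(\Delta)}{d^2}$, and $L \in \{1,\mu\}$. Your computation $\alpha^2 A + 2\alpha(1-\alpha) + (1-\alpha)^2 B = 2L\lim\frac{f_1(\Delta)}{d^2} = L\left(1+2a+2b\right)$ is correct. It is not a renormalization of the printed $2L\max(a,b)$: since $1+2a+2b > 2\max(a,b)$, the two never agree. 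So ``following whatever normalization the statement fixes'' is not an option. The printed identity fails whenever its hypotheses hold, and for 3(b) they do hold.

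For example, take the boundary of the $d$-cross polytope with $k = \lfloor \alpha d\rfloor$ and $0<\alpha<\frac12$. Then $\mu = \alpha(1-\alpha)$, $A = \frac{1-\alpha}{\alpha}$, $B = \frac{\alpha}{1-\alpha}$, $a = 1$ and $b = \frac12$. The left side is $4\alpha(1-\alpha)$, while the printed right side is $2\alpha(1-\alpha)$. The paper's step (divide by $d^2$ and take limits) actually produces your identity, not the printed one. You should state and prove the corrected version, which agrees with the paper's own sublinear-$k$ formula. Similarly, in 3(a) your argument yields the denominator $\frac12 + \frac{g_1(\Delta)}{d} + \frac{g_2(\Delta)}{d^2}$, not the printed $\frac{g_1(\Delta)}{d^2}$.

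A smaller point in 1(a): the ``or'' is not settled by $g_2 \le \binom{g_1+1}{2}$. Use instead $g_2(\Delta) = b_{2,0} + (d-3)\gamma_1(\Delta) + \gamma_2(\Delta) \ge (d-3)g_1(\Delta) - O(d^2)$. This gives $f_1(\Delta) = O(g_2(\Delta))$ when $d = o(g_1(\Delta))$, so the $g_2$ alternative always holds.
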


	\begin{rem} \textbf{(Role of the index $k$) \\} \label{indexratchange}
		If $k$ is \emph{not} a constant with respect to $d$, ratios of the form $\frac{g_{k + 1}(\Delta)}{g_k(\Delta)}$ may not increase with $d$ as quickly as expected from an increase in the ``order with respect to $d$'' (e.g. $g_1(\Delta)$ linear in $d$ case from Proposition \ref{gamposling1h1}). For example, this includes the case of boundaries of cross polytopes discussed in Part 2 of Example \ref{constexmp}. While increasing the index by 1 increases the this ``order'', the index $k$ affects the ratios between consecutive terms enough to offset this increase. This contributes to keeping $\frac{1}{f_1(\Delta)} \frac{\mathcal{C}_{k - 1}}{g_k(\Delta)}$ bounded between 0 and 1 for all $0 \le k \le \frac{d}{2} - 1$ even as $d \to +\infty$. In addition, $\gamma(\Delta) \ge 0$ implies the upper bound $\frac{g_{k - 1}(\Delta)}{g_k(\Delta)} \le 3$ which is \emph{independent} of the index $k$ (Corollary \ref{gamgratrest}). Note that $g_k(\Delta) \ne 0$ in our setting by Proposition \ref{interlacingnontriv}. \\
	\end{rem}

	\begin{proof}
		\begin{enumerate}
			\item 
				\begin{enumerate}
					\item Since $\gamma(\Delta) \ge 0$, we have that \[ 0 < \frac{g_{k - 1}(\Delta)}{g_k(\Delta)} \le 3 \] by Corollary \ref{gamgratrest} and Proposition \ref{interlacingnontriv}. This means that the only possible term that is asymptotically strictly larger than a quadratic term in $d$ is \[ k(k + 1) \frac{g_{k + 1}(\Delta)}{g_k(\Delta)}. \] If $k$ is a constant, we factor out $k(k + 1)$ and focus on the $\frac{g_{k + 1}(\Delta)}{g_k(\Delta)}$ term to obtain the stated limit. Suppose that $\lim_{d \to +\infty} k = +\infty$. Then, we have \[ \frac{g_{k + 1}(\Delta)}{g_k(\Delta)} = o \left( \frac{f_1(\Delta)}{k^2} \right). \] 
					
					Since $d = o(g_1(\Delta))$, the terms $d g_1(\Delta)$ is asymptotically strictly larger than $\binom{d + 1}{2}$, which is a quadratic polynomial in $d$. The term $g_2(\Delta)$ is the only other one that could have this property. This implies that \[ \frac{g_{k + 1}(\Delta)}{g_k(\Delta)} = o \left( \frac{d g_1(\Delta)}{k^2} \right) \] or \[ \frac{g_{k + 1}(\Delta)}{g_k(\Delta)} = o \left( \frac{g_2(\Delta)}{k^2} \right). \] 
					
					Specializing this to the case where $k$ is linear in $d$, we would have \[ \frac{g_{k + 1}(\Delta)}{g_k(\Delta)} = o \left( \frac{f_1(\Delta)}{d^2} \right). \] Then, we have \[  \frac{g_{k + 1}(\Delta)}{g_k(\Delta)} = o \left( \frac{g_1(\Delta)}{d} \right) \] or \[ \frac{g_{k + 1}(\Delta)}{g_k(\Delta)} = o \left( \frac{g_2(\Delta)}{d^2} \right). \]

					\item Since $g_1(\Delta)$ is linear in $d$, Proposition \ref{interlacingnontriv} implies that \[ f_1(\Delta) = \binom{d + 1}{2} + d g_1(\Delta) + g_2(\Delta) \] is quadratic in $d$. Note that $g_1(\Delta) \ge d - 1$ and $g_2(\Delta) \ge \frac{1}{3} g_1(\Delta)$ since $\gamma(\Delta) \ge 0$ (Corollary \ref{gamgratrest}). This implies that \[ \lim_{d \to +\infty} \frac{1}{d^2} \frac{\mathcal{C}_{k - 1}}{g_k(\Delta)} = 0. \]
					
					If $k$ is asymptotically linear in $d$ with $\lim_{\alpha \to +\infty} \frac{k}{d} = \alpha$ for some $0 < \alpha \le \frac{1}{2}$, then we have \[ \frac{\mathcal{C}_{k - 1}}{g_k(\Delta)} \ge k(d - k + 1) \Longrightarrow \lim_{d \to +\infty} \frac{1}{d^2} \frac{\mathcal{C}_{k - 1}}{g_k(\Delta)} \ge \lim_{d \to +\infty} \frac{1}{d^2} k(d - k + 1) = \alpha(1 - \alpha) > 0. \] Thus, it is impossible to have \[  \lim_{d \to +\infty} \frac{1}{d^2} \frac{\mathcal{C}_{k - 1}}{g_k(\Delta)} = 0 \] in this case. \\
					
					Suppose that $k$ is sublinear in $d$. Then, nonnegativity of the terms added to form $\frac{\mathcal{C}_{k - 1}}{g_k(\Delta)}$ implies that \[ \frac{g_{k + 1}(\Delta)}{g_k(\Delta)} = o \left( \frac{d^2}{k^2} \right) \] and \[ \lim_{d \to +\infty} \frac{g_{k - 1}(\Delta)}{g_k(\Delta)} = 0. \]

				\end{enumerate}

			\item 
				\begin{enumerate}
					\item If $d = o(g_1(\Delta))$, we have that \[ f_1(\Delta) = \binom{d + 1}{2} + d g_1(\Delta) + g_2(\Delta) \] is asymptotically strictly larger than any quadratic polynomial in $d$. Since $\gamma(\Delta) \ge 0$, this implies that the quotients of the last two terms of $\frac{ \mathcal{C}_{k - 1} }{g_k(\Delta)}$ by $f_1(\Delta)$ approach 0 as $d \to +\infty$ since $\frac{g_{k - 1}(\Delta)}{g_k(\Delta)} \le 3$ (Corollary \ref{gamgratrest}). This means that the only possible term contributing a nonzero limit is \[ k(k + 1) \frac{g_{k + 1}(\Delta)}{g_k(\Delta)}. \]
					
					Dividing the numerator and denominator by $k^2$ when $\lim_{d \to +\infty} k = +\infty$ and factoring out $k(k + 1)$ when $k$ is a constant gives the stated limits. \\
					
					\item If $g_1(\Delta)$ is linear in $d$, then $d g_1(\Delta)$ and $g_2(\Delta)$ are both quadratic in $d$ (Proposition \ref{gamposling1h1}). Suppose that $k$ is linear in $d$ with $\lim_{d \to +\infty} \frac{k}{d} = \alpha$ for some $0 < \alpha \le \frac{1}{2}$. Then, dividing the numerator and denominator of the limit by $d^2$ and taking limits gives the claimed limit. \\
					
					Now consider the case where $k$ is sublinear in $d$ (e.g. a constant). This means that the term $k(d - k + 1)$ is asymptotically strictly smaller than a quadratic polynomial in $d$. Then, we have \[ \lim_{d \to +\infty} \frac{1}{f_1(\Delta)} \frac{\mathcal{C}_{k - 1}}{g_k(\Delta)} = \frac{1}{2} \lim_{d \to +\infty} \frac{ k(k + 1) \frac{g_{k + 1}(\Delta)}{g_k(\Delta)} + (d - k + 1)(d - k + 2) \frac{g_{k - 1}(\Delta)}{g_k(\Delta)} }{f_1(\Delta)}. \] Note that $\frac{g_{k - 1}(\Delta)}{g_k(\Delta)} \le 3$ since $\gamma(\Delta) \ge 0$ (Corollary \ref{gamgratrest}). \\

					If $\lim_{d \to +\infty} k = +\infty$, then we divide the numerator and denominator by $k^2$. The stated limit is then obtained by taking the asymptotically largest terms. \\
					
					The case where $k$ is a constant follows from dividing the numerator and denominator by $d^2$ since each term added to form $\frac{\mathcal{C}_{k - 1}}{g_k(\Delta)}$ is at most linear in $d$ by Proposition \ref{gamposling1h1} (also see Section \ref{sumconst}). \\

				\end{enumerate}

			\item Since $\mu \ne 0$, this follows from multiplying the expressions in Part 2 by $\mu$.

		\end{enumerate}
	\end{proof}

	In the inequality \[ 1 - \frac{1}{f_1(\Delta)} \frac{\mathcal{C}_k}{g_{k + 1}(\Delta)} \le f_1(\Delta)^{ \frac{1}{k} } \frac{ \left( 1 - \frac{1}{f_1(\Delta)} \frac{\mathcal{C}_{k - 1}}{g_k(\Delta)} \right)^{\frac{k + 1}{k}} }{ \frac{g_{k + 1}(\Delta)}{g_k(\Delta)^{ \frac{k + 1}{k} }} } = f_1(\Delta)^{ \frac{1}{k} } \frac{g_k(\Delta)^{ \frac{k + 1}{k} }}{g_{k + 1}(\Delta)} \left( 1 - \frac{1}{f_1(\Delta)} \frac{\mathcal{C}_{k - 1}}{g_k(\Delta)} \right)^{\frac{k + 1}{k}}, \] it remains to consider interactions with the terms \[ \frac{1}{f_1(\Delta)^{ \frac{1}{k} }} \frac{g_{k + 1}(\Delta)}{g_k(\Delta)^{ \frac{k + 1}{k} }}. \] Note that this is equal to $\frac{1}{\omega_k}$, where $\omega_k$ is involved in the lower bound  
	\begin{align*}
		\frac{1}{f_1(\Delta)} \frac{\mathcal{C}_k}{g_{k + 1}(\Delta)} &\ge \left( 1 - \omega_k^{ \frac{k}{k + 1} } \left( 1 - \frac{1}{f_1(\Delta)} \frac{\mathcal{C}_{k - 1}}{g_k(\Delta)} \right) \right)^{ \frac{k + 1}{k} } \\
		&= \left( \omega_k^{ \frac{k}{k + 1} } \left( \frac{1}{f_1(\Delta)} \frac{\mathcal{C}_{k - 1}}{g_k(\Delta)} \right) + \left( 1 - \omega_k^{ \frac{k}{k + 1} } \right) \right)^{ \frac{k + 1}{k} },
	\end{align*}

	in Proposition \ref{ratlowbd}. \\

	Before relating this back to the earlier limits, we will make some comments on the behavior of the term \[ \frac{1}{f_1(\Delta)^{ \frac{1}{k} }} \frac{g_{k + 1}(\Delta)}{g_k(\Delta)^{ \frac{k + 1}{k} }}  \] as $d \to +\infty$. \\
	
	\begin{prop} \label{zerolinkcoeff}
		Let $\Delta$ be a $(d - 1)$-dimensional simplicial sphere. Consider $1 \le k \le \frac{d}{2} - 1$. \\
		
		\begin{enumerate}
			\item If $k$ is a constant with respect to $d$, then \[ \lim_{d \to +\infty} \frac{1}{f_1(\Delta)^{ \frac{1}{k} }} \frac{g_{k + 1}(\Delta)}{g_k(\Delta)^{ \frac{k + 1}{k} }} = 0. \] 
			
			\item Suppose that $k$ is a linear or sublinear function in $d$. Then, we have that \[ \lim_{d \to +\infty} \frac{1}{f_1(\Delta)^{ \frac{1}{k} }} = 0 \] if and only if \[ k = o(\log f_1(\Delta)). \]
			
			\begin{itemize}
				\item If $g_1(\Delta)$ is linear in $d$, this is equivalent to $k = o(\log d)$. \\
				
				\item  If $d = o(g_1(\Delta))$ (e.g. $d^2 = o(g_2(\Delta))$), we would either have \[ k = o(\log (d g_1(\Delta))) = o(\log d + \log g_1(\Delta)) \] or $k = o(\log g_2(\Delta))$. \\
			\end{itemize}

			Note that \[ \lim_{d \to +\infty} \frac{1}{f_1(\Delta)^{ \frac{1}{k} }} \frac{g_{k + 1}(\Delta)}{g_k(\Delta)^{ \frac{k + 1}{k} }} = 0 \] under such conditions. \\
			
		\end{enumerate}
		
	\end{prop}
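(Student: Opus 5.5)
The plan is to split the target quantity into the two factors $f_1(\Delta)^{-\frac{1}{k}}$ and $\frac{g_{k + 1}(\Delta)}{g_k(\Delta)^{\frac{k + 1}{k}}}$ and control them separately. For the second factor, Part 2c) of Theorem \ref{gconjsphere} together with Part 2 of Corollary \ref{macpowasymp} gives
\[ g_{k + 1}(\Delta) \le g_k(\Delta)^{\langle k \rangle} \le g_k(\Delta)^{\frac{k + 1}{k}}. \]
Hence this factor lies in $[0, 1]$ for every $d$ and every admissible $k$, and no asymptotic information about it is needed. It therefore suffices to show that $f_1(\Delta)^{-\frac{1}{k}} \to 0$ under each hypothesis, and the product then tends to $0$ by squeezing. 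This also yields the closing remark in Part 2.

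For Part 1 ($k$ constant), recall from Corollary \ref{globgintlac} that
\[ f_1(\Delta) = \binom{d + 1}{2} + d g_1(\Delta) + g_2(\Delta). \]
Since $g_1(\Delta), g_2(\Delta) \ge 0$ for a simplicial sphere, we get $f_1(\Delta) \ge \binom{d + 1}{2} \to +\infty$. For fixed $k$, the function $x \mapsto x^{-\frac{1}{k}}$ tends to $0$ as $x \to +\infty$, which finishes Part 1.

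For Part 2, write $f_1(\Delta)^{-\frac{1}{k}} = \exp\left(-\frac{\log f_1(\Delta)}{k}\right)$. This tends to $0$ if and only if $\frac{\log f_1(\Delta)}{k} \to +\infty$. This is the reciprocal statement to $k = o(\log f_1(\Delta))$, which I would take as the intended reading of the condition; the phrasing ``$k = o(\cdot)$'' in the statement should be read as $\frac{k}{\log f_1(\Delta)} \to 0$. Both directions of the equivalence then follow from continuity and monotonicity of $\exp$.

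The two bullet specializations come from identifying the dominant term of $f_1(\Delta)$, up to constant factors, and taking logarithms.
\begin{itemize}
\item If $g_1(\Delta)$ is linear in $d$, then $g_2(\Delta) \le \binom{g_1(\Delta) + 1}{2}$ is $O(d^2)$, and $f_1(\Delta) \ge \binom{d + 1}{2}$. So $f_1(\Delta) = \Theta(d^2)$ and $\log f_1(\Delta) = 2 \log d + O(1)$, which gives $k = o(\log d)$.
\item If $d = o(g_1(\Delta))$, then $\binom{d + 1}{2} = o(d g_1(\Delta))$. Hence $f_1(\Delta) = \Theta(\max(d g_1(\Delta), g_2(\Delta)))$, and $\log f_1(\Delta)$ equals, up to $O(1)$, the larger of $\log d + \log g_1(\Delta)$ and $\log g_2(\Delta)$. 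This gives the stated dichotomy.
\end{itemize}

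The main obstacle is bookkeeping rather than depth. We must make sure that additive $O(1)$ errors in logarithms, and the choice of which term dominates, do not affect the $o(\cdot)$ conditions. This holds because all the logarithms involved tend to $+\infty$. The other point to confirm is that the ``if and only if'' is stated only for the factor $f_1(\Delta)^{-\frac{1}{k}}$, since the product with the bounded second factor gives only the one-directional limit statement.
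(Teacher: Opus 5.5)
Your proposal is correct and follows the paper's own argument. You bound $\frac{g_{k+1}(\Delta)}{g_k(\Delta)^{\frac{k+1}{k}}}$ by $1$ using the $M$-vector condition and Part 2 of Corollary \ref{macpowasymp}, rewrite $f_1(\Delta)^{-\frac{1}{k}} = \exp\left(-\frac{\log f_1(\Delta)}{k}\right)$, and read off both specializations from the dominant term of $f_1(\Delta) = \binom{d+1}{2} + d g_1(\Delta) + g_2(\Delta)$. In the linear case you get $f_1(\Delta) = \Theta(d^2)$ directly from $g_2(\Delta) \le \binom{g_1(\Delta)+1}{2}$, rather than citing Proposition \ref{gamposling1h1}, which assumes $\gamma(\Delta) \ge 0$; that choice and your explicit $O(1)$ bookkeeping for the logarithms are slightly more careful than the paper, but the route is the same.
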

	
	\begin{proof}
		\begin{enumerate}
			\item Recall that $g_{k + 1}(\Delta) \le g_k(\Delta)^{ \frac{k + 1}{k} }$ by Corollary \ref{macpowasymp} and the $M$-vector condition on $g(\Delta)$ (Part 2c) of Theorem \ref{macpowasymp}). This means that \[ \frac{1}{f_1(\Delta)^{ \frac{1}{k} }} \frac{g_{k + 1}(\Delta)}{g_k(\Delta)^{ \frac{k + 1}{k} }} \le \frac{1}{f_1(\Delta)^{ \frac{1}{k} }}. \] 
			
			Since $\lim_{d \to +\infty} f_1(\Delta) = +\infty$ and $k$ is a constant, we have that \[ \lim_{d \to +\infty} \frac{1}{f_1(\Delta)^{ \frac{1}{k} }} = 0. \] \\
			
			This implies that \[ \lim_{d \to +\infty}  \frac{1}{f_1(\Delta)^{ \frac{1}{k} }} \frac{g_{k + 1}(\Delta)}{g_k(\Delta)^{ \frac{k + 1}{k} }} = 0 \] since the terms we take limits of are nonnegative. \\
			
			\item The condition is equivalent to \[ \lim_{d \to +\infty} - \frac{1}{k} \log f_1(\Delta) = -\infty. \] This would mean that \[ \lim_{d \to +\infty} \frac{k}{\log f_1(\Delta)} = 0 \] and $k = o(\log f_1(\Delta))$. \\
			
			If $g_1(\Delta)$ is linear in $d$, then \[ f_1(\Delta) = \binom{d + 1}{2} + d g_1(\Delta) + g_2(\Delta) \] is quadratic in $d$ by Proposition \ref{gamposling1h1}. This implies that $k = o(\log d^2) = o(\log d)$. In general, having $k = o(\log M)$ for some function $M = M(d)$ with $\lim_{d \to +\infty} M = +\infty$ would imply that $k = o(\log (c M))$ for any positive constant $c > 0$ with respect to $d$. \\
			
			Suppose that $d = o(g_1(\Delta))$ (e.g. when $d^2 = o(g_2(\Delta))$). Then, we have $k = o(\log(d g_1(\Delta)))$ or $k = o(\log g_2(\Delta))$ by the same reasoning as above since $d g_1(\Delta)$ or $g_2(\Delta)$ are asymptotically strictly larger than the quadratic term $\binom{d + 1}{2}$. \\
		\end{enumerate}
	\end{proof}

	Since the induced limit inequality holds trivially when \[ \lim_{d \to +\infty} \frac{1}{f_1(\Delta)} \frac{\mathcal{C}_{k - 1}}{g_k(\Delta)} = 0 \] by Part 1 of Proposition \ref{linkinitasymp}, we will focus on the case where \[ 0 < \lim_{d \to +\infty} \frac{1}{f_1(\Delta)} \frac{\mathcal{C}_{k - 1}}{g_k(\Delta)} \le 1. \]
	
	Substituting in the cases above, we have the following: \\
	
	\begin{cor} \textbf{(Coefficients of main terms and initial link condition asymptotics) \\} \label{coefflinkasymp}
		Suppose that $\Delta$ is a $(d - 1)$-dimensional simplicial sphere satisfying the link condition such that $\gamma(\Delta) \ge 0$. 
		
		\begin{enumerate}
			\item Suppose that \[ \lim_{d \to +\infty} \frac{1}{f_1(\Delta)} \frac{\mathcal{C}_{k - 1}}{g_k(\Delta)} = 1 \] as in Part 2 of Corollary \ref{asympgratinit}. \\
				\begin{enumerate}
					\item If \[ \lim_{d \to +\infty} f_1(\Delta)^{ \frac{1}{k} } \frac{ \left( 1 - \frac{1}{f_1(\Delta)} \frac{\mathcal{C}_{k - 1}}{g_k(\Delta)} \right)^{\frac{k + 1}{k}} }{ \frac{g_{k + 1}(\Delta)}{g_k(\Delta)^{ \frac{k + 1}{k} }} } = +\infty, \] the induced limit inequality holds trivially. \\
					
					Note that this would require \[ \lim_{d \to +\infty} \frac{1}{f_1(\Delta)^{ \frac{1}{k} }} \frac{g_{k + 1}(\Delta)}{g_k(\Delta)^{ \frac{k + 1}{k} }} = 0. \]
					
					Examples of sufficiently small $k$ where the limit above is equal to 0 (e.g. $k$ a constant) are described in Proposition \ref{zerolinkcoeff}. In order to for the upper bound of the inequality to approach $+\infty$ as $d \to +\infty$, this term must approach 0 more quickly than the numerator. \\
					
					\item Assume that \[ \lim_{d \to +\infty} f_1(\Delta)^{ \frac{1}{k} } \frac{ \left( 1 - \frac{1}{f_1(\Delta)} \frac{\mathcal{C}_{k - 1}}{g_k(\Delta)} \right)^{\frac{k + 1}{k}} }{ \frac{g_{k + 1}(\Delta)}{g_k(\Delta)^{ \frac{k + 1}{k} }} } = \beta \] for some finite constant $\beta$. \\
						\begin{enumerate}
							\item If $\beta > 0$, then we have \[ \lim_{d \to +\infty} \frac{1}{f_1(\Delta)^{ \frac{1}{k} }} \frac{g_{k + 1}(\Delta)}{g_k(\Delta)^{ \frac{k + 1}{k} }} = 0. \] 
							
							Examples of sufficiently small $k$ where this holds (e.g. $k$ a constant) are described in Proposition \ref{zerolinkcoeff}. \\

							Since $\beta > 0$ is finite, this limit implies that \[ \left( 1 - \frac{1}{f_1(\Delta)} \frac{\mathcal{C}_{k - 1}}{g_k(\Delta)} \right)^{\frac{k + 1}{k}} = \Theta \left( \frac{1}{f_1(\Delta)^{ \frac{1}{k} }} \frac{g_{k + 1}(\Delta)}{g_k(\Delta)^{ \frac{k + 1}{k} }} \right) = \Theta \left( \left( \frac{ f_1(\Delta)^{ \frac{k}{k + 1} } }{g_{k + 1}(\Delta)^{ \frac{1}{k + 1} }} \frac{1}{f_1(\Delta)} \frac{g_{k + 1}(\Delta)}{g_k(\Delta)} \right)^{ \frac{k + 1}{k} } \right) \] with respect to $d$. Equivalently, we have \[ \frac{1}{f_1(\Delta)^{ \frac{1}{k} }} \frac{g_{k + 1}(\Delta)}{g_k(\Delta)^{ \frac{k + 1}{k} }} = \left( \frac{ f_1(\Delta)^{ \frac{k}{k + 1} } }{g_{k + 1}(\Delta)^{ \frac{1}{k + 1} }} \frac{1}{f_1(\Delta)} \frac{g_{k + 1}(\Delta)}{g_k(\Delta)} \right)^{ \frac{k + 1}{k} } = \Theta \left( \left( 1 - \frac{1}{f_1(\Delta)} \frac{\mathcal{C}_{k - 1}}{g_k(\Delta)} \right)^{\frac{k + 1}{k}} \right). \]
							
							Note that \[ \frac{1}{f_1(\Delta)^{ \frac{1}{k} }} \frac{g_{k + 1}(\Delta)}{g_k(\Delta)^{ \frac{k + 1}{k} }} \le \frac{1}{f_1(\Delta)^{ \frac{1}{k} }}. \]
							
							\item If $\beta = 0$, then we have \[ \left( 1 - \frac{1}{f_1(\Delta)} \frac{\mathcal{C}_{k - 1}}{g_k(\Delta)} \right)^{\frac{k + 1}{k}} = o \left( \frac{1}{f_1(\Delta)^{\frac{1}{k}} } \frac{g_{k + 1}(\Delta)}{g_k(\Delta)^{ \frac{k + 1}{k} }} \right) = o \left( \left( \frac{ f_1(\Delta)^{ \frac{k}{k + 1} } }{g_{k + 1}(\Delta)^{ \frac{1}{k + 1} }} \frac{1}{f_1(\Delta)} \frac{g_{k + 1}(\Delta)}{g_k(\Delta)} \right)^{ \frac{k + 1}{k} } \right) \] by definition. Equivalently, we have \[ \lim_{d \to +\infty} \frac{ \frac{1}{f_1(\Delta)^{\frac{1}{k}}  } \frac{g_{k + 1}(\Delta)}{g_k(\Delta)^{ \frac{k + 1}{k} }} }{ \left( 1 - \frac{1}{f_1(\Delta)} \frac{\mathcal{C}_{k - 1}}{g_k(\Delta)} \right)^{ \frac{k + 1}{k} } } = \lim_{d \to +\infty} \frac{ \left( \frac{ f_1(\Delta)^{ \frac{k}{k + 1} } }{g_{k + 1}(\Delta)^{ \frac{1}{k + 1} }} \frac{1}{f_1(\Delta)} \frac{g_{k + 1}(\Delta)}{g_k(\Delta)} \right)^{ \frac{k + 1}{k} } }{ \left( 1 - \frac{1}{f_1(\Delta)} \frac{\mathcal{C}_{k - 1}}{g_k(\Delta)} \right)^{ \frac{k + 1}{k} } } =  +\infty. \]
							
							In particular, this would imply that \[ \left( 1 - \frac{1}{f_1(\Delta)} \frac{\mathcal{C}_{k - 1}}{g_k(\Delta)} \right)^{\frac{k + 1}{k}} = o \left( \frac{1}{f_1(\Delta)^{\frac{1}{k}} } \right)  \] and \[ \lim_{d \to +\infty} \frac{ \frac{1}{f_1(\Delta)^{\frac{1}{k}}  } }{ \left( 1 - \frac{1}{f_1(\Delta)} \frac{\mathcal{C}_{k - 1}}{g_k(\Delta)} \right)^{ \frac{k + 1}{k} } } = +\infty. \]
							
							There is a wider range of possible behavior of \[ \frac{1}{f_1(\Delta)^{ \frac{1}{k} }} \frac{g_{k + 1}(\Delta)}{g_k(\Delta)^{\frac{k + 1}{k}}} \] since we can get the same limit whether it approaches a nonzero constant as $d \to +\infty$ or approaches 0 as $d \to +\infty$ but does so more slowly than $\left( 1 - \frac{1}{f_1(\Delta)} \frac{\mathcal{C}_{k - 1}}{g_k(\Delta)} \right)^{\frac{k + 1}{k}}$. \\
							
						\end{enumerate}

				\end{enumerate}

			\item Suppose that \[ \lim_{d \to +\infty} \frac{1}{f_1(\Delta)} \frac{\mathcal{C}_{k - 1}}{g_k(\Delta)} = \mu \] with $0 < \mu < 1$ as in Part 3 of Corollary \ref{asympgratinit}. \\
				\begin{enumerate}
					\item If \[ \lim_{d \to +\infty} f_1(\Delta)^{ \frac{1}{k} } \frac{ g_k(\Delta)^{ \frac{k + 1}{k} } }{g_{k + 1}(\Delta)} = +\infty \] or \[ \lim_{d \to +\infty} f_1(\Delta)^{ \frac{1}{k} } \frac{ g_k(\Delta)^{ \frac{k + 1}{k} } }{g_{k + 1}(\Delta)}  = \eta \] with \[ \eta \ge \frac{1}{(1 - \mu)^{ \frac{k + 1}{k} }}, \] the induced limit inequality holds trivially. For example, the first case includes the small $k$ (e.g. $k$ a constant) from Proposition \ref{zerolinkcoeff}. \\
					
					\item Assume that \[ \lim_{d \to +\infty} f_1(\Delta)^{ \frac{1}{k} } \frac{ g_k(\Delta)^{ \frac{k + 1}{k} } }{g_{k + 1}(\Delta)}  = \eta \] with \[ \eta < \frac{1}{(1 - \mu)^{ \frac{k + 1}{k} }}. \]
					
					In order for this to occur, the parameter $k$ must increase sufficiently quickly as $d$ increases (Proposition \ref{zerolinkcoeff}). \\
					
					Then, we have \[ \lim_{d \to +\infty} \frac{1}{f_1(\Delta)} \frac{\mathcal{C}_k}{g_{k + 1}(\Delta)} \ge 1 - \eta(1 - \mu)^{ \frac{k + 1}{k} }. \]
					
					Since there is a strictly positive lower bound, this means that we are in the setting of Part 3 of Corollary \ref{asympgratinit} with $k$ replaced by $k + 1$ and \[ \sigma \coloneq \lim_{d \to +\infty} \frac{1}{f_1(\Delta)} \frac{\mathcal{C}_k}{g_{k + 1}(\Delta)} \] replacing $\mu$. \\

				\end{enumerate}

		\end{enumerate}
		
	\end{cor}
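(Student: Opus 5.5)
The plan is to work throughout with the rewritten global inequality
\[ 1 - \frac{1}{f_1(\Delta)} \frac{\mathcal{C}_k}{g_{k + 1}(\Delta)} \le f_1(\Delta)^{ \frac{1}{k} } \frac{g_k(\Delta)^{ \frac{k + 1}{k} }}{g_{k + 1}(\Delta)} \left( 1 - \frac{1}{f_1(\Delta)} \frac{\mathcal{C}_{k - 1}}{g_k(\Delta)} \right)^{\frac{k + 1}{k}}. \]
By Corollary \ref{globgintlac} (equivalently the left inequality of Corollary \ref{diffcush}), its left-hand side always lies in $[0,1]$. Consequently, the induced limit inequality is trivial whenever the right-hand side has limit $+\infty$ or limit at least $1$. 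The whole corollary should therefore reduce to computing, in each assumed regime, the limit of the right-hand side as a product of $\omega_k = f_1(\Delta)^{1/k} g_k(\Delta)^{(k+1)/k}/g_{k+1}(\Delta)$ and $\left(1 - \frac{1}{f_1(\Delta)}\frac{\mathcal{C}_{k-1}}{g_k(\Delta)}\right)^{(k+1)/k}$.

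For Part 1 we have $\lim \frac{1}{f_1}\frac{\mathcal{C}_{k-1}}{g_k} = 1$, so the second factor tends to $0$. In Part 1(a) the product tends to $+\infty$, which gives triviality at once. The remark that $1/\omega_k \to 0$ is needed follows because the second factor tends to $0$: the product can diverge only if $\omega_k \to \infty$. The cases where this happens are supplied by Proposition \ref{zerolinkcoeff}.

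In Part 1(b) the product tends to a finite $\beta$. If $\beta > 0$, the ratio of the two quantities $1/\omega_k$ and $(1-\cdot)^{(k+1)/k}$ tends to $1/\beta$, which is the stated $\Theta$-relation. It also forces $1/\omega_k \to 0$, since the other factor does. The rewritten form
\[ \left( \frac{ f_1(\Delta)^{ \frac{k}{k + 1} } }{g_{k + 1}(\Delta)^{ \frac{1}{k + 1} }} \frac{1}{f_1(\Delta)} \frac{g_{k + 1}(\Delta)}{g_k(\Delta)} \right)^{ \frac{k + 1}{k} } \]
is a pure algebraic identity, which I will verify by comparing exponents of $f_1$, $g_k$ and $g_{k+1}$. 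The bound $1/\omega_k \le f_1(\Delta)^{-1/k}$ comes from $g_{k+1}(\Delta) \le g_k(\Delta)^{(k+1)/k}$ (Part 2c) of Theorem \ref{gconjsphere} together with Corollary \ref{macpowasymp}). If $\beta = 0$, the $o(\cdot)$ statement is the definition, and the consequence $o(f_1^{-1/k})$ again follows from that same bound on $1/\omega_k$.

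For Part 2 we have $\lim \frac{1}{f_1}\frac{\mathcal{C}_{k-1}}{g_k} = \mu \in (0,1)$, so the second factor tends to $(1-\mu)^{(k+1)/k}$. Here I will take $k$ constant so that this limit is meaningful; for growing $k$, I would use $(k+1)/k \to 1$. If $\omega_k \to \infty$, or $\omega_k \to \eta \ge (1-\mu)^{-(k+1)/k}$, the right-hand side has limit at least $1$ and the inequality is trivial, as in Part 3 of Proposition \ref{linkinitasymp}. Constant $k$ gives $\omega_k \to \infty$ via Proposition \ref{zerolinkcoeff}, and the same proposition shows that a finite $\eta$ forces $k$ to grow fast.

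In Part 2(b), passing to the limit in the rearranged inequality gives
\[ \lim \frac{1}{f_1(\Delta)}\frac{\mathcal{C}_k}{g_{k+1}(\Delta)} \ge 1 - \eta(1-\mu)^{(k+1)/k} > 0. \]
Combined with the upper bound $1$ from Corollary \ref{globgintlac}, this places $\sigma$ in $(0,1]$, so Corollary \ref{asympgratinit} applies with $k$ shifted to $k+1$.

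The main obstacle is that none of these limits need exist a priori. I would handle this by reading every statement conditionally on the assumed limits, and by using $\liminf$ for $\sigma$ in Part 2(b), noting existence as a hypothesis. The case $\sigma = 1$ is then routed to the Part 2 analogue rather than Part 3.
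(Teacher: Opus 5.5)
Your proposal is correct and follows essentially the paper's route. Like the paper, you read the right-hand side of the rewritten inequality as $\omega_k$ times $\left( 1 - \frac{1}{f_1(\Delta)} \frac{\mathcal{C}_{k - 1}}{g_k(\Delta)} \right)^{\frac{k + 1}{k}}$, compare it with a left-hand side confined to $[0,1]$ by Corollary \ref{globgintlac}, and substitute the regimes of Proposition \ref{linkinitasymp}, Proposition \ref{zerolinkcoeff} and Corollary \ref{asympgratinit}. Your extra care in Part 2(b) sharpens the paper's statement without changing the argument: you note that a finite $\eta$ forces $k \to \infty$, so the exponent $\frac{k+1}{k}$ must be read in the limit, and that the case $\sigma = 1$ belongs to Part 2 rather than Part 3 of Corollary \ref{asympgratinit}.
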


	\subsubsection{Summary and asymptotic behavior of individual inequalities} \label{sumconst}

	Let $\Delta$ be a $(d - 1)$-dimensional simplicial sphere satisfying the link condition. Recall that the inequality \[ g_{k + 1}(\Delta) - \frac{\mathcal{C}_k}{f_1(\Delta)} \le f_1(\Delta)^{ \frac{1}{k} } \left( g_k(\Delta) - \frac{\mathcal{C}_{k - 1}}{f_1(\Delta)} \right)^{\frac{k + 1}{k}} \] is a comparison of consecutive components of ``average $g$-vectors'' on local parts. We mainly work with its (equivalent) counterpart  \[ 1 - \frac{1}{f_1(\Delta)} \frac{\mathcal{C}_k}{g_{k + 1}(\Delta)} \le f_1(\Delta)^{ \frac{1}{k} } \frac{ \left( 1 - \frac{1}{f_1(\Delta)} \frac{\mathcal{C}_{k - 1}}{g_k(\Delta)} \right)^{\frac{k + 1}{k}} }{ \frac{g_{k + 1}(\Delta)}{g_k(\Delta)^{ \frac{k + 1}{k} }} } = f_1(\Delta)^{ \frac{1}{k} } \frac{ \left( 1 - \frac{1}{f_1(\Delta)} \frac{\mathcal{C}_{k - 1}}{g_k(\Delta)} \right)^{\frac{k + 1}{k}} }{ \frac{g_{k + 1}(\Delta)}{g_k(\Delta) }} \cdot g_k(\Delta)^{ \frac{1}{k} } \] comparing ratios between average $g$-vector components of local parts (Definition \ref{shortglob}). \\

	Expanding the averaged terms $\mathcal{C}_k$ and $\mathcal{C}_{k - 1}$ in terms of global parts, this gives rise to a lower bound of $\frac{g_{k + 2}(\Delta)}{g_{k + 1}(\Delta)}$ in terms of $\frac{g_{k + 1}(\Delta)}{g_k(\Delta)}$, $f_1(\Delta)$, and $\frac{g_{k + 1}(\Delta)}{g_k(\Delta)^{ \frac{k + 1}{k} }}$. The simplicial sphere $\Delta$ having a smaller number of edges or the $M$-vector inequality on $g(\Delta)$ (Part 2c) of Theorem \ref{gconjsphere}) would yield a tighter bound (i.e. larger lower bound). When $d \gg 0$, the lower bound increases as $\frac{g_{k + 1}(\Delta)}{g_k(\Delta)}$ decreases. \\

	The general shape of the resulting inequality is described in Proposition \ref{ratlowbd} as 
	\begin{align*}
		\frac{1}{f_1(\Delta)} \frac{\mathcal{C}_k}{g_{k + 1}(\Delta)} &\ge \left( 1 - \omega_k^{ \frac{k}{k + 1} } \left( 1 - \frac{1}{f_1(\Delta)} \frac{\mathcal{C}_{k - 1}}{g_k(\Delta)} \right) \right)^{ \frac{k + 1}{k} } \\
		&= \left( \omega_k^{ \frac{k}{k + 1} } \left( \frac{1}{f_1(\Delta)} \frac{\mathcal{C}_{k - 1}}{g_k(\Delta)} \right) + \left( 1 - \omega_k^{ \frac{k}{k + 1} } \right) \right)^{ \frac{k + 1}{k} },
	\end{align*}
	
	where \[ \omega_k = f_1(\Delta)^{ \frac{1}{k} } \frac{g_k(\Delta)^{ \frac{k + 1}{k} }}{g_{k + 1}(\Delta)}. \]
	
	In a special case (Example \ref{simpratlowbd}), this simplifies to  \[ \frac{1}{f_1(\Delta)} \frac{\mathcal{C}_k}{g_{k + 1}(\Delta)} \ge \left( \frac{1}{f_1(\Delta)} \frac{\mathcal{C}_{k - 1}}{g_k(\Delta)} \right)^{ \frac{k + 1}{k} }. \]
	
	Comparing this to general properties of simplicial spheres, we note that this link condition inequality does \emph{not} seem to impose nontrivial conditions beyond the $M$-vector inequality for the $g$-vectors of arbitrary simplicial spheres if $g_1(\Delta)$ is very large compared to the rate of growth of the $g$-vector components (Theorem \ref{largelinktriv}) and suggest a closer connection between $\gamma(\Delta) \ge 0$ and $f$-vector realizations in this setting (Remark \ref{largeparamint}). \\

	In general, we take a closer look at the specific types of bounds obtained when $\dim \Delta$ is large. This involves behavior of ``limiting inequalities'' as $d \to +\infty$. We have considered index parameters $1 \le k \le \frac{d}{2}$ that are be linear or sublinear (e.g. constant) in $d$ (e.g. see Proposition \ref{linkinitasymp} and Corollary \ref{asympgratinit}). In this section, we focus on the case where $k$ is constant. It describes limiting behavior of \emph{individual} link condition inequalities, while the other types of functions describe those of \emph{collections} of link condition inequalities. This is where we will mainly start using the assumption $\gamma(\Delta) \ge 0$. The induced limit inequality (i.e. for $d \gg 0$) holds trivially if there is a constant $C > 0$ such that \[ \left( 1 - \frac{1}{f_1(\Delta)} \frac{\mathcal{C}_{k - 1}}{g_k(\Delta)} \right)^{\frac{k + 1}{k}} > C \] for all $d \gg 0$ (Remark \ref{trivineqasymp}). Note that \[ 0 \le \frac{1}{f_1(\Delta)} \frac{\mathcal{C}_{k - 1}}{g_k(\Delta)} \le 1 \] and \[ 0 \le \frac{1}{f_1(\Delta)} \frac{\mathcal{C}_k}{g_{k + 1}(\Delta)}  \le 1. \] 
	
	While the above triviality condition does not require a limit to exist as $d \to +\infty$, we will assume that \[ \lim_{d \to +\infty} \frac{1}{f_1(\Delta)} \frac{\mathcal{C}_{k - 1}}{g_k(\Delta)} \] and \[ \lim_{d \to +\infty} \frac{1}{f_1(\Delta)} \frac{\mathcal{C}_k}{g_{k + 1}(\Delta)} \] exist in order study what functions yield the possible output behaviors. A similar analysis can be used to control behavior in cases where a limit does not exist (e.g. oscillating functions) bounded by such functions. \\

	For example, such a lower bound $C > 0$ exists if \[ \lim_{d \to +\infty} \frac{1}{f_1(\Delta)} \frac{\mathcal{C}_{k - 1}}{g_k(\Delta)} = \mu \] for some $0 < \mu < 1$ and $k$ is constant. The $g$-vector ratios inducing this behavior described in Part 3 of Corollary \ref{asympgratinit}. If $d = o(g_1(\Delta))$ (e.g. when $d^2 = o(g_2(\Delta))$), we have \[ \lim_{d \to +\infty} \frac{ \frac{g_{k + 1}(\Delta)}{g_k(\Delta)} }{f_1(\Delta)} = \frac{2 \mu}{k(k + 1)}.  \] 
	
	Note that asymptotic equivalence of $\frac{g_{k + 1}(\Delta)}{g_k(\Delta)}$ with $f_1(\Delta)$ can be replaced by $d g_1(\Delta)$ or $g_2(\Delta)$ in this setting. The same reasoning translates to comparisons with $\frac{g_1(\Delta)}{d}$ and $\frac{g_2(\Delta)}{d^2}$ when $g_1(\Delta)$ is (asymptotically) linear in $d$. Since $g_\ell(\Delta)$ is of degree $\ell$ in $d$ (Proposition \ref{gamposling1h1}) in this setting, we would have \[ \lim_{d \to +\infty}  \frac{g_{k - 1}(\Delta)}{g_k(\Delta)} = \mu \left( 1 + 2 \lim_{d \to +\infty} \frac{g_1(\Delta)}{d} + 2 \lim_{d \to +\infty} \frac{g_2(\Delta)}{d^2} \right). \] However, this is impossible since the left hand side is equal to 0 and the right hand side is strictly positive. Other cases (e.g. involving $k$ as a linear or sublinear function of $d$) inducing trivial induced limit inequalities are Part 1 and Part 3 of Proposition \ref{linkinitasymp} with $g$-vector ratios inducing them described in other parts of Corollary \ref{asympgratinit}.  \\
	
	Another instance where the induced limit inequality holds trivially is when \[ \lim_{d \to +\infty} \frac{1}{f_1(\Delta)} \frac{\mathcal{C}_{k - 1}}{g_k(\Delta)} = 0. \] By Part 1 of Corollary \ref{asympgratinit}, this splits into the following cases:
	
	\begin{itemize}
		\item If $d = o(g_1(\Delta))$ (e.g. when $d^2 = o(g_2(\Delta))$), then \[ \frac{g_{k + 1}(\Delta)}{g_k(\Delta)} = o(dg_1(\Delta)) \] or \[ \frac{g_{k + 1}(\Delta)}{g_k(\Delta)} = o(g_2(\Delta)). \]
		
		This case was implicitly discussed in Theorem \ref{largelinktriv} and Remark \ref{largeparamint}. \\
		
		\item If $g_1(\Delta)$ is (asymptotically) linear in $d$, this is always true since \[ \frac{g_{k + 1}(\Delta)}{g_k(\Delta)} = \Theta(d) \] and \[ \frac{g_{k - 1}(\Delta)}{g_k(\Delta)} = \Theta \left( \frac{1}{d} \right) \] by Proposition \ref{gamposling1h1}. \\
		
	\end{itemize}
	
	The trivial cases discussed above are summarized below. \\
	
	\begin{cor} \textbf{(Triviality criteria for individual link condition limiting inequalities) \\} \label{consttriv} 
		Let $\Delta$ be a $(d - 1)$-dimensional simplicial sphere satisfying the link condition such that $\gamma(\Delta) \ge 0$. Fix $1 \le k \le \frac{d}{2} - 2$. The limiting inequality of the global/averaged lower bound induced by the link condition (Corollary \ref{diffcush}) holds trivially for $d \gg 0$ under the following conditions: \\
		
		\begin{enumerate}
			\item Suppose that $d = o(g_1(\Delta))$ (e.g. when $d^2 = o(g_2(\Delta))$). \\
				\begin{enumerate}
					\item There is a constant $0 < \mu < 1$ such that \[ \lim_{d \to +\infty} \frac{ \frac{g_{k + 1}(\Delta)}{g_k(\Delta)} }{f_1(\Delta)} = \frac{2 \mu}{k(k + 1)}.  \] Equivalently, we have  \[ \lim_{d \to +\infty} \frac{ \frac{g_{k + 1}(\Delta)}{g_k(\Delta)} }{d g_1(\Delta)} = \frac{2 \mu}{k(k + 1)}  \] or \[ \lim_{d \to +\infty} \frac{ \frac{g_{k + 1}(\Delta)}{g_k(\Delta)} }{g_2(\Delta)} = \frac{2 \mu}{k(k + 1)}.  \]
					
					\item The ratio $\frac{g_{k + 1}(\Delta)}{g_k(\Delta)}$ is constant or increases slowly compared to $f_1(\Delta)$ as $d \to +\infty$. \\ 
					
					More specifically, we have \[ \frac{g_{k + 1}(\Delta)}{g_k(\Delta)} = o(dg_1(\Delta)) \] or \[ \frac{g_{k + 1}(\Delta)}{g_k(\Delta)} = o(g_2(\Delta)). \]
					
					This case was implicitly discussed in Theorem \ref{largelinktriv} and Remark \ref{largeparamint}. \\
				\end{enumerate} 
			
			\item The term $g_1(\Delta)$ is (asymptotically) linear in $d$. \\
		\end{enumerate}
	\end{cor}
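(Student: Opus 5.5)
The plan is to treat Corollary \ref{consttriv} as a bookkeeping consequence of Proposition \ref{linkinitasymp} (with Remark \ref{trivineqasymp}) together with the ratio descriptions in Corollary \ref{asympgratinit}, specialized to constant $k$. Throughout, I work with the normalized form of the inequality from Corollary \ref{diffcush}:
\[ 1 - \frac{1}{f_1(\Delta)} \frac{\mathcal{C}_k}{g_{k + 1}(\Delta)} \le \omega_k \left( 1 - \frac{1}{f_1(\Delta)} \frac{\mathcal{C}_{k - 1}}{g_k(\Delta)} \right)^{\frac{k + 1}{k}}, \]
with $\omega_k$ as in Proposition \ref{ratlowbd}. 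The left side lies in $[0,1]$ by Corollary \ref{globgintlac}. Since $k$ is constant, Part 1 of Proposition \ref{zerolinkcoeff} gives $\omega_k \to +\infty$: indeed $\omega_k \ge f_1(\Delta)^{1/k}$ and $f_1(\Delta) \to +\infty$. So the limiting inequality is trivial as soon as the bracketed factor stays bounded below by some $C > 0$ for $d \gg 0$ (Remark \ref{trivineqasymp}). Equivalently, it suffices to rule out $\frac{1}{f_1(\Delta)} \frac{\mathcal{C}_{k-1}}{g_k(\Delta)} \to 1$.

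For Part 1(a), assume $d = o(g_1(\Delta))$. The terms $(d-k+1)(d-k+2)\frac{g_{k-1}(\Delta)}{g_k(\Delta)}$ and $2k(d-k+1)$ in $\mathcal{C}_{k-1}/g_k(\Delta)$ are $O(d^2)$ because $\frac{g_{k-1}(\Delta)}{g_k(\Delta)} \le 3$ (Corollary \ref{gamgratrest}). These terms are therefore $o(f_1(\Delta))$. Hence
\[ \frac{1}{f_1(\Delta)} \frac{\mathcal{C}_{k-1}}{g_k(\Delta)} \sim \frac{k(k+1)}{2} \cdot \frac{g_{k+1}(\Delta)/g_k(\Delta)}{f_1(\Delta)}, \]
so the hypothesis of (a) is exactly that this limit equals $\mu \in (0,1)$, as in Part 3 of Corollary \ref{asympgratinit}. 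Then the bracket tends to $(1-\mu)^{(k+1)/k} > 0$. Combined with $\omega_k \to \infty$, this falls under Part 3 of Proposition \ref{linkinitasymp}. The equivalent reformulations with $d g_1(\Delta)$ or $g_2(\Delta)$ hold because, under $d = o(g_1(\Delta))$, $f_1(\Delta)$ is asymptotic to the sum $d g_1(\Delta) + g_2(\Delta)$, and whichever of these dominates controls the limit. I would state this as "the dominant one" rather than insist on a strict either/or.

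Part 1(b) is the same computation with limit $0$. The hypothesis $\frac{g_{k+1}(\Delta)}{g_k(\Delta)} = o(f_1(\Delta))$ gives $\frac{1}{f_1(\Delta)}\frac{\mathcal{C}_{k-1}}{g_k(\Delta)} \to 0$, which is Part 1 of Proposition \ref{linkinitasymp}, and this recovers Theorem \ref{largelinktriv}.

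For Part 2, take $g_1(\Delta)$ linear in $d$. By Proposition \ref{gamposling1h1}, each $g_\ell(\Delta)$ is $\Theta(d^\ell)$. The upper bound comes from that proposition; the lower bound comes from $b_{\ell,0} = \binom{d}{\ell} - \binom{d}{\ell-1}$ in Proposition \ref{gamtohgvect}. Consequently $\frac{g_{k+1}(\Delta)}{g_k(\Delta)} = \Theta(d)$, $\frac{g_{k-1}(\Delta)}{g_k(\Delta)} = \Theta(1/d)$, and $f_1(\Delta) = \Theta(d^2)$. For fixed $k$, every summand of $\mathcal{C}_{k-1}/g_k(\Delta)$ is then $O(d)$, so the normalized quantity tends to $0$, and Part 1 of Proposition \ref{linkinitasymp} applies again. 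This is consistent with the contradiction already noted in Part 2(b) of Corollary \ref{asympgratinit} for constant $k$.

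The main obstacle is the $\Theta$-claim for ratios in Part 2. Proposition \ref{gamposling1h1} bounds each $g_\ell(\Delta)$ above and below by degree $\ell$ polynomials with constants depending on $\ell$. Since $k$ is fixed, only the three indices $k-1, k, k+1$ matter, so these constants are harmless; I would make this point explicitly. A second subtlety is that Part 1 of Proposition \ref{linkinitasymp} assumes that the limits exist. I would bypass this by using only the $\liminf$ of the bracket, via Remark \ref{trivineqasymp}.
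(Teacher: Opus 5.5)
Your proposal is correct and follows essentially the same route as the paper's discussion preceding the corollary. Part 1(a) goes through Remark~\ref{trivineqasymp} and Part 3 of Proposition~\ref{linkinitasymp}, combined with the ratio description in Part 3 of Corollary~\ref{asympgratinit}. Part 1(b) is the limit-zero case (Part 1 of Proposition~\ref{linkinitasymp}), and Part 2 uses $\frac{g_{k+1}(\Delta)}{g_k(\Delta)} = \Theta(d)$ and $\frac{g_{k-1}(\Delta)}{g_k(\Delta)} = \Theta\left(\frac{1}{d}\right)$ from Proposition~\ref{gamposling1h1}.

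Your additional care only sharpens that argument. This includes making $\omega_k \ge f_1(\Delta)^{\frac{1}{k}} \to +\infty$ explicit and using the $\liminf$ of the bracket instead of assuming limits exist. It also includes reading the ``$d g_1(\Delta)$ or $g_2(\Delta)$'' reformulations as replacing $f_1(\Delta) \sim d g_1(\Delta) + g_2(\Delta)$ by its dominant part.
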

	
	\begin{rem} \textbf{(Gamma vector implications and further generalizations) \\} 
	\vspace{-3mm} 
	\begin{enumerate}
		\item \textbf{(Nonnegative gamma vector realizations) \\} 
		
		Since the gamma vector is obtained from the $g$-vector via an invertible linear transformation (coefficients in Proposition \ref{gamtohgvect}), this is an indication that the link condition only has a small effect on nonnegative gamma vectors realized by high-dimensional simplicial spheres in the setting of Corollary \ref{consttriv}. \\
		
		\item \textbf{(Generalizations to other linear/sublinear functions $k$ in $d$) \\}
		
		Taking a constant $k$ means that we consider asymptotic behavior of \emph{individual} limiting inequalities as $d \to +\infty$. We can study variations of this behavior from modification of the indices by applying generalizations to other linear/sublinear functions $k$ in $d$ from Proposition \ref{linkinitasymp} and Corollary \ref{asympgratinit}. \\

	\end{enumerate}

	\end{rem}

	We will now consider cases where the limiting inequality induced by the link condition lower bound may hold nontrivially. As mentioned above, we will assume that $k$ is constant for simplicity and refer to earlier results where we other sublinear functions or linear functions in $d$ are covered (e.g. Proposition \ref{linkinitasymp} and Corollary \ref{asympgratinit}). Given the context above, we will focus on the case where  \[ \lim_{d \to +\infty} \frac{1}{f_1(\Delta)} \frac{\mathcal{C}_{k - 1}}{g_k(\Delta)} = 1 \] discussed in Part 2 of Proposition \ref{linkinitasymp} and Part 2 of Corollary \ref{asympgratinit}, which means that \[ \lim_{d \to +\infty} \left( 1 - \frac{1}{f_1(\Delta)} \frac{\mathcal{C}_{k - 1}}{g_k(\Delta)} \right)^{ \frac{k + 1}{k} } = 0. \]
	
	The possible $g$-vector ratios inducing such behavior is discussed in Part 2 of Corollary \ref{asympgratinit}. By Corollary \ref{consttriv}, we can assume that $d = (o(g_1(\Delta)))$ (e.g. when $d^2 = o(g_2(\Delta))$). Then, we have \[ \lim_{d \to +\infty} \frac{ \frac{g_{k + 1}(\Delta)}{g_k(\Delta)} }{f_1(\Delta)} = \frac{2}{k(k + 1)}.  \] In this setting, we can replace $f_1(\Delta)$ by $d g_1(\Delta)$ or $g_2(\Delta)$. \\

	We will now compare these to the ``coefficient terms'' related to the number of edges $f_1(\Delta)$ and the ratio $\frac{g_k(\Delta)^{ \frac{k + 1}{k} }}{g_{k + 1}(\Delta)}$ from the $M$-vector condition on $g(\Delta)$ for general simplicial spheres (Part 2c) of Theorem \ref{gconjsphere}). Recall that \[ \lim_{d \to +\infty} f_1(\Delta)^{ \frac{1}{k} } \frac{g_k(\Delta)^{ \frac{k + 1}{k} }}{g_{k + 1}(\Delta)} = +\infty \] since $k$ is a constant. Equivalently, we have \[ \lim_{d \to +\infty} \frac{1}{f_1(\Delta)^{ \frac{1}{k} }} \frac{g_{k + 1}(\Delta)}{g_k(\Delta)^{ \frac{k + 1}{k} }} = 0. \] As mentioned in Corollary \ref{coefflinkasymp}, the limit \[ \lim_{d \to +\infty} f_1(\Delta)^{ \frac{1}{k} } \frac{ \left( 1 - \frac{1}{f_1(\Delta)} \frac{\mathcal{C}_{k - 1}}{g_k(\Delta)} \right)^{\frac{k + 1}{k}} }{ \frac{g_{k + 1}(\Delta)}{g_k(\Delta)^{ \frac{k + 1}{k} }} } = \lim_{d \to +\infty} \frac{ \left( 1 - \frac{1}{f_1(\Delta)} \frac{\mathcal{C}_{k - 1}}{g_k(\Delta)} \right)^{\frac{k + 1}{k}} }{ \frac{1}{f_1(\Delta)^{ \frac{1}{k} }} \frac{g_{k + 1}(\Delta)}{g_k(\Delta)^{ \frac{k + 1}{k} }} } \] has a range of possible behaviors since the numerator and denominator both approach 0 as $d \to +\infty$. If \[ \lim_{d \to +\infty} \frac{ \left( 1 - \frac{1}{f_1(\Delta)} \frac{\mathcal{C}_{k - 1}}{g_k(\Delta)} \right)^{\frac{k + 1}{k}} }{ \frac{1}{f_1(\Delta)^{ \frac{1}{k} }} \frac{g_{k + 1}(\Delta)}{g_k(\Delta)^{ \frac{k + 1}{k} }} } = +\infty, \] then the limiting inequality induced by the lower bound inequality holds trivially (i.e. for $d \gg 0$) since the upper bound can get arbitrarily large while the lower bound stays between 0 and 1. This is the case where the denominator approaches 0 more quickly. \\
	
	Otherwise, we have \[ \lim_{d \to +\infty} \frac{ \left( 1 - \frac{1}{f_1(\Delta)} \frac{\mathcal{C}_{k - 1}}{g_k(\Delta)} \right)^{\frac{k + 1}{k}} }{ \frac{1}{f_1(\Delta)^{ \frac{1}{k} }} \frac{g_{k + 1}(\Delta)}{g_k(\Delta)^{ \frac{k + 1}{k} }} } = \beta, \] for (finite) constant $\beta \ge 0$. \\
	
	The induced limiting inequality is then 
	\begin{align*}
		1 - \lim_{d \to +\infty} \frac{1}{f_1(\Delta)} \frac{\mathcal{C}_k}{g_{k + 1}(\Delta)} &\le \beta \\
		\Longleftrightarrow \lim_{d \to +\infty} \frac{1}{f_1(\Delta)} \frac{\mathcal{C}_k}{g_{k + 1}(\Delta)} &\ge 1 - \beta.
	\end{align*}

	If $\beta > 0$, this implies that \[ \left( 1 - \frac{1}{f_1(\Delta)} \frac{\mathcal{C}_{k - 1}}{g_k(\Delta)} \right)^{\frac{k + 1}{k}} = \Theta \left( \frac{1}{f_1(\Delta)^{ \frac{1}{k} }} \frac{g_{k + 1}(\Delta)}{g_k(\Delta)^{ \frac{k + 1}{k} }} \right). \] 
	
	We can take a closer look at this asymptotic relation. Since $d = o(g_1(\Delta))$ (e.g. when $d^2 = o(g_2(\Delta))$), the dominant terms translate the condition above to \[ \left( 1 - \frac{1}{2} \frac{ k(k + 1) \frac{g_{k + 1}(\Delta)}{g_k(\Delta)} }{f_1(\Delta)}  \right)^{\frac{k + 1}{k}} = \Theta \left( \frac{1}{f_1(\Delta)^{ \frac{1}{k} }} \frac{g_{k + 1}(\Delta)}{g_k(\Delta)^{ \frac{k + 1}{k} }} \right). \]
	
	Since $k$ is a constant and the terms involved are nonnegative, the identity 
	\begin{align*}
		\frac{1}{f_1(\Delta)^{ \frac{1}{k} }} \frac{g_{k + 1}(\Delta)}{g_k(\Delta)^{ \frac{k + 1}{k} }} &= \left( \frac{1}{f_1(\Delta)^{ \frac{1}{k + 1} }} \frac{g_{k + 1}(\Delta)^{ \frac{k}{k + 1} }}{g_k(\Delta)} \right)^{ \frac{k + 1}{k} } \\
		&= \left( \frac{1}{f_1(\Delta)^{ \frac{1}{k + 1} } g_{k + 1}(\Delta)^{ \frac{1}{k + 1} } } \frac{g_{k + 1}(\Delta)}{g_k(\Delta)} \right)^{ \frac{k + 1}{k} } \\
		&= \left( \frac{ f_1(\Delta)^{ \frac{k}{k + 1} } }{g_{k + 1}(\Delta)^{ \frac{1}{k + 1} }} \frac{1}{f_1(\Delta)} \frac{g_{k + 1}(\Delta)}{g_k(\Delta)} \right)^{ \frac{k + 1}{k} }
	\end{align*}

	would imply that \[  \left( 1 - \frac{1}{2} \frac{ k(k + 1) \frac{g_{k + 1}(\Delta)}{g_k(\Delta)} }{f_1(\Delta)} \right)^{ \frac{k + 1}{k} } = \Theta \left( \left( \frac{ f_1(\Delta)^{ \frac{k}{k + 1} } }{g_{k + 1}(\Delta)^{ \frac{1}{k + 1} }} \frac{1}{f_1(\Delta)} \frac{g_{k + 1}(\Delta)}{g_k(\Delta)} \right)^{ \frac{k + 1}{k} } \right). \] 
	
	This depends on a comparison between $f_1(\Delta)^k$ and $g_{k + 1}(\Delta)$. Note that \[ f_1(\Delta) = \binom{d + 1}{2} + d g_1(\Delta) + g_2(\Delta) \] and $g_2(\Delta) \ge \frac{1}{3} g_1(\Delta)$ since $\gamma(\Delta) \ge 0$ (Corollary \ref{gamgratrest}). \\
	
	Since \[ 0 < \frac{1}{f_1(\Delta)} \frac{\mathcal{C}_k}{g_{k + 1}(\Delta)} \le 1, \] the limiting inequality \[ \lim_{d \to +\infty} \frac{1}{f_1(\Delta)} \frac{\mathcal{C}_k}{g_{k + 1}(\Delta)} \ge 1 - \beta \] is only nontrivial if $\beta \le 1$. When $\beta > 0$, the lower bound $1 - \beta$ is strictly positive and we are in the setting of Part 3 of Corollary \ref{asympgratinit} with $k$ replaced by $k + 1$. Setting \[ \sigma \coloneq \lim_{d \to +\infty} \frac{1}{f_1(\Delta)} \frac{\mathcal{C}_k}{g_{k + 1}(\Delta)}, \] the inequality is \[ \sigma > 1 - \beta. \] Replacing $\mu$ by $\sigma$ in Part 3 of Corollary \ref{asympgratinit}, we have \[ \lim_{d \to +\infty} \frac{ \frac{g_{k + 2}(\Delta)}{g_{k + 1}(\Delta)} }{f_1(\Delta)} = \frac{2 \sigma}{(k + 1)(k + 2)} \] since $d = o(g_1(\Delta))$ (e.g. when $d^2 = o(g_2(\Delta))$). \\

	Suppose that $\beta = 0$. Then, we have \[ \left( 1 - \frac{1}{f_1(\Delta)} \frac{\mathcal{C}_{k - 1}}{g_k(\Delta)} \right)^{\frac{k + 1}{k}} = o \left( \frac{1}{f_1(\Delta)^{ \frac{1}{k} }} \frac{g_{k + 1}(\Delta)}{g_k(\Delta)^{ \frac{k + 1}{k} }} \right). \]
	
	Earlier identities yield the following simplification \[ \left( 1 - \frac{1}{2} \frac{ k(k + 1) \frac{g_{k + 1}(\Delta)}{g_k(\Delta)} }{f_1(\Delta)} \right)^{ \frac{k + 1}{k} } = o \left( \left( \frac{ f_1(\Delta)^{ \frac{k}{k + 1} } }{g_{k + 1}(\Delta)^{ \frac{1}{k + 1} }} \frac{1}{f_1(\Delta)} \frac{g_{k + 1}(\Delta)}{g_k(\Delta)} \right)^{ \frac{k + 1}{k} } \right) \] since $d = o(g_1(\Delta))$ (e.g. when $d^2 = o(g_2(\Delta))$). \\

	The induced lower bound would then be $\sigma \ge 1$. Since $0 \le \sigma \le 1$, we have \[ \sigma \coloneq \lim_{d \to +\infty} \frac{1}{f_1(\Delta)} \frac{\mathcal{C}_k}{g_{k + 1}(\Delta)} = 1. \]
	
	Then, the same reasoning as above using Part 2 of Corollary \ref{asympgratinit} implies that \[ \lim_{d \to +\infty} \frac{ \frac{g_{k + 2}(\Delta)}{g_{k + 1}(\Delta)} }{f_1(\Delta)} = \frac{2}{(k + 1)(k + 2)} \] since $d = o(g_1(\Delta))$ (e.g. when $d^2 = o(g_2(\Delta))$). \\
 	
	The possible nontrivial limiting behaviors discussed above are listed below. \\
	
	\begin{cor} \textbf{(Nontriviality criteria and an additional trivial case for individual link condition limiting inequalities) \\} \label{constlinknontriv}
		Let $\Delta$ be a $(d - 1)$-dimensional simplicial sphere such that $\gamma(\Delta) \ge 0$. Fix $1 \le k \le \frac{d}{2} - 2$. Suppose $d = (o(g_1(\Delta)))$ (e.g. when $d^2 = o(g_2(\Delta))$). 
		
		\begin{enumerate}
			\item The limiting inequality of the global/averaged lower bound induced by the link condition (Corollary \ref{diffcush}) behaves nontrivially if the following conditions are satisfied: 
				\begin{enumerate}
					\item We have \[ \lim_{d \to +\infty} \frac{ \frac{g_{k + 1}(\Delta)}{g_k(\Delta)} }{f_1(\Delta)} = \frac{2}{k(k + 1)}.  \] Equivalently, we have \[ \lim_{d \to +\infty} \frac{ \frac{g_{k + 1}(\Delta)}{g_k(\Delta)} }{d g_1(\Delta)} = \frac{2}{k(k + 1)}  \] or \[ \lim_{d \to +\infty} \frac{ \frac{g_{k + 1}(\Delta)}{g_k(\Delta)} }{g_2(\Delta)} = \frac{2}{k(k + 1)}.  \]

					\item We have \[ \left( 1 - \frac{1}{2} \frac{ k(k + 1) \frac{g_{k + 1}(\Delta)}{g_k(\Delta)} }{f_1(\Delta)} \right)^{ \frac{k + 1}{k} } = \Theta \left( \frac{1}{f_1(\Delta)^{ \frac{1}{k} }} \frac{g_{k + 1}(\Delta)}{g_k(\Delta)^{ \frac{k + 1}{k} }} \right) = \Theta \left( \left( \frac{ f_1(\Delta)^{ \frac{k}{k + 1} } }{g_{k + 1}(\Delta)^{ \frac{1}{k + 1} }} \frac{1}{f_1(\Delta)} \frac{g_{k + 1}(\Delta)}{g_k(\Delta)} \right)^{ \frac{k + 1}{k} } \right) \] or \[ \left( 1 - \frac{1}{2} \frac{ k(k + 1) \frac{g_{k + 1}(\Delta)}{g_k(\Delta)} }{f_1(\Delta)} \right)^{ \frac{k + 1}{k} } = o \left( \frac{1}{f_1(\Delta)^{ \frac{1}{k} }} \frac{g_{k + 1}(\Delta)}{g_k(\Delta)^{ \frac{k + 1}{k} }} \right) = o \left( \left( \frac{ f_1(\Delta)^{ \frac{k}{k + 1} } }{g_{k + 1}(\Delta)^{ \frac{1}{k + 1} }} \frac{1}{f_1(\Delta)} \frac{g_{k + 1}(\Delta)}{g_k(\Delta)} \right)^{ \frac{k + 1}{k} } \right) \]
					
					with the quotient of the left hand side by the right hand side approaching a constant $0 \le \beta \le 1$ as $d \to +\infty$. \\
					
					This depends on the size of $f_1(\Delta)^k$ relative to $g_{k + 1}(\Delta)$. Recall that \[ f_1(\Delta) = \binom{d + 1}{2} + d g_1(\Delta) + g_2(\Delta) \] and $g_2(\Delta) \ge \frac{1}{3} g_1(\Delta)$ since $\gamma(\Delta) \ge 0$ (Corollary \ref{gamgratrest}). Comparing this to the upper bound $g_2(\Delta) \le g_1(\Delta)^{\langle 1 \rangle} = \binom{g_1(\Delta) + 1}{2}$, the asymptotic properties above may be an indication that $g_2(\Delta)$ is not very large compared to $g_1(\Delta)$. \\
					
				\end{enumerate}
			
			\item If we are in the setting of Part 1 with condition 1b) replaced by \[ \frac{1}{f_1(\Delta)^{ \frac{1}{k} }} \frac{g_{k + 1}(\Delta)}{g_k(\Delta)^{ \frac{k + 1}{k} }} = \left( \frac{ f_1(\Delta)^{ \frac{k}{k + 1} } }{g_{k + 1}(\Delta)^{ \frac{1}{k + 1} }} \frac{1}{f_1(\Delta)} \frac{g_{k + 1}(\Delta)}{g_k(\Delta)} \right)^{ \frac{k + 1}{k} } = o \left( \left( 1 - \frac{1}{2} \frac{ k(k + 1) \frac{g_{k + 1}(\Delta)}{g_k(\Delta)} }{f_1(\Delta)} \right)^{ \frac{k + 1}{k} } \right), \] the limiting inequality holds trivially. \\ 
			
			\item The resulting limiting inequalities in Part 1 are of the form $\sigma \ge 1 - \beta$, where $\sigma$ and $\beta$ satisfy the following properties: \\
				\begin{enumerate}
					\item We have \[ \lim_{d \to +\infty} \frac{ \frac{g_{k + 2}(\Delta)}{g_{k + 1}(\Delta)} }{f_1(\Delta)} = \frac{2 \sigma}{(k + 1)(k + 2)}. \]
					
					Equivalently, we have \[ \lim_{d \to +\infty} \frac{ \frac{g_{k + 2}(\Delta)}{g_{k + 1}(\Delta)} }{d g_1(\Delta)} = \frac{2 \sigma}{(k + 1)(k + 2)} \] or \[ \lim_{d \to +\infty} \frac{ \frac{g_{k + 2}(\Delta)}{g_{k + 1}(\Delta)} }{g_2(\Delta)} = \frac{2 \sigma}{(k + 1)(k + 2)}. \]
					
					\item We have \[ \lim_{d \to +\infty} \frac{ \left( 1 - \frac{1}{2} \frac{ k(k + 1) \frac{g_{k + 1}(\Delta)}{g_k(\Delta)} }{f_1(\Delta)} \right)^{ \frac{k + 1}{k} } }{ \frac{1}{f_1(\Delta)^{ \frac{1}{k} }} \frac{g_{k + 1}(\Delta)}{g_k(\Delta)^{ \frac{k + 1}{k} }} } = \lim_{d \to +\infty} \frac{ \left( 1 - \frac{1}{2} \frac{ k(k + 1) \frac{g_{k + 1}(\Delta)}{g_k(\Delta)} }{f_1(\Delta)} \right)^{ \frac{k + 1}{k} }   }{ \left( \frac{ f_1(\Delta)^{ \frac{k}{k + 1} } }{g_{k + 1}(\Delta)^{ \frac{1}{k + 1} }} \frac{1}{f_1(\Delta)} \frac{g_{k + 1}(\Delta)}{g_k(\Delta)} \right)^{ \frac{k + 1}{k} }  } = \beta. \]
					
					As mentioned in Part 1b), this seems to depend on the relationship between the sizes of $f_1(\Delta)^k$ and $g_{k + 1}(\Delta)$. \\
					
				\end{enumerate}

			Since $0 \le \sigma \le 1$, this bound is only nontrivial if $0 \le \beta \le 1$. If $\beta = 0$, we have $\sigma = 1$. \\
			
		\end{enumerate}
		
	\end{cor}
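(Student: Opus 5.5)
The plan is to assemble Corollary \ref{constlinknontriv} from the case analysis carried out just before it, using the $\sigma, \beta$ bookkeeping. Start from the equivalent form of the averaged link inequality from Corollary \ref{diffcush},
\[ 1 - \frac{1}{f_1(\Delta)} \frac{\mathcal{C}_k}{g_{k + 1}(\Delta)} \le \frac{ \left( 1 - \frac{1}{f_1(\Delta)} \frac{\mathcal{C}_{k - 1}}{g_k(\Delta)} \right)^{\frac{k + 1}{k}} }{ \frac{1}{f_1(\Delta)^{ \frac{1}{k} }} \frac{g_{k + 1}(\Delta)}{g_k(\Delta)^{ \frac{k + 1}{k} }} }. \]
Because $\lim_{d \to +\infty} f_1(\Delta)^{1/k} = +\infty$ for constant $k$ (Proposition \ref{zerolinkcoeff}), Proposition \ref{linkinitasymp} and Remark \ref{trivineqasymp} show that the inequality can only be nontrivial when $\lim \frac{1}{f_1(\Delta)} \frac{\mathcal{C}_{k - 1}}{g_k(\Delta)} = 1$. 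Otherwise the numerator stays bounded away from $0$, the right hand side tends to $+\infty$, and the left hand side stays in $[0,1]$ by Corollary \ref{globgintlac}. Part 2a) of Corollary \ref{asympgratinit} then turns this limit into condition 1a). Under $d = o(g_1(\Delta))$, the terms $(d-k)(d-k+1)\frac{g_{k-1}}{g_k} \le 3(d-k)(d-k+1)$ and $2k(d-k+1)$ are $o(f_1(\Delta))$, so only $k(k+1)\frac{g_{k+1}}{g_k}/(2 f_1(\Delta))$ survives. The replacement of $f_1(\Delta)$ by $d g_1(\Delta)$ or $g_2(\Delta)$ comes from whichever of these dominates $f_1(\Delta)$.

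Next, substitute the same dominant-term approximation into the numerator to rewrite it as $\left(1 - \frac{k(k+1)}{2}\frac{g_{k+1}/g_k}{f_1(\Delta)}\right)^{\frac{k+1}{k}}$. One point needs care here. The replacement must hold up to a factor tending to $1$ even though both quantities tend to $0$. To justify it, I would check that the lower-order terms of $\frac{1}{f_1}\frac{\mathcal{C}_{k-1}}{g_k}$ are negligible relative to the gap from $1$. If that check fails, I would state 1b) directly in terms of $\mathcal{C}_{k-1}$, which is equivalent for the limit statements.

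Then rewrite the denominator using the identity
\[ \frac{1}{f_1(\Delta)^{1/k}} \frac{g_{k+1}(\Delta)}{g_k(\Delta)^{(k+1)/k}} = \left( \frac{f_1(\Delta)^{k/(k+1)}}{g_{k+1}(\Delta)^{1/(k+1)}} \frac{1}{f_1(\Delta)} \frac{g_{k+1}(\Delta)}{g_k(\Delta)} \right)^{\frac{k+1}{k}}, \]
which is checked by exponent bookkeeping. It shows why the behavior depends on $f_1(\Delta)^k$ versus $g_{k+1}(\Delta)$.

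The ratio of the numerator to the denominator now splits into three cases, following Corollary \ref{coefflinkasymp}. If the ratio tends to $+\infty$, which is the little-$o$ condition of Part 2, the right hand side blows up and the inequality is trivial. If it tends to a finite $\beta$, the limiting inequality is $1 - \sigma \le \beta$, with $\sigma = \lim \frac{1}{f_1}\frac{\mathcal{C}_k}{g_{k+1}}$. Since $0 \le \sigma \le 1$, this is informative only when $\beta \le 1$, which gives Part 3b) and the $\Theta$ and $o$ alternatives of 1b).

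For Part 3a), apply Part 3 of Corollary \ref{asympgratinit} with $k$ replaced by $k+1$ and $\mu$ replaced by $\sigma$ when $\sigma < 1$, and Part 2 of the same corollary when $\sigma = 1$. This gives $\lim \frac{g_{k+2}/g_{k+1}}{f_1(\Delta)} = \frac{2\sigma}{(k+1)(k+2)}$; the hypothesis $k \le \frac{d}{2} - 2$ ensures the index $k+1$ is admissible. Finally, $\beta = 0$ forces $\sigma \ge 1$ and hence $\sigma = 1$.

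The main obstacle is that all of this assumes the relevant limits exist. I would state explicitly that the corollary is read under this standing assumption, as the paper does in Section \ref{sumconst}. I also expect the dominant-term replacement in the numerator, where both sides vanish, to need the most care.
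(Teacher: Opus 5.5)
Your outline follows the paper's own derivation, which is the discussion in Section~\ref{sumconst} just before the corollary, essentially step for step. You reduce to $\lim_{d\to+\infty}\frac{1}{f_1(\Delta)}\frac{\mathcal{C}_{k-1}}{g_k(\Delta)}=1$ via Proposition~\ref{linkinitasymp} and read off 1a) from Part 2a) of Corollary~\ref{asympgratinit}. You then rewrite the numerator by dominant terms and the denominator by the exponent identity, split on the limit $\beta$ of the quotient, and get Part 3a) from Corollary~\ref{asympgratinit} with $k$ replaced by $k+1$.

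The step you single out is a real weak point, and the paper simply asserts it. The discarded part of the numerator is at least $k(d-k+1)/f_1(\Delta)$. Nothing in the hypotheses makes this $o$ of $1-\frac{1}{f_1(\Delta)}\frac{\mathcal{C}_{k-1}}{g_k(\Delta)}$, so $\beta$ could change under the replacement. Also, like the paper, you only show that nontriviality forces 1a)--1b), whereas Part 1 is phrased as a sufficient condition.

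The more serious problem, which your plan inherits from the paper, is that for constant $k$ the regime being analyzed is empty.
\begin{itemize}
\item For any simplicial sphere, $g_{k+1}(\Delta)\le g_k(\Delta)^{\frac{k+1}{k}}$ and $g_k(\Delta)\le g_1(\Delta)^k$ (Corollaries~\ref{macpowasymp} and~\ref{sphereginitbd}). Alternatively, $g_{k+1}\le g_1 g_k$ directly, since an $M$-vector is the Hilbert function of a standard graded algebra in which degree $k+1$ is spanned by products of degree $1$ and degree $k$ elements.
\item Hence $\frac{g_{k+1}(\Delta)}{g_k(\Delta)}\le g_1(\Delta)$. Since $f_1(\Delta)\ge d\,g_1(\Delta)$, this gives $\frac{1}{f_1(\Delta)}\frac{g_{k+1}(\Delta)}{g_k(\Delta)}\le\frac{1}{d}$, so 1a) can never hold.
\item More precisely, using $\frac{g_{k-1}(\Delta)}{g_k(\Delta)}<3$ from Corollary~\ref{gamgratrest},
\[ \frac{1}{f_1(\Delta)}\frac{\mathcal{C}_{k-1}}{g_k(\Delta)}\le\frac{k(k+1)}{2d}+\frac{k}{g_1(\Delta)}+\frac{3(d+1)^2}{2d\,g_1(\Delta)}\longrightarrow 0 \]
whenever $d=o(g_1(\Delta))$.
\end{itemize}
So under the corollary's hypotheses you are always in Part 1 of Proposition~\ref{linkinitasymp}, and the limiting inequality is always trivial. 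The corollary is therefore true only vacuously, and the correct proof is this short observation. Your case analysis, like the paper's, derives consequences of an unsatisfiable hypothesis and never reaches an actual configuration. The bound stops excluding this case only when $k$ grows at least like $\sqrt{2d}$, which is outside this corollary.
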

	
	This gives rise to a general interpretation of situations where the limiting link condition inequality for $d \gg 0$ from Corollary \ref{diffcush} behaves nontrivially. \\
	
	\begin{cor} \textbf{(Nontrivial link condition inequality asymptotic interpretation and gamma vector implications for individual inequalities) \\} \label{nontrivlinkgam}
		Let $\Delta$ be a $(d - 1)$-dimensional simplicial sphere satisfying the link condition such that $\gamma(\Delta) \ge 0$. Fix $1 \le k \le \frac{d}{2} - 2$.
		
		\begin{enumerate}
			\item The upper bound $\sigma$ in the induced limiting link condition inequality $\sigma \ge 1 - \beta$ as $d \to +\infty$ can be interpreted asymptotically (with respect to $d$) as the quotient of $\frac{g_{k + 2}(\Delta)}{g_{k + 1}(\Delta)}$ by $\frac{g_{k + 1}(\Delta)}{g_k(\Delta)}$ since we are dividing by the constant $k + 2$ (Part 1 and Part 3 of Corollary \ref{constlinknontriv}). In the context of Remark \ref{indexratchange}, this would be a constraint on the rate of decrease of the terms $\frac{g_{m + 1}(\Delta)}{g_m(\Delta)}$. \\
			
			\item The lower bound $1 - \beta$ in Part 1 is a number between 0 and 1. It increases when the inequality $g_{k + 1}(\Delta) \le g_k(\Delta)^{ \frac{k + 1}{k} }$ ($M$-vector condition on $g(\Delta)$ from Part 2c) of Theorem \ref{gconjsphere} and Corollary \ref{macpowasymp}) is closer to equality or the number of edges $f_1(\Delta)$ decreases. The lower bound also increases if the size of $f_1(\Delta)^k$ relative to $g_{k + 1}(\Delta)$ increases. \\
			
			\item While the lower bound for the quotient of $\frac{g_{k + 2}(\Delta)}{g_{k + 1}(\Delta)}$ by $\frac{g_{k + 1}(\Delta)}{g_k(\Delta)}$ from Corollary \ref{diffcush} can be translated to one for $\gamma_{k + 2}(\Delta)$, the gamma vector components $\gamma_i(\Delta)$ affected are mainly those that are asymptotically dominant in $g_i(\Delta)$. For example, having $\gamma_{\ell + 1}(\Delta) \ge (d + 1) g_\ell(\Delta)$ (e.g. when $g_{\ell + 1}(\Delta) \ge 2(d + 1) g_\ell(\Delta)$) would imply that $\gamma_{\ell + 1}(\Delta)$ must be the asymptotically dominant term in $g_{\ell + 1}(\Delta)$ (Corollary \ref{gamgratrest}). If this holds for all $\ell \in \{ k, k + 1, k + 2 \}$, we can take the inequality to be an asymptotic lower bound on the quotient of $\frac{\gamma_{k + 2}(\Delta)}{\gamma_{k + 1}(\Delta)}$ by $\frac{\gamma_{k + 1}(\Delta)}{\gamma_k(\Delta)}$.  \\

		\end{enumerate}
	\end{cor}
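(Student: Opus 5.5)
The plan is to read off the three parts of Corollary \ref{nontrivlinkgam} from the limiting analysis preceding Corollary \ref{constlinknontriv}, with $k$ held constant throughout.

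\textbf{Part 1.} We work in the nontrivial regime of Corollary \ref{constlinknontriv}; by Corollary \ref{consttriv} we may assume $d = o(g_1(\Delta))$. Part 1a) of Corollary \ref{constlinknontriv} gives
\[ \frac{g_{k+1}(\Delta)}{g_k(\Delta)} \sim \frac{2 f_1(\Delta)}{k(k+1)}, \]
and Part 3a) gives
\[ \frac{g_{k+2}(\Delta)}{g_{k+1}(\Delta)} \sim \frac{2\sigma f_1(\Delta)}{(k+1)(k+2)} \]
(or $o(\cdot)$ of this when $\sigma = 0$). Dividing the second by the first cancels $f_1(\Delta)$ and leaves
\[ \lim_{d\to+\infty} \frac{g_{k+2}(\Delta)/g_{k+1}(\Delta)}{g_{k+1}(\Delta)/g_k(\Delta)} = \frac{k}{k+2}\,\sigma. \]
So $\sigma$ equals this quotient of consecutive ratios up to the fixed constant factor coming from the index shift, namely division by $k+2$ and multiplication by $k$. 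The inequality $\sigma \ge 1-\beta$ then becomes a lower bound on how fast $g_{m+1}(\Delta)/g_m(\Delta)$ can decrease, which ties it to Remark \ref{indexratchange}.

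\textbf{Part 2.} Since $\beta \ge 0$, we have $1-\beta \le 1$. For $1-\beta \ge 0$, recall from the discussion before Corollary \ref{constlinknontriv} that $\beta > 1$ makes the limiting inequality trivial, so in the nontrivial case $0 \le \beta \le 1$. For monotonicity, use the definition of $\beta$ in Part 3b) of Corollary \ref{constlinknontriv}: $\beta$ is the limit of the numerator $\left(1-\frac{1}{f_1(\Delta)}\frac{\mathcal{C}_{k-1}}{g_k(\Delta)}\right)^{\frac{k+1}{k}}$ divided by $\omega_k^{-1} = f_1(\Delta)^{-\frac{1}{k}}\, g_{k+1}(\Delta)/g_k(\Delta)^{\frac{k+1}{k}}$. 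Bringing the $M$-vector inequality $g_{k+1}(\Delta) \le g_k(\Delta)^{\frac{k+1}{k}}$ closer to equality increases this denominator. Decreasing $f_1(\Delta)$ (holding the numerator fixed) also increases it. Either way $\beta$ decreases and $1-\beta$ increases.

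The $f_1(\Delta)^k$ versus $g_{k+1}(\Delta)$ phrasing comes from the rewriting
\[ \omega_k^{-1} = \left( \frac{f_1(\Delta)^{\frac{k}{k+1}}}{g_{k+1}(\Delta)^{\frac{1}{k+1}}} \cdot \frac{1}{f_1(\Delta)} \frac{g_{k+1}(\Delta)}{g_k(\Delta)} \right)^{\frac{k+1}{k}}. \]
By Part 1a), the factor $\frac{1}{f_1(\Delta)}\frac{g_{k+1}(\Delta)}{g_k(\Delta)}$ tends to the constant $\frac{2}{k(k+1)}$. What remains is $(f_1(\Delta)^k/g_{k+1}(\Delta))^{\frac{1}{k+1}}$. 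This is the one delicate step: it only determines the direction of change in $\beta$ once the other two factors are frozen at their limits. I would state it in that conditional form rather than as a genuine monotonicity.

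\textbf{Part 3.} By Proposition \ref{gamtohgvect}, $\gamma_{k+2}(\Delta)$ appears only in $g_{k+2}(\Delta)$, with coefficient $1$. Hence a lower bound on $g_{k+2}(\Delta)$ translates into one on $\gamma_{k+2}(\Delta)$ after subtracting $b_{k+2,0} + \cdots + b_{k+2,k+1}\gamma_{k+1}(\Delta)$. By Lemma \ref{gamtgrowth} this subtracted sum is less than $(d+1)g_{k+1}(\Delta)$. Corollary \ref{gamgratrest}(2) then gives that $\gamma_{\ell+1}(\Delta) \ge (d+1)g_\ell(\Delta)$ forces
\[ g_{\ell+1}(\Delta) = \gamma_{\ell+1}(\Delta) + O\big((d+1)g_\ell(\Delta)\big). \]
The example condition $g_{\ell+1}(\Delta) \ge 2(d+1)g_\ell(\Delta)$ suffices because it implies $\gamma_{\ell+1}(\Delta) \ge g_{\ell+1}(\Delta) - (d+1)g_\ell(\Delta) \ge (d+1)g_\ell(\Delta)$. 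Under this condition $\gamma_{\ell+1}(\Delta)$ is a constant proportion of $g_{\ell+1}(\Delta)$, between $\frac{1}{2}$ and $1$, rather than asymptotically equal to it. Assuming this for $\ell = k, k+1, k+2$ makes each $g$-ratio comparable to the corresponding $\gamma$-ratio. The quotient of $\frac{\gamma_{k+2}}{\gamma_{k+1}}$ by $\frac{\gamma_{k+1}}{\gamma_k}$ is then bounded below by a constant multiple of the bound from Part 1. The gap between constant proportion and exact asymptotic dominance is where care is needed in stating this part.
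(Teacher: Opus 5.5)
Your proposal takes the paper's route. The paper gives no separate argument for this corollary and reads it off the discussion around Corollary \ref{constlinknontriv}: Parts 1a) and 3a) there for the interpretation of $\sigma$, the definition of $\beta$ and the rewriting of $\omega_k^{-1}$ for the monotonicity claims, and Proposition \ref{gamtohgvect} with Corollary \ref{gamgratrest} for the gamma-vector translation. Your explicit factor $\frac{k}{k+2}$, your conditional reading of the $f_1(\Delta)^k$ versus $g_{k+1}(\Delta)$ claim, and your observation that $\gamma_{\ell+1}(\Delta) \ge (d+1) g_\ell(\Delta)$ only yields $\gamma_{\ell+1}(\Delta) \ge \frac{1}{2} g_{\ell+1}(\Delta)$ are all more accurate than the paper's wording.

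One caveat you share with the paper: the Part 1a) hypothesis is never met for constant $k$. We have $g_{k+1}(\Delta)/g_k(\Delta) \le g_k(\Delta)^{1/k} \le g_1(\Delta)$ by the $M$-vector condition and Corollaries \ref{macpowasymp} and \ref{sphereginitbd}, while $f_1(\Delta) > d\, g_1(\Delta)$. So the ratio in Part 1a) of Corollary \ref{constlinknontriv} is below $\frac{1}{d}$ and cannot tend to $\frac{2}{k(k+1)}$. Part 3a) likewise forces $\sigma = 0$. Your Part 1 computation, like the paper's, is therefore a valid implication about an empty regime, not a description of actual spheres.
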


	\color{black}

	\section{Generalized positivity properties from orthogonal polynomials and lattice paths} \label{genorthlat}

	Recall from Proposition \ref{gamtohgvect} that matrices transforming the gamma vectors to the first half of the $h$-vector or the $g$-vector are totally nonnegative via a Lindstr\"om--Gessel--Viennot argument. Such arguments and connections to lattice paths in general have also been involved in recent gamma positivity-related work such as \cite{BV} and \cite{EHR}. In fact, there is a common structure underlying this argument and the inverted Chebyshev expansion of the gamma polynomial from Theorem \ref{gamchebinv} which we have used in earlier positivity and structural results involving the gamma vector (e.g. Corollary 1.35 on p. 25 of \cite{Pflag} and Corollary 1.10 on p. 9 of \cite{Pcheb}). \\ 
	
	More specifically, this is the general connection between orthogonal polynomials and the combinatorics of lattice paths (e.g. see \cite{V1} and \cite{V2}). Viewing the inverted Chebyshev expansion from this perspective, we have an interpretation of the gamma vector components in terms of monomer/dimer covers with colored monomers (Proposition \ref{gamorthpov}). In Remark \ref{dimtchebsubdiv}, we put this in the context of connections between dimer covers and repeated (stellar) edge subdivisions. This follows from compatibility of Tchebyshev subdivisions from \cite{Het} and \cite{HetB} with modified $f$-polynomials, which were the main connection between the inverted Chebyshev expansion and positivity/geometric properties in the results mentioned above. \\
	
	Finally, we show that combinatorial properties of lattice paths give orthogonal polynomial generalizations of the implication of unimodality by gamma positivity (Proposition \ref{formalgpos}) and realizability properties of nonnegative gamma vectors analogous to those in Section \ref{gamposout} for simplicial spheres (Proposition \ref{geninvgamsphere}) and Cohen--Macaulay simplicial complexes (Remark \ref{cmorthgen}). While we will go through realizations by the entire space of such objects as in Section \ref{gennonnegrealiz} explicitly, we will sketch how one would obtain analogues of the results of Section \ref{linkcondimp} (Remark \ref{linkapproxorth}) since it will also use largely the same reasoning and specific growth rates will depend on the initial parameters involved. \\

	By the inverted Chebyshev expansion from Theorem \ref{gamchebinv}, we have that \[ (2w)^{ \frac{d}{2} } \gamma \left( \frac{1}{2w} \right) =  g(u) \coloneq h_{\frac{d}{2}} + 2 \sum_{j = 1}^{ \frac{d}{2} } h_{ \frac{d}{2} - j } T_j(w - 1). \]

	In order to relate this expression to lattice paths, we will normalize it as a linear combination of unitary orthogonal polynomials (Th\'eor\`eme 9 on I-4 and I-5 of \cite{V2} and p. 142 of \cite{V1}). Since $\widehat{T}_0(x) = T_0(x) = 1$ and $\widehat{T}_m(x) = \frac{1}{2^{m - 1}} T_m(x)$ if $m \ge 1$ for the unitary orthogonal polynomials $\widehat{T}_m(x)$ corresponding to Chebyshev polynomials of the first kind $T_m(x)$ (II-4 from \cite{V2}), we have \[ (2w)^{ \frac{d}{2} } \gamma \left( \frac{1}{2w} \right) = \sum_{j = 0}^{\frac{d}{2}} 2^j h_{\frac{d}{2} - j} \widehat{T}_j(w - 1). \]
	
	Note that the difference in the factor of 2 in the original inverted Chebyshev expansion corresponds to the difference between the index 0 and positive index relations between the Chebyshev polynomials of the first kind and the unitary orthogonal polynomials corresponding to them. \\
	
	From the perspective of unitary orthogonal polynomials $P_r(x)$ satisfying the recursion \[ P_{m + 1}(x) = (x - b_m) P_m(x) - \lambda_m P_{m - 1}(x) \] for all $m \ge 1$ (I-5 from \cite{V2} and p. 142 of \cite{V1}), taking $P_r(x) = \widehat{T}_r(x - 1)$ means setting \[ b_m = 1 \] for all $m \ge 0$ and 
	\[ \lambda_m = \begin{cases}
		\frac{1}{4} & \text{ if $m \ge 2$} \\
		\frac{1}{2} & \text{ if $m = 1$}
	\end{cases} \]
	
	by II-4 from \cite{V2}. \\
	
	The matrix corresponding to the linear transformation $F : k[x] \longrightarrow k[x]$ defined by $x^m \mapsto P_m(x)$ acts on row vectors as \[ z \mapsto z \cdot P, \] where $z$ is a row vector listing coefficients in order of increasing degrees and $P$ is the matrix whose $(m, i)$ entry is the coefficient of $x^i$ in $P_m(x)$ (see III-1 from \cite{V2}). \\
	
	In our setting, we have \[ P  : ( h_{\frac{d}{2}}, 2 h_{\frac{d}{2} - 1}, \ldots, 2^{\frac{d}{2} - 1} h_1, 2^{\frac{d}{2}} h_0 ) \mapsto ( \gamma_{\frac{d}{2}}, 2 \gamma_{\frac{d}{2} - 1}, \ldots, 2^{\frac{d}{2} - 1} \gamma_1, 2^{ \frac{d}{2} } \gamma_0 ) \]
	
	and \[ P^{-1} : ( \gamma_{\frac{d}{2}}, 2 \gamma_{\frac{d}{2} - 1}, \ldots, 2^{\frac{d}{2} - 1} \gamma_1, 2^{ \frac{d}{2} } \gamma_0 ) \mapsto ( h_{\frac{d}{2}}, 2 h_{\frac{d}{2} - 1}, \ldots, 2^{\frac{d}{2} - 1} h_1, 2^{\frac{d}{2}} h_0 ). \]
	
	The latter is analogous to the matrix \[ A = \left[ \binom{d - 2j}{i - j} \right]_{0 \le i, j \le \frac{d}{2}} \] from Proposition \ref{gamtohgvect}
	such $A \gamma = (h_0, \ldots, h_{\lfloor \frac{d}{2} \rfloor})$. \\
	
	The connection to Lindstr\"om--Gessel--Viennot methods involving lattice paths mentioned in Proposition \ref{gamtohgvect} fits into more general combinatorial properties of inverses $P^{-1}$ of such matrices $P$ (see Th\'eor\`eme 1 on III-2 from \cite{V2} and p. 143 of \cite{V1}). It is known that $P^{-1} = (\mu_{N, k})$, where \[ \mu_{N, k} = \sum_{\omega \in M_{N, k}} v(\omega), \] where $M_{N, k}$ consists of the Motzkin paths of length $N$ starting from level 0 and ending at level $k$ and $v(\omega)$ is the sum of the valuations of the paths. They are given by the products of the valuations of the edges. The valuation of a step starting at level $m$ is determined by the $b_m$ and $\lambda_m$ as follows:
	
		\begin{itemize}
			\item A northeast (NE) step has valuation 1. 
			
			\item An east (E) step has valuation $b_m$.
			
			\item A southeast (SE) step has valuation $\lambda_m$.
		\end{itemize}   
	
	We note that $P^{-1} = (\mu_{N, k})$ has positive entries in our setting since $b_m > 0$ and $\lambda_m > 0$ for all $m \ge 1$. In the material below, we will use the term ``weight'' in place of ``valuation'' for convenience since we will discuss weighted sums. \\

	Also, there is an interpretation of degree $r$ coefficients of unitary orthogonal polynomials $P_m(x)$ as sums of weighted disjoint monomer/dimer covers of $[m] = \{ 1, \ldots, m \}$ with $r$ missing elements, where monomers $\{ k \}$ have weight $-b_{k - 1}$ and dimers $\{ k, k + 1 \}$ have weight $-\lambda_k$ (Lemme 13 on I-9 and I-10 of \cite{V2} and p. 150 -- 151 of \cite{V1}). \\
	
	In our context, these properties can be translated as follows:

	\begin{prop} \label{gamorthpov}
		Suppose that $h = h(\Delta)$ is the $h$-vector of a $(d - 1)$-dimensional shellable simplicial complex $\Delta$ (equivalently that of a Cohen--Macaulay simplicial complex). Assume that $d$ is even. \\
		\begin{enumerate}
			\item The truncated $h$-vector $(h_0, \ldots, h_{\frac{d}{2}})$ is the $h$-vector of a $\left( \frac{d}{2} - 1 \right)$-dimensional partitionable simplicial complex $\widetilde{\Delta}$. \\
			
			\item For each induced partionining $[r(F_i), \widetilde{F}_i]$ of $\widetilde{\Delta}$ from Part 1, let $d_i$ be the ``minimal dimer'' corresponding to the pair of faces $r(F_i)$ and its union with the minimal vertex of $\widetilde{F}_i \setminus r(F_i)$ with respect to the vertex ordering from the proof of Part 1. We take the $r(F_i)$ from $\Delta$ with $|r(F_i)| \le \frac{d}{2}$ and $|\widetilde{F}_i| = \frac{d}{2}$ for all $i$. \\
			
			Given $0 \le \ell \le \frac{d}{2}$, we have \[ \gamma_\ell = A_\ell - 2 B_\ell, \] where $A_\ell$ and $B_\ell$ are the differences between the number of disjoint monomer/dimer covers using an even number of pieces and those using an odd number of pieces among the following covers: \\
			
				\begin{itemize}
					\item In $A_\ell$, we consider disjoint monomer/dimer covers of intervals $[r(F_i), \widetilde{F}_i]$ with $|r(F_i)| \le \ell + 1$ that do \emph{not} use the minimal dimer $d_i$ and have $\frac{d}{2} - \ell$ missing elements. Each monomer is assigned to one of 2 colors and the dimers are left uncolored. \\
					
					\item In $B_\ell$, we consider disjoint monomer/dimer covers of intervals $[r(F_i), \widetilde{F}_i] \setminus d_i$ with $|r(F_i)| \le \ell - 1$ that have $\frac{d}{2} - \ell$ missing elements. Again, each monomer is assigned to one of 2 colors and the dimers are left uncolored. \\
				\end{itemize}  
			
			If $\ell = \frac{d}{2}$, we consider the constant term of the inverted Chebyshev expansion. This means multiplying $2^j$ by the constants $\widehat{T}_j(-1) = (-1)^j \frac{1}{2^{j - 1}}$ for $j \ge 1$ and $\widehat{T}_0(-1) = 1$. The identities follow from using induction and the defining recursion of this unitary orthogonal polynomial. \\

			\item Consider the unitary orthogonal polynomials $P_m(x)$ with $b_i > 0$ and $\lambda_j > 0$ for all $0 \le i \le N$ and $1 \le j \le N$. The image of nonnegative row vector where the $N^{\text{th}}$ component is equal to $2^N$ under the inverse map $P^{-1}$ discussed above gives an analogue of the implication $\gamma(\Delta) \ge 0 \Longrightarrow h_i(\Delta) \ge \binom{d}{i}$ from Proposition \ref{gamtohgvect}. The corresponding lower bound of the index $\ell$ output/$\ell^{\text{th}}$ coordinate would be $q_\ell \ge 2^N \mu_{N, \ell}$ and the analogous lower bound of the associated ``formal $h$-vector'' would be $h_k \ge 2^k \mu_{N, N - k}$. \\
		\end{enumerate}
	\end{prop}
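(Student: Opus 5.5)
Part 1 comes from Theorem \ref{hvectcmshell}. Since $\Delta$ is shellable, $h(\Delta)$ satisfies $h_0 = 1$ and $0 \le h_{i + 1} \le h_i^{\langle i \rangle}$ for all $i$. Any initial segment of an $M$-sequence is again an $M$-sequence, so the same inequalities hold for $(h_0, \ldots, h_{\frac{d}{2}})$. Applying Theorem \ref{hvectcmshell} again, in dimension $\frac{d}{2} - 1$, gives a shellable complex $\widetilde{\Delta}$ with this $h$-vector. I would also record the shelling order $\widetilde{F}_1, \ldots, \widetilde{F}_s$, the restriction faces $r(F_i)$, and the induced vertex ordering. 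These are what define the intervals $[r(F_i), \widetilde{F}_i]$ and the minimal dimers $d_i$ in Part 2. Shellable implies partitionable, and since $h_j$ counts the $i$ with $|r(F_i)| = j$, every interval has $|\widetilde{F}_i| = \frac{d}{2}$, as required.

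For Part 2, start from the normalized identity
\[ (2w)^{\frac{d}{2}} \gamma\left(\tfrac{1}{2w}\right) = \sum_{j=0}^{\frac{d}{2}} 2^j h_{\frac{d}{2}-j} \widehat{T}_j(w-1). \]
Comparing coefficients of $w^{\frac{d}{2}-\ell}$ gives
\[ 2^{\frac{d}{2}-\ell}\gamma_\ell = \sum_j 2^j h_{\frac{d}{2}-j}\,[x^{\frac{d}{2}-\ell}]P_j(x), \]
with $P_j(x) = \widehat{T}_j(x-1)$. By Viennot's Lemme 13, each coefficient $[x^r]P_j$ is a signed count of monomer/dimer covers of $[j]$ with $r$ missing elements. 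Monomers have weight $-b = -1$ and dimers $\{m, m+1\}$ have weight $-\lambda_m$, so $\lambda_m = \frac{1}{4}$ except $\lambda_1 = \frac{1}{2}$.

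Next, I would identify $[j]$ with the complement $\widetilde{F}_i \setminus r(F_i)$, ordered by the shelling's vertex order, for each interval with $|r(F_i)| = \frac{d}{2} - j$. This turns $h_{\frac{d}{2}-j}$ times the coefficient into a sum over intervals, and the minimal dimer $d_i$ corresponds to the dimer $\{1, 2\}$. The powers of $2$ are then redistributed. Each dimer carries weight $\frac{1}{4}$, which cancels the two units of $j$ it covers, and each of the remaining $2^{\#\text{monomers}}$ factors becomes a choice of 2 colors per monomer.

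The exceptional $\lambda_1 = \frac{1}{2}$ on $d_i$ leaves a factor of $2$ instead of $1$. I would therefore split the covers by whether they use $d_i$:

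\begin{itemize}
\item Covers avoiding $d_i$ give $A_\ell$.
\item Covers using $d_i$ reduce to covers of $[r(F_i), \widetilde{F}_i] \setminus d_i$. Matching the sign and the extra factor gives the term $-2B_\ell$, after dividing through by $2^{\frac{d}{2}-\ell}$.
\end{itemize}

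The size ranges $|r(F_i)| \le \ell + 1$ and $|r(F_i)| \le \ell - 1$ come from needing at least $\frac{d}{2} - \ell$ missing elements among the $j$ or $j - 2$ available positions. The case $\ell = \frac{d}{2}$ is just evaluation at $w = 0$, using $\widehat{T}_j(-1) = (-1)^j 2^{1-j}$, which can be checked by induction on the recursion.

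\textbf{Main obstacle.} This bookkeeping of the powers of $2$ and signs against the exceptional $\lambda_1$ is where I expect the difficulty. I would verify it first on small $j$ before committing to the exact form of $A_\ell - 2B_\ell$.

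For Part 3, use Viennot's Théorème 1: $P^{-1} = (\mu_{N,k})$ with $\mu_{N,k}$ a weighted sum of Motzkin paths, all of whose weights are positive because $b_i, \lambda_j > 0$. Applying $P^{-1}$ to a nonnegative row vector, every output coordinate is a nonnegative combination of the inputs. Dropping all inputs except the $N$-th component $2^N$ gives $q_\ell \ge 2^N \mu_{N,\ell}$. Undoing the $2^k$ normalization and the index reversal $k \leftrightarrow N - k$ then gives $h_k \ge 2^k \mu_{N, N-k}$, in analogy with $h_i \ge \binom{d}{i}$ from Proposition \ref{gamtohgvect}.
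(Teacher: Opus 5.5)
Your proposal is correct. For Parts 2 and 3 it is essentially the paper's argument:
\begin{itemize}
\item Viennot's paving expansion of $\widehat{T}_j(x-1)$ with $b_m = 1$, $\lambda_1 = \frac12$ and $\lambda_m = \frac14$ for $m \ge 2$;
\item the identity $2^j 4^{-\#\mathrm{dimers}} = 2^{\#\mathrm{monomers}}\, 2^{\frac d2 - \ell}$, which turns the weights into monomer colourings;
\item the split according to whether the first dimer is used, which produces $-2B_\ell$;
\item positivity of the Motzkin-path entries $\mu_{N,k}$ of $P^{-1}$.
\end{itemize}

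The genuine difference is in Part 1. The paper keeps the shelling of $\Delta$. It truncates each interval $[r(F_i), F_i]$ with $|r(F_i)| \le \frac d2$ to $[r(F_i), \widetilde F_i]$ by adjoining the smallest vertices of $F_i \setminus r(F_i)$ in increasing order. This is what makes the $r(F_i)$ and the minimal dimers $d_i$ data of $\Delta$ itself, as the statement stipulates. You instead observe that an initial segment of an $M$-sequence is an $M$-sequence, and re-apply Theorem \ref{hvectcmshell} in dimension $\frac d2 - 1$.

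Your route is shorter and actually more reliable. The union of the paper's truncated intervals need not be closed under taking subsets. Take the boundary of the $4$-simplex on $\{1,\dots,5\}$ with the lexicographic shelling. The retained intervals are $[\emptyset, \{1,2\}]$, $[\{5\}, \{1,5\}]$ and $[\{4,5\}, \{4,5\}]$, which contain $\{4,5\}$ but not $\{4\}$. Stanley's theorem, by contrast, produces an honest shellable complex.

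What you give up is the direct link to $\Delta$. Your restriction faces live in an abstract $\widetilde\Delta$, so the clause ``the $r(F_i)$ from $\Delta$'' no longer applies literally. This costs nothing for Part 2: your count only uses that there are $h_{\frac d2 - j}$ Boolean intervals of rank $j$, so it runs verbatim on either family of intervals.

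One small correction to your Part 2 bookkeeping concerns the size ranges.
\begin{itemize}
\item The count actually needs $|r(F_i)| \le \ell$ for $A_\ell$ and $|r(F_i)| \le \ell - 2$ for $B_\ell$.
\item The stated bounds $\ell + 1$ and $\ell - 1$ do not come from the count. They only add intervals too short to have $\frac d2 - \ell$ missing elements, and those contribute zero.
\end{itemize}
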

	
	\begin{proof}
		\begin{enumerate}
			\item Start with a fixed ordering of the vertices of $\Delta$ and an ordering $F_1 < \cdots < F_s$ of the facets $F_i \in \Delta$ yielding a shelling of $\Delta$. Let $\Delta = [r(F_1), F_1] \cup \cdots \cup [r(F_s), F_s]$ be the induced partitioning of $\Delta$ (p. 79 -- 80 of \cite{St}). In each interval $[r(F_i), F_i]$, take the interval $[r(F_i), \widetilde{F}_i]$, where $\widetilde{F}_i$ is formed by filling vertices of this interval in increasing order until we form a face with $\frac{d}{2}$ vertices. Since we already started with a disjoint collection of vertices, the intervals $[r(F_i), \widetilde{F}_i]$ form a partitioning of the simplicial complex $\widetilde{\Delta}$ whose facets are these $\left( \frac{d}{2} - 1 \right)$-dimensional faces $\widetilde{F}_i$. \\
			
			\item Recall that \[ P  : ( h_{\frac{d}{2}}, 2 h_{\frac{d}{2} - 1}, \ldots, 2^{\frac{d}{2} - 1} h_1, 2^{\frac{d}{2}} h_0 ) \mapsto ( \gamma_{\frac{d}{2}}, 2 \gamma_{\frac{d}{2} - 1}, \ldots, 2^{\frac{d}{2} - 1} \gamma_1, 2^{ \frac{d}{2} } \gamma_0 ) \] 
			
			via the $P_j(w) = \widehat{T}_j(w - 1)$ in the inverted Chebyshev expansion \[ (2w)^{ \frac{d}{2} } \gamma \left( \frac{1}{2w} \right) = \sum_{j = 0}^{\frac{d}{2}} 2^j h_{\frac{d}{2} - j} \widehat{T}_j(w - 1). \] In each $P_j(w)$, we are considering a disjoint monomer/dimer cover of the interval $[0, j - 1]$. Since each weight $h_{\frac{d}{2} - j}$ is the number of partitioning intervals $[r(F_i), \widetilde{F}_i]$ with $|r(F_i)| = \frac{d}{2} - j$, they amount to disjoint monomer/dimer covers of such intervals with elements of $[0, j - 1]$ corresponding to faces of $\widetilde{\Delta}$ lying in this interval. We will take the elements of $[0, j - 1]$ listed in increasing order to correspond to the faces of $\widetilde{\Delta}$ in the interval $[r(F_i), \widetilde{F}_i]$ with vertices added in increasing order with respect to the vertex ordering in the proof of Part 1. For example, the element $0$ corresponds to the face $r(F_i) \in \widetilde{\Delta}$. The statement about the sign follows from the definition of the signed weights from earlier. \\

			The degree $\frac{d}{2} - \ell$ term $2^{\frac{d}{2} - \ell} \gamma_\ell$ comes from disjoint monomer/dimer covers $\beta$ of such size $j$ intervals (called pavings on I-9 and I-10 of \cite{V2}) with $\frac{d}{2} - \ell$ missing elements. In the sum, this means that the relevant indices are those where $\frac{d}{2} - \ell \le \frac{d}{2} - \left( \frac{d}{2} - j \right) + 1$. This is equivalent to $|r(F_i)| = \frac{d}{2} - j \le \ell + 1$. In the context of these covers, the only cases where $\lambda_1 = \frac{1}{2}$ is used comes from covers that use the dimer $\{ 0, 1 \}$. For each interval $[r(F_i), \widetilde{F}_i]$, this dimer corresponds to the pair of faces $r(F_i)$ and the face formed by adding the smallest vertex of $\widetilde{F}_i \setminus r(F_i)$ to $r(F_i)$ with respect to the ordering of the vertices of $\Delta$ from the proof of Part 1. We define this pair of faces be the ``minimal dimer'' $d_i$ associated to the interval $[r(F_i), \widetilde{F}_i]$. \\
			
			We will split each disjoint monomer/dimer cover of the intervals $[r(F_i), \widetilde{F}_i]$ into those that use the minimal dimer $d_i$ and those that do not. Consider the monomer/dimer covers that do \emph{not} use the minimal dimer $d_i$. We will add terms of degree $\frac{d}{2} - \ell$ from each index $j$ of the sum \[ (2w)^{ \frac{d}{2} } \gamma \left( \frac{1}{2w} \right) = \sum_{j = 0}^{\frac{d}{2}} 2^j h_{\frac{d}{2} - j} \widehat{T}_j(w - 1). \]

			In the notation of the sum above, we have $|r(F_i)| = \frac{d}{2} - j$. The condition $\frac{d}{2} - \ell \le \frac{d}{2} - \left( \frac{d}{2} - j \right) + 1$ is equivalent to $\frac{d}{2} - j \le \ell + 1$ and we consider the intervals $[r(F_i), \widetilde{F}_i]$ such that $|r(F_i)| \le \ell + 1$. Since every dimer $\{ k, k + 1 \}$ of $[0, j - 1]$ corresponding to a dimer of such a monomer/dimer cover has $k \ge 1$, we have $\lambda_{k + 1} = \frac{1}{4}$. Let $m(\beta)$ be the number of monomers and $d(\beta)$ be the number of dimers. Since $\lambda_{k + 1} = \frac{1}{4}$ and $b_k = 1$ for all $k \ge 1$, we will add terms of the form $(-1)^{m(\beta) + d(\beta)} 2^{j - 2d(\beta)}$. By definition, we have that $m(\beta) + 2 d(\beta) = j - \left( \frac{d}{2} - \ell \right)$. This implies that \[ j - 2 d(\beta) = m(\beta) + \left( \frac{d}{2} - \ell \right). \]

			After factoring out $2^{\frac{d}{2} - \ell}$, the terms from disjoint monomer/dimer covers of $[r(F_i), \widetilde{F}_i]$ that do \emph{not} use the minimal dimer $d_i$ give a weighted alternating sum where the weights of the monomers are equal to 2 and the weights of the dimers are equal to 1. We can also take this to mean that the monomers are each assigned one of 2 colors (e.g. red or blue) while the dimers are left uncolored. Recall that the sign comes from the number of pieces used. Thus, the sum of such terms is the difference between the number of such monomer/dimer covers with colored monomers using an even number of pieces and those using an odd number of pieces. \\

			Next, we consider the disjoint monomer/dimer covers of $[r(F_i), \widetilde{F}_i]$ that \emph{do} use the minimal dimer $d_i$. Recall that the degree comes from the number of \emph{missing} elements. In the induced disjoint monomer/dimer cover of $[r(F_i), \widetilde{F}_i] \setminus d_i$ formed by removing $d_i$ from the cover, the number of missing elements still stays the same. On the other hand, we would use $j - 2$ instead of $j$ in the steps above since the total size of the interval to cover is reduced by 2. This means that $\frac{d}{2} - 2 - |r(F_i)| + 1 \ge \frac{d}{2} - \ell$, which is equivalent to $|r(F_i)| \le \ell - 1$. Since the weight of a particular disjoint monomer/dimer cover is the product of the weights of the pieces used and $\lambda_{k + 1} = \frac{1}{2}$ only if $k = 0$, the use of $j - 2$ in place of $j$ for these covers also implies that the sum of the terms involved is $-\frac{1}{2} \cdot 2^2 \cdot 2^{ \frac{d}{2} - \ell } = -2 \cdot 2^{ \frac{d}{2} - \ell }$ multiplied by an alternating sum analogous to the previous case for these induced disjoint monomer/dimer covers of $[r(F_i), \widetilde{F}_i] \setminus d_i$. More specifically, this alternating sum is the difference between the number of induced monomer/dimer covers with red/blue colored monomers that use an even number of pieces and those using an odd number of pieces.   \\

			\item In this part, we consider the inverse map \[ P^{-1} : ( \gamma_{\frac{d}{2}}, 2 \gamma_{\frac{d}{2} - 1}, \ldots, 2^{\frac{d}{2} - 1} \gamma_1, 2^{ \frac{d}{2} } \gamma_0 ) \mapsto ( h_{\frac{d}{2}}, 2 h_{\frac{d}{2} - 1}, \ldots, 2^{\frac{d}{2} - 1} h_1, 2^{\frac{d}{2}} h_0 ) \] where the $P_j(w)$ from the inverted Chebyshev expansion are replaced by arbitrary unitary orthogonal polynomials with $b_k > 0$ and $\lambda_k > 0$ for all $k \ge 1$. The assumption $z_N = 2^N$ is analogous to $\gamma_0 = 1$ and we are replacing $\frac{d}{2}$ with $N$. \\
			
			The implication $\gamma(\Delta) \ge 0 \Longrightarrow h_i(\Delta) \ge \binom{d}{i}$ from Proposition \ref{gamtohgvect} comes from the expansion \[ h_i = a_{i, 0} \gamma_0 + a_{i, 1} \gamma_1 + \ldots + a_{i, i - 1} \gamma_{i - 1} + \gamma_i  \] since $a_{i, j} = \binom{d - 2j}{i - j}$, $\gamma_0 = 1$, and $\gamma_j \ge 0$ for all $1 \le j \le \frac{d}{2}$. \\
			
			The earlier discussion implies that the map $z \mapsto z \cdot P^{-1}$ sends a row vector $z = (z_0, \ldots, z_N)$ to the vector $q = (q_0, \ldots, q_N)$ with 
			\begin{align*}
				q_\ell &= z_\ell \mu_{\ell, \ell} + z_{\ell + 1} \mu_{\ell + 1, \ell} + \ldots + z_N \mu_{N, \ell} \\
				&= z_\ell + z_{\ell + 1} \mu_{\ell + 1, \ell} + \ldots + z_N \mu_{N, \ell}
			\end{align*}
			
			since $\mu_{\ell, \ell} = 1$. Since we assumed that $b_i > 0$ and $\lambda_j > 0$ for all $0 \le i \le N$ and $1 \le j \le N$, we have that $\mu_{r, s} > 0$ for all $r \ge s$. Since $z_N$ corresponds to $2^N \gamma_0$ here, the assumption that $z_j \ge 0$ for all $0 \le j \le N$ implies that $q_\ell \ge z_N \mu_{N, \ell} = 2^N \mu_{N, \ell}$. \\
			
			Comparing this to the $h$-vector above, we have that $q_\ell = 2^\ell h_{ N - \ell }$ and $h_k = 2^{ - \left( N - k \right) } q_{ N - k }$. The induced lower bound would then be 
			
			\begin{align*}
				h_k &= 2^{ - \left( N - k \right) } q_{ N - k } \\
				&\ge 2^{ - \left( N - k \right) } \cdot 2^N \mu_{N, N - k} \\
				&= 2^k \mu_{N, N - k}. 
			\end{align*}
			
		\end{enumerate}

	\end{proof}

	\begin{rem} \textbf{(Dimer covers and repeated edge subdivisions) \\} 
		\label{dimtchebsubdiv}
		
		Combining the monomer/dimer cover interpretation of orthogonal polynomials with effects of repeated edge subdivisions on modified $f$-polynomials (Proposition 3.3 on p. 579 of \cite{Het}) suggests connections between dimer covers and such simplicial complex operations. Note that faces of subdivisions can be tracked using chains of faces of the original simplicial complex (e.g. barycentric subdivisions). \\
		
		More specifically, the edge subdivisions are Tchebyshev subdivisions $T(\Delta)$ (Definition 2 on p. 921 of \cite{HetB}, Definition 2.1 on p. 574 of \cite{Het}), which are successive (stellar) subdivisions of all the edges of the starting simplicial complex $\Delta$ in some order. While the underlying simplicial complex may depend on the order of the edges, the resulting change in $f$-vectors does \emph{not} depend on the choice of order on the edges (Theorem 3.1 on p. 577 of \cite{Het}). The formula involved implies that $T(F_\Delta(x)) = F_{T(\Delta)}(x)$ (Proposition 3.3 on p. 578 -- 579 of \cite{Het}), where $F_S(x) \coloneq f_S \left( \frac{x - 1}{2} \right)$ and $T : \mathbb{R}[x] \longrightarrow \mathbb{R}[x]$ sends $x^m \mapsto T_m(x)$ (Definition 1.1 on p. 572 of \cite{Het}). \\
		
		Substituting $S = T(\Delta)$, we have \[ [x^\ell] F_{T(\Delta)}(x) = (-1)^\ell \sum_{k = \ell}^d (-1)^k 2^{-k} \binom{k}{\ell} f_{k - 1}(T(\Delta)). \] We can think about this as a weighted alternating sum of pairs of faces $(F, G)$ of $\Delta$ with $F \subset G$ and $|G| = k$. \\
		
		The monomer/dimer covers will come from the $T(F_\Delta(x))$ side. Since $b_k = 0$ for all $k \ge 0$ in the unitary orthogonal polynomials $\widehat{T}_m(x)$ corresponding to Chebyshev polynomials of the first kind $T_m(x)$, we will only consider dimer covers since monomers have weight 0 in the monomer/dimer cover expansion (I-9 and I-10 of \cite{V2}). For the degree $\ell$ coefficient of $\widehat{T}_m(x)$, the number of missing elements $\ell$ in the associated interval $[0, m - 1]$ will have the same parity as $m$ since we are only using dimers. \\ 
		
		Let \[ \mathcal{D}_{m, \ell} \coloneq (-1)^{ \frac{m - \ell}{2} } (A_{m, \ell} + 2 B_{m, \ell}), \] where \[ A_{m, \ell} \coloneq \# \{ \text{dimer covers of $[0, m - 1]$ with $\ell$ missing elements that do \emph{not} use $\{ 0, 1 \}$}  \} \] and \[ B_{m, \ell} \coloneq  \# \{ \text{dimer covers of $[0, m - 1] \setminus \{ 0, 1 \} $ with $\ell$ missing elements}  \}. \]
		
		If $m \ge 2$, the reasoning in the proof of Part 2 of Proposition \ref{gamorthpov} implies that $2^m [x^\ell] \widehat{T}_m(x) = 2^\ell \mathcal{D}_{m, \ell}$. In the $m = 1$ case, there are no ``dimer covers'' apart from the empty one (not using any dimers) missing the one element $\{ 0 \}$ and $B_{m, \ell} = 0$ (i.e. only have a linear term with $\widehat{T}_1(x) = x$). Finally, we have $\widehat{T}_m(0) = 0$ if $m$ is odd and $\widetilde{T}_m(0) = (-1)^{ \frac{m}{2} } 2^{-(m - 1)}$ if $m$ is even and $m \ge 2$. When $\ell = 0$, we have $B_{m, 0} = 0$. Suppose that $\ell \ge 1$. Then, we have
		
		\begin{align*}
			[x^\ell] T(F_\Delta(x)) &= \sum_{m = \ell}^d ([x^m] F_\Delta(x)) ([x^\ell] T_m(x)) \\
			&= \sum_{m = \ell}^d (-1)^m \left( \sum_{k = m}^d (-1)^k 2^{-k} \binom{k}{m} f_{k - 1}(\Delta) \right) \cdot (2^{m - 1} [x^\ell] \widehat{T}_m(x)) \\
			&= \sum_{ \substack{ \ell \le m \le d \\ m \equiv \ell \pmod{2} } }  (-1)^m \left( \sum_{k = m}^d (-1)^k 2^{-k} \binom{k}{m} f_{k - 1}(\Delta) \right) \cdot (2^{\ell - 1} \mathcal{D}_{m, \ell}) \\
			&= (-1)^\ell  \sum_{ \substack{ \ell \le m \le d \\ m \equiv \ell \pmod{2} } } (-1)^m \cdot (-1)^m \left( \sum_{k = m}^d (-1)^k 2^{-k} \binom{k}{m} f_{k - 1}(\Delta) \right) \cdot (2^{\ell - 1} \mathcal{D}_{m, \ell}) \\
			&= (-1)^\ell \sum_{ \substack{ \ell \le m \le d \\ m \equiv \ell \pmod{2} } } \left( \sum_{k = m}^d (-1)^k 2^{-k} \binom{k}{m} f_{k - 1}(\Delta) \right) \cdot (2^{\ell - 1} \mathcal{D}_{m, \ell}) \\
			&= (-1)^\ell \sum_{k = \ell}^d (-1)^k 2^{-k} f_{k - 1}(\Delta) \left( \sum_{ \substack{ \ell \le m \le k \\ m \equiv \ell \pmod{2} } } \binom{k}{m} (2^{\ell - 1} \mathcal{D}_{m, \ell}) \right) 
		\end{align*}
		
		since $(-1)^\ell \cdot (-1)^m = 1$ if $m \equiv \ell \pmod{2}$. \\
		
		We can compare this to the terms $\binom{k}{\ell} f_{k - 1}(T(\Delta))$ in \[ [x^\ell] F_{T(\Delta)}(x) = (-1)^\ell \sum_{k = \ell}^d (-1)^k 2^{-k} \binom{k}{\ell} f_{k - 1}(T(\Delta)), \]

		If $\ell = 0$, we have \[ [x^0] F_{T(\Delta)}(x) = \sum_{k = \ell}^d (-1)^k 2^{-k} f_{k - 1}(T(\Delta)) \] and 
		
		\begin{align*}
			[x^0] T(F_\Delta(x)) &= \sum_{m = 0}^d ([x^m] F_\Delta(x)) ([x^0] T_m(x)) \\
			&= \sum_{m = 0}^d (-1)^m  \left( \sum_{k = m}^d (-1)^k 2^{-k} \binom{k}{m} f_{k - 1}(\Delta) \right) ([x^0] T_m(x)) \\
			&= \sum_{ \substack{ 0 \le m \le d \\ m \equiv 0 \pmod{2} } } (-1)^m  \left( \sum_{k = m}^d (-1)^k 2^{-k} \binom{k}{m} f_{k - 1}(\Delta) \right) (-1)^{ \frac{m}{2} } \\
			&=  \sum_{ \substack{ 0 \le m \le d \\ m \equiv 0 \pmod{2} } }   \left( \sum_{k = m}^d (-1)^k 2^{-k} \binom{k}{m} f_{k - 1}(\Delta) \right) (-1)^{ \frac{m}{2} } \\
			&= \sum_{k = 0}^d (-1)^k 2^{-k} f_{k - 1}(\Delta) \sum_{ \substack{ 0 \le m \le k \\ m \equiv 0 \pmod{2} } }  \binom{k}{m} (-1)^{ \frac{m}{2} } \\
			&= \sum_{k = 0}^d (-1)^k 2^{-k} f_{k - 1}(\Delta) \sum_{ \substack{ 0 \le m \le k \\ m \equiv 0 \pmod{2} } }  \binom{k}{m}  \mathcal{D}_{m, 0}.
		\end{align*}
		
	\end{rem}
	
	\color{black}

	Returning to the inverse map \[ P^{-1} : ( \gamma_{\frac{d}{2}}, 2 \gamma_{\frac{d}{2} - 1}, \ldots, 2^{\frac{d}{2} - 1} \gamma_1, \gamma_0 ) \mapsto ( h_{\frac{d}{2}}, 2 h_{\frac{d}{2} - 1}, \ldots, 2^{\frac{d}{2} - 1} h_1, 2^{\frac{d}{2}} h_0 ), \]
	
	from Proposition \ref{gamorthpov} and the discussion preceding it, we can find that nonnegativity of the entries of the analogous input vector $z$ in $z \mapsto P^{-1}$ implies that the output vectors have increasing entries. We are considering the truncation to $N \times N$ matrices. For example, $N = \frac{d}{2}$ in the setting of ``normalized gamma vectors''. The main recursion we will use is the following: \\
	
	\begin{lem} \label{pathwtrec}
		Suppose that $N \ge k$. The entries $\mu_{N, k}$ of $P^{-1}$ satisfy the recursion \[ \mu_{N, k} = \mu_{N - 1, k - 1} + b_k \mu_{N - 1, k} + \lambda_{k + 1} \mu_{N - 1, k + 1}, \] where we have $\mu_{r, s} = 0$ if $r < s$ or some index lies outside of $[0, N]$. If $b_i > 0$ and $\lambda_j > 0$ for all $0 \le i \le N$ and $1 \le j \le N$, this implies that $\mu_{N, k} \ge \mu_{N - 1, k}$ for all $N \ge 1$. Similarly, we have $\frac{\mu_{N, k}}{\mu_{N - 1, k}} \ge b_k$ if $\mu_{N - 1, k} \ne 0$ and $\frac{\mu_{N, k}}{\mu_{N - 1, k + 1}} \ge \lambda_{k + 1}$ if $\mu_{N - 1, k + 1} \ne 0$ in this setting. \\
	\end{lem}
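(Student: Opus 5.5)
The recursion comes from decomposing Motzkin paths by their last step, using the path interpretation $\mu_{N,k} = \sum_{\omega \in M_{N,k}} v(\omega)$ recalled before Proposition \ref{gamorthpov}. A Motzkin path of length $N$ from level $0$ to level $k$ has a final step that is one of three kinds:
\begin{itemize}
\item a northeast step from level $k-1$, with weight $1$;
\item an east step at level $k$, with weight $b_k$;
\item a southeast step from level $k+1$, with weight $\lambda_{k+1}$.
\end{itemize}
Deleting that step gives a bijection between paths of the given type and paths of length $N-1$ ending at level $k-1$, $k$, or $k+1$ respectively. The weight of a path is the product of its step weights, so it factors as the weight of the shortened path times the weight of the removed step. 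Summing over the three cases gives
\[ \mu_{N,k} = \mu_{N-1,k-1} + b_k \mu_{N-1,k} + \lambda_{k+1}\mu_{N-1,k+1}. \]
The boundary conventions come for free: if $k = 0$ there is no northeast case, a path of length $N-1$ cannot end above level $N-1$, and levels outside $[0,N]$ are impossible. All of these cases correspond to the zero conventions $\mu_{r,s}=0$. As a consistency check, the weights match the recursion $P_{m+1}=(x-b_m)P_m-\lambda_mP_{m-1}$ together with Viennot's Théorème 1; we can simply cite that result for the combinatorial formula.

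Under the positivity hypotheses, every path weight is a product of positive numbers, so $\mu_{r,s}\ge 0$ for all $r,s$, and $\mu_{r,s}>0$ whenever $r\ge s$. Dropping nonnegative terms from the recursion then gives the ratio bounds directly. From $\mu_{N,k}\ge b_k\mu_{N-1,k}$ we get $\mu_{N,k}/\mu_{N-1,k}\ge b_k$ when $\mu_{N-1,k}\neq 0$. From $\mu_{N,k}\ge \lambda_{k+1}\mu_{N-1,k+1}$ we get the second ratio bound.

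The monotonicity claim $\mu_{N,k}\ge\mu_{N-1,k}$ is the one place needing care, and it is the main obstacle. It does not follow from the ratio bound alone unless $b_k\ge 1$, so a general argument has to use the other two terms.

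\textbf{Case $b_k \ge 1$.} This includes the Chebyshev setting, where $b_m=1$. Here the bound is immediate from $\mu_{N,k}\ge b_k\mu_{N-1,k}$.

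\textbf{General case.} I would first try an injection of paths. Map a path of length $N-1$ ending at level $k$ to a path of length $N$ by inserting a step. Inserting an east step multiplies the weight by some $b_j$. Inserting a northeast/southeast pair would lengthen the path by two, so that does not work. The plan is instead to use the recursion inductively together with the other two terms. This is where I expect the argument to need an extra assumption, namely that the weights are normalized so that $b_j\ge 1$, as in the paper's setting. If the general claim fails for small $b_k$, I would state the monotonicity under $b_k\ge1$ and note that this covers all the applications in this section.
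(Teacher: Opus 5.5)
Your derivation of the recursion is the same as the paper's. The paper also partitions the Motzkin paths of length $N$ ending at level $k$ by the type of the last step (NE, E, SE), and notes that $\mu_{r,s}=0$ for $r<s$ because a path cannot rise more than one level per step. Both arguments implicitly need $N\ge 1$: at $N=k=0$ the right-hand side vanishes by the stated conventions, while $\mu_{0,0}=1$. Your deduction of the two ratio bounds by discarding nonnegative terms is correct. The paper gives no argument for these consequences at all; its proof stops after the recursion.

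On the monotonicity claim, you should stop hedging. It is \emph{false} under the stated positivity hypotheses alone, so no argument, path injection or otherwise, can establish it. Take $k=0$ and $N=1$. The only Motzkin path of length $1$ from level $0$ to level $0$ is a single E step, so $\mu_{1,0}=b_0$, while $\mu_{0,0}=1$. Any choice $0<b_0<1$ therefore gives $\mu_{1,0}<\mu_{0,0}$. More generally, $\mu_{k+1,k}=b_0+\cdots+b_k$, which is smaller than $\mu_{k,k}=1$ whenever $b_0+\cdots+b_k<1$.

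The correct statement is the one you fall back on. If $b_k\ge 1$, then $\mu_{N,k}\ge b_k\mu_{N-1,k}\ge\mu_{N-1,k}$. This covers the translated Chebyshev case $b_m=1$ that the paper actually uses, and none of the later results rely on the general version. So your treatment is more careful than the original. Just present the counterexample and the corrected hypothesis as definite statements rather than as a possibility.
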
 
	
	\begin{proof}
		The recursion follows from partionining the Motzkin paths of length $N$ starting at 0 and ending at level $k$ according to the type of the last step taken. The first term is the case where the last step is the NE step. Similarly, the other two terms are from the cases where the last steps taken are E and SE respectively. Since the level reached after starting at level 0 and taking $r$ steps of a Motzkin path cannot be larger than $r$, we have $\mu_{r, s} = 0$ if $r < s$. \\
	\end{proof}
	
	This recursion implies some growth properties of these parameters that will be used in a later discussion of approximations. \\
	
	\begin{cor} \label{wtgrowth}
		Fix an initial collection of parameters $b_m$ and $\lambda_m$ for $0 \le m \le N$.
		\begin{enumerate}
			\item $\mu_{N, N - 1} = b_0 + b_1 + \ldots + b_{N - 1}$. This implies that $\mu_{N, N - 1}$ grows linearly with respect to $N$. 
			
			\item Given $N \ge k$, the Motzkin path weight sum $\mu_{N, k}$ is a polynomial of degree $N - k$ in the weights $b_i$ and $\lambda_j$ of the $E$ and $SE$ steps with nonnegative coefficients.  \\
		\end{enumerate}
	\end{cor}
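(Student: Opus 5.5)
Both parts follow from the recursion of Lemma \ref{pathwtrec}, or equivalently from the combinatorial description of $\mu_{N,k}$ as a weighted sum over Motzkin paths of length $N$ from level $0$ to level $k$. The plan is to handle Part 1 by a direct enumeration of the admissible paths, and Part 2 by induction on $N$ using the recursion.

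For Part 1, a Motzkin path of length $N$ ending at level $N-1$ has a net rise of $N-1$ in $N$ steps. If it contains $u$ NE steps, $e$ E steps and $s$ SE steps, then $u+e+s=N$ and $u-s=N-1$, which forces $e+2s = 1$. Hence $s=0$, $e=1$ and $u=N-1$. The path therefore consists of $N-1$ NE steps and a single E step. If the E step occurs at position $i+1$, the path is at level $i$ when it is taken, so its weight is $b_i$. All NE steps have weight $1$. Summing over $i=0,\ldots,N-1$ gives $\mu_{N,N-1} = b_0+\cdots+b_{N-1}$.

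As a cross-check, one can also induct with Lemma \ref{pathwtrec}. Since $\mu_{N-1,N}=0$ and $\mu_{N-1,N-2}+ b_{N-1}\mu_{N-1,N-1}$ with $\mu_{N-1,N-1}=1$, this gives the same formula. The ``linear growth in $N$'' claim is then read off: with fixed bounded positive parameters, the sum has $N$ terms, so it is bounded above and below by constant multiples of $N$ (for instance when the $b_i$ are bounded between two positive constants, as in the Chebyshev case where $b_i=1$).

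For Part 2, I would show by the same step-counting that each path contributing to $\mu_{N,k}$ satisfies $u-s=k$ and $u+e+s=N$. The number of weighted (non-NE) steps is therefore $e+s = N-u = N-k-s$. The weight of such a path is a monomial in the $b_i$ and $\lambda_j$ of degree $e+s$.

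Here is the main subtlety. Paths with $s>0$ have degree $N-k-s < N-k$, so $\mu_{N,k}$ is not homogeneous. Its degree is nonetheless exactly $N-k$, because of the path with $s=0$, $e=N-k$ and $u=k$. For example, this path takes all $k$ NE steps first and then $N-k$ E steps at level $k$, contributing $b_k^{N-k}$. The top-degree part is then a sum of monomials in the $b_i$ with positive integer coefficients, and it cannot cancel because all coefficients are nonnegative. Nonnegativity of the coefficients holds because $\mu_{N,k}$ is literally a sum of products of edge weights with coefficient $1$ per path.

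Alternatively, induction on $N$ via Lemma \ref{pathwtrec} works directly. The three terms $\mu_{N-1,k-1}$, $b_k\mu_{N-1,k}$ and $\lambda_{k+1}\mu_{N-1,k+1}$ have degrees at most $(N-k)$, $1+(N-1-k)$ and $1+(N-k-2)$ respectively, all with nonnegative coefficients. The degree is attained by the middle term by the inductive hypothesis.

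I expect no real obstacle here. The only care needed is in stating ``degree'' correctly, namely as a maximal degree rather than homogeneity, and in handling the boundary conventions $\mu_{r,s}=0$ for $r<s$.
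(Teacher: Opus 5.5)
Your proposal is correct, but your main argument differs from the paper's. The paper proves both parts only by induction on $N$ from the last-step recursion of Lemma \ref{pathwtrec}:
\begin{itemize}
\item Part 1 uses the base case $\mu_{2,1}=b_0+b_1$ and the step $\mu_{N,N-1}=\mu_{N-1,N-2}+b_{N-1}$, which is exactly your cross-check.
\item Part 2 uses base cases $N\le 2$ and separate treatment of $k\ge 1$, where the degree comes from $\mu_{N-1,k-1}$, and $k=0$, where it comes from $b_0\mu_{N-1,0}$.
\end{itemize}

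Your primary route instead solves $u+e+s=N$, $u-s=k$ directly. This gives more information:
\begin{itemize}
\item In Part 1 it identifies the $N$ contributing paths explicitly.
\item In Part 2 it shows that each path contributes a monomial of degree $N-k-s$. So the top-degree part of $\mu_{N,k}$ is exactly the sum over SE-free paths, and it contains, e.g., $b_k^{N-k}$.
\item Non-cancellation is visibly automatic, and no base cases or $k=0$ versus $k\ge1$ split are needed.
\item Since $e+2s=N-k$, it also shows that $\mu_{N,k}$ is homogeneous of degree $N-k$ if each $\lambda_j$ is given degree $2$. This matches the monomer/dimer picture used elsewhere in the paper.
\end{itemize}
The paper's induction is shorter once Lemma \ref{pathwtrec} is available, but it leaves the no-cancellation step (nonnegativity of coefficients) implicit.

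Two small points:
\begin{itemize}
\item In your inductive alternative, the middle term $b_k\mu_{N-1,k}$ vanishes when $k=N$, so there the degree (zero) comes from $\mu_{N-1,N-1}=1$. The paper's use of the first term for $k\ge1$ avoids this edge case.
\item Your caveat on ``linear growth'' is warranted. The paper asserts it with no boundedness assumption on the $b_i$, and one is needed: for example, $b_i=2^i$ gives exponential growth.
\end{itemize}
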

	
	\begin{rem}
		Differences of indices also play an important role in sizes of matrix entries $a_{r, s}$ and $b_{r, s}$ from Proposition \ref{gamtohgvect} (see Lemma \ref{gamtgrowth}). \\
	\end{rem}
	
	\begin{proof}
		\begin{enumerate}
			\item We use induction on $N$. If $N = 2$, we have $\mu_{2, 1} = b_0 + b_1$ since we don't have enough steps to ``cancel out'' a decrease in height. Suppose that $\mu_{N - 1, N - 2} = b_0 + b_1 + \ldots + b_{N - 2}$. Then, Lemma \ref{pathwtrec} implies that
			\begin{align*}
				\mu_{N, N - 1} &= \mu_{N - 1, N - 2} + b_{N - 1} \mu_{N - 1, N - 1} + \lambda_N \mu_{N - 1, N} \\
				&= \mu_{N - 1, N - 2} + b_{N - 1} \\
				&= b_0 + b_1 + \ldots + b_{N - 2} + b_{N - 1}
			\end{align*}
			
			since $\mu_{N - 1, N - 1} = 1$ and $\mu_{N - 1, N} = 0$. \\
			
			\item The only term in the $N = 0$ case is $\mu_{0, 0} = 1$. For the $N = 1$ case, we have $\mu_{1, 0} = b_0$ and $\mu_{1, 1} = 1$. If $N = 2$, we have the following:
			
			\begin{itemize}
				\item $\mu_{2, 0} = b_0^2 + \lambda_1$
				
				\item $\mu_{2, 1} = b_0 + b_1$
				
				\item $\mu_{2, 2} = 1$
			\end{itemize}
			
			We use induction on $N$ to show that the statement holds for $N \ge 3$. Suppose that it holds when the first index $N$ replaced by $N - 1$. By Lemma \ref{wtgrowth}, we have \[ \mu_{N, k} = \mu_{N - 1, k - 1} + b_k \mu_{N - 1, k} + \lambda_{k + 1} \mu_{N - 1, k + 1}. \] The induction assumption implies that $\mu_{N - 1, k - 1}$ is a polynomial in the $b_i$ and $\lambda_j$ of degree $N - k$, $\mu_{N - 1, k}$ is one of degree $N - k - 1$, and $\mu_{N - 1, k + 1}$ is one of degree $N - k - 2$ if they are nonzero. If $k \ge 1$, then $\mu_{N, k} \ne 0$ implies that $\mu_{N - 1, k - 1} \ne 0$ since $(N - 1) - (k - 1) = N - k \ge 0$ and we assumed $N \ge 3$.  Since the coefficient of $\mu_{N - 1, k - 1}$ is nonzero, this implies that $\mu_{N, k}$ is a polynomial in the $b_i$ and $\lambda_j$ of degree $N - k$. The coefficients remain nonnegative since we are taking sums of polynomials in the $b_i$ and $\lambda_j$ with nonnegative coefficients. In the $k = 0$ case, we have $\mu_{N, 0} = b_0 \mu_{N - 1, 0} + \lambda_1 \mu_{N - 1, 1}$. The $N \ge 3$ assumption implies that $\mu_{N - 1, 0} \ne 0$. The induction assumption then implies that $\mu_{N, 0}$ is a polynomial of degree $N$ in the $b_m$ and $\lambda_m$. Since we are adding polynomials in these variables with nonnegative coefficients, the term $\mu_{N, k}$ has nonnegative coefficients as a polynomial in the $b_m$ and $\lambda_m$. \\
			
		\end{enumerate}
	\end{proof}

	In order to clarify our discussion, we will define the following: \\
	
	\begin{defn}
		Consider the input vector $z = (z_0, \ldots, z_N)$ of the inverse map $P^{-1}$ from an initial collection of $N + 1$ unitary orthogonal polynomials (indexed by $0, \ldots, N$) with parameters $b_k > 0$ and $\lambda_k > 0$ for all $1 \ge k \le N$.
		
		\begin{enumerate}
			\item The \textbf{formal $h$-vector} associated to $z$ is the vector $(h_0, \ldots, h_N)$ such that \[ P^{-1}(z) = ( h_N, 2 h_{N - 1}, \ldots, 2^{N - 1} h_1, 2^N h_0 ). \] 
			
			\item Given such an $h$, we will take $z$ to be the \textbf{generalized inverted gamma vector} associated to the collection of unitary orthogonal polynomials $P_0, \ldots, P_N$. \\
		\end{enumerate}

	\end{defn}
	
	\begin{rem}
		We consider $z$ to be the generalization of an inverted and rescaled gamma vector of $h$ since $P^{-1} : ( \gamma_{\frac{d}{2}}, 2 \gamma_{\frac{d}{2} - 1}, \ldots, 2^{\frac{d}{2} - 1} \gamma_1, \gamma_0 ) \mapsto ( h_{\frac{d}{2}}, 2 h_{\frac{d}{2} - 1}, \ldots, 2^{\frac{d}{2} - 1} h_1, 2^{\frac{d}{2}} h_0 )$ if the unitary orthogonal polynomials are the ones associated to the Chebyshev polynomials of the first kind translated by $-1$. \\
	\end{rem}
	
	Given the input vector $z$ of $z \mapsto z \cdot P^{-1}$, the output vectors are of the form 
	\begin{align*}
		q_\ell &= z_\ell \mu_{\ell, \ell} + z_{\ell + 1} \mu_{\ell + 1, \ell} + \ldots + z_N \mu_{N, \ell} \\
		&= z_\ell + z_{\ell + 1} \mu_{\ell + 1, \ell} + \ldots + z_N \mu_{N, \ell}
	\end{align*}

	for all $0 \le \ell \le N$ since $\mu_{\ell, \ell} = 1$. Note that the set of variables used increases as we decrease the index since we start with an earlier variable. \\
	
	As mentioned in the proof of Part 3 of Proposition \ref{gamorthpov}, we would have $q_\ell = 2^\ell h_{N - \ell}$. This would imply that $h_{N - \ell} = 2^{-\ell} q_\ell$ and $h_k = 2^{- (N - k) } q_{ N - k }$. In particular, $h_k \le h_{k + 1}$ if and only if $q_\ell \le 2 q_{\ell - 1}$ (unimodality analogue). Subtracting consecutive terms implies that \[ g_k = 2^{ - \left( N - k + 1 \right) } (2q_{N - k} - q_{N - k + 1}). \] 
	
	We will use these formal $h$-vectors and formal $g$-vectors to describe $z \ge 0$ vectors (analogous to $\gamma(\Delta) \ge 0$) realized by $h$-vectors of Cohen-Macaulay simplicial complexes and simplicial spheres using methods from Section \ref{gennonnegrealiz}. \\
	
	\begin{prop} \textbf{(Realizations of nonnegative generalized inverted gamma vectors) \\}
	
		Consider the realizations of nonnegative generalized inverted gamma vectors $z$ by formal $h$-vectors associated to simplicial complexes.
		\begin{enumerate}
			\item The conditions for realizations of $z \ge 0$ by Cohen--Macaulay simplicial complexes (equivalently of shellable ones) from Theorem \ref{hvectcmshell} are the following:
			
				\begin{itemize}
					\item \textbf{($h_0 = 1$ analogue)} \\
					$q_N = 2^N$
					
					\item \textbf{($M$-vector condition on formal $h$-vector)} \\
					$0 \le 2^{-(\ell - 1)} q_{\ell - 1} \le (2^{-\ell} q_\ell)^{\langle N - \ell \rangle}$  
				\end{itemize}
			
			\item The conditions for realizations of $z \ge 0$ by simplicial spheres from Theorem \ref{gconjsphere} are the following: 
			
				\begin{itemize}
					\item \textbf{($h_0 = 1$ analogue)} \\
					$q_N = 2^N$
					
					\item \textbf{($g \ge 0$ in formal $g$-vector)} \\
					$2q_{\ell - 1} \ge q_\ell$
					
					\item \textbf{($M$-vector condition on formal $g$-vector)} \\
					$2^{ -\ell } (2q_{ \ell - 1} - q_\ell) \le \left[ 2^{ -(\ell + 1) } (2q_\ell - q_{ \ell + 1 }) \right]^{\langle N - \ell \rangle}$ \\
				\end{itemize}
			 
		\end{enumerate}
	\end{prop}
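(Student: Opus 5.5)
The plan is a direct translation: substitute the formal $h$-vector and formal $g$-vector, expressed through $q = z \cdot P^{-1}$, into Theorem \ref{hvectcmshell} and Theorem \ref{gconjsphere}. I would use the two identities recorded just before the statement, namely $h_k = 2^{-(N-k)} q_{N-k}$ and $g_k = 2^{-(N-k+1)}(2q_{N-k} - q_{N-k+1})$. Here the formal $h$-vector has length $N+1$ and plays the role of the truncated half $(h_0, \ldots, h_{\frac{d}{2}})$ with $N = \frac{d}{2}$. Dehn--Sommerville is automatic, since we only record the first half and extend it palindromically. Nonnegativity of $z$ enters only through the positivity of the $\mu_{r,s}$, which guarantees $q \ge 0$. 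The $q_\ell$ lower bounds from Part 3 of Proposition \ref{gamorthpov} show that the translated conditions are not vacuous.

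\textbf{Step 1 (the condition $h_0 = 1$).} Since $q_\ell = z_\ell + \sum_{m > \ell} z_m \mu_{m,\ell}$, we have $q_N = z_N$. So $h_0 = 2^{-N} q_N = 1$ is equivalent to $q_N = 2^N$, which mirrors $\gamma_0 = 1$.

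\textbf{Step 2 (the Cohen--Macaulay case).} Theorem \ref{hvectcmshell} requires $0 \le h_{i+1} \le h_i^{\langle i \rangle}$. I would substitute $i = N - \ell$, so that $h_i = 2^{-\ell} q_\ell$ and $h_{i+1} = 2^{-(\ell-1)} q_{\ell-1}$. This gives exactly $0 \le 2^{-(\ell-1)} q_{\ell-1} \le (2^{-\ell} q_\ell)^{\langle N-\ell \rangle}$.

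\textbf{Step 3 (the sphere case).} The formal $g$-vector is $g_k = h_k - h_{k-1} = 2^{-(N-k+1)}(2q_{N-k} - q_{N-k+1})$. I would set $k = N - \ell + 1$, so that $g_{N-\ell+1} = 2^{-\ell}(2q_{\ell-1} - q_\ell)$.
\begin{itemize}
\item Nonnegativity $g \ge 0$ then becomes $2q_{\ell-1} \ge q_\ell$, which is the unimodality analogue already noted.
\item The $M$-vector condition $g_{i+1} \le g_i^{\langle i \rangle}$ with $i = N - \ell$ compares $g_{N-\ell+1} = 2^{-\ell}(2q_{\ell-1} - q_\ell)$ with $g_{N-\ell} = 2^{-(\ell+1)}(2q_\ell - q_{\ell+1})$. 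This yields the stated inequality with pseudopower exponent $\langle N-\ell \rangle$.
\end{itemize}

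\textbf{Main obstacle.} The hard part is purely bookkeeping. The indexing reverses between $q$ and $h$, and the powers of $2$ must land so that the pseudopower index is $N - \ell$ in both parts. The range of $\ell$ must also match $1 \le i \le N-1$ in Theorem \ref{hvectcmshell} and $0 \le i \le N-1$ in Theorem \ref{gconjsphere}. I would check the extreme cases $\ell = N$ and $\ell = 1$ explicitly. I would also remark that in the CM case these are the conditions on the truncated vector only, consistent with Part 1 of Proposition \ref{gamorthpov}.
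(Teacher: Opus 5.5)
Your proposal is correct and takes essentially the paper's approach. The paper gives no separate argument: the proposition is exactly the substitution of $h_k = 2^{-(N-k)}q_{N-k}$ and $g_k = 2^{-(N-k+1)}(2q_{N-k}-q_{N-k+1})$ (with the convention $q_{N+1}=0$) into Theorems \ref{hvectcmshell} and \ref{gconjsphere}, and your choice $i = N-\ell$ reproduces both displayed inequalities, including the pseudopower index $\langle N-\ell\rangle$. As in the paper, integrality of the entries $2^{-\ell}q_\ell$, which is needed to apply those theorems, is tacitly assumed rather than derived.
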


	Fix an initial choice of weights $b_k > 0$ and $\lambda_k > 0$ for $1 \le k \le N$. Since \[ q_{\ell - 1} = z_{\ell - 1} + z_\ell \mu_{\ell, \ell - 1} + \ldots + z_N \mu_{N, \ell - 1} \]
	
	has exactly one variable $z_{\ell - 1}$ not used in \[ q_\ell = z_\ell + z_{\ell + 1} \mu_{\ell + 1, \ell} + \ldots + z_N \mu_{N, \ell}, \]
	
	the methods from Section \ref{gennonnegrealiz} on realizability of nonnegative gamma vectors apply in the same way to generalized inverted gamma vectors $z$. The only differences are that we start from the last index $N$ instead of first index 0 and work backwards with decreasing indices instead of increasing indices while constructing a recursion. \\

	As in Section \ref{gennonnegrealiz}, we start by considering simplicial spheres. The first condition of $g \ge 0$ implies unimodality when $(h_0, \ldots, h_{\frac{d}{2}})$ is the first half and middle term of a $(d + 1)$-element palindromic sequence. Since gamma positivity of reciprocal/palindromic polynomials implies unimodality, we can ask if nonnegativity the generalized inverted gamma vector implies that the $(h_0, \ldots, h_{\frac{d}{2}})$ form a monotonically increasing sequence. In our setting, this holds for sufficiently large parameters. \\

	\begin{prop} \textbf{(Unimodality analogue from nonnegative generalized inverted gamma vectors with large entries) \\} \label{formalgpos}
		Suppose that $z \ge 0$. The entries of the formal $h$-vector form a monotonically increasing sequence if and only if \[ z_{\ell - 1} \ge -\frac{1}{2} ((2 \mu_{\ell, \ell - 1} - 1) z_\ell + (2 \mu_{\ell + 1, \ell - 1} - \mu_{\ell + 1, \ell}) z_{\ell + 1} + \ldots + (2 \mu_{N, \ell - 1} - \mu_{N, \ell}) z_N) \] for all $1 \le \ell \le N$. This indicates that nonnegative entries that decrease sufficiently quickly are realized as nonnegative generalized inverted gamma vectors of monotonically increasing formal $h$-vector entries $(h_0, \ldots, h_N)$. If $2 \mu_{r - 1, s} \le \mu_{r, s}$ for all $0 \le r, s \le N$, the inequality above would hold for all $z \ge 0$. \\
	\end{prop}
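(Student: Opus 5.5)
The plan is to express the monotonicity condition on the formal $h$-vector directly in terms of the $q_\ell$ and then substitute the Motzkin path expansion of each $q_\ell$ in the entries $z_j$. As noted just before the statement, $h_k = 2^{-(N-k)} q_{N-k}$, so $h_{k-1} \le h_k$ for all $1 \le k \le N$ is equivalent to $q_\ell \le 2 q_{\ell - 1}$ for all $1 \le \ell \le N$ (set $\ell = N - k + 1$). I will write $2q_{\ell-1} - q_\ell \ge 0$ using
\[ q_{\ell - 1} = z_{\ell - 1} + \sum_{j = \ell}^N \mu_{j, \ell - 1} z_j, \qquad q_\ell = z_\ell + \sum_{j = \ell + 1}^N \mu_{j, \ell} z_j, \]
which comes from the description of $z \mapsto z \cdot P^{-1}$ via $\mu_{N,k}$ (Th\'eor\`eme 1 of \cite{V2}) and $\mu_{\ell,\ell} = 1$.

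Collecting coefficients gives
\[ 2q_{\ell-1} - q_\ell = 2 z_{\ell-1} + (2\mu_{\ell,\ell-1} - 1) z_\ell + \sum_{j=\ell+1}^N (2\mu_{j,\ell-1} - \mu_{j,\ell}) z_j. \]
This is nonnegative exactly when $z_{\ell-1}$ is at least $-\tfrac12$ times the remaining sum, which is the stated inequality. Since each step is an equivalence, this gives the ``if and only if'' over all $1 \le \ell \le N$. The new variable $z_{\ell-1}$ appears only in $q_{\ell-1}$ and not in $q_\ell$. So, working backwards from $z_N$ (which is fixed by the $h_0 = 1$ analogue $q_N = 2^N$), each successive $z_{\ell-1}$ only has to clear a threshold determined by the later entries. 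This is the interpretive sentence about entries that ``decrease sufficiently quickly''. I would phrase it the same way as the recursive extension in Corollary \ref{sphereposgam}, with the index order reversed.

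For the final claim, suppose $2\mu_{r-1,s} \le \mu_{r,s}$ for all relevant $r,s$. The point to check is that the coefficients $2\mu_{j,\ell-1} - \mu_{j,\ell}$ and $2\mu_{\ell,\ell-1} - 1$ are then nonnegative, so the right-hand side is $\le 0$ and the inequality holds automatically for every $z \ge 0$. The coefficient $2\mu_{\ell,\ell-1} - 1$ is handled by Corollary \ref{wtgrowth}: it equals $2(b_0 + \cdots + b_{\ell-1}) - 1$, which I would bound using the hypothesis with $r = \ell$, $s = \ell-1$ together with $\mu_{\ell-1,\ell-1} = 1$.

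The main obstacle is the comparison $\mu_{j,\ell} \le 2\mu_{j,\ell-1}$, since the hypothesis as written varies the first index rather than the second. I would first try Lemma \ref{pathwtrec}, writing $\mu_{j,\ell-1} \ge \mu_{j-1,\ell-2} + \lambda_\ell \mu_{j-1,\ell}$ and inducting on $j$. If that fails, I would fall back on reading the hypothesis as the condition that directly makes each coefficient nonnegative, which is what the sufficiency claim actually requires. That reading fits the paper's informal ``would hold'' phrasing.
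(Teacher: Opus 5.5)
Your derivation of the equivalence is correct, and it is the argument the paper intends. The paper gives no separate proof; the text just before the statement records that $h_k\le h_{k+1}$ iff $q_\ell\le 2q_{\ell-1}$, together with $q_\ell=z_\ell+\sum_{j>\ell}\mu_{j,\ell}z_j$.

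The problem is the last sentence. Your first plan is to derive $\mu_{j,\ell}\le 2\mu_{j,\ell-1}$ from the stated hypothesis $2\mu_{r-1,s}\le\mu_{r,s}$ by induction with Lemma \ref{pathwtrec}. That cannot work, because the implication is false, and so is the final claim as literally stated. Take $N=2$, $b_0=2$, $b_1=100$, $\lambda_1=1$. Then $\mu_{1,0}=2$, $\mu_{2,0}=b_0^2+\lambda_1=5$ and $\mu_{2,1}=b_0+b_1=102$. The hypothesis $2\mu_{r-1,s}\le\mu_{r,s}$ holds for every pair: the nontrivial cases are $2\le 2$, $4\le 5$ and $2\le 102$, and all other pairs have $\mu_{r-1,s}=0$. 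Yet $2\mu_{2,0}-\mu_{2,1}=-92$. For $z=(0,0,4)\ge 0$ one gets $q=(20,408,4)$, i.e.\ $(h_0,h_1,h_2)=(1,204,20)$, which is not increasing. Equivalently, the $\ell=1$ inequality would require $z_0\ge 184$.

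So your fallback is not a generous reading of the informal ``would hold''; it is a genuine correction of the hypothesis, and you should present it that way. Write $2q_{\ell-1}-q_\ell=\sum_{j=\ell-1}^{N}(2\mu_{j,\ell-1}-\mu_{j,\ell})z_j$, using $\mu_{\ell-1,\ell}=0$. Testing on $z=e_j$ then shows that the inequality holds for all $z\ge 0$ and all $\ell$ exactly when $\mu_{j,\ell}\le 2\mu_{j,\ell-1}$ for $1\le\ell\le j\le N$. This compares consecutive entries within a row of $P^{-1}$, not down a column as the stated hypothesis does, and the stated hypothesis does not imply it. With the corrected hypothesis the claim follows in one line from your coefficient formula. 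The case $j=\ell$ is precisely the coefficient $2\mu_{\ell,\ell-1}-1$, so the separate appeal to Corollary \ref{wtgrowth} is unnecessary.
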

	
	As for the remaining $M$-vector condition on the formal $g$-vectors, the situation is similar to nonnegative generalized inverted gamma vectors realized by Cohen--Macaulay simplicial complexes. We are making choices of $z \ge 0$ such that the differences above are large. \\

	\begin{prop} \textbf{(Nonnegative generalized inverted gamma vectors realized by simplicial spheres) \\} \label{geninvgamsphere}
		Suppose that $z_N = 2^N$ and the formal $g$-vector is nonnegative (i.e. $z$ satisfies the conditions of Proposition \ref{formalgpos}).
		
		\begin{enumerate}
			\item Given a choice of $z_k, \ldots, z_{N - 1} \ge 0$ compatible with the $M$-vector inequalities involving $2^{-\ell} (2q_{\ell - 1} - q_\ell)$ and $2^{-(\ell + 1)} (2q_\ell - q_{\ell + 1})$ for $k + 1 \le \ell \le N$, there is a choice of $z_{k - 1} \ge 0$ compatible with the $M$-vector inequalities involving $2^{-k}(2q_{k - 1} - q_k)$ and $2^{-(k + 1)} (2q_k - q_{k + 1})$ if and only if  
			
			\begin{align*}
				2^k [2^{-(k + 1)} (2 q_k - q_{k + 1}) ]^{ \langle N - k \rangle } &= 2^k \left[ 2^{-(k + 1)} \left( 2z_k + \sum_{m = k + 1}^N (2 \mu_{m, k} - \mu_{m, k + 1}) z_m \right) \right]^{ \langle N - k \rangle } \\
				&\ge \sum_{j = k}^N ( 2 \mu_{j, k - 1} - \mu_{j, k} ) z_j.
			\end{align*}
			
			\item If the condition in Part 1 is satisfied, the induced upper bound on $z_{k - 1}$ is \[ z_{k - 1} \le \frac{1}{2} \left( 2^k \left[ 2^{-(k + 1)} \left( 2z_k + \sum_{m = k + 1}^N (2 \mu_{m, k} - \mu_{m, k + 1}) z_m \right) \right]^{ \langle N - k \rangle } - \sum_{j = k}^N ( 2 \mu_{j, k - 1} - \mu_{j, k} ) z_j \right). \]
			
			\item Suppose that $z_N = 2^N$ and $z_k, \ldots, z_{N - 1} \ge 0$ compatible with the $M$-vector inequalities involving $2^{-\ell} (2q_{\ell - 1} - q_\ell)$ and $2^{-(\ell + 1)} (2q_\ell - q_{\ell + 1})$ for $k + 1 \le \ell \le N$. If $z_s \gg 0$ for all $k \le s \le N - 1$, there is a $z_{s - 1} \ge 0$ compatible with the nonnegativity of the difference between the upper bound and the lower bound in Part 1. This means that nonnegative entries $(z_k, \ldots, z_{N - 1})$ that decrease sufficiently quickly (relative to $d$) with entries $z_s$ satisfying designated order $\frac{N - s}{N - s - 1}$ upper bounds with respect to $z_{s + 1}, \ldots, z_N$ yield nonnegative upper bounds to choose from. Sufficiently large inputs would allow extension to the entire generalized inverted gamma vector. \\
		\end{enumerate}
		
		This is a direct analogue of Corollary \ref{sphereposgam}. \\
	\end{prop}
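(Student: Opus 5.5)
The plan is to mirror the proof of Corollary \ref{sphereposgam}, with the roles of the indices reversed. First I express the formal $g$-vector in terms of $z$. From $q_\ell = z_\ell + \sum_{m=\ell+1}^N \mu_{m,\ell} z_m$ and $\mu_{\ell,\ell}=1$, I get
\[ 2q_{k-1} - q_k = 2z_{k-1} + \sum_{j=k}^N (2\mu_{j,k-1} - \mu_{j,k}) z_j, \]
and similarly
\[ 2q_k - q_{k+1} = 2z_k + \sum_{m=k+1}^N (2\mu_{m,k} - \mu_{m,k+1}) z_m. \]
The first formula is the only place the new variable $z_{k-1}$ appears. The $M$-vector inequality linking these two consecutive formal $g$-components is
\[ 2^{-k}(2q_{k-1}-q_k) \le \left[2^{-(k+1)}(2q_k-q_{k+1})\right]^{\langle N-k\rangle}. \]
After multiplying by $2^k$, it reads $2z_{k-1} + L_k(z_k,\ldots,z_N) \le U_k(z_k,\ldots,z_N)$, with $L_k$ linear and $U_k$ the pseudopower expression.

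Parts 1 and 2 then follow directly. The inequality constrains $z_{k-1}$ only from above, so a choice $z_{k-1}\ge 0$ exists if and only if it holds at $z_{k-1}=0$, that is, if and only if $U_k \ge L_k$. This is exactly the displayed condition of Part 1. Solving for $z_{k-1}$ gives the bound of Part 2. I will also note that $2q_{k-1}-q_k\ge 0$, the formal $g\ge0$ condition, is assumed through Proposition \ref{formalgpos}. Therefore the lower constraint adds nothing beyond $z_{k-1}\ge 0$ together with that hypothesis.

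Part 3 is the asymptotic step, and I expect it to be the main obstacle. Set $\alpha = 2^{-(k+1)}(2q_k-q_{k+1})$. Then the right-hand side of Part 1 is $2^k\alpha^{\langle N-k\rangle}$. By Corollary \ref{macpowasymp} this is asymptotically $2^k C_{N-k}\,\alpha^{\frac{N-k+1}{N-k}}$, which is superlinear in $\alpha$.

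The left-hand side $L_k$ is linear in $z_k,\ldots,z_N$. The task is to bound $L_k$ by a constant multiple of $\alpha$, uniformly in the large entries, which is the analogue of the $\frac13 < m < d+1$ bound from Lemma \ref{gamtgrowth}. For this I would compare coefficients termwise:
\[ |2\mu_{j,k-1}-\mu_{j,k}| \le c\,\max\left(2,\; 2\mu_{j,k}-\mu_{j,k+1}\right), \]
with $c$ depending only on the fixed weights $b_m,\lambda_m$. Such a $c$ exists because Corollary \ref{wtgrowth} makes every $\mu_{r,s}$ a fixed finite positive quantity once $N$ and the weights are fixed.

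The subtle point is the coefficients $2\mu_{m,k}-\mu_{m,k+1}$ in $\alpha$, which could be nonpositive. If so, positivity of $\alpha$ has to come from the formal $g\ge 0$ hypothesis rather than from the individual terms. I would handle this by using $\alpha\ge 2^{-(k+1)}\cdot 2z_k$ when $z_k$ dominates. Otherwise I would invoke the recursion of Lemma \ref{pathwtrec} to show $\mu_{m,k}\ge \lambda_{k+1}\mu_{m-1,k+1}$ and thereby control the negative parts.

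With $L_k \le c'\alpha$ in hand, $\alpha^{\frac{1}{N-k}}\to\infty$ gives $U_k\ge L_k$ once $\alpha \gtrsim (c'/C_{N-k})^{N-k}$. This is guaranteed when $z_k,\ldots,z_{N-1}\gg 0$ while respecting the previously chosen order-$\frac{N-s}{N-s-1}$ upper bounds. Iterating downward in the index from $N$, exactly as Corollary \ref{sphereposgam} iterates upward, extends the construction to the whole generalized inverted gamma vector.
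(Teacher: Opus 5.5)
Your Parts 1 and 2 are correct and match what the paper's appeal to Corollary \ref{sphereposgam} amounts to. Within this step, $z_{k-1}$ enters only through $2q_{k-1}-q_k=2z_{k-1}+L_k$, so the single inequality $2z_{k-1}+L_k\le U_k$ is feasible with $z_{k-1}\ge 0$ if and only if $U_k\ge L_k$.

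The gap is in Part 3, at the step you flag yourself. The termwise estimate $|2\mu_{j,k-1}-\mu_{j,k}|\le c\max(2,\,2\mu_{j,k}-\mu_{j,k+1})$ gives $L_k\le c'\alpha$ only when every coefficient of $2q_k-q_{k+1}$ is nonnegative. Lemma \ref{pathwtrec} cannot supply that. By Corollary \ref{wtgrowth}, the coefficient of $z_{k+1}$ is $2\mu_{k+1,k}-1=2B-1$ with $B=b_0+\cdots+b_k$. This is a fixed negative number whenever $B<\frac12$, whatever the recursion says about $\mu_{m,k}$ versus $\mu_{m-1,k+1}$.

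In that situation your ``otherwise'' regime actually fails. Take $k\ge 1$, $N=k+2$, $B<\frac12$ and $\lambda_k\ge 1$. Let $z_{k+1}\to\infty$ and choose $z_k=\frac{1-2B}{2}z_{k+1}+O(1)\to\infty$ so that $2q_k-q_{k+1}$ stays bounded and nonnegative (adjusting for integrality).
\begin{itemize}
\item All conditions at levels $\ell\ge k+1$ still hold, and $U_k$ stays bounded.
\item Since $\mu_{k+1,k-1}\ge\lambda_k$ (the path $\mathrm{NE}^k\,\mathrm{SE}$) and $\mu_{k,k-1}\ge 0$, the net coefficient of $z_{k+1}$ in $L_k$ is at least $-\frac12+2\lambda_k-B>0$.
\end{itemize}
Hence $L_k\to\infty$ and no admissible $z_{k-1}\ge 0$ exists, even though $z_k,z_{k+1}\gg 0$.

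So an extra input is needed. The paper's one-line justification tacitly imports it from Lemma \ref{gamtgrowth}, where all coefficients $b_{r,s}$ are positive with controlled ratios. There are two ways to close the argument.
\begin{itemize}
\item[(i)] Assume $2\mu_{m,k}\ge\mu_{m,k+1}$ for the relevant $m$. This holds for the translated Chebyshev weights, where $\mu_{m,\ell}=2^{-(m-\ell)}\binom{2m}{m-\ell}$, so $2\mu_{m,k}-\mu_{m,k+1}=2^{-(m-k-1)}\bigl[\binom{2m}{m-k}-\binom{2m}{m-k-1}\bigr]>0$. Your termwise comparison then works verbatim.
\item[(ii)] Work in the regime that the statement's ``decrease sufficiently quickly'' clause describes: $z_k\ge M(z_{k+1}+\cdots+z_N)$ with $M$ large in terms of the weights. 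Then $2^{k+1}\alpha\ge z_k$ and $L_k=O(z_k)$, so $L_k\le c'\alpha$ and your pseudopower asymptotics finish the step. This regime propagates down the induction, because the Part 2 bound from the previous step allows $z_k$ up to order $z_{k+1}^{(N-k)/(N-k-1)}$, which is superlinear.
\end{itemize}
What you must drop is the assertion that $z_k,\ldots,z_{N-1}\gg 0$ alone forces $\alpha$ to be large.
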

	
	\begin{rem} \textbf{(Comparison with Cohen--Macaulay simplicial complexes and varying $N$) \\} \label{cmorthgen}
		Observations analogous to those in Remark \ref{matcoeffsize} also hold for generalized inverted gamma vectors. Ratios between the $b_i$ and $\lambda_j$ and $N$ would be relevant parameters in versions of the bounds discussed. \\
	\end{rem}

	Before moving on, we will note that multiplication inside pseudopowers $a^{\langle k \rangle}$ (e.g. in simplicial sphere realization results) is similar to multiplication by a modified constant. When the starting index is large, this is close to multiplication by the original constant. \\

	\begin{prop} \textbf{(Multiplication by constants and pseudopower approximations) \\} \label{pseudopowrat}
		Let $a$ be a positive integer. Fix a constant $\beta > 0$ such that $\beta a$ is an integer and a positive integer $k \ge 1$. Then, we have \[ (\beta a)^{\langle k \rangle} \sim  \beta^{ \frac{k + 1}{k} } a^{\langle k \rangle} = \beta^{1 + \frac{1}{k}} a^{\langle k \rangle} \] as $a \to +\infty$. When $k \gg 0$, this is close to multiplication of $a^{\langle k \rangle}$ by $\beta$. \\
	\end{prop}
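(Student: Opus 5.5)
The plan is to reduce the statement to the asymptotics of Corollary \ref{macpowasymp}, applied once to $a$ and once to $\beta a$. Since $k$ is fixed and $\beta > 0$ is a constant, $\beta a \to +\infty$ as $a \to +\infty$. Part 1 of Corollary \ref{macpowasymp} then gives
\[ (\beta a)^{\langle k \rangle} \sim C_k (\beta a)^{\frac{k + 1}{k}} = \beta^{\frac{k + 1}{k}} C_k a^{\frac{k + 1}{k}} \qquad \text{and} \qquad a^{\langle k \rangle} \sim C_k a^{\frac{k + 1}{k}}, \]
where $C_k = \frac{(k!)^{1/k}}{k + 1}$ is independent of $a$. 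Dividing the first relation by the second, and using that $\sim$ is compatible with quotients when nothing vanishes (here $a^{\langle k \rangle} \ge \binom{n_k + 1}{k + 1} > 0$ by Lemma \ref{kmacpwrbds}), yields $\frac{(\beta a)^{\langle k \rangle}}{\beta^{\frac{k+1}{k}} a^{\langle k \rangle}} \to 1$. This is exactly the claim.

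One technical point needs care. The function $a \mapsto (\beta a)^{\langle k \rangle}$ is only defined along the values of $a$ for which $\beta a$ is an integer, so the limit should be read along that set of positive integers. This set is infinite, since otherwise the statement would be vacuous; if $\beta$ is rational it contains all multiples of its denominator. Corollary \ref{macpowasymp} is a statement about integers tending to infinity, so it can be applied along any such subsequence. To be safe, I would re-derive the needed input directly from Lemma \ref{macrepbds} and Lemma \ref{kmacpwrbds}. Writing $a = \binom{x}{k}$, we have $\binom{x - 1 + 1}{k+1} \le \binom{n_k+1}{k+1} \le a^{\langle k \rangle} \le \binom{x+1}{k+1}$, and both outer bounds are asymptotic to $\frac{x^{k+1}}{(k+1)!}$ with $x \sim (k! a)^{1/k}$. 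This sandwich holds for every integer, so it passes to subsequences without issue.

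For the final sentence, note that $\beta^{1 + \frac{1}{k}} = \beta \cdot \beta^{1/k}$ and $\beta^{1/k} \to 1$ as $k \to +\infty$ for fixed $\beta > 0$. So for $k \gg 0$ the multiplier is close to $\beta$, in the sense that $\beta^{1/k} = 1 + O\!\left(\frac{|\log \beta|}{k}\right)$. This should be stated as a remark on the constant rather than as a uniform asymptotic in $k$. The main obstacle is not the computation but making sure the limits are not silently interchanged: the $\sim$ in Corollary \ref{macpowasymp} is for fixed $k$, so I will state the large-$k$ comment only about the prefactor $\beta^{1 + \frac{1}{k}}$.
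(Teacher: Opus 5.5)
Your proposal is correct and follows essentially the same route as the paper: apply Corollary \ref{macpowasymp} once to $a$ and once to $\beta a$, then compare the two through the common constant $C_k$. The paper instead chains the equivalences by transitivity of $\sim$ rather than dividing. Your extra care goes beyond what the paper writes but does not change the argument: you restrict $a$ to values with $\beta a \in \mathbb{Z}$, you re-derive the asymptotic from the sandwich in Lemmas \ref{macrepbds} and \ref{kmacpwrbds}, and you confine the large-$k$ remark to the prefactor $\beta^{1/k}$.
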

	
	\begin{proof}
		Recall from Corollary \ref{macpowasymp} that $a^{\langle k \rangle} \sim C_k a^{\frac{k + 1}{k} }$ as $a \to \infty$, where \[ C_m = \frac{(m!)^{ \frac{1}{m} }}{m + 1}. \] If we substitute $\beta a$ in place of $a$, this implies that $(\beta a)^{\langle k \rangle} \sim C_k (\beta a)^{\frac{k + 1}{k}} = \beta^{ \frac{k + 1}{k} } \cdot C_k a^{\frac{k + 1}{k} }$. Then, we have $(\beta a)^{\langle k \rangle} \sim  \beta^{ \frac{k + 1}{k} } a^{\langle k \rangle} = \beta^{1 + \frac{1}{k}} a^{\langle k \rangle} $ as $a \to +\infty$. \\
	\end{proof}
	
	\color{black} 
	
	The remainder of the analysis in Section \ref{gennonnegrealiz} carries over to our setting of more general orthogonal polynomials. \\
	
 	Finally, this framework shows that the analysis in Section \ref{linkcondimp} for simplicial spheres satisfying the link condition also carries over to the more general setting of unitary orthogonal polynomials associated to positive parameters $b_m > 0$ and $\lambda_m > 0$. We explain this in further detail below. \\
	
	\begin{rem} \textbf{(Applications to generalizations of approximations for simplicial spheres satisfying the link condition) \\} \label{linkapproxorth}
		
		\begin{enumerate}
			\item Recall that the analogue of the implication $\gamma(\Delta) \ge 0 \Longrightarrow h_i(\Delta) \ge \binom{d}{i}$ for generalized inverted gamma vectors $z \ge 0$ is $h_k \ge 2^k \mu_{N, N - k}$. Setting $k = 1$, we have the lower bound $h_1 \ge 2 \mu_{N, N - 1}$ for the formal $h_1$ component. In the context of the $g_1(\Delta) \ge d - 1$ assumptions made earlier from $\gamma(\Delta) \ge 0 \Longrightarrow h_1(\Delta) \ge d$ (among other cases), it is natural to look at lower bounds for formal $g_1$ components involving $\mu_{N, N - 1}$. Given constants $b_i > 0$, we see a similar behavior for the $\mu_{N, N - 1}$ since $\mu_{N, N - 1} = b_0 + \ldots + b_{N - 1}$ (Corollary \ref{wtgrowth}), which grows linearly with $N$. \\
			
			\item Suppose that $b_i > 0$ and $\lambda_j > 0$ for all $0 \le i \le N$ and $1 \le j \le N$. An important tool in asymptotic estimates involving simplicial spheres satsifying the link condition was to control behavior of the $g$-vector components after a single index change. This involved finding upper and lower bounds for ratios of the form $\frac{b_{r, s}}{b_{r - 1, s}}$ for terms $b_{i, j} = \binom{d - 2j}{i - j} - \binom{d - 2j}{i - j - 1}$ from Proposition \ref{gamtohgvect}. In our setting, the analogue would be a comparison of the weights $\mu_{r, s}$ with $\mu_{r, s - 1}$. Combining Part 2 of Lemma \ref{wtgrowth} with upper and lower bounds on ratios between the $b_i$ and $\lambda_j$ terms, we can use it and the number of terms involved to study bounds of ratios of $\mu_{r, s}$ with $\mu_{r, s - 1}$ since the difference in degrees as polynomials in the $b_i$ and $\lambda_j$ is 1. \\

		\end{enumerate}

	\end{rem}

\end{document}